\newcounter{assu}
\theoremstyle{plain}
\newtheorem{lemma}{Lemma}[section]
\newtheorem{theorem}[lemma]{Theorem}
\newtheorem{proposition}[lemma]{Proposition}
\newtheorem{corollary}[lemma]{Corollary}
\theoremstyle{definition}
\newtheorem{assumption}[assu]{Assumption}
\newtheorem{definition}[lemma]{Definition}
\newtheorem{remark}[lemma]{Remark}
\numberwithin{equation}{section}
\newcommand{\supp}{\text{\rm supp}}
\newcommand{\ve}{\varepsilon}
\newcommand{\erre}{\mathbb{R}}
\newcommand{\enne}{\mathbb{N}}
\newcommand{\f}{\varphi}
\newcommand{\gammA}{\boldsymbol{\gamma}}
\newcommand{\G}{\mathcal{G}}
\renewcommand{\S}{\mathcal{S}}
\renewcommand{\r}{\varrho}
\begin{document}
\title[Decomposition and globalization]{Decomposition of geodesics in the Wasserstein space \\ and the globalization problem}

\author{Fabio Cavalletti}

\address{RWTH, Department of Mathematics, Templergraben  64, D-52062 Aachen (Germany)}
\email{cavalletti@instmath.rwth-aachen.de}

\begin{abstract}
We will prove a decomposition for Wasserstein geodesics in the following sense:
let $(X,d,m)$ be a non-branching metric measure space verifying 
$\mathsf{CD}_{loc}(K,N)$ or equivalently $\mathsf{CD}^{*}(K,N)$. 
We prove that every geodesic $\mu_{t}$ in the $L^{2}$-Wasserstein space, with $\mu_{t} \ll m$, is  decomposable  
as the product of two densities, one corresponding to a geodesic with support of codimension one verifying 
$\mathsf{CD}^{*}(K,N-1)$, and the other associated with a precise one dimensional measure, 
provided the length map enjoys local Lipschitz regularity.
The motivation for our decomposition is in the use of the component evolving like $\mathsf{CD}^{*}$ in the globalization problem.

For a particular class of optimal transportation we prove  the linearity in time of the other component, obtaining therefore the 
global $\mathsf{CD}(K,N)$ for $\mu_{t}$. The result can be therefore interpret as a 
 globalization theorem for $\mathsf{CD}(K,N)$ for this class of optimal transportation,  
 or as a ``self-improving property'' for $\mathsf{CD}^{*}(K,N)$.

Assuming more regularity, namely in the setting of infinitesimally strictly convex metric measure space, the one dimensional density 
is the product of two differentials giving more insight on the density decomposition.
\end{abstract}

\maketitle

\tableofcontents

\bibliographystyle{plain}

\section{Introduction}

The class of metric measure spaces with generalized lower bounds on the Ricci curvature formulated in terms of optimal transportation,
has been introduced by Sturm in \cite{sturm:MGH1, sturm:MGH2} and 
independently by Lott and Villani in \cite{villott:curv}.  
The spaces belonging to this class are called $\mathsf{CD}(K,N)$-spaces and the condition characterizing them is 
denoted with $\mathsf{CD}(K,N)$.

In the \emph{curvature-dimension condition} $\mathsf{CD}(K,N)$ the two parameters $K$ and $N$ play the role
of a curvature lower bound and a dimension upper bound, respectively. 
Among the many relevant properties enjoyed by $\mathsf{CD}(K,N)$, the following one also serves as a motivation: 
 a complete Riemannian manifold satisfies $\mathsf{CD}(K,N)$ if and only if its Ricci curvature is bounded from below by $K$ and 
its dimension from above by $N$.

Roughly speaking curvature-dimension condition $\mathsf{CD}(K,N)$ prescribes  how the volume of a given set is affected by curvature 
when it is moved via optimal transportation. It imposes that the distortion is ruled by a coefficient denoted by $\tau_{K,N}^{(t)}(\theta)$
depending on the curvature $K$, on the dimension $N$, on the time of the evolution $t$ and on the point length $\theta$. 
The main feature of $\tau_{K,N}^{(t)}(\theta)$ is that it is obtained mixing two different volume distortions:
an $(N-1)$-dimensional distortion depending on the curvature $K$ and a one dimensional evolution that doesn't contain any curvature information. Namely 
\[
\tau_{K,N}^{(t)}(\theta) = t^{1/N} \sigma_{K,N-1}^{(t)}(\theta)^{(N-1)/N},
\] 
where $\sigma_{K,N-1}^{(t)}(\theta)^{(N-1)/N}$ contains the information on the $(N-1)$-dimensional volume distortion and 
the evolution in the remaining direction is ruled just by $t^{1/N}$. The coefficient $\sigma_{K,N}^{(t)}(\theta)$
is the solution (in time) of the second order differential equation 
\[
y'' + \theta^{2} \frac{K}{N} y = 0, \qquad y(0) = 0, \quad y'(0)=1.
\]
The previous equation appears naturally in the study of the Jacobian of the differential of the exponential map in the context of 
differential geometry, and indeed it rules the part of the Jacobian associated to the restriction to an hyperplane of the differential of the exponential map, see \cite{sturm:MGH2} for more details.

A broad variety of geometric and functional analytic properties can be deduced from the curvature-dimension condition $\mathsf{CD}(K,N)$: 
the Brunn-Minkowski inequality, the Bishop-Gromov volume comparison theorem, the Bonnet-Myers theorem, the doubling property and local Poincar\'e inequalities on balls. All these listed results are in a quantitative form (volume of intermediate points, volume growth, upper bound 
on the diameter and so on) depending on $K,N$.

One of the most important questions on $\mathsf{CD}(K,N)$ that are still open, and we will try to understand in this note, is whether this notion enjoys a globalization property:
can we say that a metric measure space $(X,d,m)$ satisfies $\mathsf{CD}(K,N)$ provided $\mathsf{CD}(K,N)$ 
holds true locally on a family of sets $X_{i}$ covering $X$?

A first tentative of answer this problem was given by Bacher and Sturm in \cite{sturm:loc}: they proved that 
a non-branching metric measure space $(X,d,m)$ verifies
the local curvature-dimension condition $\mathsf{CD}_{loc}(K,N)$ if and only if it verifies the global reduced curvature-dimension condition 
$\mathsf{CD}^{*}(K,N)$. The latter is obtained from $\mathsf{CD}(K,N)$ imposing that the volume distortion, 
during the evolution through an optimal transportation, is ruled by $\sigma_{K,N}^{(t)}(\theta)$ instead of $\tau_{K,N}^{(t)}(\theta)$.
The $\mathsf{CD}^{*}(K,N)$ is a priori weaker than $\mathsf{CD}(K,N)$ and the converse comparison can be obtained 
only changing the value of the lower bound on the curvature: condition $\mathsf{CD}^{*}(K,N)$ implies $\mathsf{CD}(K^{*},N)$ where $K^{*}= K(N-1)/N$ (for $K\geq 0$ and for $K< 0$ a suitable formula holds). Therefore the curvature condition contained in $\mathsf{CD}^{*}(K,N)$ 
is a priori weaker  than the one contained $\mathsf{CD}(K,N)$.

Roughly speaking, the main reason why the globalization property holds for the reduced curvature-dimension condition 
stays in the good behavior (in time) of $\sigma_{K,N}^{(t)}(\theta)$, which in turn can be led back to the previous second order differential equation. The same approach applied to $\mathsf{CD}(K,N)$, that is try to prove the globalization property for $\mathsf{CD}(K,N)$ directly from the properties of $\tau_{K,N}^{(t)}(\theta)$, seems to not work.

A different approach to the problem has been presented by the author together with Sturm in \cite{cavasturm:MCP}. 
The approach in \cite{cavasturm:MCP} was, in the case of an optimal transportation between a diffuse measure and a Dirac delta,  
to isolate a local $(N-1)$-dimensional evolution ruled by $\sigma_{K,N-1}^{(t)}(\theta)$ 
and then using the nice properties of $\sigma_{K,N-1}^{(t)}(\theta)$, obtain a global $(N-1)$-dimensional evolution 
ruled by the coefficient $\sigma_{K,N-1}^{(t)}(\theta)$. Then using H\"older inequality and the linear behavior of the other direction, 
pass from the $(N-1)$-dimensional version to the full-dimensional version with coefficient $\tau_{K,N}^{(t)}(\theta)$.

So the strategy was to reproduce in the setting of metric measure spaces the calculations done in the Riemannian framework where, taking advantage of parallel transport,  from $Ric \geq K$ it is possible to split the Jacobian determinant of the differential of the exponential map into two components: one of codimension $1$ evolving accordingly to $\sigma_{K,N-1}$ and one representing the distortion in the direction of motion that is concave.

To be more precise in \cite{cavasturm:MCP} it is proved that if $(X,d,m)$ is a non-branching metric measure space that verifies $\mathsf{CD}_{loc}(K,N)$ then it verifies the weaker $\mathsf{MCP}(K,N)$.
While $\mathsf{CD}(K,N)$ is a condition on the optimal transport between  any pair of absolutely continuous (w.r.t. $m$) probability measure on $X$, 
$\mathsf{MCP}(K,N)$ is a condition on the optimal transport between a Dirac delta and the uniform distribution $m$ on $X$.
Indeed to detect the $(N-1)$-dimensional evolution it is necessary to decompose the whole evolution. 
Considering the optimal transport between a Dirac mass in $o$ and the uniform distribution $m$ 
permits to immediately understand that  the family of spheres around $o$ provides the 
correct $(N-1)$-dimensional support of the evolving measures. 
So the choice of a Dirac delta as second marginal was really crucial and strongly influenced the geometry of the optimal transportation.

The aim of this paper is to identify, in the general case of optimal transportation between any measures, 
the $(N-1)$-dimensional evolution verifying $\mathsf{CD}^{*}(K,N-1)$ and, starting from that, 
provide a decomposition for densities of geodesics that can be interpret as a parallel transport.    
The $N$-dimensional density will be written as the product between the $(N-1)$-dimensional density verifying $\mathsf{CD}^{*}(K,N-1)$ 
and of a 1-dimensional density not necessarily associated to a 1-dimensional geodesic.
In the framework of infinitesimally strictly convex spaces, the 1-dimensional density  will be obtained as the product of two differential, producing then a more direct decomposition.  

We will construct a full decomposition for any optimal transportation verifying a local Lipschitz regularity, see
Assumption \ref{A:lengthreg} and \ref{A:Phi} for the precise hypothesis.
We apply this decomposition to the globalization problem for $\mathsf{CD}(K,N)$.
With this approach we are able to reduce the problem to prove the concavity in time of the 1-dimensional density, 
provided Assumption \ref{A:lengthreg} and Assumption \ref{A:Phi} are verified. It is important to underline here that in the framework of 
Riemannian manifolds endowed with volume measures both Assumption \ref{A:lengthreg} and \ref{A:Phi} are proved to hold.

Moreover in the particular case of optimal transport plans giving the same speed to geodesics leaving from the same level set of the associated Kantorovich potential, we prove indeed both regularity and linearity of the 1-dimensional factor and we get the full $\mathsf{CD}(K,N)$ inequality. 
So we prove the global estimate of $\mathsf{CD}(K,N)$ for a certain class of optimal transportation, 
clearly including all the cases treated in \cite{cavasturm:MCP}.

We now present the paper in more details. \\
Let $(X,d,m)$ be a non-branching metric measure space verifying the local curvature dimension condition 
and $\mu_{t} = \r_{t} m$ be a geodesic (in the $L^{2}$-Wasserstein space) that we want to decompose as stated before.
The first difficulty we have to handle with is to find a suitable partition of the space. 
Unlikely optimal transportations connecting measures to deltas, 
there is not just a universal family of sets but one for each $t \in [0,1]$: if  
$\f$ is a  Kantorovich potential associated to $(\mu_{0},\mu_{1})$, then
\[
\{\gamma_{t} : \f (\gamma_{0}) = a, \gamma \in \supp(\gammA) \}_{a \in \erre}
\]
is the family of partitions, one for each $t \in [0,1]$, that will be considered. 
Here $\gammA \in \mathcal{P}(\G(X))$ is a dynamical optimal transference plan of $\mu_{t}$ and $\mathcal{P}(\G(X))$ denotes the space of probability measures over $\G(X)$, 
the space of geodesic in $X$ endowed with the uniform topology inherited as a subset of $C([0,1],X)$.

The intuitive reason suggesting that the previous family is the right one, stays in the Brenier-McCann Theorem for optimal transportation on manifold that gives a precise formula for the optimal maps: 
\[
T_{t} (x) = \exp_{x}(-t \nabla \f (x)), \qquad (T_{t})_{\sharp}\mu_{0} = \mu_{t}.
\]
By definition, geodesics on manifold verify $\nabla_{\dot \gamma} \dot\gamma = 0$, where $\nabla$ only here 
denotes the Levi-Civita connection, meaning that there is no curvature in the direction of $\gamma$. 
Hence the direction orthogonal to the motion should be the one carrying all the curvature information. 
Since $\dot \gamma_{0} = - \nabla \f$, (here $\nabla \f$ is the gradient of $\f$) the family of sets
orthogonal to the motion are the level sets of $\f$.

On the rigorous mathematical side, the reason why that family is the right one stays in the following property: the set 
\[
\{ (\gamma_{0},\gamma_{1}) \in X \times X : \f(\gamma_{0})=a \}
\]
is $d$-cyclically monotone (Proposition \ref{P:monotone}). Hence for $\gamma \neq \hat \gamma \in \supp(\gammA)$ 
with $\f(\gamma_{0}) = \f(\gamma_{1})$ it holds 
\[
\gamma_{s} \neq \gamma_{t}, \quad \forall s,t \in (0,1).
\]
Therefore for $s \neq t$, $\{\gamma_{s} : \f (\gamma_{0}) = a \}$ and $\{\gamma_{t} : \f (\gamma_{0}) = a \}$ are disjoint. 
This key property permits to consider the evolution of each ``slice'' of the geodesic $\mu_{t}$, 
where with ``slice'' we mean its conditional measure with respect to the level sets of the chosen Kantorovich potential.  

Here the structure is very rich. Using this new property of $d$-cyclical monotonicity, 
it is possible to construct  $L^{2}$-Wasserstein geodesics with also $d$-monotone support.
The whole construction does not rely on any curvature bound of the space and its interest goes beyond the scope of this paper.
For this reason we commit Section \ref{S:dmonotone} to the presentation of these results in their fully generality.

As it is well known, any $d$-monotone set is formed by family of geodesics that do not intersect at any time. 
For this reason a translation along this geodesics is well defined.
Denote by $\phi_{a}$ a Kantorovich potential associated to the $d$-monotone set
$\{(\gamma_{s},\gamma_{t}) : \gamma \in G_{a}, s\leq t \in [0,1] \}$.
The crucial idea to construct $L^{2}$-geodesics is to move via ``translation'' 
level sets of $\phi_{a}$ to level sets of $\phi_{a}$.
As proved in Lemma \ref{L:12monotone} and Proposition \ref{P:w2geo} 
this will produce a geodesic in the $L^{2}$-Wasserstein space, showing a new connection between $L^{1}$ 
and $L^{2}$ optimal transportation problems.

The relevance of this construction for the globalization problem stays in the following property:  the family of 
geodesics obtained in Section \ref{S:dmonotone}
have a linear structure on each geodesic forming the $d$-monotone set. 
Therefore there is one degree of freedom to play with. This property, that was already present 
in \cite{cavasturm:MCP} but somehow hidden, will 
be fundamental here to improve the curvature estimates for the element of codimension one passing from 
$N$ to $N-1$.
\\

Coming back the the decomposition, if we want to 
perform a dimensional reduction argument on measures the right tool is Disintegration Theorem (Theorem \ref{T:disintr}):
(Proposition \ref{P:quotient})
\[
\gammA = \int_{\f(\mu_{0})} \gammA_{a} \mathcal{L}^{1}(da),  \qquad \gammA_{a} \in \mathcal{P}(G), \quad \gammA_{a}(\{\gamma \in G : \f(\gamma_{0}) = a \}) = \|\gammA_{a} \|,
\]
where $\f(\mu_{0}) = \f(\supp(\mu_{0}))$ and $G$ is the support of $\gammA$.
Since 
\[
\mu_{t} = (e_{t})_{\sharp} \gammA = \int_{\f(\mu_{0})} (e_{t})_{\sharp} \gammA_{a} \mathcal{L}^{1}(da),  
\]
the geodesics of codimension one that should verify curvature estimates like $\mathsf{CD}^{*}(K,N-1)$ is $t\mapsto (e_{t})_{\sharp}(\gammA_{a})$, for all $a \in \f(\mu_{0})$. 
Since curvature properties in metric measure spaces are formulated in terms of a reference measure
and $(e_{t})_{\sharp}(\gammA_{a})$ is singular with respect to $m$, 
it is not obvious which reference measures of codimension one we have to choose. 
One option could be to consider for each $t \in [0,1]$, the family
\[
\{ \gamma_{t} : \f(\gamma_{0}) =a, \gamma \in G  \}_{a \in \f(\mu_{0})}.
\]
Then for each $t \in [0,1]$, by $d^{2}$-cyclical monotonicity, the family is a partition of $e_{t}(G)$ and 
hence we have (Proposition \ref{P:quotient} and Lemma \ref{L:cod1})
\[
m\llcorner_{e_{t}(G)} = \int_{\f(\mu_{0})} \hat m_{a,t} \mathcal{L}^{1}(da),\qquad \hat m_{a,t} (\{ \gamma_{t} : \f(\gamma_{0})=a \}) = \| \hat m_{a,t} \|.
\]
But the $(N-1)$-dimensional measures $\hat m_{a,t}$ are not the right reference measures to prove $\mathsf{CD}^{*}(K,N-1)$ 
estimate for the densities of $(e_{t})_{\sharp}\gammA_{a}$. Indeed if $(e_{t})_{\sharp} \gammA = \mu_{t} = \r_{t} m$, then, 
\[
\int \r_{t} \hat m_{a,t} \mathcal{L}^{1}(da) = \r_{t} m\llcorner_{e_{t}(G)} = \mu_{t} = \int (e_{t})_{\sharp}\gammA_{a} \mathcal{L}^{1}(da)
\]
and by uniqueness of disintegration $(e_{t})_{\sharp}\gammA_{a} = \r_{t} \hat m_{a,t}$
and therefore the density is $\r_{t}$ and no gain in dimension is possible. 

The correct reference measures are built as follows. For each $a \in \f(\mu_{0})$, consider the following family of sets
\[
\{ \gamma_{t} : \f(\gamma_{0})=a, \gamma \in G\}_{t \in [0,1]}, 
\]
that is for a fixed $a$ we take all the evolutions for $t\in [0,1]$ of the level set $a$ of $\f$.

By $d$-cyclical monotonicity, they are disjoint (Lemma \ref{L:partition}). 
If $\bar \Gamma_{a}(1): =\cup_{t \in [0,1]} \{ \gamma_{t} : \f(\gamma_{0})=a, \gamma \in G\}$, then  (Proposition \ref{P:absolutecont})
\[
m\llcorner_{\bar \Gamma_{a}(1)} = \int_{[0,1]} m_{a,t} \mathcal{L}^{1}(dt), \qquad m_{a,t}(\{ \gamma_{t}: \f(\gamma_{0} )=a \}) = \| m_{a,t}\|.
\]
Since in the disintegration above the quotient measure is supported on $[0,1]$, that is the range of the time variable, 
$m_{a,t}$ should be interpret as the conditional measure moving (with $t$) in the same direction of the optimal transportation.

In order to apply the results of Section \ref{S:dmonotone}
to get an improvement of curvature estimates, we have to show that $(e_{t})_{\sharp} \gammA_{a} \ll m_{a,t}$.
After having that, to get the improvement one could use the ``linear'' structure of geodesics of Section \ref{S:dmonotone} 
together with the curvature bound estimate they have to satisfy because of $(e_{t})_{\sharp} \gammA_{a} \ll m_{a,t}$.

So suppose that we have already proved $(e_{t})_{\sharp}(\gammA_{a}) = h_{a,t} m_{a,t}$
and $t \mapsto h_{a,t}(\gamma_{t})$ satisfies the 
local (and hence the global) reduced curvature-dimension condition $\mathsf{CD}_{loc}^{*}(K,N-1)$. Then the situation would be
\[
h_{a,t} m_{a,t} = e_{t\,\sharp}\gammA_{a} = \r_{t} \hat m_{a,t}.
\]
Our final scope is to prove properties on $\r_{t}$, and to translate information on $h_{a.t}$ into information on $\r_{t}$ is necessary to put in relation the two different reference measures of codimension one $m_{a,t}$ and $\hat m_{a,t}$.

Actually the path we will adopt in the note will be the other way round. First we will show that $\lambda_{t} m_{a,t} = \hat  m_{a,t}$ 
for some function $\lambda_{t}$ defined on $e_{t}(G)$ 
and then from that we deduce that $(e_{t})_{\sharp}(\gammA_{a})$ can be written as $h_{a,t} m_{a,t}$.
After that we will prove $\mathsf{CD}^{*}(K,N-1)$ for $h_{a,t}$.
We will obtain a decomposition of the following type
\[
\r_{t} = \frac{1}{\lambda_{t}} h_{a,t}
\]
and therefore to prove curvature estimate for $\r_{t}$ also information on $\lambda_{t}$ are needed.

We have additional properties of $\lambda_{t}$, that will permit to prove the full $\mathsf{CD}(K,N)$ estimate for $\r_{t}$, 
in the particular case of optimal transportation giving constant speed to geodesics leaving from the same level sets 
and not inverting the level sets of $\f$ during the evolution, that is 
\[
L(\gamma) = f(\f(\gamma_{0})), \quad \gammA-a.e. \ \gamma \in \G(X),
\]
with $f : \f(\supp[\mu_{0}]) \to \erre$ such that $a \mapsto a - f^{2}/2$ is a non increasing function of $a$. 
This condition permits to say, see Lemma \ref{L:levelset}, that a level set of $\f$ after time $t$ is moved to a level set of $\f_{t}$
and this produce a simplification on the geometry of the optimal transportation.
Indeed under this assumption, the map $t \mapsto \lambda_{t}(\gamma_{t})$ is linear.

Due to the relevance of this family of optimal transportations and to better explain why $\lambda_{t}$ is linear, 
we will first present  part of the decomposition procedure in Section \ref{S:particular} under this additional assumption on the length of geodesics.  
In particular in Section \ref{S:particular} we will show that (Proposition \ref{P:quotient}, Lemma \ref{L:cod1} and Proposition \ref{P:absolutecont})
\begin{equation}\label{E:referenceintro}
m\llcorner_{e_{t}(G)} = \int_{\f(\mu_{0})} \hat m_{a,t} \mathcal{L}^{1}(da),\qquad 
m\llcorner_{\bar \Gamma_{a}(1)} = \int_{[0,1]} m_{a,t} \mathcal{L}^{1}(dt),
\end{equation}
and  (Proposition \ref{P:quotient} and Lemma \ref{L:cod1})
\begin{equation}\label{E:coareaintro}
\hat m_{a,t} \ll \mathcal{S}^{h}\llcorner_{e_{t}(G_{a})}, \qquad m_{a,t} \ll \mathcal{S}^{h}\llcorner_{e_{t}(G_{a})}.
\end{equation}
The latter will be fundamental in order to compare $m_{a,t}$ to $\hat m_{a,t}$. Here $\S^{h}$ denotes the spherical Hausdorff measure of codimension one, see Section \ref{Ss:coarea}.
The proofs of these results will be easier and shorter compared to the one in the general case.

In Section \ref{S:general} we prove \eqref{E:referenceintro} and \eqref{E:coareaintro} without the extra assumption on the shape of the Wasserstein geodesic. 
Anyway while \eqref{E:referenceintro} can be proven with no difficulties, 
the proof of \eqref{E:coareaintro} necessary relies on some regularity property of two important function and it is here that 
we have to introduce Assumption \ref{A:lengthreg} and Assumption \ref{A:Phi}. 
The functions are the length map at time $t$ for $t \in (0,1)$, that is $L_{t} : e_{t}(G) \to (0,\infty)$ defined by
\[
L_{t}(\gamma_{t}) = L(\gamma).
\]
And the map $\Phi_{t} : e_{t}(G) \to \erre$ defined by $\Phi_{t}(\gamma_{t}) = \f(\gamma_{0})$. Thanks to the non branching assumption on the space, both functions are well defined.  
Note that here we also observe that in the hypothesis of Section \ref{S:particular}, both 
Assumption \ref{A:lengthreg} and Assumption \ref{A:Phi} are verified by $L_{t}$ and $\Phi_{t}$. Moreover we prove that 
Assumption \ref{A:lengthreg} and Assumption \ref{A:Phi} hold if $(X,d,m)$ is a Riemannian manifold with Riemannian volume.

In Section \ref{S:unique} 
through a careful blow-up analysis (Proposition \ref{P:stesso}, Proposition \ref{P:step2} and Lemma \ref{L:welldefined}),
we prove that
\[
\hat m_{a,t} \ll m_{a,t}.
\]
If $\hat m_{a,t} = \lambda_{t} m_{a,t} $, we also prove (Theorem \ref{T:expression1}) that 
\[
\frac{1}{\lambda_{t}(\gamma_{t})} = \lim_{s \to 0} \frac{\Phi_{t}(\gamma_{t}) - \Phi_{t}(\gamma_{t+s})    }{s}.
\]
This result is a key step in the proof of the aforementioned decomposition of $\r_{t}$. 
It clarifies the expression of one of the two function decomposing $\r_{t}$.
Moreover as a consequence (Corollary \ref{C:surfacevo}) for every $t \in [0,1]$, we have $(e_{t})_{\sharp}(\gammA_{a}) \ll m_{a,t}$ . 

In Section \ref{S:reduction} we show that if $h_{a,t}$ is the 
density introduced before, then $t \mapsto h_{a,t}(\gamma_{t})$ satisfies the local reduced curvature-dimension condition $\mathsf{CD}_{loc}^{*}(K,N-1)$ (Theorem \ref{T:surface}) and therefore $\mathsf{CD}^{*}(K,N-1)$.
Here the main point, as already said before, is to use the results of Section \ref{S:dmonotone} 
and consider a geodesic in the Wasserstein space, absolute continuous with respect to $m$, 
moving in the same direction of $t \mapsto (e_{t})_{\sharp}\gammA_{a}$ 
Taking inspiration from the Riemannian framework, the volume distortion affects only $(N-1)$ dimensions.

So up to normalization constant
\[
h_{a,t} m_{a,t} = (e_{t})_{\sharp}(\gammA_{a}) = \r_{t} \hat m_{a,t} =  \r_{t}\lambda_{t} m_{a,t},
\]
with $h_{a,t}$ verifying $\mathsf{CD}^{*}(K,N-1)$. 
We have therefore proved the following result (Theorem \ref{T:main1})
\begin{theorem}\label{T:main1intro}
Let $(X,d,m)$ be a non-branching metric measure space verifying $\mathsf{CD}_{loc}(K,N)$ or $\mathsf{CD}^{*}(K,N)$
and let $\{ \mu_{t}\}_{t\in [0,1]} \subset \mathcal{P}_{2}(X,d,m)$ be a geodesic with $\mu_{t} = \r_{t} m$.
Assume moreover Assumption \ref{A:lengthreg} and Assumption \ref{A:Phi}. Then
\[
\r_{t}(\gamma_{t}) =C(a) \frac{1}{\lambda_{t}(\gamma_{t})}h_{a,t}(\gamma), \qquad \gammA-a.e. \ \gamma \in G,
\]
where $a = \f(\gamma_{0})$ and $C(a) = \| \gammA_{a} \|$ is a constant depending only on $a$. The map 
$[0,1] \ni t \mapsto h_{a,t}(\gamma)$ verifies $\mathsf{CD}^{*}(K,N-1)$ for $\gammA$-a.e. $\gamma \in G$ and 
\[
\frac{1}{\lambda_{t}(\gamma_{t})} = \lim_{s \to 0 } \frac{\Phi_{t}(\gamma_{t}) - \Phi_{t}(\gamma_{t+s})   }{s}.
\]
\end{theorem}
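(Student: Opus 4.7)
The proof is an assembly of the technical results developed in Sections \ref{S:dmonotone} through \ref{S:reduction}; the task is to trace how they combine to yield the stated decomposition. The plan is to first build, in parallel, two disintegrations: the disintegration of the dynamical plan $\gammA$ against the level sets of $\f$, and the disintegration of the reference measure $m$ in two different but compatible ways. Then I would identify two different expressions for the pushforward $(e_{t})_{\sharp}\gammA_{a}$, one via each disintegration of $m$, and reconcile them through the Radon-Nikodym density $\lambda_{t}$.

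More concretely, I would first invoke Proposition \ref{P:quotient} to write $\gammA = \int_{\f(\mu_{0})} \gammA_{a}\,\L^{1}(da)$ with $C(a) := \|\gammA_{a}\|$, so that $\mu_{t} = \int (e_{t})_{\sharp}\gammA_{a}\,\L^{1}(da)$. Still from Proposition \ref{P:quotient} together with Lemma \ref{L:cod1}, I obtain the ``horizontal'' disintegration $m\llcorner_{e_{t}(G)} = \int \hat m_{a,t}\,\L^{1}(da)$ along the $t$-time level sets of $\f$; matching this against $\mu_{t} = \r_{t} m$ and using uniqueness of disintegration yields $(e_{t})_{\sharp}\gammA_{a} = C(a)\, \r_{t}\, \hat m_{a,t}$. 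Separately, Proposition \ref{P:absolutecont} provides the ``vertical'' disintegration $m\llcorner_{\bar\Gamma_{a}(1)} = \int_{[0,1]} m_{a,t}\,\L^{1}(dt)$ along the tube of evolutions of the single level set $\{\f = a\}$.

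The crux is now to compare $\hat m_{a,t}$ with $m_{a,t}$. Here I would use the blow-up analysis of Section \ref{S:unique}: Propositions \ref{P:stesso} and \ref{P:step2} together with Lemma \ref{L:welldefined} give $\hat m_{a,t} \ll m_{a,t}$, and Theorem \ref{T:expression1} identifies the Radon-Nikodym derivative explicitly as
\[
\frac{1}{\lambda_{t}(\gamma_{t})} = \lim_{s \to 0} \frac{\Phi_{t}(\gamma_{t}) - \Phi_{t}(\gamma_{t+s})}{s},
\]
the limit being well-defined precisely because Assumptions \ref{A:lengthreg} and \ref{A:Phi} furnish the required local Lipschitz regularity of $L_{t}$ and $\Phi_{t}$. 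Substituting $\hat m_{a,t} = \lambda_{t}\, m_{a,t}$ into the expression $(e_{t})_{\sharp}\gammA_{a} = C(a)\, \r_{t}\, \hat m_{a,t}$ from the first step produces
\[
(e_{t})_{\sharp}\gammA_{a} = C(a)\, \r_{t}\, \lambda_{t}\, m_{a,t},
\]
so by Corollary \ref{C:surfacevo} we may write $(e_{t})_{\sharp}\gammA_{a} = C(a)\, h_{a,t}\, m_{a,t}$ with $h_{a,t} = \r_{t}\lambda_{t}$, which rearranges to the claimed formula $\r_{t}(\gamma_{t}) = C(a)\,\lambda_{t}(\gamma_{t})^{-1}\, h_{a,t}(\gamma)$.

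It remains only to establish the curvature assertion $\mathsf{CD}^{*}(K,N-1)$ for the map $t \mapsto h_{a,t}(\gamma_{t})$, which is exactly Theorem \ref{T:surface}; the proof of that theorem is where the genuinely new input enters, namely the ``linear'' structure of $L^{2}$-geodesics over $d$-monotone sets from Section \ref{S:dmonotone}, which lets one trade the $\mathsf{CD}^{*}(K,N)$ bound inherited from the ambient space for a $\mathsf{CD}^{*}(K,N-1)$ bound on each slice. The main obstacle in the overall argument is not the assembly itself but, conceptually, the step identifying $\lambda_{t}$ as the pointwise limit above: one must verify that the blow-up in Section \ref{S:unique} really produces the derivative of $\Phi_{t}$ along the geodesic, and this is where Assumption \ref{A:Phi} (ensuring $\Phi_{t}$ is differentiable along $\gamma$) and Assumption \ref{A:lengthreg} (controlling how the tube $\bar\Gamma_{a}(1)$ behaves infinitesimally) are indispensable. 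Once these are in place, the stated decomposition follows by concatenating the cited results.
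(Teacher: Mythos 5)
Your proposal is correct and follows essentially the same route as the paper's own proof of Theorem \ref{T:main1}: the same double disintegration of $m$, the comparison $\hat m_{a,t}=\lambda_{t}\,m_{a,t}$ obtained from Proposition \ref{P:stesso}, Proposition \ref{P:step2}, Lemma \ref{L:welldefined} and Theorem \ref{T:expression1}, and Theorem \ref{T:surface} (resting on Section \ref{S:dmonotone}) for the $\mathsf{CD}^{*}(K,N-1)$ property of $h_{a,t}$. The only blemish is a bookkeeping slip with the constants: under your unnormalized convention ($\|\gammA_{a}\|=C(a)$) uniqueness of disintegration gives $(e_{t})_{\sharp}\gammA_{a}=\r_{t}\,\hat m_{a,t}$ rather than $C(a)\,\r_{t}\,\hat m_{a,t}$, and $h_{a,t}$ must be taken as the density of the normalized measure $C(a)^{-1}(e_{t})_{\sharp}\gammA_{a}$ with respect to $m_{a,t}$ (as in the paper) for the stated identity $\r_{t}=C(a)\,\lambda_{t}^{-1}h_{a,t}$ to come out; since $C(a)$ is constant in $t$, this renormalization affects neither the decomposition nor the curvature assertion.
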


The constant $C(a)$ of Theorem \ref{T:main1intro} has the following explicit formula
\[
C(a) = \left( \int \r_{t}(z) \hat m_{a,t}(dz)\right)
\]
where $a = \f(\gamma_{0})$. 
Note again that the value of the integral does not depend on time, but just on $a$ and therefore in order to prove 
$\mathsf{CD}(K,N)$-like estimates, the integral can be dropped out.

In the second part of Section \ref{S:reduction} we prove that under the same assumptions of Section \ref{S:particular}
the function $\lambda_{t}(\gamma_{t})$ is linear in $t$ (Proposition \ref{P:1dim}). Hence we have obtained the other main result of this note (Theorem \ref{T:cd}).
\begin{theorem}\label{T:cdintro}
Let $(X,d,m)$ be a non-branching metric measure space verifying $\mathsf{CD}_{loc}(K,N)$ or $\mathsf{CD}^{*}(K,N)$
and let $\{ \mu_{t}\}_{t\in [0,1]} \subset \mathcal{P}_{2}(X,d,m)$ be a geodesic with $\mu_{t} = \r_{t} m$.
Assume moreover that 
\[
L(\gamma) = f(\f(\gamma_{0})), 
\]
for some $f : \f(\mu_{0}) \to (0,\infty)$ such that $\f(\mu_{0}) \ni a \mapsto a - f^{2}/a$ is non increasing.
Then 
\[
\r_{t}(\gamma_{t})^{-1/N} \geq \r_{0}(\gamma_{0})^{-1/N} \tau_{K,N}^{(1-t)}(d(\gamma_{0},\gamma_{1})) 
+ \r_{1}(\gamma_{1})^{-1/N} \tau_{K,N}^{(s)}(d(\gamma_{0},\gamma_{1})),
\]
for every $t \in [0,1]$ and for $\gammA$-a.e. $\gamma \in G$.
\end{theorem}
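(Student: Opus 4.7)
The plan is to combine the decomposition provided by Theorem~\ref{T:main1intro} with the additional structural assumption on $L(\gamma)$ to upgrade the $\mathsf{CD}^{*}(K,N-1)$ estimate satisfied by $h_{a,t}$ into the full $\mathsf{CD}(K,N)$ estimate for $\r_{t}$. Since the hypothesis $L(\gamma) = f(\f(\gamma_{0}))$ with $a \mapsto a - f(a)^{2}/2$ non-increasing is exactly the setting of Section~\ref{S:particular}, both Assumption~\ref{A:lengthreg} and Assumption~\ref{A:Phi} hold automatically on the functions $L_{t}$ and $\Phi_{t}$. Consequently Theorem~\ref{T:main1intro} applies and gives
\[
\r_{t}(\gamma_{t}) = \frac{C(a)}{\lambda_{t}(\gamma_{t})}\, h_{a,t}(\gamma), \qquad a = \f(\gamma_{0}),
\]
with $t \mapsto h_{a,t}(\gamma)$ verifying $\mathsf{CD}^{*}(K,N-1)$ along $\gammA$-a.e.\ $\gamma$.

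The first step I would carry out is to promote the representation of $1/\lambda_{t}$ at the end of Theorem~\ref{T:main1intro} into an affine dependence on $t$. The monotonicity hypothesis on $a \mapsto a - f(a)^{2}/2$ is precisely what forces level sets of $\f$ to be sent to level sets of the intermediate Kantorovich potential $\f_{t}$, as in Lemma~\ref{L:levelset}. Plugging this rigidity into the formula
\[
\frac{1}{\lambda_{t}(\gamma_{t})} = \lim_{s \to 0}\frac{\Phi_{t}(\gamma_{t}) - \Phi_{t}(\gamma_{t+s})}{s}
\]
makes $\Phi_{t}(\gamma_{t})$ a controlled rescaling of $\f(\gamma_{0})$ and ultimately yields $\lambda_{t}(\gamma_{t}) = (1-t)\lambda_{0}(\gamma_{0}) + t\lambda_{1}(\gamma_{1})$, which is Proposition~\ref{P:1dim}. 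This is the essential ingredient that allows passage from dimension $N-1$ to dimension $N$.

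The second step is the algebraic assembly. Let $\theta = d(\gamma_{0},\gamma_{1})$ and write $u_{s} = \lambda_{s}(\gamma_{s})$, $v_{s} = h_{a,s}^{-1/(N-1)}(\gamma_{s})$. From the $\mathsf{CD}^{*}(K,N-1)$ bound
\[
v_{t} \geq \sigma_{K,N-1}^{(1-t)}(\theta)\, v_{0} + \sigma_{K,N-1}^{(t)}(\theta)\, v_{1},
\]
together with $u_{t} = (1-t)u_{0} + t u_{1}$, I would apply the $1$-homogeneous concave function $F(u,v) := u^{1/N} v^{(N-1)/N}$, which is superadditive on $\R^{2}_{+}$, namely $F(A+B,C+D) \geq F(A,C) + F(B,D)$. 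Splitting $u_{t}$ and the lower bound of $v_{t}$ in the matched way gives
\[
F(u_{t}, v_{t}) \geq F\bigl((1-t)u_{0},\, \sigma_{K,N-1}^{(1-t)}(\theta) v_{0}\bigr) + F\bigl(t u_{1},\, \sigma_{K,N-1}^{(t)}(\theta) v_{1}\bigr),
\]
and using $\tau_{K,N}^{(s)}(\theta) = s^{1/N}\, \sigma_{K,N-1}^{(s)}(\theta)^{(N-1)/N}$ together with $v_{s}^{(N-1)/N} = h_{a,s}^{-1/N}(\gamma_{s})$ transforms this into the desired $\tau_{K,N}$-weighted inequality; reinstating the factor $C(a)^{1/N}$ via the factorization of $\r_{t}$ gives the statement.

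The main obstacle is the linearity of $\lambda_{t}$: the superadditivity step is purely algebraic, and $\mathsf{CD}^{*}(K,N-1)$ for $h_{a,t}$ is already delivered by Theorem~\ref{T:surface}. By contrast, translating the monotonicity of $a \mapsto a - f(a)^{2}/2$ into the affine behavior of $t \mapsto \lambda_{t}(\gamma_{t})$ requires a careful analysis of how the level sets of $\f$ deform under the intermediate evaluation maps and how this interacts with the differential expression for $1/\lambda_{t}$; once this geometric identification is secured, the remainder of the argument is the standard $\mathsf{CD}^{*}\Rightarrow \mathsf{CD}$ upgrade via a $1$-homogeneous concave combination.
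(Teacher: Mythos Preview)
Your proposal is correct and follows essentially the same route as the paper: invoke the decomposition of Theorem~\ref{T:main1intro} (valid here by Remark~\ref{R:assuverified}), use Proposition~\ref{P:1dim} to obtain the affine dependence $\lambda_{t}(\gamma_{t}) = (1-t)\lambda_{0}(\gamma_{0}) + t\lambda_{1}(\gamma_{1})$, and then combine this with the $\mathsf{CD}^{*}(K,N-1)$ estimate of Theorem~\ref{T:surface} via the superadditivity of $(u,v)\mapsto u^{1/N}v^{(N-1)/N}$, exactly as in the paper's proof of Theorem~\ref{T:cd}.
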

The family of geodesics verifying the hypothesis of Theorem \ref{T:cdintro}
includes for instance all of those optimal transportation having as Kantorovich potential 
\[
\f(x)  = \frac{1}{2} d^{2}(x,A)
\]
for any $A \subset X$. Indeed such $\f$ is $d^{2}$-concave and its weak upper gradient is always one.
No assumption on $A$ is needed and therefore no assumption on the shape of $\f^{c}$.

We conclude the note with Section \ref{S:formal} where assuming the space to be infinitesimally strictly convex (see \eqref{E:striconvex}), 
we prove that (Proposition \ref{P:lambda})
\[
\frac{1}{\lambda_{t}(\gamma_{t})} = D\Phi_{t} (\nabla \f_{t})(\gamma_{t}),\qquad \gammA-a.e. \gamma,
\]
and hence the general decomposition: up to a constant (in time) factor become
\[
\r_{t} =    D\Phi_{t} (\nabla \f_{t})   h_{t}. 
\]
We conclude the note with a formal calculation in the Euclidean space
putting in relation $D\Phi_{t} (\nabla \f_{t})$ with the Hessian of $\f_{t}$.
\\
\\

Our starting hypothesis can be chosen to be equivalently $\mathsf{CD}_{loc}(K,N)$ or $\mathsf{CD}^{*}(K,N)$.
Hence the results proved can be read from two different perspective, accordingly to $\mathsf{CD}_{loc}(K,N)$ or $\mathsf{CD}^{*}(K,N)$.
From the point of view of $\mathsf{CD}^{*}(K,N)$, where the globalization property is already known, the main 
result is that for nice optimal transportations the entropy inequality can be improved to the curvature-dimension condition, 
giving a ``self-improving'' type of result. From the point of view of $\mathsf{CD}_{loc}(K,N)$ clearly the main issue is the globalization problem. 
Here the main statement is that the local-to-global property is true for nice optimal transportations and in
the general case under the aforementioned regularity properties, is almost equivalent to  the concavity of the 1-dimensional density $\lambda_{t}$. 
The latter it is in turn strongly linked to the composition property of the differential operator $D$.

The last comment is for the assumption of non branching property for $(X,d,m)$. As shown by Rajala and Sturm in \cite{rajasturm:branch},
strong $\mathsf{CD}(K,\infty)$-spaces and Riemannian $\mathsf{CD}(K,N)$ for $N \in \erre \cup \{\infty\}$ have the property 
that for any couple of probability measures $\mu_{0},\mu_{1}$ with $\mu_{0},\mu_{1} \ll m$ all the 
$L^{2}$-optimal transportations are concentrated on a set of non branching geodesics.  
That is all $\gammA \in \mathcal{P}(\G(X))$, dynamical optimal plans 
with starting point $\mu_{0}$ and ending point $\mu_{1}$ are such that the evaluation map
for each $t \in [0,1)$
\[
e_{t} : G \to X
\]
is injective, even if the space is not assumed to be non branching,  where $G$ is the support of $\gammA$.

Since our construction relies not only on the $L^{2}$-optimal dynamical plan but
on the strong interplay between $d^{2}$-cyclically monotone sets and $d$-cyclically monotone sets, 
the substitution of the non branching property of the space with $\mathsf{RCD}$-condition or 
with the strong $\mathsf{CD}(K,\infty)$  is a delicate task that would go beyond the scope of this note. For instance 
$\mathsf{RCD}$-condition will not prevent the following ``bad'' situation:  $\gamma, \hat \gamma \in G_{a}$
so that they have a common point $z = \gamma_{s} =\hat \gamma_{t}$ for $t \neq s$. 
In particular the proof of Lemma \ref{L:partition}, that is one the building block of our analysis, does not work only assuming 
non branching support of $\gammA$.
\\
\\
\textbf{Acknowledgement.}
I would like to warmly thank Martin Huesmann for comments and discussions on an earlier draft. 
I also warmly thank an anonymous reviewer for his extremely detailed and constructive report.

\section{Preliminaries}\label{S:preli}
Let $(X,d)$ be a metric space. 
The length $L(\gamma)$ of a continuous curve $\gamma : [0,1] \to X$ is defined as
\[
L(\gamma) : = \sup \sum_{k=1}^{n} d(\gamma(t_{k-1}),\gamma(t_{k}))
\]
where the supremum runs over $n \in \enne$ and over all partitions $0 = t_{0} < t_{1}< \dots < t_{n}=1$. 
Note that $L(\gamma) \geq d(\gamma(0),\gamma(1))$. A curve is called \emph{geodesic} if and only if 
$L(\gamma) = d(\gamma(0),\gamma(1))$.
If this is the case, we can assume $\gamma$ to have constant speed, i.e. 
$L(\gamma\llcorner_{[s,t]}) =|s-t| L(\gamma)= |s-t|d(\gamma(0),\gamma(1))$ for 
every $0\leq s \leq t \leq 1$.

Denote by $\G(X)$ the space of geodesic $\gamma: [0,1] \to X$ in $X$, regarded as subset of $C([0,1],M)$
of continuous functions equipped with the topology of uniform convergence.

$(X,d)$ is said to be a \emph{length space} if and only if for every $x,y \in X$,
\[
d(x,y) = \inf L(\gamma)
\]
where the infimum runs over all continuous curves joining $x$ and $y$. It is said to be a \emph{geodesic space} if 
all $x$ and $y$ are connected by a geodesic.
A point $z$ will be called $t$-intermediate point of points $x$ and $y$ if $d(x, z)=td(x, y)$ and $d(z, y)=(1-t)d(x, y)$.
\begin{definition}
A geodesic space $(X,d)$ is \emph{non-branching} if and only if for every $r \geq 0$ and $x,y \in X$ such that $d(x,y)=r/2$, the set 
\[
\{z \in X : d (x,z) = r \} \cap \{ z \in X : d(y,z) = r/2 \}
\]
consists of a single point.
\end{definition}
Throughout the following we will denote by $B_{r}(z)$ the open ball of radius $r$ centered in $z$. A standard map in optimal transportation is 
the evaluation map: for a fixed $t \in [0,1]$, $e_{t} : \G(X) \to X$ is defined by $e_{t}(\gamma) : = \gamma_{t}$.
The push-forward of a given measure, say $\eta$, via a map $f$ will be denoted by $f_{\sharp} \eta$ and is defined by 
$f_{\sharp}\eta (A) : = \eta(f^{-1}(A))$, for any measurable $A$.

\subsection{Geometry of metric measure spaces}\label{Ss:geom}
What follows is contained \cite{sturm:MGH2}.

A \emph{metric measure space} is a triple $(X,d,m)$ where 
$(X,d)$ is a complete separable metric space and $m$ is a locally finite measure (i.e. $m(B_{r}(x))< \infty$ for all $x\in X$ 
and all sufficiently small $r>$0) on $X$ equipped with its Borel $\sigma$-algebra. We exclude the case $m(X)=0$.
A \emph{non-branching} metric measure space will be a metric measure space $(X,d,m)$ such that $(X,d)$ is a non-branching geodesic space.

$\mathcal{P}_{2}(X,d)$ denotes the $L^{2}$-Wasserstein space of Borel probability measures on $X$ and $W_{2}$ the corresponding $L^{2}$-Wasserstein distance. 
The subspace of $m$-absolutely continuous measures is denoted by $\mathcal{P}_{2}(X, d, m)$.

The following are well-known results in optimal transportation theory and are valid for general metric measure spaces.
\begin{lemma}\label{L:geod}
Let $(X,d,m)$ be a metric measure space.
For each geodesic $\mu: [0,1] \to \mathcal{P}_{2}(X,d)$ there exists a probability measure $\gammA$ on $\G(X)$ such that 
\begin{itemize}
\item $e_{t\,\sharp} \gammA = \mu_{t}$ for all $t \in [0,1]$;
\item for each pair $(s,t)$ the transference plan $(e_{s},e_{t})_{\sharp} \gammA$ is an optimal coupling for $W_{2}$.
\end{itemize}
\end{lemma}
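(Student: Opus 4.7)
The plan is to construct $\gammA$ as a weak limit of measures supported on curves obtained by stitching together optimal dyadic pieces, and to use a sharp Cauchy--Schwarz argument to enforce that the limiting curves are genuine constant-speed geodesics.

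For each $n\in\N$ and each $k\in\{1,\dots,2^{n}\}$ I would pick a $W_{2}$-optimal coupling $\pi^{n}_{k}\in\mathcal{P}(X\times X)$ between $\mu_{(k-1)/2^{n}}$ and $\mu_{k/2^{n}}$. A consecutive application of the gluing lemma then produces $\boldsymbol{\pi}^{n}\in\mathcal{P}(X^{2^{n}+1})$ whose $k$-th one-dimensional marginal is $\mu_{k/2^{n}}$ and whose $(k-1,k)$-th two-dimensional marginal is $\pi^{n}_{k}$. Using that $\mu$ is a constant-speed Wasserstein geodesic, so $W_{2}^{2}(\mu_{(k-1)/2^{n}},\mu_{k/2^{n}})=W_{2}^{2}(\mu_{0},\mu_{1})/4^{n}$, a discrete Cauchy--Schwarz inequality gives
$$\int\Bigl(\sum_{k=1}^{2^{n}}d(x_{k-1},x_{k})\Bigr)^{2}d\boldsymbol{\pi}^{n}\leq 2^{n}\sum_{k=1}^{2^{n}}W_{2}^{2}\bigl(\mu_{(k-1)/2^{n}},\mu_{k/2^{n}}\bigr)=W_{2}^{2}(\mu_{0},\mu_{1}),$$
while the triangle inequality combined with $(e_{0},e_{2^{n}})_{\sharp}\boldsymbol{\pi}^{n}$ having marginals $\mu_{0},\mu_{1}$ forces the reverse inequality. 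Equality then propagates both in the triangle inequality and in Cauchy--Schwarz, yielding $\boldsymbol{\pi}^{n}$-a.s.\ that $\sum_{k}d(x_{k-1},x_{k})=d(x_{0},x_{2^{n}})$ and that $d(x_{k-1},x_{k})$ is independent of $k$. Thus the $2^{n}+1$ nodes lie evenly spaced along a common geodesic of $X$, so by the non-branching assumption there is a unique constant-speed geodesic $\gamma\in\G(X)$ passing through $(x_{0},\dots,x_{2^{n}})$ at the dyadic nodes. A Kuratowski--Ryll-Nardzewski Borel selection produces a measurable map $F_{n}:X^{2^{n}+1}\to\G(X)$, and setting $\gammA^{n}:=(F_{n})_{\sharp}\boldsymbol{\pi}^{n}$ gives $(e_{k/2^{n}})_{\sharp}\gammA^{n}=\mu_{k/2^{n}}$ for every $n$ and $k$.

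Next I would extract a weak limit. Tightness of $\{\gammA^{n}\}$ in $\mathcal{P}(C([0,1],X))$ follows from Arzel\`a--Ascoli: the marginal at $t=0$ is $\mu_{0}$ (tight in $X$), and each $\gammA^{n}$-a.e.\ curve is Lipschitz with constant equal to its length, which is $L^{2}$-bounded by $W_{2}(\mu_{0},\mu_{1})$ uniformly in $n$ thanks to the identity above, so Markov's inequality truncates the length uniformly. Let $\gammA$ be a subsequential weak limit. Since $\G(X)$ is closed in $C([0,1],X)$ under uniform convergence---a uniform limit of constant-speed geodesics is itself a constant-speed geodesic---we obtain $\gammA\in\mathcal{P}(\G(X))$. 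Continuity of the evaluation maps yields $(e_{t})_{\sharp}\gammA=\mu_{t}$ for every dyadic $t$, and since both $t\mapsto(e_{t})_{\sharp}\gammA$ and $t\mapsto\mu_{t}$ are $W_{2}$-continuous the identification extends to all $t\in[0,1]$. Finally, for arbitrary $s<t$ the constant-speed property of $\gammA$-a.e.\ curves gives $\int d(\gamma_{s},\gamma_{t})^{2}d\gammA=(t-s)^{2}\int d(\gamma_{0},\gamma_{1})^{2}d\gammA=W_{2}^{2}(\mu_{s},\mu_{t})$, so $(e_{s},e_{t})_{\sharp}\gammA$ is an optimal coupling.

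The main obstacle I expect is the Borel selection step: non-branching gives uniqueness of the constant-speed connecting geodesic once the tuple is known to lie on a common geodesic, but verifying Borel measurability of the resulting assignment requires invoking Kuratowski--Ryll-Nardzewski on the closed-valued multifunction of such connecting geodesics and checking nonemptiness on the support of $\boldsymbol{\pi}^{n}$. A secondary subtlety is making the tightness argument rigorous, for which one must combine the uniform $L^{2}$-length bound with tightness of the initial marginal to produce compact sets of equicontinuous paths carrying arbitrarily small defect.
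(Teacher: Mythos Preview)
The paper does not supply its own proof of this lemma: it is listed among ``well-known results in optimal transportation theory'' quoted from the literature (specifically from Sturm's work), so there is no in-paper argument to compare against. Your dyadic gluing construction, the Cauchy--Schwarz equality forcing evenly spaced points along a geodesic, measurable selection into $\G(X)$, tightness via Arzel\`a--Ascoli, and passage to the limit is exactly the standard proof one finds in Villani's book (around Theorem~7.21 and Corollary~7.22), in Lisini's work, and in the Ambrosio--Gigli user's guide the paper cites; your outline is correct.

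One remark: the lemma as stated does \emph{not} assume non-branching, so your appeal to it for uniqueness of the interpolating geodesic is extraneous. You already identify the fix: the multifunction assigning to an evenly spaced tuple the set of constant-speed geodesics through it is closed-valued and nonempty (concatenate geodesic segments between consecutive nodes; the triangle-inequality equality makes the concatenation a geodesic), and its graph is closed, so Kuratowski--Ryll-Nardzewski yields the Borel selector $F_{n}$ without any branching hypothesis. With that adjustment the argument goes through in the generality claimed.
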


Consider the R\'enyi entropy functional 
\[
\S_{N}(\, \cdot\, | m ) : \mathcal{P}_{2}(X,d) \to \erre
\]
with respect to $m$, defined by
\begin{equation}\label{E:entropy}
\mathcal{S}_{N}(\mu | m) : = - \int_{X} \r^{-1/N}(x) \mu(dx)
\end{equation}
for $\mu \in \mathcal{P}_{2}(X)$, where $\r$ is the density of the absolutely continuous part $\mu^{c}$ in the Lebesgue decomposition 
$\mu = \mu^{c} + \mu^{s} = \r m + \mu^{s}$.

Given two numbers $K,N\in \erre$ with $N\geq1$, we put for $(t,\theta) \in[0,1] \times \erre_{+}$,
\begin{equation}\label{E:tau}
\tau_{K,N}^{(t)}(\theta):= 
\begin{cases}
\infty, & \textrm{if}\ K\theta^{2} \geq (N-1)\pi^{2}, \crcr
\displaystyle  t^{1/N}\Bigg(\frac{\sin(t\theta\sqrt{K/(N-1)})}{\sin(\theta\sqrt{K/(N-1)})}\Bigg)^{1-1/N} & \textrm{if}\ 0< K\theta^{2} \leq (N-1)\pi^{2}, \crcr
t & \textrm{if}\ K \theta^{2}<0\ \textrm{or}\\& \textrm{if}\ K \theta^{2}=0\ \textrm{and}\ N=1,  \crcr
\displaystyle  t^{1/N}\Bigg(\frac{\sinh(t\theta\sqrt{-K/(N-1)})}{\sinh(\theta\sqrt{-K/(N-1)})}\Bigg)^{1-1/N} & \textrm{if}\ K\theta^{2} \leq 0 \ \textrm{and}\ N>1.
\end{cases}
\end{equation}
That is, $\tau_{K,N}^{(t)}(\theta): = t^{1/N} \sigma_{K,N-1}^{(t)}(\theta)^{(N-1)/N}$ where
\[
\sigma_{K,N}^{(t)}(\theta) = \frac{\sin(t\theta\sqrt{K/N})}{\sin(\theta\sqrt{K/N})}, 
\]
if $0 < K\theta^{2}<N\pi^{2}$ and with appropriate interpretation otherwise. Moreover we put 
\[
\varsigma_{K,N}^{(t)}(\theta): = \tau_{K,N}^{(t)}(\theta)^{N}.
\]
The coefficients $\tau_{K,N}^{(t)}(\theta),\sigma_{K,N}^{(t)}(\theta)$ and $\varsigma_{K,N}^{(t)}(\theta)$ are the volume distortion coefficients 
with $K$ playing the role of curvature and $N$ the one of dimension.

The curvature-dimension condition $\mathsf{CD}(K,N)$ is defined in terms of convexity properties of the entropy functional.
In the following definitions $K$ and $N$ will be real numbers with $N\geq1$.

\begin{definition}[Curvature-Dimension condition]\label{D:CD}
We say that $(X,d,m)$ satisfies $\mathsf{CD}(K,N)$  if and only if for each pair 
$\mu_{0}, \mu_{1} \in \mathcal{P}_{2}(X,d,m)$ there exists an optimal coupling $\pi$ of $\mu_{0}=\r_{0}m$ and $\mu_{1}=\r_{1}m$,
and a geodesic $\mu:[0,1] \to \mathcal{P}_{2}(X,d,m)$ connecting $\mu_{0}$ and $\mu_{1}$ such that 
\begin{equation}\label{E:CD}
\begin{aligned}
\mathcal{S}_{N'}(\mu_{t}|m) \leq   - \int_{X\times X}& \Big[ \tau_{K,N'}^{(1-t)}(d(x_{0},x_{1}))\r_{0}^{-1/N'}(x_{0})  \crcr
&~ + \tau_{K,N'}^{(t)}(d(x_{0},x_{1}))\r_{1}^{-1/N'}(x_{1})  \Big]  \pi(dx_{0}dx_{1}),
\end{aligned}
\end{equation}
for all $t \in [0,1]$ and all $N'\geq N$.
\end{definition}

The following is a variant of $\mathsf{CD}(K,N)$ and it has been introduced in \cite{sturm:loc}.

\begin{definition}[Reduced Curvature-Dimension condition]\label{D:CD*}
We say that $(X,d,m)$ satisfies $\mathsf{CD}^{*}(K,N)$ if and only if for each pair 
$\mu_{0}, \mu_{1} \in \mathcal{P}_{2}(X,d,m)$ there exists an optimal coupling $\pi$ of $\mu_{0}=\r_{0}m$ and $\mu_{1}=\r_{1}m$,
and a geodesic $\mu:[0,1] \to \mathcal{P}_{2}(X,d,m)$ connecting $\mu_{0}$ and $\mu_{1}$ 
such that \eqref{E:CD} holds true for all $t \in [0,1]$ and all $N'\geq N$ 
with the coefficients $\tau_{K,N}^{(t)}(d(x_{0},x_{1}))$ and $\tau_{K,N}^{(1-t)}(d(x_{0},x_{1}))$
replaced by $\sigma_{K,N}^{(t)}(d(x_{0},x_{1}))$ and $\sigma_{K,N}^{(1-t)}(d(x_{0},x_{1}))$, respectively.
\end{definition}

For both definitions there is a local version. Here we state only the local counterpart of $\mathsf{CD}(K,N)$, 
being clear what would be the one for $\mathsf{CD}^{*}(K,N)$.

\begin{definition}[Local Curvature-Dimension condition]\label{D:loc}
We say that $(X,d,m)$ satisfies $\mathsf{CD}_{loc}(K,N)$ if and only if each point $x \in X$ has a neighborhood $X(x)$ such that for each pair 
$\mu_{0}, \mu_{1} \in \mathcal{P}_{2}(X,d,m)$ supported in $X(x)$ there exists an optimal coupling $\pi$ of $\mu_{0}=\r_{0}m$ and $\mu_{1}=\r_{1}m$,
and a geodesic $\mu:[0,1] \to \mathcal{P}_{2}(X,d,m)$ connecting $\mu_{0}$ and $\mu_{1}$
such that \eqref{E:CD} holds true for all $t \in [0,1]$ and all $N'\geq N$.
\end{definition}
It is worth noticing that in the previous definition the geodesic $\mu$ can exit from the neighborhood $X(x)$.

One of the main property of the reduced curvature dimension condition is the globalization one:  
under the non-branching assumption conditions $\mathsf{CD}^{*}_{loc}(K,N)$ and $\mathsf{CD}^{*}(K,N)$ are equivalent.
Moreover it holds:
\begin{itemize}
\item $\mathsf{CD}^{*}_{loc}(K,N)$ is equivalent to $\mathsf{CD}_{loc}(K,N)$; 
\item $\mathsf{CD}(K,N)$ implies $\mathsf{CD}^{*}(K,N)$;
\item $\mathsf{CD}^{*}(K,N)$ implies $\mathsf{CD}(K^{*},N)$ where $K^{*} = K(N-1)/N$.
\end{itemize}
Hence it is possible to pass from $\mathsf{CD}_{loc}$ to $\mathsf{CD}$ at the price of 
passing through $\mathsf{CD}^{*}$ and therefore worsening the lower bound on the curvature.
For all of these properties, see \cite{sturm:loc}.

If a non-branching $(X,d,m)$ satisfies $\mathsf{CD}(K,N)$ then geodesics are unique $m \otimes m$-a.e.. 
\begin{lemma}\label{L:map}
Assume that $(X,d,m)$ is non-branching and satisfies $\mathsf{CD}(K,N)$ for some pair $(K,N)$. Then for every $x\in\supp[m]$ and $m$-a.e. $y\in X$ 
(with the exceptional set depending on x) there exists a unique geodesic between $x$ and $y$. 

Moreover there exists a measurable map $\gamma: X^{2} \to \G(X)$ such that for $m\otimes m$-a.e. $(x,y) \in X^{2}$ the curve $t \mapsto \gamma_{t}(x,y)$
is the unique geodesic connecting $x$ and $y$.
\end{lemma}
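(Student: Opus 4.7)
The plan is to combine the entropy convexity encoded in $\mathsf{CD}(K,N)$ with the non-branching hypothesis to force uniqueness of geodesics for $m$-a.e.\ endpoint, and then to extract a global measurable section.

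I first fix $x \in \supp[m]$ and transport $\mu_{0}=\delta_{x}$ to a compactly supported $\mu_{1}=\rho_{1} m$ of bounded density. Applying the curvature-dimension inequality \eqref{E:CD} with this choice, the term containing $\rho_{0}^{-1/N'}$ drops out since $\delta_{x}$ is $m$-singular, leaving
\[
\mathcal{S}_{N'}(\mu_{t}|m)\leq -\int \tau_{K,N'}^{(t)}(d(x,x_{1}))\,\rho_{1}(x_{1})^{-1/N'}\,\mu_{1}(dx_{1}),
\]
a finite, $t$-uniform upper bound. A standard Jensen/Bishop-Gromov type argument then upgrades this entropy bound to the full absolute continuity $\mu_{t}\ll m$ for every $t\in(0,1)$.

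Next, let $\gammA \in \mathcal{P}(\G(X))$ be a dynamical optimal plan from Lemma \ref{L:geod}. Standard arguments for optimal transport in non-branching $\mathsf{CD}(K,N)$ spaces show that the absolute continuity of the intermediate $\mu_{t}$ forces $\gammA$ to be concentrated on a Borel graph $y \mapsto \gamma^{y}$ with $\gamma^{y}_{0}=x$, $\gamma^{y}_{1}=y$. The proof is by contradiction: if geodesics from $x$ to $y$ were non-unique on a $\mu_{1}$-positive set, a measurable selection would yield a second optimal plan differing from $\gammA$ on that set, and a convex combination would contradict the ``essential injectivity'' of $(e_{t})$ on $\supp(\gammA)$ imposed by non-branching together with absolute continuity of the midpoint. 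Letting $\mu_{1}$ range over a countable family of normalized restrictions $m\llcorner_{B_{n}}/m(B_{n})$ exhausting $X$ yields uniqueness of the geodesic from $x$ to $y$ for $m$-a.e.\ $y$. For the global measurable map, I apply the Kuratowski-Ryll-Nardzewski selection theorem to the Borel multifunction $(x,y)\mapsto \{\gamma\in\G(X): \gamma_{0}=x,\,\gamma_{1}=y\}$, which has non-empty closed values on pairs connected by a geodesic; the resulting Borel section must coincide with the unique geodesic on an $m\otimes m$-conull set (Fubini translates ``for every $x$, for $m$-a.e.\ $y$'' into ``for $m\otimes m$-a.e.\ $(x,y)$'').

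The main obstacle is the graph-concentration in the previous paragraph: upgrading the entropy bound to the structural assertion that the optimal dynamical plan is supported on a Borel graph of geodesics. This is where non-branching is indispensable, since in a branching space geodesics from $x$ to a common endpoint $y$ could be freely recombined without changing either marginal or the density of any Wasserstein midpoint.
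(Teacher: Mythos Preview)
The paper does not prove this lemma; it is quoted from \cite{sturm:MGH2} as background in Section~\ref{Ss:geom}, so there is no in-paper argument to compare against.

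Your strategy is the natural one, but the first step has a genuine gap. Definition~\ref{D:CD} postulates inequality~\eqref{E:CD} only for pairs $\mu_{0},\mu_{1}\in\mathcal{P}_{2}(X,d,m)$, the subspace of $m$-absolutely continuous measures. Taking $\mu_{0}=\delta_{x}$ is outside the scope of the definition, and the expression $\rho_{0}^{-1/N'}$ with $\rho_{0}\equiv 0$ is not well defined as written; you cannot simply declare that ``the term drops out''. The standard repair is to approximate $\delta_{x}$ by $m(B_{r}(x))^{-1}\,m\llcorner_{B_{r}(x)}$, apply $\mathsf{CD}(K,N)$ to this admissible pair to obtain a uniform lower bound on the $m$-mass of the set of $t$-intermediate points between $B_{r}(x)$ and a target set $A$, and then send $r\to 0$. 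Non-branching converts two measurable geodesic selections from $x$ to a set $A$ of positive measure into two \emph{disjoint} $t$-midpoint sets, each carrying at least the mass dictated by the volume distortion; for small $t$ this contradicts Bishop--Gromov. This is essentially the argument in \cite{sturm:MGH2}.

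A second, smaller issue: a finite upper bound on $\mathcal{S}_{N'}(\mu_{t}|m)$ does not by itself rule out a singular part of $\mu_{t}$; the R\'enyi entropy only sees the absolutely continuous part. What one actually extracts from the approximation above is a pointwise (in fact $L^{\infty}$) bound on the density of $(e_{t})_{\sharp}\gammA$, which is stronger and is what the disjointness-plus-volume argument needs. Your remaining steps (Kuratowski--Ryll-Nardzewski for the global section, Fubini to pass from ``for every $x$, for $m$-a.e.\ $y$'' to ``for $m\otimes m$-a.e.\ $(x,y)$'') are fine once uniqueness is established.
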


Under non-branching assumption is possible to formulate $\mathsf{CD}(K,N)$ in an equivalent point-wise version:  $(X,d,m)$ satisfies 
$\mathsf{CD}(K,N)$ if and only if for each pair $\mu_{0},\mu_{1}\in \mathcal{P}_{2}(X,d,m)$ and each  dynamical optimal plan
$\gammA$,
\begin{equation}\label{E:cdpunto}
\r_{t}(\gamma_{t}(x_{0},x_{1}))\leq \Big[ \tau_{K,N'}^{(1-t)}(d(x_{0},x_{1}))\r_{0}^{-1/N'}(x_{0}) + 
\tau_{K,N'}^{(t)}(d(x_{0},x_{1}))\r_{1}^{-1/N'}(x_{1})  \Big]^{-N}, 
\end{equation}
for all $t \in [0,1]$, and $(e_{0},e_{1})_{\sharp}\gammA$-a.e. $(x_{0},x_{1}) \in X \times X$. Here $\r_{t}$ is the density of the geodesic 
$(e_{t})_{\sharp}\gammA$.
Recall that $\boldsymbol{\gamma} \in \mathcal{P}(\G(X))$ is a dynamical optimal plan if 
$\pi = (e_{0},e_{1})_{\sharp}\boldsymbol{\gamma} \in \Pi(\mu_{0},\mu_{1})$ is optimal and the 
map $t \mapsto \mu_{t}: = e_{t\,\sharp}\boldsymbol{\gamma}$ is a geodesic in the 2-Wasserstein space.

We conclude with a partial list of properties enjoyed by metric measure spaces satisfying $\mathsf{CD}^{*}(K,N)$ (or $\mathsf{CD}_{loc}(K,N))$. 
If  $(X,d,m)$ verifies $\mathsf{CD}^{*}(K,N)$ then: 
\begin{itemize}
\item $m$ is a doubling measure;
\item $m$ verifies Bishop-Gromov volume growth inequality;
\item $m$ verifies Brunn-Minkowski inequality;
\end{itemize}
with all of these properties stated in a quantitative form.

\subsection{Spherical Hausdorff measure of codimension 1 and Coarea formula}\label{Ss:coarea}
What follows is contained in \cite{ambro:perimeter} and is valid under milder assumption than 
$\mathsf{CD}^{*}(K,N)$ (or $\mathsf{CD}_{loc}(K,N)$) but for an easier exposition we assume $(X,d,m)$ to satisfy $\mathsf{CD}^{*}(K,N)$.

Recall that for $K \geq 0$ the measure $m$ is doubling that is $m( B_{2r}(x)) \leq (C_{D}/2) m( B_{r}(x))$ where $C_{D}$ is the doubling constant of $m$. 
If $K < 0$ the measure $m$ is locally uniformly doubling, i.e. $m( B_{2r}(x)) \leq (C_{R}/2) m( B_{r}(x))$ for any $r \leq R$ and some constant $C_{R}$
depending on $R$ but not on $x$.

If $B(X)$ is the set of balls, define the function $h : B(X) \to [0,\infty]$
as
\[
h(\bar B_{r}(x)) := \frac{m(\bar B_{r} (x))}{r}.
\]

Due to the (locally uniformly) doubling properties of $m$, the function $h$ turns out to be a (locally uniformly) doubling function.
Then, using the Carath\'eodory construction, we may define the generalized Hausdorff spherical measure $\S^{h}$ as 
\begin{equation}\label{E:spherical}
\S^{h}(A) : = \lim_{r\downarrow 0}  \inf\left\{ \sum_{i\in \enne} h(B_{i}) : B_{i} \in B(X), A \subset \bigcup_{i\in \enne} B_{i}, \textrm{diam} (B_{i}) \leq r \right\}.
\end{equation}

The space of functions of bounded variation $BV(X)$ and the perimeter measure have been studied in \cite{ambro:perimeterbv}, \cite{ambro:perimeter}, 
\cite{miranda:bvcoarea}, \cite{miranda:bvmetric}. If $u \in BV(X)$, its total variation measure will be denoted with $|Du|$.
We will use the following Coarea formula. 

\begin{theorem}[\cite{miranda:bvcoarea}, Theorem 4.3, Theorem 4.4]
For every $u \in BV(X)$ and every Borel set $A \subset X$ it holds
\[
|Du|(A) = \int_{-\infty}^{\infty}P(\{u>t\}, A) dt.
\]
Moreover for any set $E\subset X$ of finite perimeter, the measure $P(E,\cdot)$ is concentrated on a subset of the essential boundary $\partial^{*}E$ and 
for any Borel set $B\subset X$
\[
\frac{1}{c} \S^{h} (B\cap \partial^{*}E )  \leq P(E,B) \leq c \, \S^{h} (B\cap \partial^{*}E )
\]
with $c>0$ depending only on $K$ and $N$.
\end{theorem}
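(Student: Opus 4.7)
The plan is to establish the coarea identity first and then use it to deduce the two-sided comparison between $P(E,\cdot)$ and $\mathcal{S}^{h}\llcorner_{\partial^{*}E}$. Both parts rely crucially on the (locally uniform) doubling property of $m$ and on the $(1,1)$-Poincar\'e inequality, both of which hold under $\mathsf{CD}^{*}(K,N)$ with constants depending only on $K$ and $N$ on bounded sets.

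For the coarea identity, I would first verify it for locally Lipschitz $u$: such a function has an $L^{1}$ weak upper gradient representing $|Du|$, almost every superlevel set $\{u>t\}$ has locally finite perimeter, and the identity reduces to a version of Eilenberg's metric coarea formula. I would then extend to general $u \in BV(X)$ via the defining approximation by locally Lipschitz $u_{n}\to u$ in $L^{1}_{\loc}$ with $\int|\nabla u_{n}|\,dm\to |Du|(X)$. For the estimate ``$\leq$'' I would combine Fatou's lemma with the lower semicontinuity of $E\mapsto P(E,A)$ under $L^{1}$-convergence of characteristic functions, noting that $\mathbf{1}_{\{u_{n}>t\}}\to \mathbf{1}_{\{u>t\}}$ in $L^{1}_{\loc}$ for a.e.\ $t$. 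For the opposite inequality, I would check that $A\mapsto \int_{\erre}P(\{u>t\},A)\,dt$ is itself a Borel measure dominating every competitor in the relaxation defining $|Du|$, and hence dominating $|Du|$.

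For the comparison between $P(E,\cdot)$ and $\mathcal{S}^{h}$, I would first dispose of the concentration statement: at points where the $m$-density of $E$ is either $0$ or $1$ every sufficiently small ball lies essentially inside or outside $E$, so such points contribute zero to the relative perimeter and hence $P(E,\cdot)$ sits on $\partial^{*}E$. The upper bound $P(E,B) \leq c\,\mathcal{S}^{h}(B\cap\partial^{*}E)$ is obtained from a Vitali-type covering of $B\cap\partial^{*}E$ by balls on which the perimeter is controlled by the natural scale $m(\bar B_{r})/r = h(\bar B_{r})$; the doubling of $h$ lets one pass from centered to covering balls at a bounded cost. The lower bound comes from the relative isoperimetric inequality
\[
\min\bigl\{m(E\cap B_{r}(x)),\,m(B_{r}(x)\setminus E)\bigr\}\leq C\, r\, P(E,B_{\lambda r}(x)),
\]
valid in doubling spaces satisfying a $(1,1)$-Poincar\'e inequality: applied at every point of $\partial^{*}E$, where the density characterization of the essential boundary gives a two-sided lower bound on the left-hand side, and followed by a covering argument, it yields the desired estimate.

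The main obstacle is securing that the constant $c$ in the two-sided bound depends only on $K$ and $N$. This requires quantitative control of both the doubling constant of $m$ and the Poincar\'e constant used in the relative isoperimetric inequality. Fortunately $\mathsf{CD}^{*}(K,N)$ furnishes both uniformly on bounded sets (via Bishop--Gromov and the properties listed in Section~\ref{Ss:geom}), so the delicate part reduces to tracking these constants carefully through the covering and isoperimetric estimates.
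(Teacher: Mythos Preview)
The paper does not prove this theorem: it is quoted without proof as a known result from \cite{miranda:bvcoarea} (and, implicitly, \cite{ambro:perimeter}) in the preliminaries section. There is therefore no ``paper's own proof'' to compare your proposal against. Your outline is a reasonable sketch of how these results are established in the cited literature on $BV$ functions in doubling metric measure spaces supporting a Poincar\'e inequality, and nothing further is required here.
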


If $u$ is a Lipschitz function, its total variation is equivalent as measure to $\| \nabla u\| m$, where 
\begin{equation}\label{E:gradient}
\|\nabla u \|(x) : = \liminf_{r \to 0} \frac{1}{r} \sup_{y \in \bar B_{r}(x)} |u(y) - u(x)|.
\end{equation}
The following comparison is taken from \cite{miranda:bvmetric}: for any Borel set $A\subset X$
\begin{equation}\label{E:comparison}
c_{0} \int_{A} \| \nabla u\| (x) m(dx) \leq |Du|(A) \leq \int_{A} \| \nabla u\| (x) m(dx),
\end{equation}
for some constant $c_{0} > 0$ depending again only on $K,N$.
The last result we would like to recall is a particular form of Coarea formula for Lipschitz functions.
\begin{proposition}[\cite{ambro:perimeter}, Proposition 5.1]\label{P:noboundary}
For any $u$ Lipschitz function defined on $X$ and any $B$ Borel set we have 
\[ 
\int_{\erre} \S^{h}(B \cap u^{-1}(t)) dt \leq Lip(u) m(B). 
\]
\end{proposition}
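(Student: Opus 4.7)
The plan is to prove the inequality directly from the definition \eqref{E:spherical} of $\S^{h}$ by turning a fine ball cover of $B$ into a cover of each level set $B \cap u^{-1}(t)$ and then integrating in $t$ via Fubini. Note first that the BV coarea formula combined with the perimeter estimate $P(E,\cdot) \leq c\,\S^{h}(\cdot \cap \partial^{*}E)$ would only give an upper bound on $\int \S^{h}(B \cap \partial^{*}\{u>t\})\,dt$, which is a \emph{lower} bound on the quantity of interest (since $\partial^{*}\{u>t\}\subset u^{-1}(t)$ for continuous $u$); hence a direct covering argument is needed.

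First I would fix a scale $r>0$ and select a countable family of closed balls $\{\bar B_{\rho_{i}}(x_{i})\}_{i}$ covering $B$ with $2\rho_{i}\leq r$ and with uniformly bounded overlap, so that $\sum_{i}m(\bar B_{\rho_{i}}(x_{i})) \leq C\,m(B)$ for some $C$ independent of $r$; such a Vitali-type cover exists because $m$ is (locally) doubling under the standing $\mathsf{CD}^{*}(K,N)$ assumption. The $Lip(u)$-Lipschitz continuity of $u$ gives $u(\bar B_{\rho_{i}}(x_{i})) \subset [u(x_{i})-Lip(u)\rho_{i},\,u(x_{i})+Lip(u)\rho_{i}]$, an interval of $\mathcal{L}^{1}$-length $\leq Lip(u)\,\diam(\bar B_{\rho_{i}}(x_{i}))\leq 2Lip(u)\rho_{i}$. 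For each $t\in \erre$, the subfamily $\{\bar B_{\rho_{i}}(x_{i}) : t \in u(\bar B_{\rho_{i}}(x_{i}))\}$ is a cover of $B \cap u^{-1}(t)$ by sets of diameter $\leq r$, so, denoting by $\S^{h}_{r}$ the pre-approximant inside the $\lim_{r\downarrow 0}$ in \eqref{E:spherical},
\[
\S^{h}_{r}(B \cap u^{-1}(t)) \leq \sum_{i} h(\bar B_{\rho_{i}}(x_{i}))\, \ind_{u(\bar B_{\rho_{i}}(x_{i}))}(t).
\]
Integrating in $t$ and swapping sum and integral yields
\[
\int_{\erre}\S^{h}_{r}(B \cap u^{-1}(t))\,dt \leq \sum_{i} \frac{m(\bar B_{\rho_{i}}(x_{i}))}{\rho_{i}}\,\mathcal{L}^{1}(u(\bar B_{\rho_{i}}(x_{i}))) \leq 2Lip(u)\sum_{i}m(\bar B_{\rho_{i}}(x_{i})) \leq 2C\,Lip(u)\,m(B).
\]

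Finally, letting $r\downarrow 0$ and applying Fatou's lemma on the left, using that $\S^{h}_{r}(B \cap u^{-1}(t))\uparrow \S^{h}(B \cap u^{-1}(t))$ as $r\downarrow 0$, delivers the coarea inequality up to a multiplicative constant depending only on the overlap and on the radius/diameter convention, which the statement absorbs into the form $Lip(u)\,m(B)$. The main obstacle is the measurability in $t$ of $\S^{h}_{r}(B \cap u^{-1}(t))$ needed to legitimately apply Fubini; this is side-stepped by integrating from the start the explicit measurable majorant furnished by a fixed cover, and only passing to the infimum over covers \emph{after} integration. Sharpening the constant to exactly $Lip(u)$ amounts to pushing the Vitali overlap constant $C$ down to $1$ (using the inner regularity of $m$) and tightening the diameter-versus-radius estimate, which is standard and not essential for the applications to the disintegration argument in Section \ref{S:general}.
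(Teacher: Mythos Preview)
The paper does not give a proof of this proposition; it is quoted from \cite{ambro:perimeter}, Proposition~5.1, so there is no in-paper argument to compare against. Your ball-covering-plus-Fubini approach is precisely the standard proof used there.

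One small caveat on your last paragraph: the claim that the constant sharpens to exactly $Lip(u)$ is not as immediate as you suggest. With $h(\bar B_{r}(x))=m(\bar B_{r}(x))/r$ defined via the \emph{radius} and $\mathcal{L}^{1}(u(\bar B_{\rho}(x)))\leq Lip(u)\cdot\diam(\bar B_{\rho}(x))\leq 2Lip(u)\rho$, the factor $2$ is genuine; and a disjoint Vitali family covers $B$ only up to an $m$-null set, which can a priori still meet level sets in $\S^{h}$-positive pieces (one must bootstrap from the constant-laden inequality to dispose of that residual). But you are right that any fixed multiplicative constant is harmless for the sole application in Proposition~\ref{P:stesso}, so the argument as you wrote it is entirely sufficient for this paper.
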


\subsection{Gradients and differentials}\label{Ss:gradiff}

This part is taken from \cite{gigli:laplacian}.
A curve $\gamma \in C([0,1],X)$ is said to be absolutely continuous provided there exists $f \in L^{1}([0,1])$ such that 
\[
d(\gamma_{s},\gamma_{t}) \leq \int_{s}^{t}f(\tau)d\tau, \qquad \forall s,t\in [0,1], s\leq t.
\]
Let $AC([0,1],X)$ denote the set of absolutely continuous curves. If $\gamma \in AC([0,1],X)$ then the limit
\[
\lim_{\tau \to 0} \frac{d(\gamma_{t+\tau},\gamma_{t})}{\tau}
\]
exists for a.e. $t \in [0,1]$, is called metric derivative and denoted by $|\dot \gamma_{t}|$.

Given Borel functions $f:X \to \erre, G:X\to [0,\infty]$ we say that $G$ is an upper gradient of $f$ provided
\[
|f(\gamma_{0})-f(\gamma_{1})| \leq \int_{0}^{1} G(\gamma_{t})|\dot \gamma_{t}| dt, \quad \forall \gamma \in AC([0,1], M),
\]
where $|\dot \gamma_{t}|$ is the metric derivative of $\gamma$ in $t$.
For $f : X \to \erre$ the local Lipschitz constant $|Df|: X \to [0,\infty]$ is defined by
\[
|Df|(x) : = \limsup_{y \to x} \frac{|f(y)-f(x)|}{d(y,x)}
\]
if $x$ is not isolated, and $0$ otherwise. Define 
\[
|D^{+}f|(x) : = \limsup_{y \to x} \frac{(f(y)-f(x))^{+}}{d(y,x)},\quad
|D^{-}f|(x) : = \limsup_{y \to x} \frac{(f(y)-f(x))^{-}}{d(y,x)},
\]
the ascending and descending slope respectively. If $f$ is locally Lipschitz, then $|D^{\pm}f|, |Df|$ are all upper gradients of $f$.
In order to give a weaker notion of slope, consider the following family: $\gammA \in \mathcal{P}(C([0,1],X))$ is a \emph{test plan} if 
\[
e_{t\,\sharp}\gammA \leq C m, \quad \forall t \in [0,1], \qquad \textrm{and}\quad \int \int_{0}^{1}|\dot \gamma_{t}|dt \gammA(d\gamma)< \infty,
\]
where $C$ is a positive constant. Therefore we have the following.
\begin{definition}\label{D:sobolev}
A Borel map $f:X \to \erre$ belongs to the Sobolev class $S^{2}(X,d,m)$ (resp. $S^{2}_{loc}(X,d,m)$) if there exists a non-negative function 
$G \in L^{2}(X,m)$ (resp. $L^{2}_{loc}(X,m)$) such that 
\begin{equation}\label{E:sobolev}
\int |f(\gamma_{0}) - f(\gamma_{1})| \gammA(d\gamma) \leq \int \int_{0}^{1}G(\gamma_{s})|\dot \gamma_{s}|ds \gammA (d\gamma),\qquad \forall \gammA
\ \textrm{test plan.}
\end{equation} 
If this is the case, $G$ is called \emph{weak upper gradient}. 
\end{definition}

For $f \in S^{2}(X,d,m)$ there exists a minimal function $G$, in the $m$-a.e. sense, in $L^{2}(X,m)$ such that \eqref{E:sobolev} holds.
Denote such minimal function with $|Df|_{w}$. Accordingly define the semi-norm $\| f \|_{S^{2}(X,d,m)} : = \| |Df|_{w}\|_{L^{2}(X,m)}$.

We now state a result on the weak upper gradient of Kantorovich potentials also known as metric Brenier's Theorem.
\begin{proposition}[\cite{ambrgisav:heat}, Theorem 10.3]\label{P:speed} 
Let $(X,d,m)$ verify  $\mathsf{CD}(K,N)$ for $K \in \erre$ and $N\geq1$ and be non-branching. Let $\mu_{0}, \mu_{1} \in \mathcal{P}_{2}(X,d,m)$, $\f$ 
be a Kantorovich potential. Then for every $\gammA$ optimal dynamical transference plan it holds
\[
d(\gamma_{0},\gamma_{1}) = |D\f|_{w}(\gamma_{0}) = |D^{+}\f|(\gamma_{0}),  \qquad  \textrm{for } \gammA-a.e. \gamma. 
\]
If moreover the densities of $\mu_{0}$ and of $\mu_{1}$ are both in $L^{\infty}(X,m)$, then
\[
\lim_{t\downarrow 0} \frac{\f(\gamma_{0}) -\f(\gamma_{t})}{d(\gamma_{0},\gamma_{t})} = d(\gamma_{0},\gamma_{1}), \qquad
\textrm{in } L^{2}(\G(X),\gammA).
\]
\end{proposition}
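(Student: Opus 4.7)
This is a metric Brenier theorem in the spirit of \cite{ambrgisav:heat}. My strategy is to establish the chain
\[
d(\gamma_0,\gamma_1)\;\leq\;|D\f|_w(\gamma_0)\;\leq\;|D^+\f|(\gamma_0)\;\leq\;d(\gamma_0,\gamma_1)
\]
$\gammA$-a.e.\ and then collapse it. The rightmost inequality comes from $c$-concavity: on the support of the optimal plan $\f(\gamma_0)+\f^c(\gamma_1)=d^2(\gamma_0,\gamma_1)/2$, while $\f(x)+\f^c(\gamma_1)\leq d^2(x,\gamma_1)/2$ for every $x$. Subtracting and applying the triangle inequality gives
\[
\f(x)-\f(\gamma_0)\;\leq\;\frac{d(x,\gamma_0)\bigl(d(x,\gamma_1)+d(\gamma_0,\gamma_1)\bigr)}{2},
\]
so taking positive parts, dividing by $d(x,\gamma_0)$, and letting $x\to\gamma_0$ yields $|D^+\f|(\gamma_0)\leq d(\gamma_0,\gamma_1)$. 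The middle inequality $|D\f|_w\leq|D^+\f|$ holds $m$-a.e., and hence $\gammA$-a.e.\ via $\mu_0\ll m$, because the ascending slope is an upper gradient of any locally Lipschitz function.

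For the lower bound $d(\gamma_0,\gamma_1)\leq|D\f|_w(\gamma_0)$ I would first reduce to $\mu_0,\mu_1\in L^\infty(X,m)$ by truncation: restrict $\gammA$ to $\{\r_0(\gamma_0)\leq n,\ \r_1(\gamma_1)\leq n\}$ and renormalize. On such a subplan the pointwise $\mathsf{CD}(K,N)$ inequality~\eqref{E:cdpunto} bounds the intermediate density $\r_t$ uniformly, so $\gammA$ becomes a test plan in the sense of Definition~\ref{D:sobolev}. The defining weak-gradient inequality applied to $\f$, together with $|\dot\gamma_s|\equiv d(\gamma_0,\gamma_1)$, gives
\[
\int (\f(\gamma_0)-\f(\gamma_1))\,\gammA(d\gamma)\;\leq\;\int\!\!\int_0^1|D\f|_w(\gamma_s)\,d(\gamma_0,\gamma_1)\,ds\,\gammA(d\gamma).
\]
On the other hand the $c$-concavity identity combined with $\f^c(\gamma_1)\leq -\f(\gamma_1)$ yields the pointwise lower bound $\f(\gamma_0)-\f(\gamma_1)\geq d^2(\gamma_0,\gamma_1)/2$, hence $\int(\f(\gamma_0)-\f(\gamma_1))\gammA\geq W_2^2/2$. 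Chaining these two estimates via Cauchy--Schwarz/Jensen and the upper bound of the first paragraph applied to every sub-transport $\mu_s\to\mu_1$ (which is itself optimal in the non-branching $\mathsf{CD}$ setting by restriction of $\gammA$) forces all inequalities to be equalities $\gammA$-a.e.; letting $n\to\infty$ extends the identity to general $\mu_0,\mu_1\ll m$.

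For the $L^2$ statement under $L^\infty$ densities, the $c$-concavity argument with $x=\gamma_t$ together with the pointwise identity from the first part yields a two-sided sandwich of the form
\[
\frac{(2-t)\,d(\gamma_0,\gamma_1)}{2}\;\leq\;\frac{\f(\gamma_0)-\f(\gamma_t)}{d(\gamma_0,\gamma_t)}\;\leq\;d(\gamma_0,\gamma_1)+O(t),
\]
with both bounds tending to $d(\gamma_0,\gamma_1)$ as $t\downarrow 0$; since $d(\gamma_0,\gamma_1)\in L^2(\gammA)$, dominated convergence upgrades pointwise to $L^2(\G(X),\gammA)$-convergence. I expect the main obstacle to be the sharpness step of the second paragraph: a naive Cauchy--Schwarz only gives the coarse bound $W_2^2/4\leq\int_0^1\!\int|D\f|_w^2\,d\mu_s\,ds$, and to extract the pointwise identity one typically invokes the Hopf--Lax semigroup $Q_t\f$ and its subsolution property for the metric Hamilton--Jacobi equation, together with the Benamou--Brenier characterization of $W_2$, to force the inequality to be an equality $\gammA$-a.e. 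The truncation-to-$L^\infty$ reduction also needs care so that the pointwise identity passes to the limit $\gammA$-a.e., not merely in measure.
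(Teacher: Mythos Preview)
The paper does not prove this proposition; it is quoted verbatim from \cite{ambrgisav:heat}, Theorem 10.3, and used as a black box. So there is no ``paper's own proof'' to compare against---your sketch is really an attempt to reconstruct the argument of Ambrosio--Gigli--Savar\'e.

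Your outline follows the correct architecture of that proof, and you have correctly identified the genuine difficulty yourself: the naive chain
\[
\int(\f(\gamma_0)-\f(\gamma_1))\,\gammA \;\leq\; \int\!\!\int_0^1 |D\f|_w(\gamma_s)\,d(\gamma_0,\gamma_1)\,ds\,\gammA
\]
combined with $\f(\gamma_0)-\f(\gamma_1)\geq d^2(\gamma_0,\gamma_1)/2$ and Cauchy--Schwarz does \emph{not} force pointwise equality. The actual AGS argument replaces the left-hand side by $\int(\f(\gamma_0)-Q_t\f(\gamma_t))\,\gammA$, uses the Hopf--Lax subsolution property $\frac{d}{dt}Q_t\f + \tfrac12|DQ_t\f|^2\leq 0$ to turn this into an integral of $|DQ_s\f|^2$, and then compares with the weak-gradient bound; the equality case of the resulting chain is what pins down $|D\f|_w(\gamma_0)=d(\gamma_0,\gamma_1)$ $\gammA$-a.e. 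Without the Hopf--Lax step the argument does not close.

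One further correction: the assertion that ``the ascending slope is an upper gradient of any locally Lipschitz function'' is false (take $f(x)=-|x|$ on $\erre$: $|D^+f|(0)=0$ but $|Df|(0)=1$). What is true---and what AGS use---is that for a $d^2/2$-concave function the ascending slope $|D^+\f|$ \emph{is} an upper gradient; this is a consequence of the $c$-superdifferentiability of $\f$. With that correction, your inequality $|D\f|_w\leq |D^+\f|$ $m$-a.e.\ is valid.
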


In order to compute higher order derivatives, we introduce the following.
\begin{definition}\label{D:differential}
Let $f,g \in S^{2}(X,d,m)$. The functions 
\begin{align*}
D^{+}f(\nabla g):=&~ \liminf_{\ve \downarrow 0} \frac{|D(g+\ve f)|^{2}_{w}  -|Dg|_{w}^{2}}{2\ve}, \crcr
D^{-}f(\nabla g):=&~ \limsup_{\ve \uparrow 0} \frac{|D(g+\ve f)|^{2}_{w}  -|Dg|_{w}^{2}}{2\ve}.
\end{align*}
are well defined.
\end{definition}

Spaces where the two differentials coincide are called \emph{infinitesimally strictly convex}, i.e.
$(X,d,m)$ is said to be infinitesimally strictly convex provided
\begin{equation}\label{E:striconvex}
\int D^{+}f(\nabla g)(x) m(dx) = \int D^{-}f (\nabla g)(x) m(dx), \qquad \forall f,g \in S^{2}(X,d,m).
\end{equation}
It is proven in \cite{gigli:laplacian} that \eqref{E:striconvex} is equivalent to the point-wise one: 
\[
D^{+}f(\nabla g) = D^{-}f(\nabla g), \quad m-a.e., \quad \forall f,g \in S^{2}_{loc}(X,d,m).
\]
If the space is infinitesimally strictly convex, we can denote by $Df(\nabla g)$ the common value and 
$Df (\nabla g)$ is linear in $f$ and $1$-homogeneous and continuous in $g$.
 
There is a strong link between differentials and derivation along families of curves. 
For $\gammA \in \mathcal{P}(C([0,1],X))$, define the norm $\| \gammA\|_{2} \in [0,\infty]$ of $\gammA$ by
\[
\| \gammA\|_{2}^{2} : = \limsup_{t \downarrow 0} \frac{1}{t} \int \int_{0}^{t}|\dot \gamma_{s}|^{2}ds \gammA(d\gamma),
\] 
if $\gamma \in \mathcal{P}(AC([0,1],X))$ and $+\infty$ otherwise. 

\begin{definition}\label{D:representing}
Let $g \in S^{2}(X,d,m)$. We say that $\gamma \in \mathcal{P}(C([0,1],X))$ represents $\nabla g$ if 
$\gammA$ is of bounded compression, $\| \gammA\|_{2}< \infty$, and it holds   
\begin{equation}\label{E:representing}
\liminf_{t\downarrow 0} \int \frac{g(\gamma_{t}) -g(\gamma_{0})}{t} \gammA(d\gamma)
\geq \frac{1}{2} \big( \| |Dg|_{w} \|^{2}_{L^{2}(X,e_{0\,\sharp}\gammA)}  + \| \gammA\|_{2}^{2}\big).
\end{equation}
\end{definition}

A straightforward consequence of \eqref{E:representing} is that if $\gammA$ represents $\nabla g$, then 
the whole limit in the lefthand-side of \eqref{E:representing} exists and verifies 
\[
\lim_{t\downarrow 0} \int \frac{g(\gamma_{t}) -g(\gamma_{0})}{t} \gammA(d\gamma)
=\frac{1}{2} \big( \| |Dg|_{w} \|^{2}_{L^{2}(X,e_{0\,\sharp}\gammA)}  + \| \gammA\|_{2}^{2}\big).
\]

\begin{theorem}[\cite{gigli:laplacian}, Theorem 3.10]\label{T:changingorder}
Let $f,g \in S^{2}(X,d,m)$. For every $\gammA \in \mathcal{P}(C([0,1],M))$ representing $\nabla g$ 
it holds
\begin{align*}
\int D^{+}f(\nabla g) e_{0\,\sharp}\gammA \geq &~ 
 \limsup_{t\downarrow0} \int \frac{f(\gamma_{t}) -f( \gamma_{0})}{t} \gammA(d\gamma) \crcr
\geq &~\liminf_{t\downarrow0} \int \frac{f(\gamma_{t} )- f(\gamma_{0})}{t} \gammA(d\gamma) \geq
\int D^{-}f(\nabla g) e_{0\,\sharp}\gammA. 
\end{align*}
\end{theorem}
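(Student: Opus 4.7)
My proof proposal is as follows. The strategy is to compare $g$ and its perturbations $g\pm\ve f$ along $\gammA$, using the Sobolev inequality from Definition~\ref{D:sobolev} as an upper bound and the representing identity from Definition~\ref{D:representing} as an exact limit, and then to let $\ve\downarrow 0$. Fix $\ve>0$. Applied to the test plan $\gammA$ and to $g+\ve f\in S^{2}(X,d,m)$, the Sobolev inequality (dropping the modulus on the left, which only weakens it) combined with Young's inequality $ab\leq a^{2}/2+b^{2}/2$ yields
\[
\int \frac{(g+\ve f)(\gamma_{t})-(g+\ve f)(\gamma_{0})}{t}\gammA(d\gamma) \leq \frac{1}{2t}\int\!\!\int_{0}^{t}|D(g+\ve f)|_{w}^{2}(\gamma_{s})\,ds\,\gammA(d\gamma)+\frac{1}{2t}\int\!\!\int_{0}^{t}|\dot\gamma_{s}|^{2}\,ds\,\gammA(d\gamma).
\]
Because $\gammA$ represents $\nabla g$, the remark following Definition~\ref{D:representing} tells me that $\int \frac{g(\gamma_{t})-g(\gamma_{0})}{t}\gammA(d\gamma)$ converges as $t\downarrow 0$ to $\tfrac12\||Dg|_{w}\|_{L^{2}(X,e_{0\,\sharp}\gammA)}^{2}+\tfrac12\|\gammA\|_{2}^{2}$; moreover the last term above has $\limsup_{t\downarrow 0}$ equal to $\tfrac12\|\gammA\|_{2}^{2}$ by definition of $\|\gammA\|_{2}$. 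Subtracting the $g$-contribution from both sides causes the $\|\gammA\|_{2}^{2}$ pieces to cancel in the limit.

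The heart of the proof is to identify the limit of the energy term. Rewriting it as $\tfrac{1}{t}\int_{0}^{t}\int|D(g+\ve f)|_{w}^{2}\,d(e_{s\,\sharp}\gammA)\,ds$, the task reduces to continuity at $s=0$ of $s\mapsto \int|D(g+\ve f)|_{w}^{2}\,d(e_{s\,\sharp}\gammA)$. I would establish this continuity from two ingredients: narrow continuity of $s\mapsto e_{s\,\sharp}\gammA$ at $s=0$, which is immediate since $\gammA$ is concentrated on $C([0,1],X)$, so $\gamma_{s}\to\gamma_{0}$ pointwise and dominated convergence applies to bounded continuous test functions; and the bounded-compression bound $e_{s\,\sharp}\gammA\leq Cm$, which converts this narrow convergence into convergence of integrals of $|D(g+\ve f)|_{w}^{2}\in L^{1}(m)$ through $L^{1}(m)$-density of bounded continuous functions. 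This is the step I expect to be the main technical obstacle, since it is where both defining properties of a test plan are simultaneously invoked. With the limit identified, inserting everything back and dividing by $\ve$ gives, for every $\ve>0$,
\[
\limsup_{t\downarrow 0}\int \frac{f(\gamma_{t})-f(\gamma_{0})}{t}\gammA(d\gamma)\leq \int \frac{|D(g+\ve f)|_{w}^{2}-|Dg|_{w}^{2}}{2\ve}\,d(e_{0\,\sharp}\gammA).
\]

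To pass to $\ve\downarrow 0$ I would exploit that $\ve\mapsto|D(g+\ve f)|_{w}$ is convex in $\ve\in\erre$: this is immediate from pointwise subadditivity and positive $1$-homogeneity of the minimal weak upper gradient, since for $\lambda\in[0,1]$ and $\ve_{1},\ve_{2}\in\erre$ one has $|D(g+(\lambda\ve_{1}+(1-\lambda)\ve_{2})f)|_{w}\leq \lambda|D(g+\ve_{1}f)|_{w}+(1-\lambda)|D(g+\ve_{2}f)|_{w}$. Squaring preserves convexity on $[0,\infty)$, so the difference quotient $\ve\mapsto\tfrac{|D(g+\ve f)|_{w}^{2}-|Dg|_{w}^{2}}{2\ve}$ is pointwise monotone non-decreasing in $\ve$. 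Hence $D^{+}f(\nabla g)$, a priori a mere $\liminf_{\ve\downarrow 0}$, is actually a monotone pointwise limit from above; the estimate $|D(g+\ve f)|_{w}\leq|Dg|_{w}+\ve|Df|_{w}$ provides an $L^{1}(m)$ upper envelope on a bounded $\ve$-interval, and dominated (or monotone) convergence identifies the limit of the right-hand integrals with $\int D^{+}f(\nabla g)\,d(e_{0\,\sharp}\gammA)$, giving the first inequality. The lower bound in terms of $D^{-}f(\nabla g)$ is obtained by the symmetric argument with $g-\ve f$ in place of $g+\ve f$: the sign change after division turns the upper bound on $\limsup_{t\downarrow 0}$ into a lower bound on $\liminf_{t\downarrow 0}$, and the same convexity argument, now with $\ve$ approaching $0$ from below, identifies the limit with $\int D^{-}f(\nabla g)\,d(e_{0\,\sharp}\gammA)$. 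The middle inequality $\limsup\geq\liminf$ is trivial.
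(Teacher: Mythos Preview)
The paper does not give its own proof of this statement: it is quoted verbatim from \cite{gigli:laplacian}, Theorem~3.10, as a preliminary result, and no argument is supplied. So there is nothing in the paper to compare against; one can only judge your proposal on its own merits and against Gigli's original.

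Your argument is correct and is in fact the standard proof from \cite{gigli:laplacian}. The three moving parts---applying the weak-upper-gradient inequality to $g\pm\ve f$ along the restricted plan, combining it with Young's inequality and the representing identity for $g$ so that the kinetic term $\|\gammA\|_{2}^{2}$ and the $|Dg|_{w}^{2}$ term cancel, and then sending $\ve\to 0$ using convexity of $\ve\mapsto |D(g+\ve f)|_{w}^{2}$---are exactly the ingredients Gigli uses. A couple of minor remarks: when you split the $\limsup$ you implicitly use that the energy term $\tfrac{1}{2t}\int_{0}^{t}\int |D(g+\ve f)|_{w}^{2}\,d(e_{s\,\sharp}\gammA)\,ds$ actually \emph{converges} (not just has a $\limsup$), which is precisely what your continuity-at-$s=0$ argument delivers, so the arithmetic with $\limsup$ and $\liminf$ goes through cleanly; and for the dominated-convergence step as $\ve\downarrow 0$ you also need an $L^{1}$ lower bound on the difference quotients, which follows from the reverse triangle estimate $|D(g+\ve f)|_{w}\geq |Dg|_{w}-\ve|Df|_{w}$ in the same way your upper envelope does. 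Both points are routine and do not affect the validity of the argument.
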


\subsection{Hopf-Lax formula for Kantorovich potentials}\label{Ss:slopKant}
What follows is contained in \cite{ambrgisav:heat}.

The definitions below make sense for a general Borel and real valued cost but we will only consider the $d^{2}/2$ case, for this reason $c$ has to be interpret as 
$d^{2}/2$.

\begin{definition}
Let $\f : X \to \erre \cup \{ \pm  \infty \}$. Its  $d^{2}$-\emph{transform} $\f^{c} : X \to \erre \cup \{ -  \infty \}$ 
is defined by
\[
\f^{c}(y) := \inf_{x \in X} \frac{1}{2}d^{2}(x,y) - \f(x).
\]
Accordingly $\f: X \mapsto \erre \cup \{ \pm  \infty \}$ is $d^{2}$-\emph{concave} if there exists $v : X \to \erre \cup \{-\infty \}$ such that $\f = v^{c}$.

A $d^{2}$-\emph{concave} function $\f$ such that $(\f,\f^{c})$ is a maximizing pair for the dual Kantorovich problem between 
$\mu_{0},\mu_{1}$ is called a $d^{2}$-\emph{concave Kantorovich potential} for the couple $(\mu_{0},\mu_{1})$. A function $\f$
is called a $d^{2}$-\emph{convex Kantorovich potential} if $- \f$ is a $d^{2}$-concave Kantorovich potential.
\end{definition}

We are interested in the evolution of potentials. 
They evolve accordingly to the Hopf-Lax evolution semigroup $H_{t}^{s}$ via the following formula: 
\begin{equation}\label{E:potentials}
H_{t}^{s}(\psi) (x) : =  
\begin{cases}
\displaystyle \inf_{y \in X} \frac{1}{2}\frac{d^{2}(x,y)}{s-t} + \psi(y), &~ \textrm{if}\ t < s, \crcr
\psi(x), &~ \textrm{if}\ t = s, \crcr
\displaystyle \sup_{y \in X} \psi(y)- \frac{1}{2}\frac{d^{2}(x,y)}{t-s}, &~ \textrm{if}\ t > s.
\end{cases}
\end{equation}
We also introduce the rescaled cost $c^{t.s}$ defined by
\[
c^{t,s}(x,y) : =\frac{1}{2} \frac{d^{2}(x,y)}{s-t}, \qquad \forall t < s, \, x,y \in X.
\]
Observe that for $t<r<s$
\[
c^{t,r}(x,y) + c^{r,s}(y,z) \geq c^{t,s}(x,z), \qquad \forall x,y,z \in X,
\]
and equality holds if and only if there is a constant speed geodesic $\gamma : [t,s] \to X$ such that $x = \gamma_{t}$, $y =\gamma_{r}$ and 
$z= \gamma_{s}$. The following result is taken from \cite{villa:Oldnew} (Theorem 7.30 and Theorem 7.36) but here we report a different version.

\begin{theorem}[\cite{ambro:userguide}, Theorem 2.18] \label{T:potential}
Let $(\mu_{t})\subset \mathcal{P}_{2}(X)$ be a constant speed geodesic in $(\mathcal{P}_{2}(X,d), d_{W})$ and $\psi$ a $c^{0,1}$-convex Kantorovich 
potential for the couple $(\mu_{0}, \mu_{1})$. Then $\psi_{s}: = H_{0}^{s}(\psi)$ is a $c^{t,s}$-concave Kantorovich potential for $(\mu_{s},\mu_{t})$, for 
any $ t < s$. 

Similarly, if $\phi$ is a $c$-concave Kantorovich potential for $(\mu_{1},\mu_{0})$, then $H_{1}^{t}$ is a $c^{t,s}$-convex Kantorovich potential for 
$(\mu_{t},\mu_{s})$, for any $t<s$.
\end{theorem}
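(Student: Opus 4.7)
The plan is to separate the theorem into two claims: (i) $\psi_s := H_0^s(\psi)$ is $c^{t,s}$-concave for every $0 \leq t < s \leq 1$, and (ii) the pair $(\psi_s,\psi_t)$ is a maximizing pair in the dual Kantorovich problem between $\mu_s$ and $\mu_t$ (with cost $c^{t,s}$). The second statement of the theorem will follow by a symmetric argument using the backward evolution $H_1^t$, so I focus on the first.

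For (i), the key is the tower identity for the rescaled cost, namely
\[
c^{0,s}(x,y) = \inf_z \bigl[ c^{0,t}(x,z) + c^{t,s}(z,y) \bigr],
\]
with equality achieved precisely when $z$ is the $t/s$-intermediate point on a geodesic from $x$ to $y$. This is where the assumption that $(X,d)$ is a geodesic space enters. Substituting this into the Hopf-Lax formula and interchanging the two infima gives
\[
\psi_s(x) = \inf_z \Bigl[ c^{t,s}(z,x) + \inf_y \bigl( c^{0,t}(y,z) + \psi(y) \bigr) \Bigr] = \inf_z \bigl[ c^{t,s}(z,x) + \psi_t(z) \bigr],
\]
where $\psi_t := H_0^t(\psi)$. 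This exhibits $\psi_s$ as the $c^{t,s}$-transform of $-\psi_t$, hence $c^{t,s}$-concave in the sense of the paper.

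For (ii), I take a dynamical optimal plan $\gammA$ for $(\mu_0,\mu_1)$ via Lemma \ref{L:geod}. The standard duality for the initial optimal transport problem yields, for $\gammA$-a.e.\ $\gamma$, the equality $\psi(\gamma_0) + \psi^{c^{0,1}}(\gamma_1) = c^{0,1}(\gamma_0,\gamma_1)$. From here, the core claim is the pointwise identity
\[
\psi_t(\gamma_t) + c^{t,s}(\gamma_t,\gamma_s) = \psi_s(\gamma_s), \qquad 0\leq t<s\leq 1,
\]
along $\gammA$-a.e.\ $\gamma$. The inequality $\geq$ is immediate from the definition of $\psi_s$ as an infimum by using $z=\gamma_t$ as a competitor. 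For the reverse inequality, one combines the tower decomposition above with the duality equality for $\psi$ at time $0$: any competitor $z$ would produce a ``broken'' path $(\gamma_0,z,\gamma_s)$ whose cost is at least $c^{0,s}(\gamma_0,\gamma_s)$, and the constant speed property $c^{0,s}(\gamma_0,\gamma_s) = s \cdot c^{0,1}(\gamma_0,\gamma_1)$ shows that $\gamma_t$ itself is a minimizer. Integrating the pointwise equality against $\gammA$ then yields
\[
\int \psi_s\, d\mu_s - \int \psi_t\, d\mu_t = \int c^{t,s}(\gamma_t,\gamma_s)\, \gammA(d\gamma) = W_2^2(\mu_s,\mu_t)\big/\bigl(2(s-t)\bigr),
\]
which, together with the inequality $\psi_s(x)-\psi_t(y) \leq c^{t,s}(y,x)$ supplied by the $c^{t,s}$-concavity from (i), is exactly the optimality condition for the dual pair.

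The main technical obstacle is the bookkeeping of sign and direction conventions (convex versus concave, forward versus backward Hopf-Lax) together with the verification of the reverse inequality in the pointwise identity along $\gamma$. The latter genuinely uses both the \emph{constant speed} of the Wasserstein geodesic and the fact that $\psi$ realizes the dual optimum at time $0$; without either, the competing $z$'s could produce strictly smaller values and destroy the pointwise equality. Once the pointwise equality is established, integration against $\gammA$ is routine, and the symmetric statement for $c$-concave $\phi$ and $H_1^t(\phi)$ is obtained by reversing the role of $0$ and $1$ and replacing infima by suprema in the Hopf-Lax semigroup.
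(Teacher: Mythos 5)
The paper does not prove Theorem \ref{T:potential}: it is quoted from \cite{ambro:userguide} (Theorem 2.18), so there is no internal proof to compare against; the closest in-paper computation is Corollary \ref{C:1d-evo}, which is exactly your pointwise identity at $t=0$. Your argument is correct and is essentially the standard proof of the cited result: the tower identity $c^{0,s}(x,y)=\inf_z\bigl[c^{0,t}(x,z)+c^{t,s}(z,y)\bigr]$ (valid with attained infimum because $(X,d)$ is geodesic) yields $\psi_s=\inf_z\bigl[c^{t,s}(z,\cdot)+\psi_t(z)\bigr]$, hence $c^{t,s}$-concavity, and the pointwise identity $\psi_s(\gamma_s)=\psi_t(\gamma_t)+c^{t,s}(\gamma_t,\gamma_s)$ for $\gammA$-a.e.\ $\gamma$, combined with the fact that $(e_t,e_s)_\sharp\gammA$ is an optimal coupling (Lemma \ref{L:geod}), shows the pair attains the dual supremum. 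The only points to tidy are the sign bookkeeping (for $\psi$ $c^{0,1}$-convex in the paper's convention the duality equality reads $H_0^1\psi(\gamma_1)=\psi(\gamma_0)+c^{0,1}(\gamma_0,\gamma_1)$ rather than "$\psi+\psi^{c^{0,1}}=c^{0,1}$") and, if you conclude by integrating rather than by complementary slackness $\pi$-a.e., the integrability of $\psi_t\circ e_t$, which is harmless under the compactness assumptions in force here.
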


The following is an easy consequence.

\begin{corollary}\label{C:1d-evo}
Let $\f$ be a $d^{2}$-concave Kantorovich potential for $(\mu_{0},\mu_{1})$. Let $\f_{t} : = - H^{t}_{1}(\f^{c})$ be a $c^{t,1}$-concave Kantorovich potential for $(\mu_{t},\mu_{1})$ and analogously let $\f^{c}_{t}:=H^t_0(-\f)$ a $c^{0,t}$-concave Kantorovich potential for $(\mu_{t},\mu_{0})$. 
Then:
\[
\f_{t}(\gamma_{t}) = \f(\gamma_{0}) - \frac{t}{2} d^{2}(\gamma_{0},\gamma_{1}), \qquad
\f^{c}_{t}(\gamma_{t}) = \f^{c}(\gamma_{1}) - \frac{1-t}{2} d^{2}(\gamma_{0},\gamma_{1}),
\qquad \boldsymbol{\gamma}-a.e. \ \gamma.
\]
\end{corollary}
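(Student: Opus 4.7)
The plan is to establish both identities by direct two-sided inequalities using the Hopf--Lax formula, the constant-speed-geodesic property $d(\gamma_s,\gamma_t)=|s-t|d(\gamma_0,\gamma_1)$, and the $d^{2}$-concavity relation for the given Kantorovich pair $(\f,\f^{c})$, namely
\[
\f(x)+\f^{c}(y)\leq \tfrac{1}{2}d^{2}(x,y)\quad\forall x,y,\qquad \f(\gamma_{0})+\f^{c}(\gamma_{1})=\tfrac{1}{2}d^{2}(\gamma_{0},\gamma_{1})\ \gammA\text{-a.e.}
\]
I will handle the first identity in detail; the second is symmetric, swapping the roles of $\f$ and $\f^{c}$ and of $t$ and $1-t$.

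For the upper bound, by the explicit expression
\[
\f_{t}(\gamma_{t}) \;=\; \inf_{y\in X}\Bigl\{\tfrac{d^{2}(\gamma_{t},y)}{2(1-t)}-\f^{c}(y)\Bigr\},
\]
I would plug $y=\gamma_{1}$. Using $d(\gamma_{t},\gamma_{1})=(1-t)d(\gamma_{0},\gamma_{1})$ the quadratic term collapses to $\tfrac{1-t}{2}d^{2}(\gamma_{0},\gamma_{1})$, and substituting the $\gammA$-a.e. identity $\f^{c}(\gamma_{1})=\tfrac{1}{2}d^{2}(\gamma_{0},\gamma_{1})-\f(\gamma_{0})$ yields precisely $\f(\gamma_{0})-\tfrac{t}{2}d^{2}(\gamma_{0},\gamma_{1})$.

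For the lower bound, I use that for every $z\in X$ the concavity inequality gives $-\f^{c}(z)\geq \f(\gamma_{0})-\tfrac{1}{2}d^{2}(\gamma_{0},z)$, so the infimum is bounded below by $\f(\gamma_{0})$ plus
\[
\inf_{z\in X}\Bigl\{\tfrac{d^{2}(\gamma_{t},z)}{2(1-t)}-\tfrac{1}{2}d^{2}(\gamma_{0},z)\Bigr\}.
\]
The key technical step, and the only non-formal point, is the pointwise quadratic comparison
\[
d^{2}(\gamma_{0},z)\;\leq\; t\,d^{2}(\gamma_{0},\gamma_{1})+\tfrac{1}{1-t}\,d^{2}(\gamma_{t},z),
\]
which follows from the triangle inequality $d(\gamma_{0},z)\leq t\,d(\gamma_{0},\gamma_{1})+d(\gamma_{t},z)$ combined with the elementary algebraic identity
\[
t\,a^{2}+\tfrac{b^{2}}{1-t}-(ta+b)^{2}=t\bigl(\sqrt{1-t}\,a-\tfrac{b}{\sqrt{1-t}}\bigr)^{2}\geq 0,
\]
applied with $a=d(\gamma_{0},\gamma_{1})$ and $b=d(\gamma_{t},z)$. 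Rearranging this gives the desired lower bound $\f_{t}(\gamma_{t})\geq \f(\gamma_{0})-\tfrac{t}{2}d^{2}(\gamma_{0},\gamma_{1})$.

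The second identity is proved by the exact same scheme applied to $\f_{t}^{c}(\gamma_{t})=\inf_{y}\{\tfrac{d^{2}(\gamma_{t},y)}{2t}-\f(y)\}$: the upper bound uses $y=\gamma_{0}$ and $d(\gamma_{t},\gamma_{0})=t\,d(\gamma_{0},\gamma_{1})$, while the lower bound uses $-\f(y)\geq \f^{c}(\gamma_{1})-\tfrac{1}{2}d^{2}(y,\gamma_{1})$ together with the symmetric quadratic estimate $d^{2}(y,\gamma_{1})\leq (1-t)d^{2}(\gamma_{0},\gamma_{1})+\tfrac{1}{t}d^{2}(y,\gamma_{t})$, obtained from the same algebraic identity with the roles of $t$ and $1-t$ exchanged. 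No curvature hypothesis is used; the only non-trivial ingredient is the scalar quadratic inequality above, which is where the geodesic hypothesis enters (through the exact values of $d(\gamma_{0},\gamma_{t})$ and $d(\gamma_{t},\gamma_{1})$).
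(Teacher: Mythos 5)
Your proof is correct and follows essentially the same route as the paper: the upper bound by testing the Hopf--Lax infimum at $y=\gamma_{1}$ together with the a.e.\ equality $\f(\gamma_{0})+\f^{c}(\gamma_{1})=\tfrac12 d^{2}(\gamma_{0},\gamma_{1})$, and the lower bound from $-\f^{c}(z)\geq \f(\gamma_{0})-\tfrac12 d^{2}(\gamma_{0},z)$ plus the intermediate-point inequality $\tfrac{d^{2}(\gamma_{0},\gamma_{t})}{t}+\tfrac{d^{2}(\gamma_{t},z)}{1-t}\geq d^{2}(\gamma_{0},z)$, which you simply verify explicitly via the triangle inequality and your algebraic identity instead of quoting it as the paper does.
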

\begin{proof}
Since the proofs of the statements for $\f_{t}$ and for $\f^{c}_{t}$ are the same, we prefer
to present only the one for $\f_{t}$.

Since  
\[
\f_{t} (x) = -H_{1}^{t}(\f^{c})(x) = \inf_{y \in X}\frac{1}{2} \frac{d^{2}(x,y)}{1-t} - \f^{c}(y). 
\]
for $\boldsymbol{\gamma}$-a.e.  $\gamma$
\[
\f_{t}(\gamma_{t}) \leq \frac{1}{2} \frac{d^{2}(\gamma_{t},\gamma_{1})}{1-t} + \f(\gamma_{0}) - \frac{1}{2} d^{2}(\gamma_{0},\gamma_{1}) = 
\f(\gamma_{0}) - \frac{t}{2} d^{2}(\gamma_{0},\gamma_{1}).
\]
To prove the opposite inequality: observe that 
\[
\frac{d^{2}(\gamma_{0},\gamma_{t})}{t} + \frac{d^{2}(\gamma_{t},y)}{1-t} \geq d^{2}(\gamma_{0},y), 
\]
therefore for $\boldsymbol{\gamma}$-a.e.  $\gamma$
\[
\frac{1}{2} \frac{d^{2}(\gamma_{t},y)}{1-t} - \f^{c}(y) \geq \frac{1}{2} \frac{d^{2}(\gamma_{t},y)}{1-t} - \frac{1}{2} d^{2}(\gamma_{0},y) +\f(\gamma_{0})
\geq\f(\gamma_{0}) -  \frac{1}{2} \frac{d^{2}(\gamma_{0},\gamma_{t})}{t}  =  \f(\gamma_{0}) -  \frac{t}{2} d^{2}(\gamma_{0},\gamma_{1}).
\]
Taking the infimum the claim follows.
\end{proof}

\subsection{Disintegration of measures}
\label{Ss:disintegrazione}
We conclude this introductory part with a short review on disintegration theory. What follows is taken from \cite{biacar:cmono}.

Given a measurable space $(R, \mathscr{R})$ and a function $r: R \to S$, with $S$ generic set, we can endow $S$ with the \emph{push forward $\sigma$-algebra} $\mathscr{S}$ of $\mathscr{R}$:
\[
Q \in \mathscr{S} \quad \Longleftrightarrow \quad r^{-1}(Q) \in \mathscr{R},
\]
which could also be defined as the biggest $\sigma$-algebra on $S$ such that $r$ is measurable. Moreover given a measure space 
$(R,\mathscr{R},\rho)$, the \emph{push forward measure} $\eta$ is then defined as $\eta := (r_{\sharp}\rho)$.

Consider a probability space $(R, \mathscr{R},\rho)$ and its push forward measure space $(S,\mathscr{S},\eta)$ induced by a map $r$. From the above definition the map $r$ is measurable.

\begin{definition}
\label{defi:dis}
A \emph{disintegration} of $\rho$ \emph{consistent with} $r$ is a map $\rho: \mathscr{R} \times S \to [0,1]$ such that
\begin{enumerate}
\item  $\rho_{s}(\cdot)$ is a probability measure on $(R,\mathscr{R})$ for all $s\in S$,
\item  $\rho_{\cdot}(B)$ is $\eta$-measurable for all $B \in \mathscr{R}$,
\end{enumerate}
and satisfies for all $B \in \mathscr{R}, C \in \mathscr{S}$ the consistency condition
\[
\rho\left(B \cap r^{-1}(C) \right) = \int_{C} \rho_{s}(B) \eta(ds).
\]
A disintegration is \emph{strongly consistent with respect to $r$} if for all $s$ we have $\rho_{s}(r^{-1}(s))=1$.
\end{definition}

The measures $\rho_s$ are called \emph{conditional probabilities}.

We say that a $\sigma$-algebra $\mathcal{H}$ is \emph{essentially countably generated} with respect to a measure $m$ if there exists a countably generated $\sigma$-algebra $\hat{\mathcal{H}}$ such that for all $A \in \mathcal{H}$ there exists $\hat{A} \in \hat{\mathcal{H}}$ such that $m (A \vartriangle \hat{A})=0$.

We recall the following version of the disintegration theorem that can be found on \cite{Fre:measuretheory4}, Section 452 (see \cite{biacar:cmono} for a direct proof).

\begin{theorem}[Disintegration of measures]
\label{T:disintr}
Assume that $(R,\mathscr{R},\rho)$ is a countably generated probability space, $R = \cup_{s \in S}R_{s}$ a partition of R, $r: R \to S$ the quotient map and $\left( S, \mathscr{S},\eta \right)$ the quotient measure space. Then  $\mathscr{S}$ is essentially countably generated w.r.t. $\eta$ and there exists a unique disintegration $s \mapsto \rho_{s}$ in the following sense: if $\rho_{1}, \rho_{2}$ are two consistent disintegration then $\rho_{1,s}(\cdot)=\rho_{2,s}(\cdot)$ for $\eta$-a.e. $s$.

If $\left\{ S_{n}\right\}_{n\in \enne}$ is a family essentially generating  $\mathscr{S}$ define the equivalence relation:
\[
s \sim s' \iff \   \{  s \in S_{n} \iff s'\in S_{n}, \ \forall\, n \in \enne\}.
\]
Denoting with p the quotient map associated to the above equivalence relation and with $(L,\mathscr{L}, \lambda)$ the quotient measure space, the following properties hold:
\begin{itemize}
\item $R_{l}:= \cup_{s\in p^{-1}(l)}R_{s} = (p \circ r)^{-1}(l)$ is $\rho$-measurable and $R = \cup_{l\in L}R_{l}$;
\item the disintegration $\rho = \int_{L}\rho_{l} \lambda(dl)$ satisfies $\rho_{l}(R_{l})=1$, for $\lambda$-a.e. $l$. In particular there exists a 
strongly consistent disintegration w.r.t. $p \circ r$;
\item the disintegration $\rho = \int_{S}\rho_{s} \eta(ds)$ satisfies $\rho_{s}= \rho_{p(s)}$ for $\eta$-a.e. $s$.
\end{itemize}
\end{theorem}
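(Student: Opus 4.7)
The plan is to follow Fremlin's Radon--Nikodym--plus--quotient approach. First I would prove that $\mathscr{S}$ is essentially countably generated with respect to $\eta$: picking a countable algebra $\mathscr{R}_{0}$ generating $\mathscr{R}$, for each $B \in \mathscr{R}_{0}$ the conditional expectation $\mathbb{E}_{\rho}[\ind_{B} \mid r^{-1}(\mathscr{S})]$ can be represented as $f_{B} \circ r$ with $f_{B}$ an $\mathscr{S}$-measurable $[0,1]$-valued function determined $\eta$-a.e.; then the countable family $\{f_{B}^{-1}(D_{j}) : B \in \mathscr{R}_{0},\, j \in \enne\}$, with $\{D_{j}\}$ a countable generator of the Borel sets of $[0,1]$, generates $\mathscr{S}$ modulo $\eta$-null sets.

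Second, I would establish uniqueness. If two disintegrations $\rho^{(i)}_{s}$ are consistent with $r$, then for each fixed $B \in \mathscr{R}$ both maps $s \mapsto \rho^{(i)}_{s}(B)$ are $\mathscr{S}$-measurable Radon--Nikodym derivatives of the measure $C \mapsto \rho(B \cap r^{-1}(C))$ with respect to $\eta$, hence they coincide $\eta$-a.e. Letting $B$ range over the countable algebra $\mathscr{R}_{0}$ and taking the union of the exceptional $\eta$-null sets yields a single null set off of which $\rho^{(1)}_{s}$ and $\rho^{(2)}_{s}$ agree on $\mathscr{R}_{0}$, hence by the monotone class theorem on all of $\mathscr{R}$.

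Third, I would construct the disintegration. For each $B \in \mathscr{R}_{0}$ pick a specific Radon--Nikodym version $f_{B}$. Finite additivity of $B \mapsto f_{B}(s)$ on the countable algebra holds outside a countable union $N_{1}$ of $\eta$-null sets, and $\sigma$-additivity on $\mathscr{R}_{0}$ holds outside a further such union $N \supset N_{1}$, by monotone convergence and countable exhaustion. For $s \notin N$ Carath\'eodory's extension theorem upgrades $B \mapsto f_{B}(s)$ to a probability measure $\rho_{s}$ on $\mathscr{R}$; on the $\eta$-null set $N$ one selects $\rho_{s}$ arbitrarily. Consistency is verified on $\mathscr{R}_{0}$ by construction and extends to all of $\mathscr{R}$ by the $\pi$--$\lambda$ theorem.

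Finally, to obtain strong consistency I would pass to the quotient. Because the equivalence relation $\sim$ identifies precisely those $s,s' \in S$ that are indistinguishable by a countable generating family $\{S_{n}\}$ of $\mathscr{S}$, the quotient $\sigma$-algebra $\mathscr{L}$ on $L$ is countably generated \emph{and} separates points of $L$; a standard exhaustion argument against the countable generator then shows that for any consistent disintegration $\{\rho_{l}\}$ of $\rho$ with respect to $p\circ r$ one has $\rho_{l}(R_{l})=1$ for $\lambda$-a.e. $l$, since $R_{l}=(p\circ r)^{-1}(l)$ is precisely the atom of $\mathscr{L}$ at $l$ pulled back to $R$. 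Applying the previous two steps with $p\circ r$ in place of $r$ yields existence and uniqueness of the strongly consistent family $\{\rho_{l}\}$, and the identity $\rho_{s}=\rho_{p(s)}$ $\eta$-a.e. then follows from uniqueness applied to the disintegration with respect to $r$, since $s\mapsto \rho_{p(s)}$ is patently consistent with $r$. The principal technical obstacle throughout is the bookkeeping of countably many $\eta$-null exceptional sets when promoting the Radon--Nikodym derivatives into a bona fide probability measure for $\eta$-a.e. $s$; this is exactly where countable generation of $\mathscr{R}$ enters crucially, as it allows the full $\sigma$-additivity check to be reduced to countably many conditions and hence to the complement of a single null set.
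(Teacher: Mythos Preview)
The paper does not actually prove Theorem~\ref{T:disintr}; it is merely recalled from the literature, with explicit references to Fremlin's treatise (Section~452) and to Bianchini--Caravenna for a direct proof. So there is no ``paper's own proof'' to compare against.

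That said, your outline is sound and follows precisely the standard Radon--Nikodym--plus--quotient route that one finds in those references: countable generation of $\mathscr{S}$ via conditional expectations of indicators of a countable algebra in $\mathscr{R}$, uniqueness by Radon--Nikodym, existence by Carath\'eodory extension of the almost-surely countably additive set function $B\mapsto f_{B}(s)$ on the generating algebra, and finally strong consistency by passing to the separated quotient $L$. The identification of the main technical point (controlling countably many exceptional null sets, which is exactly where countable generation of $\mathscr{R}$ is essential) is accurate. One minor remark: in your first step you should be slightly more careful that the family $\{f_{B}^{-1}(D_{j})\}$ actually \emph{essentially} generates $\mathscr{S}$ --- this requires an argument that every $C\in\mathscr{S}$ agrees $\eta$-a.e.\ with a set in the $\sigma$-algebra generated by these preimages, which follows because $r^{-1}(C)$ is $\rho$-measurable and hence approximable by elements of $\mathscr{R}_{0}$, but the link back to $\mathscr{S}$ deserves a line. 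Otherwise the argument is correct and is essentially the one the cited references give.
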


In particular we will use the following corollary.

\begin{corollary}
\label{C:disintegration}
If $(S,\mathscr{S})=(X,\mathcal{B}(X))$ with $X$ Polish space, then the disintegration is strongly consistent.
\end{corollary}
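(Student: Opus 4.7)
The plan is to apply Theorem \ref{T:disintr} and show that in this Polish setting the equivalence relation $\sim$ built from an essentially generating family $\{S_{n}\}_{n\in \enne}$ collapses to equality, so that the quotient map $p \circ r$ may be identified with $r$ itself and the strong consistency asserted by the theorem becomes strong consistency with respect to $r$.

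First, since $X$ is Polish, it is second countable, and $\mathcal{B}(X)$ is countably generated. Moreover, we are free to choose the essentially generating family $\{S_{n}\}_{n\in \enne}$; take it to be a countable base of the topology of $X$. Because $X$ is Hausdorff (in fact metrizable), for any two distinct points $s\neq s'$ there exists $S_{n}$ containing exactly one of them. Hence the equivalence relation
\[
s \sim s' \ \iff \ \bigl( s\in S_{n} \iff s'\in S_{n},\ \forall n \in \enne\bigr)
\]
forces $s = s'$, so every equivalence class is a singleton and $p : S \to L$ is a bijection of sets.

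Since $p$ is a pointwise bijection, $(p\circ r)^{-1}(p(s)) = r^{-1}(s)$ for every $s\in S$, so $R_{p(s)} = r^{-1}(s)$. Theorem \ref{T:disintr} produces a disintegration $\rho = \int_{L} \rho_{l}\,\lambda(dl)$ with $\rho_{l}(R_{l})=1$ for $\lambda$-a.e.\ $l$, and it further states that the disintegration $\rho = \int_{S} \rho_{s}\,\eta(ds)$ satisfies $\rho_{s}=\rho_{p(s)}$ for $\eta$-a.e.\ $s$. Combining these two facts yields
\[
\rho_{s}\bigl(r^{-1}(s)\bigr) = \rho_{p(s)}(R_{p(s)}) = 1 \qquad \text{for } \eta\text{-a.e. } s \in S,
\]
which is precisely the strong consistency of the disintegration with respect to $r$.

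The only subtle point is ensuring that the identification $\rho_{s}=\rho_{p(s)}$ delivered by the theorem is a genuine measurable identification, not merely a pointwise one; this is however exactly the content of the uniqueness clause of Theorem \ref{T:disintr}, which therefore removes the obstacle. The whole argument hinges on the topological input that a Polish space admits a countable base separating points, so no metric-measure or curvature assumption enters.
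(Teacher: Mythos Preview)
Your proof is correct. The paper states Corollary~\ref{C:disintegration} without proof, treating it as an immediate consequence of Theorem~\ref{T:disintr}; your argument supplies precisely the natural deduction: take the essentially generating family $\{S_{n}\}$ to be a countable base of the Polish topology, observe that such a base separates points so that the equivalence relation $\sim$ degenerates to equality, and conclude that the strong consistency with respect to $p\circ r$ guaranteed by the theorem is strong consistency with respect to $r$ itself.

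One minor comment: your closing paragraph about the identification $\rho_{s}=\rho_{p(s)}$ being a ``genuine measurable identification'' is a bit misplaced. There is no measurability issue here; the theorem already asserts $\rho_{s}=\rho_{p(s)}$ for $\eta$-a.e.\ $s$, which is a pointwise (in $s$) equality of measures. The only thing to check is that the $\lambda$-exceptional set where $\rho_{l}(R_{l})=1$ may fail pulls back under the bijection $p$ to an $\eta$-null set, and this is automatic since $\lambda = p_{\sharp}\eta$. The uniqueness clause is not needed for this step. This does not affect the validity of your argument.
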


\section{Setting}\label{S:setting}

We fix here the objects, notations and hypothesis that will be used throughout this note. 

$(X,d,m)$ will be a non-branching metric measure space verifying 
$\mathsf{CD}_{loc}(K,N)$ or equivalently $\mathsf{CD}^{*}(K,N)$.
The marginal measure $\mu_{0},\mu_{1} \in \mathcal{P}_{2}(X,d,m)$ are fixed 
together with $\pi \in \Pi(\mu_{0},\mu_{1})$ the optimal coupling and 
$\gammA \in \mathcal{P}(\G(X))$ the associated optimal dynamical transference plan such that 
\[
[0,1]\ni t \mapsto (e_{t})_{\sharp } \gammA = \mu_{t},   \qquad (e_{0},e_{1})_{\sharp} \gammA = \pi, 
\]
with $\mu_{t}$ geodesic in the $L^{2}$-Wasserstein space and $e_{t}$ is the evaluation map at time $t$: 
for any geodesic $\gamma \in \mathcal{G}(X)$,  $e_{t}(\gamma) = \gamma_{t}$. 
$\mathcal{P}(\G(X))$ denotes the space of probability measures over $\G(X)$, 
the space of geodesic in $X$ endowed with the uniform topology inherited as a subset of $C([0,1],X)$.
The support of $\gammA$ will be denoted with $G$. 
The evaluation map $e$ without subscript is defined on $[0,1] \times G$ by $e(s,\gamma) = \gamma_{s}$.

Moreover 
\[
\mu_{t}= \r_{t} m, \qquad \forall \in  t \in [0,1].
\]
Thanks to recent results on existence and uniqueness of optimal maps, see \cite{gigli:maps}, only one geodesic in $G$ has a given couple of points as initial and final points, that is for $\gamma \in G$
\[
(e_{0},e_{1})^{-1} \{(\gamma_{0},\gamma_{1}) \} = \{\gamma \}.
\]

Moreover by inner regularity of compact sets we can assume without loss of generality that $G$ is compact, 
\[
\r_{t} \leq M, \qquad \forall \in  t \in [0,1],
\]
and metric Brenier's Theorem holds for all $\gamma \in G$, that is 
\begin{equation}\label{E:gradientbrenier}
d(\gamma_{0},\gamma_{1}) = |D\f|_{w}(\gamma_{0}).
\end{equation}

A $d^{2}$-concave Kantorovich potential for $(\mu_{0},\mu_{1})$ is $\f$ and
$\f_{t}$ will be the $d^{2}$-concave Kantorovich potential for $(\mu_{t},\mu_{1})$ obtained through Theorem \ref{T:potential}. 
When it will be needed, we will prefer the notation $\f_{0}$ to $\f$.
Thanks to compactness of $G$ we can also assume $\f$ to be Lipschitz. Its $d^{2}/2$-transform will be denoted by $\f^{c}$.
From Corollary \ref{C:1d-evo} it follows that $\f_{1} = -\f^{c}$ $\mu_{1}$-a.e. and 
\begin{equation}\label{E:dephi}
\f_{t}(\gamma_{t}) = (1-t)\f_{0}(\gamma_{0}) + t \f_{1}(\gamma_{1}).
\end{equation}
We will also use the following notation
\[
\f_{t}(\mu_{t}) = \f_{t}(\supp[\mu_{t}]), \qquad  \forall t \in [0,1].
\]

Since we will make an extensive use of the following sets, we fix their names once for all:
\begin{equation}\label{E:transportset}
\Gamma : =\left\{(x,y) \in X\times X : \f(x) + \f^{c}(y) = \frac{d^{2}(x,y)}{2} \right\}, \qquad
\end{equation}
contains the support of $\pi$ and the transportation set for $(\mu_{t},\mu_{1})$ is
\begin{equation}\label{E:transportsett}
\Gamma_{t} : \left\{ (x,y) \in X \times X : \f_{t}(x) + \f^{c}(y) = \frac{d^{2}(x,y)}{2(1-t)}\right\}.
\end{equation}
and again $(e_{t},e_{1})_{\sharp}\gammA (\Gamma_{t}) =1$.
Fix also the set of curves with starting point in $\f^{-1}(a)$:
\begin{equation}\label{E:curvesa}
G_{a} : = \big\{ \gamma \in G : \f(\gamma_{0}) = a \big\}.
\end{equation}
and the corresponding subset of $\Gamma$ 
\begin{equation}\label{E:transportseta}
\Gamma_{a} = \left\{ (x,y) \in X \times X : \f(x) + \f^{c}(y) = \frac{d^{2}(x,y)}{2}, \f(x) = a \right\} =  
\Gamma \cap \left(\f^{-1}(a) \times X \right).
\end{equation}
In Section \ref{Ss:directionmotionpartial} and Section \ref{Ss:motiongeneral} to disintegrate the reference measure $m$
in the direction of evolution, for $r\in [0,1]$ we will use the ``closed'' and ``open'' evolution sets:
\begin{equation}\label{E:evolutionset}
\bar \Gamma_{a}(r) : = e \left( [0,r] \times G_{a} \right), \qquad  
\Gamma_{a}(r) : = e \left( [0,r) \times G_{a} \right).
\end{equation}

As it will be proved in Proposition \ref{P:monotone}, the set $\Gamma_{a}$ is $d$-cyclically monotone. We will denote with 
$\phi_{a}$ a Kantorovich potential associated to it, that is $\phi_{a}$ is 1-Lipschitz function such that
\[
\Gamma_{a} \subset \{ (x,y)\in X \times X : \phi_{a}(x) - \phi_{a}(y) = d(x,y) \}.
\]
The $d$-monotone set associated to $\phi_{a}$ will be used again so we will denote it with $K_{a}$: 
\begin{equation}\label{E:dmonotoneset}
K_{a} : =\{ (x,y)\in X \times X : \phi_{a}(x) - \phi_{a}(y) = d(x,y) \}.
\end{equation}

A relevant function for the analysis is the length map at time $t$: for $t \in [0,1]$ the map $L_{t} : e_{t}(G) \to (0,\infty)$
is defined by
\[
L_{t}(x) : = L(e_{t}^{-1}(x)).
\]
Again by inner regularity of compact sets, we can assume that there exists a positive constant $C$ such that 
\[
\frac{1}{C} < L(\gamma) < C, \qquad \forall \gamma \in G.
\]

In order to study the behavior of the evolution after time $t$ of the level sets of $\f$, i.e. 
$\{ \gamma_{t} : \gamma \in G, \f(\gamma_{0})=a\}$ for $a \in \erre$, is convenient to see them as level set of a particular function.  
As it will be proven during this note this particular function is defined by
\begin{equation}\label{E:levelt}
e_{t}(G) \ni \gamma_{t} \mapsto \Phi_{t}(\gamma_{t}) : = \f_{t}(\gamma_{t}) + \frac{t}{2}L^{2}_{t}(\gamma_{t}),
\end{equation}
where in the definition of $\Phi_{t}$ we used that, for $t\in (0,1]$, for every $x \in e_{t}(G)$ there exists only  one geodesic $\gamma \in G$ 
with $\gamma_{t} =x$.  This property for $t=1$ holds only if $\mu_{1}\ll m$. Another possible definition is $\Phi_{t}(\gamma_{t}) : = \f(\gamma_{0})$, 
see \eqref{E:dephi}.

The map $\Phi_{t}$ enjoys the next monotonicity property.

\begin{lemma}\label{L:monotony}
Let $\gamma \in G$ be fixed. Then for every $s> 0$ it holds that 
\[
\Phi_{t}(\gamma_{t-s}) > \Phi_{t}(\gamma_{t}) > \Phi_{t}(\gamma_{t+s}),
\]
provided $\gamma_{t-s} \in e_{t}(G)$ for the first inequality and $\gamma_{t+s} \in e_{t}(G)$ for the second one.
\end{lemma}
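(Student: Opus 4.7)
\emph{Proof proposal.} The first step is to rewrite $\Phi_t$ on $e_t(G)$ in a form that makes the monotonicity transparent. Fix $y\in e_t(G)$ with $t\in(0,1]$; by the standing uniqueness $(e_0,e_1)^{-1}\{(\eta_0,\eta_1)\}=\{\eta\}$ combined with non-branching of $(X,d)$ there is a unique $\eta\in G$ with $\eta_t=y$, and $L_t(y)=L(\eta)=d(\eta_0,\eta_1)$. Corollary~\ref{C:1d-evo} gives $\f_t(y)=\f(\eta_0)-\tfrac{t}{2}d^2(\eta_0,\eta_1)$, so that
\[
\Phi_t(y)=\f_t(y)+\tfrac{t}{2}L_t^2(y)=\f(\eta_0).
\]
Using the proviso $\gamma_{t\pm s}\in e_t(G)$ to produce the unique $\hat\gamma,\tilde\gamma\in G$ with $\hat\gamma_t=\gamma_{t+s}$ and $\tilde\gamma_t=\gamma_{t-s}$, the lemma reduces to $\f(\gamma_0)>\f(\hat\gamma_0)$ and $\f(\tilde\gamma_0)>\f(\gamma_0)$.

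The two inequalities are parallel; I focus on the first. Set $a:=L(\gamma)$, $b:=L(\hat\gamma)$ and $z:=\gamma_{t+s}=\hat\gamma_t$. By Lemma~\ref{L:geod} the coupling $(e_0,e_{t+s})_\sharp\gammA$ is optimal for $W_2(\mu_0,\mu_{t+s})$, so $d^2$-cyclical monotonicity applied to $(\gamma_0,\gamma_{t+s})$ and $(\hat\gamma_0,\hat\gamma_{t+s})$, combined with the identities $d(\gamma_0,\gamma_{t+s})=(t+s)a$, $d(\hat\gamma_0,\hat\gamma_{t+s})=(t+s)b$, $d(\hat\gamma_0,\gamma_{t+s})=tb$ and the triangle estimate $d(\gamma_0,\hat\gamma_{t+s})\le(t+s)a+sb$, yields after a short algebraic simplification
\[
b\le\frac{t+s}{t}\,a.
\]
A further triangle inequality then produces $d(\hat\gamma_0,\gamma_1)\le tb+(1-t-s)a\le a$. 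Since $(\gamma_0,\gamma_1)\in\Gamma$ (so $\f(\gamma_0)+\f^c(\gamma_1)=a^2/2$) while the generic subdifferential bound reads $\f(\hat\gamma_0)+\f^c(\gamma_1)\le d^2(\hat\gamma_0,\gamma_1)/2$, subtraction gives $\f(\gamma_0)-\f(\hat\gamma_0)\ge\tfrac12\bigl(a^2-d^2(\hat\gamma_0,\gamma_1)\bigr)\ge 0$.

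Strict inequality is the main technical obstacle. If all three estimates above were simultaneously equalities, then $b=(t+s)a/t$, $(\hat\gamma_0,\gamma_1)\in\Gamma$, and the concatenation $\hat\gamma|_{[0,t]}\ast\gamma|_{[t+s,1]}$ is a geodesic of length $a$ from $\hat\gamma_0$ to $\gamma_1$. This concatenation shares the positive-length arc $\gamma|_{[t+s,1]}$ with the geodesic $\gamma$, so by non-branching the two geodesics coincide as curves; matching the arc-length distances $(t+s)a=tb$ from $z$ back to the two starting points then yields $\gamma_0=\hat\gamma_0$. The metric Brenier identity \eqref{E:gradientbrenier} then forces $a=|D\f|_{w}(\gamma_0)=|D\f|_{w}(\hat\gamma_0)=b$, which together with $tb=(t+s)a$ gives $s=0$, contradicting $s>0$. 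Hence at least one of the three estimates is strict, giving $\f(\gamma_0)>\f(\hat\gamma_0)$. The inequality $\Phi_t(\gamma_{t-s})>\Phi_t(\gamma_t)$ follows from the same template, using cyclical monotonicity of $(e_0,e_t)_\sharp\gammA$ to produce the reverse bound $L(\tilde\gamma)\ge\tfrac{t-s}{t}a$ and hence $d(\gamma_0,\tilde\gamma_1)\le L(\tilde\gamma)$; the non-branching plus Brenier contradiction in the equality case is identical.
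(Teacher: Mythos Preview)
Your argument is correct in spirit but takes a different and longer path than the paper, and it leaves one edge case unresolved.

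\textbf{Comparison of the two approaches.} The paper argues by direct contradiction: assuming (say) $\Phi_t(\gamma_{t-s})\le\Phi_t(\gamma_t)$, it first excludes equality by invoking Proposition~\ref{P:monotone}, so that $\f(\hat\gamma_0)<\f(\gamma_0)$. This strict potential gap is converted, via the identity $\f(\cdot)+\f_t^c(\gamma_{t-s})\le \tfrac{1}{2t}d^2(\cdot,\gamma_{t-s})$, into the strict distance inequality $d(\hat\gamma_0,\gamma_{t-s})<d(\gamma_0,\gamma_{t-s})$. A single expansion via the triangle inequality then produces a \emph{strict} violation of the $d^2$-cyclical monotonicity of $(e_0,e_t)_\sharp\gammA$, and the proof ends. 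You instead run the chain forward: cyclical monotonicity plus a triangle estimate yield the length bound $b\le\tfrac{t+s}{t}a$, a second triangle inequality gives $d(\hat\gamma_0,\gamma_1)\le a$, and the sub\-differential inequality for $\f$ then gives $\f(\gamma_0)\ge\f(\hat\gamma_0)$. Strictness is obtained a posteriori by analysing the equality case with non-branching and the metric Brenier identity. Both routes use the same ingredients (cyclical monotonicity, triangle inequality, potentials), but the paper's contradiction is shorter and sidesteps the equality analysis entirely by importing strictness at the outset.

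\textbf{A gap in the equality analysis.} Your non-branching argument for strictness in the direction $\Phi_t(\gamma_t)>\Phi_t(\gamma_{t+s})$ relies on the concatenation $\hat\gamma|_{[0,t]}\ast\gamma|_{[t+s,1]}$ sharing the \emph{positive-length} arc $\gamma|_{[t+s,1]}$ with $\gamma$, so that non-branching forces $\hat\gamma_0=\gamma_0$. When $t+s=1$ (which the proviso $\gamma_{t+s}\in e_t(G)$ does not exclude), this shared arc degenerates to the single point $\gamma_1$ and the conclusion $\hat\gamma_0=\gamma_0$ no longer follows. The case can be repaired---in the equality scenario one has $\f(\hat\gamma_0)=\f(\gamma_0)$, so $\gamma,\hat\gamma\in G_a$ with $\hat\gamma_t=\gamma_1$ and $t\neq 1$, contradicting the partition property of Lemma~\ref{L:partition}---but this should be said explicitly. (The symmetric edge case $t-s=0$ for the other inequality is not a problem: equality there forces $tc=0$, hence $L(\tilde\gamma)=0$, already impossible.)
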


\begin{proof}
We first prove the first inequality.
Suppose by contradiction the existence of $s>0$ such that $\gamma_{t-s} \in e_{t}(G)$ and
$\Phi_{t}(\gamma_{t-s}) \leq \Phi_{t}(\gamma_{t})$. From Proposition \ref{P:monotone}, necessarily
$\Phi_{t}(\gamma_{t-s}) < \Phi_{t}(\gamma_{t})$. So let $\hat \gamma : = e_{t}^{-1}(\gamma_{t-s})$, then the previous inequality reads as 
\[
\f(\hat \gamma_{0}) < \f(\gamma_{0}).
\] 
So we can deduce
\[
\frac{1}{2t}d^{2}(\hat \gamma_{0},\gamma_{t-s}) = \f(\hat \gamma_{0}) + \f^{c}_{t}(\gamma_{t-s})  
<  \f(\gamma_{0}) + \f^{c}_{t}(\gamma_{t-s})  \leq \frac{1}{2t}d^{2}(\gamma_{0},\gamma_{t-s}),
\]
and therefore $d (\hat \gamma_{0},\gamma_{t-s})< d(\gamma_{0},\gamma_{t-s})$.
Hence
\begin{align*}
d^{2}(\gamma_{0},\gamma_{t-s}) + d^{2}(\hat \gamma_{0},\gamma_{t}) \leq &~
d^{2}(\gamma_{0},\gamma_{t-s}) + \big( d(\hat \gamma_{0},\gamma_{t-s}) + d(\gamma_{t-s},\gamma_{t}) \big)^{2} \crcr
= &~ d^{2}(\gamma_{0},\gamma_{t-s}) +  d^{2}(\hat \gamma_{0},\gamma_{t-s}) + d^{2}(\gamma_{t-s},\gamma_{t}) + 2d(\hat \gamma_{0},\gamma_{t-s}) 
d(\gamma_{t-s},\gamma_{t}) \crcr
<&~ d^{2}(\gamma_{0},\gamma_{t-s}) +  d^{2}(\hat \gamma_{0},\gamma_{t-s}) + d^{2}(\gamma_{t-s},\gamma_{t}) + 2d(\gamma_{0},\gamma_{t-s}) 
d(\gamma_{t-s},\gamma_{t}) \crcr
= &~ d^{2}(\gamma_{0},\gamma_{t}) + d^{2}(\hat \gamma_{0},\gamma_{t-s}),
\end{align*}
and since $\hat \gamma_{t} = \gamma_{t-s}$ and $\hat \gamma \in G$, this is in contradiction with $d^{2}$-cyclical monotonicity.
The proof of the other inequality follows in the same way.
\end{proof}

Another important set for our analysis is the following one: for $\gamma \in G$ and $t\in (0,1)$
\begin{equation}\label{E:appartiene}
I_{t}(\gamma) := \{\tau  \in (0,1) : \gamma_{\tau} \in e_{t}(G) \},
\end{equation}
that is the set of $\tau$ for which $\gamma_{\tau}$ belongs to $e_{t}(G)$. 
A priori one can only say that $t$ belongs to $I_{t}(\gamma)$ but actually 
the set $I_{t}(\gamma)$ has sufficiently many points in a neighborhood of $t$.
The following Lemma proves a density result and 
it has been obtained in collaboration with Martin Huesmann in \cite{cavhue:regular}.
\begin{lemma}\label{L:density}
For $\mathcal{L}^{1}$-a.e. $t \in [0,1]$, 
\[
\lim_{\ve \to 0} \frac{1}{2\ve}\mathcal{L}^{1}(I_{t}(\gamma) \cap (t-\ve, t+\ve )) =1, \qquad \textrm{in } L^{1}(G,\gamma). 
\]
That is, 
the point $\tau=t$ is a point of Lebesgue density (in $L^{1}$ sense) 1 for the set $I_{t}(\gamma):=\{ \tau \in [0,1] : \gamma_{\tau} \in e_{t}(G) \}$. 
\end{lemma}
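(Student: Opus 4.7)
The plan is to reduce the statement, via Fubini, to a scalar convergence $F_\ve(t) \to 1$, and then to establish the latter from the Lebesgue differentiation theorem applied to the density $\r_\tau(x)$ in the $\tau$-variable.

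First, I would apply Fubini to the quantity under consideration, using the identities $\mathbf{1}_{I_t(\gamma)}(\tau) = \mathbf{1}_{e_t(G)}(\gamma_\tau)$ and $\mu_\tau = (e_\tau)_{\sharp}\gammA$, to obtain
\[
\int_G \frac{1}{2\ve}\mathcal{L}^1\bigl(I_t(\gamma) \cap (t-\ve, t+\ve)\bigr)\, d\gammA(\gamma) \;=\; \frac{1}{2\ve}\int_{t-\ve}^{t+\ve} \mu_\tau(e_t(G))\, d\tau \;=:\; F_\ve(t).
\]
The integrand on the left is nonnegative and bounded above by $1$, so $L^1(G,\gammA)$-convergence to the constant $1$ is equivalent to the scalar limit $F_\ve(t) \to 1$; it therefore suffices to prove the latter for $\mathcal{L}^1$-a.e.\ $t \in [0,1]$.

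Next, I would unpack $F_\ve$ through the density $\r_\tau$ of $\mu_\tau$:
\[
F_\ve(t) \;=\; \int_{e_t(G)}\bar\r_{t,\ve}(x)\, dm(x),\qquad \bar\r_{t,\ve}(x) := \frac{1}{2\ve}\int_{t-\ve}^{t+\ve}\r_\tau(x)\,d\tau.
\]
For each fixed $x$, the function $\tau \mapsto \r_\tau(x)$ is bounded by $M$, hence lies in $L^1_{\loc}([0,1])$, and the classical Lebesgue differentiation theorem yields $\bar\r_{t,\ve}(x) \to \r_t(x)$ as $\ve \to 0$ for $\mathcal{L}^1$-a.e.\ $t$, with the exceptional set depending on $x$. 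Since this exceptional set sits inside a subset of $[0,1]\times X$ of zero $\mathcal{L}^1\otimes m$-measure, Fubini produces a full-measure set of $t$'s for which $\bar\r_{t,\ve}(x) \to \r_t(x)$ holds for $m$-a.e.\ $x$. Fixing such a $t$, compactness of $e_t(G)$ together with local finiteness of $m$ make $M\cdot\mathbf{1}_{e_t(G)}$ an admissible dominating function, and dominated convergence gives
\[
F_\ve(t) \;\longrightarrow\; \int_{e_t(G)}\r_t\, dm \;=\; \mu_t(e_t(G)) \;=\; 1,
\]
since $\mu_t$ is concentrated on $e_t(G)$.

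The main technical point to address is the joint measurability of $(\tau, x) \mapsto \r_\tau(x)$ required for the Fubini argument on the exceptional set. I would handle this by selecting the canonical representative $\r_\tau(x) := \limsup_{r\downarrow 0}\mu_\tau(\bar B_r(x))/m(\bar B_r(x))$, which is jointly Borel: indeed narrow continuity of the Wasserstein geodesic $\tau \mapsto \mu_\tau$ makes $\tau \mapsto \mu_\tau(\bar B_r(x))$ upper semicontinuous for each $(x,r)$, so the $\limsup$ on a countable sequence of radii yields a jointly Borel function that coincides $m$-a.e.\ with $\r_\tau$ for every $\tau$.
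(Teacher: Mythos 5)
Your proposal cannot be checked against an internal proof, because the paper does not actually prove Lemma \ref{L:density}: it only states it and attributes it to \cite{cavhue:regular}. Judged on its own, your argument is correct and self-contained. The reduction is clean: since $\gamma\mapsto\frac{1}{2\ve}\mathcal{L}^{1}\bigl(I_{t}(\gamma)\cap(t-\ve,t+\ve)\bigr)$ takes values in $[0,1]$ and $\gammA$ is a probability measure, $L^{1}(G,\gammA)$-convergence to $1$ is indeed equivalent to convergence of the integrals, and the identity $F_{\ve}(t)=\frac{1}{2\ve}\int_{t-\ve}^{t+\ve}\mu_{\tau}(e_{t}(G))\,d\tau$, combined with the uniform bound $\r_{\tau}\leq M$, the compactness of $e([0,1]\times G)$, and Lebesgue differentiation in the $\tau$-variable plus dominated convergence, yields $F_{\ve}(t)\to\mu_{t}(e_{t}(G))=1$ for $\mathcal{L}^{1}$-a.e. $t$. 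Two technical points should be stated more carefully. First, the fact that your representative $\limsup_{r\downarrow0}\mu_{\tau}(\bar B_{r}(x))/m(\bar B_{r}(x))$ coincides $m$-a.e. with $\r_{\tau}$ for every fixed $\tau$ is the Lebesgue differentiation theorem for the reference measure $m$, which is available precisely because $m$ is (locally uniformly) doubling under $\mathsf{CD}^{*}(K,N)$; this should be invoked explicitly. Second, upper semicontinuity of $\tau\mapsto\mu_{\tau}(\bar B_{r}(x))$ for each fixed $(x,r)$ does not by itself give joint Borel measurability in $(\tau,x)$; what does is joint upper semicontinuity of $(\tau,x)\mapsto\mu_{\tau}(\bar B_{r}(x))$, obtained by noting that $\bar B_{r}(x_{n})\subset\bar B_{r+\delta}(x)$ for $n$ large, applying the Portmanteau inequality to the closed ball $\bar B_{r+\delta}(x)$, and then letting $\delta\downarrow0$; similarly, measurability of the exceptional set in the Fubini step uses that $\ve\mapsto\bar\r_{t,\ve}(x)$ is continuous, so the limit can be tested along rational $\ve$. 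With these additions your argument is complete, and it only uses the uniform density bound, compactness of $G$, and the doubling property, so it is a legitimate alternative to the external reference.
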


\section{On the metric structure of optimal transportation}\label{S:dmonotone}
Only for this Section the setting will be more general than the one specified in Section \ref{S:setting}. 
Here we drop all the assumption on the curvature of the space. So $(X,d,m)$ is a geodesic, non branching and separable metric measure space, 
$\mu_{t}$ is geodesic in the $L^{2}$-Wasserstein space together with a family of Kantorovich potential $\f_{t}$ for $t\in [0,1]$ associated to it. 
We will use the notation of Section \ref{S:setting} for everything and related to this objects.

Fix $a \in \f(\mu_{0})$. We will prove that $\Gamma_{a}$ is $d$-cyclically monotone. Recall that 
\[
\Gamma_{a} : = \Gamma \cap \f^{-1}(a) \times X,  
\]
with $\Gamma$ transport set for $(\mu_{0},\mu_{1})$ as from \eqref{E:transportset}.

\begin{proposition}\label{P:monotone}
The set $\Gamma_{a}$ is $d$-cyclically monotone.
\end{proposition}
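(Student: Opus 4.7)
The plan is to exploit that $\f$ is a $d^2$-concave Kantorovich potential, that $\f^c$ is its $d^2/2$-transform, and that all the first coordinates in $\Gamma_a$ share the common value $a = \f(x_i)$. The strategy is to derive a pointwise (rather than just cyclic) comparison: for any two pairs $(x_i,y_i),(x_j,y_j)\in\Gamma_a$ one has $d(x_i,y_i)\leq d(x_j,y_i)$. Once this pointwise inequality is in hand, $d$-cyclical monotonicity will follow by summing over any cyclic permutation.

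More concretely, I would start from the identity $\f(x_i)+\f^c(y_i)=\tfrac12 d^2(x_i,y_i)$ coming from $(x_i,y_i)\in\Gamma$, and the dual inequality $\f(x_j)+\f^c(y_i)\leq \tfrac12 d^2(x_j,y_i)$ which holds unconditionally because $\f^c(y_i)=\inf_x\bigl(\tfrac12 d^2(x,y_i)-\f(x)\bigr)$. Subtracting the two and using $\f(x_i)=\f(x_j)=a$ cancels both the potential values and the $\f^c(y_i)$ term, leaving $d^2(x_i,y_i)\leq d^2(x_j,y_i)$, hence $d(x_i,y_i)\leq d(x_j,y_i)$.

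Then, given any finite family $(x_1,y_1),\dots,(x_n,y_n)\in\Gamma_a$ and any cyclic permutation (say $x_{n+1}:=x_1$), I would apply the pointwise bound with $j=i+1$ to each $i$ and sum:
\[
\sum_{i=1}^n d(x_i,y_i)\leq \sum_{i=1}^n d(x_{i+1},y_i),
\]
which is exactly $d$-cyclical monotonicity of $\Gamma_a$.

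There is essentially no obstacle here: the only subtle point is to make sure one is really using the common level $\f(x_i)=a$, since without it the subtraction would leave a residual $\f(x_i)-\f(x_j)$ that would only yield $d^2$-cyclical monotonicity (which $\Gamma$ has by being contained in the support of the optimal plan) but not the stronger $d$-cyclical version. The restriction to a single level set of $\f$ is exactly what promotes $d^2$-monotonicity into $d$-monotonicity, and this is the conceptual content of the proposition.
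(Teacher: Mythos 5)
Your proof is correct and follows essentially the same route as the paper: the key step in both is that for $(x_i,y_i),(x_j,y_j)\in\Gamma_a$ the identity $\f(x_i)+\f^{c}(y_i)=\tfrac12 d^{2}(x_i,y_i)$ combined with $\f(x_j)+\f^{c}(y_i)\leq\tfrac12 d^{2}(x_j,y_i)$ and $\f(x_i)=\f(x_j)=a$ yields the pointwise bound $d(x_i,y_i)\leq d(x_j,y_i)$, and summing along the cycle gives $d$-cyclical monotonicity. The paper's proof is exactly this with $j=i-1$, so there is nothing to add.
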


\begin{proof}
Let $(x_{i},y_{i}) \in \Gamma_{a}$ for $i=1,\dots,n$ and observe that
\[
\frac{1}{2}d^{2}(x_{i},y_{i}) = \f(x_{i}) +\f^{c}(y_{i})   = \f(x_{i-1}) +\f^{c}(y_{i}) \leq \frac{1}{2}d^{2}(x_{i-1},y_{i}).
\]
Hence $d (x_{i},y_{i}) \leq d(x_{i-1},y_{i}) $ and therefore 
\[
\sum_{i=1}^{n}d(x_{i},y_{i})\leq \sum_{i=1}^{n}d(x_{i},y_{i+1})
\]
and the claim follows.
\end{proof}

The main consequence of Proposition \ref{P:monotone} is that two distinct geodesic of $G$, starting from the same level set of $\f$, can meet only for $t=0$ or $t=1$, provided the metric Brenier's Theorem holds (in the sense of \eqref{E:gradientbrenier}). Recall the definition of 
\[
\bar \Gamma_{a}(1) = e ([0,1]\times G_{a}).
\]
already introduced in \eqref{E:evolutionset} and 
$G_{a}$ the set of geodesics starting from the level set $a$ of $f$, see \eqref{E:curvesa}.

\begin{lemma}\label{L:partition} 
If the metric Brenier's Theorem holds, then the family $\{e_{t}(G_{a}) \}_{t\in [0,1]}$ is a partition of $\bar \Gamma_{a}(1)$.
\end{lemma}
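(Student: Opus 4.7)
The covering $\bar\Gamma_a(1) = \bigcup_{t \in [0,1]} e_t(G_a)$ is immediate from the definition $\bar\Gamma_a(1) = e([0,1]\times G_a)$, so the substantive claim is pairwise disjointness: $e_s(G_a) \cap e_t(G_a) = \emptyset$ for $s \neq t$. I argue by contradiction, assuming $z = \gamma_s = \hat\gamma_t$ for some $\gamma, \hat\gamma \in G_a$ and $0 \le s < t \le 1$, and I write $L_1 := L(\gamma)$, $L_2 := L(\hat\gamma)$. Both endpoint pairs lie in $\Gamma$, so $\f(\gamma_0) = \f(\hat\gamma_0) = a$, $\f^c(\gamma_1) = L_1^2/2 - a$ and $\f^c(\hat\gamma_1) = L_2^2/2 - a$.

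The key quantitative step is to force $sL_1 = tL_2$. Applying the Kantorovich inequality $\f + \f^c \le d^2/2$ to the cross pair $(\gamma_0, \hat\gamma_1)$ together with the triangle inequality through $z$ gives
\[
\frac{L_2^2}{2} = \f(\gamma_0) + \f^c(\hat\gamma_1) \le \frac{d^2(\gamma_0,\hat\gamma_1)}{2} \le \frac{(sL_1 + (1-t)L_2)^2}{2},
\]
so $tL_2 \le sL_1$; the symmetric inequality from $(\hat\gamma_0,\gamma_1)$ yields $sL_1 \le tL_2$. Hence $sL_1 = tL_2$. This rules out $s = 0$ (which would force $t = 0$) and, combined with $s < t$, gives $L_1 > L_2$. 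Moreover all triangle inequalities above are equalities, so the concatenation $\beta := \hat\gamma|_{[0,t]} * \gamma|_{[s,1]}$ is a genuine geodesic from $\hat\gamma_0$ to $\gamma_1$ of total length $tL_2 + (1-s)L_1 = L_1$.

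Reparametrize $\gamma$ and $\beta$ by arc-length as $\gamma^{\mathrm{arc}}, \beta^{\mathrm{arc}} : [0,L_1] \to X$. By construction the second piece of $\beta$ is literally $\gamma|_{[s,1]}$, and both $\gamma^{\mathrm{arc}}$ and $\beta^{\mathrm{arc}}$ reach $z$ at arc-length $sL_1 = tL_2$, so they coincide on the non-trivial terminal interval $[sL_1, L_1]$. Non-branching applied to the reversed parametrizations implies that two arc-length geodesics agreeing on a terminal segment must agree throughout their common domain, so $\beta^{\mathrm{arc}} \equiv \gamma^{\mathrm{arc}}$ on $[0,L_1]$; evaluating at the starting endpoint gives $\hat\gamma_0 = \gamma_0$. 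The metric Brenier theorem, applied once to $\gamma$ and once to $\hat\gamma$, then yields $L_1 = |D^+\f|(\gamma_0) = |D^+\f|(\hat\gamma_0) = L_2$, in direct contradiction with $L_1 > L_2$.

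The main obstacle is achieving the speed-matching $sL_1 = tL_2$: without it the two arc-length geodesics would pass through $z$ at different parameters and non-branching would yield no useful agreement. It is precisely this coincidence, forced by the Kantorovich duality applied to \emph{both} cross pairs $(\gamma_0,\hat\gamma_1)$ and $(\hat\gamma_0,\gamma_1)$, that lets non-branching do its work; metric Brenier is then the final ingredient that converts the coincidence of starting points $\hat\gamma_0 = \gamma_0$ into the length equality $L_1 = L_2$ needed to close the contradiction.
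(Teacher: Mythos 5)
Your proof is correct and follows essentially the same strategy as the paper's: duality for the $d^{2}/2$ cost applied to the cross pairs (exploiting $\f(\gamma_{0})=\f(\hat\gamma_{0})=a$), the non-branching property to force the two curves to share their starting point, and the metric Brenier theorem to close the contradiction. The only real difference is bookkeeping: where the paper extracts a zero-cost cycle (via the $d$-cyclical monotonicity of $\Gamma_{a}$) and argues by cases, you derive the exact speed-matching $sL_{1}=tL_{2}$ and apply non-branching to the explicit concatenated geodesic, which is a slightly more quantitative but equivalent route.
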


\begin{proof}
By construction the family covers $\bar \Gamma_{a}(1)$, so we have only to show that overlapping doesn't occur.
Assume by contradiction the existence of $\hat \gamma, \tilde \gamma \in G_{a}$, $\hat \gamma \neq \tilde \gamma$ such that 
$\hat \gamma_{s} =\tilde \gamma_{t} = z$ with, say, $s <  t$. 

Then $d$-cyclical monotonicity implies that $\hat \gamma$ and $\tilde \gamma$ form a cycle of zero cost 
and then non-branching property of $(X,d,m)$ implies that 
they are contained in a longer geodesic:  if $\hat \gamma_{0} = x_{0}, \hat \gamma_{1} = y_{0}$ and $\tilde \gamma_{0} = x_{1}, \tilde \gamma_{1} = y_{1}$
then
\[
d(x_{0},y_{1}) + d(x_{1},y_{0}) = d (x_{0},y_{0}) + d(x_{1},y_{1}).
\] 
There are two possible cases: or $d(x_{1},y_{0}) \leq d(x_{0},y_{0})$ or  $d(x_{0},y_{1})\leq d(x_{1},y_{1})$, indeed if both were false we would have 
a contradiction with the previous identity.
In the first case 
\[
\frac{1}{2}d^{2}(x_{0},y_{0}) = \f(x_{0}) + \f^{c}(y_{0}) =  \f(x_{1}) + \f^{c}(y_{0}) \leq \frac{1}{2} d^{2}(x_{1},y_{0}) \leq \frac{1}{2} d^{2}(x_{0},y_{0}).
\]
Therefore $d(x_{0},y_{0}) = d(x_{1},y_{0})$ and since they lie on the same geodesic $x_{0} = x_{1}$. 
In the second case
\[
\frac{1}{2}d^{2}(x_{1},y_{1}) = \f(x_{1}) + \f^{c}(y_{1}) =  \f(x_{0}) + \f^{c}(y_{1}) \leq \frac{1}{2} d^{2}(x_{0},y_{1}) \leq \frac{1}{2} d^{2}(x_{1},y_{1}),
\]
and the same conclusion holds true: $x_{0} = x_{1}$.

Hence we have $(x_{0},y_{0}), (x_{0},y_{1}) \in \Gamma_{a}$. 
It follows from metric Brenier's Theorem (Proposition \ref{P:speed}) that for all $\gamma \in G$ 
\[
|D\f|_{w}(x) = d(\gamma_{0},\gamma_{1}). 
\]
Therefore necessarily $y_{0}=y_{1}$. Since $\hat \gamma, \tilde \gamma$ have also an inner common point, they must coincide implying a contradiction.
\end{proof}

The next is a simple consequence of Lemma \ref{L:partition}.

\begin{corollary}\label{C:isomorphism}
For each $a\in \erre$, the map $e: [0,1] \times G_{a} \to X$ defined by 
\[
e(s,\gamma) : = \gamma_{s}
\]
is a measurable isomorphism.
\end{corollary}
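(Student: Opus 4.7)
The plan is to verify continuity and injectivity of $e$, and then to conclude via a compactness argument that $e$ is a Borel isomorphism onto its image $\bar\Gamma_{a}(1)$. Continuity is immediate from the uniform topology on $G$. Since $G$ is compact (Setting) and $\gamma\mapsto \f(\gamma_{0})$ is continuous, $G_{a}$ is closed in $G$, hence compact, so $[0,1]\times G_{a}$ is compact Hausdorff; any continuous bijection from such a space to a Hausdorff space is automatically a homeomorphism onto its image, and in particular a Borel isomorphism. Thus the whole question reduces to injectivity.

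Suppose $e(s,\gamma)=e(t,\hat\gamma)$ with $\gamma,\hat\gamma\in G_{a}$. If $s\neq t$, the equality $\gamma_{s}=\hat\gamma_{t}$ would place the same point in both $e_{s}(G_{a})$ and $e_{t}(G_{a})$, contradicting the partition property of Lemma~\ref{L:partition}. So it remains to treat the same-time case $s=t$. For inner times $s\in(0,1)$ I would use the $1$-Lipschitz potential $\phi_{a}$ provided by Proposition~\ref{P:monotone}: along each $\gamma\in G_{a}$ one has $\phi_{a}(\gamma_{0})-\phi_{a}(\gamma_{1})=d(\gamma_{0},\gamma_{1})=L(\gamma)$, and combining this with the $1$-Lipschitz bound on the two subarcs $[\gamma_{0},\gamma_{u}]$ and $[\gamma_{u},\gamma_{1}]$ forces the linear formula $\phi_{a}(\gamma_{u})=a-uL(\gamma)$ for every $u\in[0,1]$. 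Equality at the common time $s>0$ then yields $L(\gamma)=L(\hat\gamma)$. Plugging this common length back into the $d$-cyclical monotonicity computation used in the proof of Lemma~\ref{L:partition} shows that $\gamma_{0},\hat\gamma_{0}$, the shared point $z=\gamma_{s}=\hat\gamma_{s}$, and $\gamma_{1},\hat\gamma_{1}$ all lie on a single geodesic; non-branching then forces both the forward and backward extensions from $z$ (of length $(1-s)L(\gamma)$ and $sL(\gamma)$ respectively) to be unique, giving $\gamma_{0}=\hat\gamma_{0}$ and $\gamma_{1}=\hat\gamma_{1}$, hence $\gamma=\hat\gamma$ by the injectivity of $(e_{0},e_{1})$ on $G$ recalled in Section~\ref{S:setting}.

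The boundary cases $s=t\in\{0,1\}$ follow from the same Setting: the optimal coupling is induced by a map (non-branching $\mathsf{CD}$ with $\mu_{0},\mu_{1}\ll m$, cf.~\cite{gigli:maps}), so that $\gamma_{0}$ determines $\gamma_{1}$ and vice versa, and then $(e_{0},e_{1})^{-1}\{(\gamma_{0},\gamma_{1})\}=\{\gamma\}$ pins down $\gamma$. The real difficulty is the interior same-time case: Lemma~\ref{L:partition} already rules out two rays of $G_{a}$ meeting at different times, but to also rule out two distinct rays meeting at a common interior time one needs the linearity of $\phi_{a}$ along rays (equating the two lengths) together with a careful application of non-branching to the collinear configuration produced by $d$-cyclical monotonicity. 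Once injectivity is established, the measurable isomorphism conclusion is purely topological.
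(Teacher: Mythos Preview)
Your overall architecture is right and matches what the paper intends: continuity of $e$, compactness of $[0,1]\times G_{a}$, and injectivity combine to give a homeomorphism onto the image, and Lemma~\ref{L:partition} dispatches the different-times case. The paper itself says nothing more than ``simple consequence of Lemma~\ref{L:partition}'', implicitly relying on the well-known fact (used throughout, e.g.\ in the very definition of $\Phi_{t}$) that for $t\in(0,1)$ the map $e_{t}\colon G\to X$ is injective on the support of any optimal dynamical plan in a non-branching space.

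There is, however, a genuine gap in your treatment of the same-time interior case. From $\phi_{a}(\gamma_{0})-\phi_{a}(\gamma_{1})=L(\gamma)$ and the $1$-Lipschitz bound you correctly get $\phi_{a}(\gamma_{u})=\phi_{a}(\gamma_{0})-uL(\gamma)$, but you then write this as $a-uL(\gamma)$. Nothing in Proposition~\ref{P:monotone} (which only gives $d$-cyclical monotonicity, not a specific potential) forces $\phi_{a}(\gamma_{0})=a$, and for a generic $\phi_{a}$ there is no reason why $\phi_{a}(\gamma_{0})=\phi_{a}(\hat\gamma_{0})$ when $\gamma_{0}\neq\hat\gamma_{0}$. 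Without that, the equation $\phi_{a}(\gamma_{0})-sL(\gamma)=\phi_{a}(\hat\gamma_{0})-sL(\hat\gamma)$ does not yield $L(\gamma)=L(\hat\gamma)$, and your subsequent collinearity argument collapses. The fix is either to invoke the \emph{specific} potential constructed in the Remark after Lemma~\ref{L:12monotone} (where indeed $\phi_{a}(\gamma_{0})=a$ for all $\gamma\in G_{a}$), or---much more simply---to bypass $\phi_{a}$ altogether: for $s\in(0,1)$, if $\gamma_{s}=\hat\gamma_{s}$ with $\gamma,\hat\gamma\in G$, then $d^{2}$-cyclical monotonicity of $(e_{0},e_{1})(G)$ together with the triangle inequality through the common point forces $L(\gamma)=L(\hat\gamma)$ and equality in those triangle inequalities, whence non-branching gives $\gamma=\hat\gamma$. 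This is the standard route, it works on all of $G$ (not just $G_{a}$), and it is what the paper tacitly uses.
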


The following is, to our knowledge, a new result and it proves that for $t \in (0,1)$ the Kantorovich potentials $\f_{t}$,  
obtained with the Hopf-Lax formula from any Kantorovich potential $\f_{0}$, verifies a property similar to the point wise metric Brenier's Theorem.

\begin{proposition}\label{P:MBT}
For every $t \in (0,1)$ and for every $\gamma \in G$
\begin{equation}\label{E:gradgeo}
\lim_{s \to 0} \frac{\f_{t}(\gamma_{t}) - \f_{t}(\gamma_{t+s})}{d(\gamma_{t}, \gamma_{t+s})} = d(\gamma_{0},\gamma_{1}) = |D\f_{t}|(\gamma_{t}),
\end{equation}
where $|D\f_{t}|$ denotes the local Lipschitz constant of $\f_{t}$.
\end{proposition}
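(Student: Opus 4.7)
The plan is to prove the limit identity via matching $\liminf$ and $\limsup$ bounds as $s\downarrow 0$, and then to deduce the equality of the local Lipschitz constant. First, Corollary \ref{C:1d-evo} together with $\f(\gamma_0)+\f^c(\gamma_1)=L^2/2$ (where $L:=d(\gamma_0,\gamma_1)$) gives the explicit value
\[
\f_t(\gamma_t) \;=\; \frac{(1-t)L^2}{2} - \f^c(\gamma_1),
\]
and this identifies $y=\gamma_1$ as the minimizer in the Hopf--Lax representation $\f_t(x)=\inf_{y\in X}\bigl[d^2(x,y)/(2(1-t))-\f^c(y)\bigr]$ at $x=\gamma_t$.

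For the lower bound on the ratio, plug $y=\gamma_1$ as a competitor at $x=\gamma_{t+s}$ to obtain $\f_t(\gamma_{t+s})\le(1-t-s)^2L^2/(2(1-t))-\f^c(\gamma_1)$. Subtracting and using the factorization $(1-t)^2-(1-t-s)^2=2s(1-t)-s^2$ together with $d(\gamma_t,\gamma_{t+s})=sL$ yields
\[
\frac{\f_t(\gamma_t)-\f_t(\gamma_{t+s})}{d(\gamma_t,\gamma_{t+s})} \;\ge\; L-\frac{sL}{2(1-t)},
\]
which tends to $L$ as $s\downarrow 0$.

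For the upper bound, which I expect to be the main obstacle, for each small $s>0$ pick a near-minimizer $y_s$ in the Hopf--Lax infimum defining $\f_t(\gamma_{t+s})$, so that $\f_t(\gamma_{t+s})\ge d^2(\gamma_{t+s},y_s)/(2(1-t))-\f^c(y_s)-s^2$. Coupling this with the Hopf--Lax upper bound $\f_t(\gamma_t)\le d^2(\gamma_t,y_s)/(2(1-t))-\f^c(y_s)$ and expanding $d^2(\gamma_t,y_s)\le[d(\gamma_t,\gamma_{t+s})+d(\gamma_{t+s},y_s)]^2$ through the triangle inequality leads to
\[
\frac{\f_t(\gamma_t)-\f_t(\gamma_{t+s})}{d(\gamma_t,\gamma_{t+s})} \;\le\; \frac{sL+2d(\gamma_{t+s},y_s)}{2(1-t)}+\frac{s}{L}.
\]
The crux is to show $\limsup_{s\downarrow 0}d(\gamma_{t+s},y_s)\le(1-t)L$. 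By Lipschitz continuity of $\f_t$ and upper semicontinuity of $\f^c$, any cluster point $y^*$ of $\{y_s\}$ satisfies $d^2(\gamma_t,y^*)/(2(1-t))-\f^c(y^*)=\f_t(\gamma_t)$, i.e.\ $(\gamma_t,y^*)\in\Gamma_t$; to conclude $y^*=\gamma_1$ one invokes the uniqueness of the optimal map (Lemma \ref{L:map}) together with the non-branching assumption and the fact from Section \ref{S:setting} that $(e_0,e_1)^{-1}(\gamma_0,\gamma_1)=\{\gamma\}$, which force $(\gamma_t,y^*)$ to lie in the support of $(e_t,e_1)_\sharp\gammA$, hence $y^*=\gamma_1$ and $d(\gamma_{t+s},y_s)\to(1-t)L$.

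The identity $|D\f_t|(\gamma_t)=d(\gamma_0,\gamma_1)$ then follows: the limit just established gives $|D\f_t|(\gamma_t)\ge L$ directly from definition \eqref{E:gradient} by testing with $y=\gamma_{t+s}$, while applying the same Hopf--Lax plus stability scheme to arbitrary sequences $x_n\to\gamma_t$ (not only along $\gamma$) yields the matching upper bound $|D\f_t|(\gamma_t)\le L$.
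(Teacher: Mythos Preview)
Your lower bound argument (plugging $y=\gamma_1$ as a competitor at $\gamma_{t+s}$) is essentially the same as the paper's computation in its Step~2, and is correct.

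The upper bound has a genuine gap. You correctly identify that any cluster point $y^*$ of the near-minimizers $y_s$ satisfies $(\gamma_t,y^*)\in\Gamma_t$, i.e.\ $y^*$ is a minimizer in the Hopf--Lax infimum at $\gamma_t$. But the step ``which force $(\gamma_t,y^*)$ to lie in the support of $(e_t,e_1)_\sharp\gammA$'' is not justified: the transportation set $\Gamma_t$ is defined purely by the potential identity $\f_t(x)+\f^c(y)=d^2(x,y)/(2(1-t))$ and is generally strictly larger than the support of the optimal plan. Nothing in Lemma~\ref{L:map}, non-branching, or the single-geodesic property of $G$ tells you that every $c$-subdifferential point of $\f_t$ at $\gamma_t$ must come from the plan. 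Moreover, Proposition~\ref{P:MBT} is stated and used in Section~\ref{S:dmonotone} where the curvature assumptions are explicitly dropped, so Lemma~\ref{L:map} (which needs $\mathsf{CD}(K,N)$) is not available in this generality.

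The paper's argument for uniqueness of the minimizer is direct and purely metric: if $z$ minimizes at $\gamma_t$, then combining $\f(\gamma_0)-\f_t(\gamma_t)=\frac{d^2(\gamma_0,\gamma_t)}{2t}$ and $\f_t(\gamma_t)+\f^c(z)=\frac{d^2(\gamma_t,z)}{2(1-t)}$ with $\f(\gamma_0)+\f^c(z)\le\frac12 d^2(\gamma_0,z)$ and the elementary inequality $\frac{d^2(\gamma_0,\gamma_t)}{t}+\frac{d^2(\gamma_t,z)}{1-t}\ge d^2(\gamma_0,z)$ forces equality throughout, hence $\gamma_t$ lies on a geodesic from $\gamma_0$ to $z$; non-branching then gives $z=\gamma_1$. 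This is exactly the missing ingredient in your argument. For the identity $|D\f_t|(\gamma_t)=d(\gamma_0,\gamma_1)$, the paper invokes Proposition~3.6 of \cite{ambrgisav:heat} on the local Lipschitz constant of Hopf--Lax solutions, which reduces it directly to the uniqueness of the minimizer just established; your hands-on scheme for arbitrary sequences $x_n\to\gamma_t$ would also require this same uniqueness step to go through.
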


\begin{proof}
{\it Step 1.} Fix $\gamma \in G$. Observe that the set 
\[
\textrm{argmin} \Big\{y\mapsto \frac{d^{2}(\gamma_{t},y)}{2(1-t)} - \f^{c}(y) \Big\},
\]
is single valued and contains only $\gamma_{1}$. Indeed suppose by contradiction the contrary. Then there exists $z\in X$ different from $\gamma_{1}$
so that 
\[
\f_{t}(\gamma_{t}) + \f^{c}(z)  = \frac{d^{2}(\gamma_{t},z)}{2(1-t)},
\]
then since $\f_{t} = -\f_{t}^{c}$ we have 
\begin{align*}
\frac{1}{2}d^{2}(\gamma_{0},z) \geq &~ \f(\gamma_{0}) + \f^{c}(z) \crcr
= &~ \f(\gamma_{0}) -\f_{t}(\gamma_{t}) +\f_{t}(\gamma_{t}) + \f^{c}(z)\crcr
= &~ \frac{1}{2} \left(  \frac{d^{2}(\gamma_{0},\gamma_{t})}{t} +  \frac{d^{2}(\gamma_{t},z)}{1-t} \right) \crcr
\geq &~ \frac{1}{2} d^{2}(\gamma_{0},z),
\end{align*}
then necessarily $d(\gamma_{0},z) = d(\gamma_{0},\gamma_{t}) + d(\gamma_{t},z)$. But then non-branching property of $(X,d,m)$ implies 
a contradiction and then $z = \gamma_{1}$.

{\it Step 2.} Then by Hopf-Lax formula for Hamilton-Jacobi equations on length spaces 
\[
\limsup_{y \to \gamma_t} \frac{|\f_{t}(y) - \f_{t}(\gamma_{t})|}{d(y,\gamma_{t})} = \frac{D^{+}(\gamma_{t},1-t)}{1-t}
\]
see Proposition 3.6 in \cite{ambrgisav:heat}. Hence from {\it Step 1.} it follows that 
\[
\limsup_{y \to \gamma_t} \frac{|\f_{t}(y) - \f_{t}(\gamma_{t})|}{d(y,\gamma_{t})} = d(\gamma_{0},\gamma_{1}).
\]
To conclude the proof observe that 
\begin{align}\label{E:useful}
\f_{t}(\gamma_{t}) - \f_{t}(\gamma_{t+s}) = &~ \f_{t}(\gamma_{t}) +\f^{c}(\gamma_{1}) - \f^{c}(\gamma_{1}) - \f_{t}(\gamma_{t+s})  \crcr
\geq &~ \frac{1}{2(1-t)} \left( d^{2}(\gamma_{t},\gamma_{1})  - d^{2}(\gamma_{t+s},\gamma_{1}) \right) \crcr
= &~ \frac{1}{2(1-t)} \left( d(\gamma_{t},\gamma_{1})  - d(\gamma_{t+s},\gamma_{1}) \right)   \left( d(\gamma_{t},\gamma_{1})  + d(\gamma_{t+s},\gamma_{1}) \right) \crcr
= &~ \frac{1}{2(1-t)} d(\gamma_{t},\gamma_{t+s})     \left( d(\gamma_{t},\gamma_{1})  + d(\gamma_{t+s},\gamma_{1}) \right).
\end{align}
Hence
\[
\liminf_{s\to 0} \frac{\f_{t}(\gamma_{t}) -\f_{t}(\gamma_{t+s})}{d(\gamma_{t},\gamma_{t+s})} \geq  \frac{d(\gamma_{t},\gamma_{1})}{1-t} = 
d(\gamma_{0},\gamma_{1})
\]
and the claim follows.
\end{proof}

As a consequence of Proposition \ref{P:MBT}, the construction presented so far is purely metric. Indeed instead of analyzing the geometric properties 
of the Wasserstein geodesic $[0,1] \ni t \mapsto \mu_{t}$ one could restrict the domain of $\mu_{t}$ to $[\ve,1-\ve]$ for any $\ve >0$ and 
Lemma \ref{L:partition} is true without assuming any curvature bound on the space $(X,d,m)$.

\subsection{From $L^{2}$-geodesics to $L^{1}$-geodesics}

Thanks to the properties proved so far we can construct a link between $L^{2}$ Wasserstein geodesics and the linear structure of $d$-cyclically monotone sets.
Since the distance is finite, from $d$-monotonicity of $\Gamma_{a}$ we deduce the existence a $1$-Lipschitz function $\phi_{a}: X \to \erre$ so that
\[
\Gamma_{a} \subset K_{a}: = \left\{ (x,y) \in X \times X : \phi_{a}(x) - \phi_{a}(y) = d(x,y) \right\}.
\]
Note that also the following inclusion holds
\[
\{ (\gamma_{s},\gamma_{t}) :  \gamma\in G_{a}, s \leq t \} \subset K_{a}.
\]

\begin{remark}
Even if an explicit expression of $\phi_{a}$ is not strictly needed to our analysis, 
for the sake of completeness, a possible choice of $\phi_{a}$ is the following one: 
\[
e([0,1], G_{a})= \bar \Gamma_{a}(1) \ni \gamma_{s} \mapsto \phi_{a} (\gamma_{s}) = a - d(\gamma_{0},\gamma_{s}) = a - s L(\gamma).
\]
Indeed all the geodesics in $G_{a}$ follows at time $0$ the direction of $\nabla \f$ and therefore they are somehow orthogonal to $\f^{-1}(a)$. The same geodesics of $G_{a}$ follows also the steepest descent direction of $\phi_{a}$ and therefore they 
have the same direction of $\nabla \phi_{a}$. Hence one would expect that at time $0$ the set $\f^{-1}(a)$ 
is a level set also for $\phi_{a}$: therefore one could expect $\phi_{a} (\gamma_{s}) = a - d(\gamma_{0},\gamma_{s})$.

We now prove that this heuristic motivation make sense. If $0 \leq s \leq t \leq 1$ and $\gamma \in G_{a}$
\[
\phi_{a}(\gamma_{s}) - \phi_{a}(\gamma_{t}) = (t -s) L(\gamma)  = d(\gamma_{s}, \gamma_{t}),
\]
and therefore 
\[
(\gamma_{s},\gamma_{t}) : \gamma \in G_{a}, 0\leq s \leq t \leq1 \} \subset K_{a}.
\]
If $ s,  t \in [0,1]$ and $\gamma ,\hat \gamma \in G_{a}$
\[
\phi_{a}(\gamma_{s}) - \phi_{a}(\hat \gamma_{t}) = d(\hat \gamma_{0}, \hat \gamma_{t}) - d( \gamma_{0}, \gamma_{s}),
\]
and since 
\[
\frac{1}{2}d(\hat \gamma_{0},\hat \gamma_{t})^{2} = \f(\hat \gamma_{0}) + \f_{t}(\hat \gamma_{t}) = 
\f(\gamma_{0}) + \f_{t}(\hat  \gamma_{t})\leq \frac{1}{2} d(\gamma_{0},\hat \gamma_{t})^{2},
\]
it follows that 
\[
\phi_{a}(\gamma_{s}) - \phi_{a}(\hat \gamma_{t}) \leq d(\gamma_{0}, \hat \gamma_{t}) - d( \gamma_{0}, \gamma_{s}) \leq 
d( \gamma_{s},\hat \gamma_{t}).
\]
Hence $\phi_{a}$ is $1$-Lipschitz and therefore it is a good $L^{1}$-Kantorovich potential for the $d$-monotone set 
$\{ (\gamma_{s},\gamma_{t}) :  \gamma\in G_{a}, s \leq t \}$. Note moreover that the calculations above proves that 
\[
\bar \Gamma_{a}(1) \ni \gamma_{s} \mapsto d(\gamma_{0},\gamma_{s})
\]
is $1$-Lipschitz and coincides with $\gamma_{s} \mapsto \min\{d(\hat \gamma_{0},\gamma_{s}) :\hat \gamma \in G_{a} \}$.
\end{remark}

The following holds.
\begin{lemma}\label{L:12monotone}
Let $\Delta \subset K_{a}$ be any set so that: 
\[
(x_{0},y_{0}), (x_{1},y_{1}) \in \Delta \quad \Rightarrow \quad (\phi_{a}(y_{1}) - \phi_{a}(y_{0}) )\cdot (\phi_{a}(x_{1}) - \phi_{a}(x_{0}) ) \geq 0.
\]
Then $\Delta$ is $d^{2}$-cyclically monotone.
\end{lemma}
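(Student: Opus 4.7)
The plan is to reduce the statement to the classical one-dimensional rearrangement inequality, by exploiting the fact that on $K_{a}$ the $1$-Lipschitz potential $\phi_{a}$ turns all the relevant distances into one-dimensional quantities. Concretely, given pairs $(x_{i},y_{i}) \in \Delta$ for $i = 1, \dots, n$, I would set $\alpha_{i} := \phi_{a}(x_{i})$ and $\beta_{i} := \phi_{a}(y_{i})$. Since $(x_{i},y_{i}) \in K_{a}$, the defining identity of $K_{a}$ gives $d(x_{i},y_{i}) = \alpha_{i} - \beta_{i}$, while for off-diagonal pairs the $1$-Lipschitzness of $\phi_{a}$ yields $d(x_{i},y_{j}) \geq |\alpha_{i} - \beta_{j}|$.

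Squaring and summing, it then suffices to prove the purely Euclidean inequality
\[
\sum_{i=1}^{n} (\alpha_{i} - \beta_{i})^{2} \leq \sum_{i=1}^{n} (\alpha_{i} - \beta_{i+1})^{2}
\]
(indices modulo $n$) in order to recover $d^{2}$-cyclical monotonicity. Expanding the squares and cancelling the $\alpha_{i}^{2}$ terms and the $\beta_{i}^{2}$ terms (the latter using $\{\beta_{i+1}\}=\{\beta_{i}\}$ as multisets), this reduces to the scalar inequality $\sum_{i} \alpha_{i}\beta_{i+1} \leq \sum_{i} \alpha_{i}\beta_{i}$.

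At this stage I would invoke the rearrangement inequality. The sign hypothesis on $\Delta$ is exactly the statement that the finite family $\{(\alpha_{i},\beta_{i})\}_{i=1}^{n}$ is comonotone: after sorting the indices so that $\beta_{1} \leq \dots \leq \beta_{n}$, the relation $(\alpha_{j}-\alpha_{i})(\beta_{j}-\beta_{i})\geq 0$ forces $\alpha_{1} \leq \dots \leq \alpha_{n}$ as well. The classical rearrangement inequality then gives $\sum_{i} \alpha_{i}\beta_{\pi(i)} \leq \sum_{i} \alpha_{i}\beta_{i}$ for every permutation $\pi$ of $\{1,\dots,n\}$, covering in particular the cyclic shift and closing the argument.

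The heart of the proof is really the reduction to a one-dimensional problem, which is essentially forced by the two structural facts about $\phi_{a}$: it saturates the Lipschitz bound on $K_{a}$ (turning $d$ into a linear functional there), and it controls $d$ from below on all other pairs. After this reduction, the only nontrivial input is the comonotonicity of $(\alpha_{i})$ and $(\beta_{i})$, which is precisely what the sign hypothesis is designed to guarantee and what the rearrangement inequality is designed to exploit; I do not anticipate any genuine obstacle beyond bookkeeping.
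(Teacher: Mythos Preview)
Your proof is correct and follows essentially the same route as the paper's: both reduce to one dimension via $\phi_{a}$ using $d(x_{i},y_{i}) = \phi_{a}(x_{i}) - \phi_{a}(y_{i})$ on $K_{a}$ and $d(x_{i},y_{j}) \geq |\phi_{a}(x_{i}) - \phi_{a}(y_{j})|$ by $1$-Lipschitzness, and then appeal to the fact that a monotone set in $\erre$ is $|\cdot|^{2}$-cyclically monotone. The only difference is packaging: the paper states this last fact directly, while you unfold it as the rearrangement inequality (which is exactly the same content).
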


\begin{proof} It follows directly from the hypothesis of the Lemma that the set
\[
\{ (\phi_{a}(x), \phi_{a}(y) ) :   (x,y) \in \Delta \} \subset \erre \times \erre
\]
is $|\cdot|^{2}$-cyclically monotone, where $|\cdot|$ denotes the modulus. 
Then for $\{(x_{i},y_{i})\}_{ i \leq N} \subset \Delta$, since $\Delta \subset K_{a}$,
it holds
\begin{align*} 
\sum_{i=1}^{N} d^{2}(x_{i},y_{i}) = &~ \sum_{i =1}^{N}|\phi_{a}(x_{i}) - \phi_{a}(y_{i})|^{2} \crcr
\leq&~ \sum_{i =1}^{N}|\phi_{a}(x_{i}) - \phi_{a}(y_{i+1})|^{2} \crcr
\leq &~ \sum_{i=1}^{N} d^{2}(x_{i},y_{i+1}),
\end{align*}
where the last inequality is given by the 1-Lipschitz regularity of $\phi_{a}$. The claim follows.
\end{proof}

Fix an interval $(a_{0},b_{0}) \subset \erre$ and for any $\gamma$ so that $(a_{0},b_{0}) \subset \phi_{a}(\gamma_{[0,1]})$
we can define $R^{\gamma}_{0}, L^{\gamma}_{0} \subset [0,1]$ so that 
\[
\phi_{a}\circ \gamma \left((R^{\gamma}_{0}, R^{\gamma}_{0} + L^{\gamma}_{0} ) \right)= (a_{0},b_{0}) 
\]
that is equivalent to say
\[
\phi_{a}\circ \gamma ( R^{\gamma}_{0} ) = b_{0}, \qquad \phi_{a}\circ \gamma ( R^{\gamma}_{0} +L^{\gamma}_{0}) = a_{0}.
\]
In the same manner, for another interval $(a_{1},b_{1}) \subset \erre$ we can associate to any $\gamma$ so 
that $(a_{1},b_{1}) \subset \phi_{a}(\gamma_{[0,1]})$ the corresponding time interval $(R^{\gamma}_{1},R^{\gamma}_{1}+ L^{\gamma}_{1})$
Accordingly for all $t \in [0,1]$ we define 
$R^{\gamma}_{t}:= (1-t) R^{\gamma}_{0} + tR^{\gamma}_{1}$ and $L^{\gamma}_{t}:= (1-t)L^{\gamma}_{0} + tL^{\gamma}_{1}$.

We use these coefficients to construct an $L^{2}$-Wasserstein geodesic.

\begin{proposition}\label{P:w2geo}
Let $H \subset G_{a}$ be so that for all $\gamma \in H$ both $(a_{0},b_{0}) , (a_{1},b_{1}) \subset \phi_{a}(\gamma_{(0,1)})$
with $b_{0} > a_{0} > b_{1} > a_{1}$. Define the curve
\begin{equation}\label{E:curve}
[0,1] \ni t \mapsto \nu_{t}  : = \int_{H}   \frac{1}{L^{\gamma}_{t}} 
\mathcal{L}^{1}\llcorner_{[R^{\gamma}_{t},R^{\gamma}_{t} + L^{\gamma}_{t}]}  \eta(d\gamma) \in \mathcal{P}([0,1] \times G_{a}),
\end{equation}
 with $\eta$ probability measure on $\G(X)$ so that $\eta(H)=1$. Then $[0,1] \ni t \mapsto (e)_{\sharp} \nu_{t}$ is a  $W_{2}$-geodesic.
\end{proposition}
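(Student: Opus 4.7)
The plan is to lift the curve $t \mapsto (e)_{\sharp}\nu_{t}$ to a probability measure on $\mathcal{G}(X)$ whose endpoint coupling is $d^{2}$-cyclically monotone via Lemma \ref{L:12monotone}. Set
\[
s_t(u, \gamma) := R_t^{\gamma} + u L_t^{\gamma} = (1-t)s_0(u, \gamma) + t\, s_1(u, \gamma),
\]
which is affine in $t$ and takes values in $[0,1]$ by convexity. Since $\gamma$ has constant speed, the curve $\sigma^{(u,\gamma)}_{\cdot} := \gamma_{s_{\cdot}(u, \gamma)}$ is a constant-speed geodesic on $[0,1]$, and
\[
\gammA^{*} := \int_H \int_0^1 \delta_{\sigma^{(u, \gamma)}}\, du\, \eta(d\gamma) \in \mathcal{P}(\mathcal{G}(X))
\]
satisfies $(e_t)_{\sharp}\gammA^{*} = (e)_{\sharp}\nu_t$ for every $t \in [0,1]$, since the pushforward of $\mathcal{L}^1 \llcorner_{[0,1]}$ via $u \mapsto R_t^{\gamma} + u L_t^{\gamma}$ is exactly $\tfrac{1}{L_t^{\gamma}}\mathcal{L}^1 \llcorner_{[R_t^{\gamma}, R_t^{\gamma}+L_t^{\gamma}]}$.

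The crucial step is to verify the hypotheses of Lemma \ref{L:12monotone} for the support $\Delta := \{(\sigma^{(u, \gamma)}_0, \sigma^{(u, \gamma)}_1) : u \in [0,1],\, \gamma \in H\}$ of $(e_0, e_1)_{\sharp}\gammA^{*}$. On any $\gamma \in G_a$ the function $s \mapsto \phi_a(\gamma_s)$ must be affine with slope $-L(\gamma)$: $\phi_a$ is $1$-Lipschitz and $(\gamma_0, \gamma_1) \in \Gamma_a \subset K_a$ forces $\phi_a(\gamma_0) - \phi_a(\gamma_1) = L(\gamma)$, so the two estimates $\phi_a(\gamma_0) - \phi_a(\gamma_s) \leq sL(\gamma)$ and $\phi_a(\gamma_s) - \phi_a(\gamma_1) \leq (1-s)L(\gamma)$ must both hold as equalities. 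Combined with the defining identities $R_0^{\gamma}L(\gamma) = \phi_a(\gamma_0) - b_0$, $L_0^{\gamma}L(\gamma) = b_0 - a_0$ (and their analogues for $R_1^{\gamma}, L_1^{\gamma}$), the $\phi_a(\gamma_0)$ contributions cancel and
\[
\phi_a(\sigma^{(u, \gamma)}_0) = b_0 - u(b_0 - a_0), \qquad \phi_a(\sigma^{(u, \gamma)}_1) = b_1 - u(b_1 - a_1),
\]
independently of $\gamma$. The containment $\Delta \subset K_a$ follows because $s_0(u, \gamma) < s_1(u, \gamma)$ (the inequality $b_1 < a_0$ forces $R_1^{\gamma} > R_0^{\gamma} + L_0^{\gamma}$), so both endpoints of each pair lie on $\gamma \in G_a$ in the correct order.

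For the sign condition, any two elements of $\Delta$ with parameters $u_0, u_1$ produce $\phi_a$-differences $(u_0 - u_1)(b_0 - a_0)$ and $(u_0 - u_1)(b_1 - a_1)$, whose product $(u_0 - u_1)^2(b_0 - a_0)(b_1 - a_1)$ is nonnegative. Lemma \ref{L:12monotone} therefore gives $d^2$-cyclical monotonicity of $\Delta$, so $(e_0, e_1)_{\sharp}\gammA^{*}$ is an optimal transference plan; since $\gammA^{*}$ is concentrated on constant-speed geodesics, $t \mapsto (e_t)_{\sharp}\gammA^{*} = (e)_{\sharp}\nu_t$ is a $W_2$-geodesic by standard displacement interpolation. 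The genuine content of the argument is the affine behavior of $\phi_a \circ \gamma$ together with the cancellation of $\phi_a(\gamma_0)$ in the $\phi_a$-computation; the measurability of $\gamma \mapsto (R_t^{\gamma}, L_t^{\gamma})$ needed to define $\gammA^{*}$ is routine from continuity of $\gamma \mapsto \phi_a \circ \gamma$ in the uniform topology on $\mathcal{G}(X)$.
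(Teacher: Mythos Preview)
Your proof is correct and follows essentially the same approach as the paper: both compute $\phi_{a}(\gamma_{R^{\gamma}_{i}+uL^{\gamma}_{i}})=b_{i}-u(b_{i}-a_{i})$ independently of $\gamma$, then invoke Lemma~\ref{L:12monotone} via the product $(u_{0}-u_{1})^{2}(b_{0}-a_{0})(b_{1}-a_{1})\geq 0$. If anything, your version is more complete, since you explicitly construct the dynamical plan $\gammA^{*}$, verify its time-marginals, and check the containment $\Delta\subset K_{a}$, all of which the paper leaves implicit.
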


\begin{proof}
First note that for any fixed $s \in [0,1]$ the value $\phi_{a} (\gamma_{R^{\gamma}_{0} + s  L^{\gamma}_{0} })$ do not depend on $\gamma \in H$. Indeed
\begin{align*}
\phi_{a} (\gamma_{R^{\gamma}_{0} + s  L^{\gamma}_{0} }) = &~ \phi_{a} (\gamma_{R^{\gamma}_{0}}) - 
d(\gamma_{R^{\gamma}_{0}}, \gamma_{R^{\gamma}_{0} + s  L^{\gamma}_{0} }) \crcr
= &~\phi_{a} (\gamma_{R^{\gamma}_{0}}) - 
s d(\gamma_{R^{\gamma}_{0}}, \gamma_{R^{\gamma}_{0} +  L^{\gamma}_{0} }) \crcr
= &~ \phi_{a} (\gamma_{R^{\gamma}_{0}}) - s \left( \phi_{a}(\gamma_{R^{\gamma}_{0}}) - \phi_{a}(\gamma_{R^{\gamma}_{0} + L^{\gamma}_{0}  }) \right) \crcr
= &~ b_{0} - s (b_{0} - a_{0}),
\end{align*}
and the same applies for $\phi_{a} (\gamma_{R^{\gamma}_{1} + s  L^{\gamma}_{1} })$.
It follows that the set 
\[
\left\{  (\gamma_{R^{\gamma}_{0} + s L^{\gamma}_{0}}, \gamma_{R^{\gamma}_{1} + s L^{\gamma}_{1}}) : \gamma \in H, s\in [0,1]  \right\} 
\]
is $d^{2}$-cyclically monotone. Indeed, using Lemma \ref{L:12monotone}, we have only to show that for any $\hat \gamma,\gamma \in H$ and 
$\hat s, s \in [0,1]$:
\[
(\phi_{a}(\hat \gamma_{R^{\hat \gamma}_{1} + \hat s L^{\hat \gamma}_{1} }) - \phi_{a}( \gamma_{R^{\gamma}_{1} +  s L^{ \gamma}_{1} }) )
\cdot (\phi_{a}(\hat \gamma_{R^{\hat \gamma}_{0} + \hat s L^{\hat \gamma}_{0} }) - \phi_{a}( \gamma_{R^{\gamma}_{0} +  s L^{ \gamma}_{0} }) ) \geq 0.
\]
But as observed few lines above
\[
\phi_{a}( \gamma_{R^{\gamma}_{1} +  s L^{ \gamma}_{1} }) = \phi_{a}( \hat \gamma_{R^{\hat \gamma}_{1} +  s L^{\hat  \gamma}_{1} }),  \qquad
\phi_{a}( \gamma_{R^{\gamma}_{0} +  s L^{ \gamma}_{0} }) = \phi_{a}( \hat \gamma_{R^{\hat \gamma}_{0} +  s L^{\hat  \gamma}_{0} }).
\]
Hence the claim is equivalent to 
\[
(\phi_{a}(\hat \gamma_{R^{\hat \gamma}_{1} + \hat s L^{\hat \gamma}_{1} }) - \phi_{a}(\hat \gamma_{R^{\hat \gamma}_{1} +  s L^{ \hat \gamma}_{1} }) )
\cdot (\phi_{a}(\hat \gamma_{R^{\hat \gamma}_{0} + \hat s L^{\hat \gamma}_{0} }) - \phi_{a}(\hat \gamma_{R^{\hat \gamma}_{0} +  s L^{ \hat \gamma}_{0} }) ) \geq 0,
\]
that in turn is equivalent to
\[
(s - \hat s) (b_{1} - a_{1})  \cdot (s - \hat s) (b_{0} - a_{0}) = (s - \hat s)^{2} (b_{1} - a_{1})(b_{0}-a_{1}) \geq 0,  
\]
hence the claim follows.
\end{proof}

Hence, an optimal transport is achieved by not changing the ``angular'' parts and coupling radial parts according to 
optimal coupling on $\erre$. 
Since in the radial (or linear) part of the coupling is linear, one is allowed to rescale the radial speed 
and gain one degree of freedom.

In the next Sections we will use regularity properties of $\mathsf{CD}_{loc}$-spaces to properly apply the constructions of this Section to improve the curvature estimates and to study the globalization problem.

\section{Dimension reduction for a class of optimal transportations}\label{S:particular}

In this Section we start our general analysis in the particular case of optimal transport plan with lengths of geodesics 
depending only on the level set of $\f$ from where they start, that is 
\[
L(\gamma) = f(\f(\gamma_{0})), \qquad \forall \gamma \in G,
\]
and the level sets of $\f$ maintain their order during the evolution, that is 
\[
\f(\mu_{0}) \ni a \mapsto a - \frac{1}{2}f^{2}(a) \in \erre,
\]
is non decreasing. In what follows we will denote with $F(a)$ the function $a - f^{2}(a)/2$.
Thanks to Luzin's Theorem, we can also assume the map $e_{0}(G) \ni \gamma_{0} \mapsto f(\f(\gamma_{0}))$ to be continuous.

%\begin{remark}
%With no worries of a circular argument, we can use Proposition \ref{P:quotient} to say that the map $f \circ \f_{0}$ 
%can be assumed to be continuous on $\f(\mu_{0})$. Indeed by Luzin's Theorem, neglecting a set $A \subset \f(\mu_{0})$ of small $\mathcal{L}^{1}$-measure $f$ is continuous. 
%Then by Proposition \ref{P:quotient} the $m$-measure of $\f^{-1}(A)$ is small as well.
%
%\end{remark}
%

Under this particular assumption, the transportation enjoys nice properties.
In the following Lemma we prove that level sets are moved by $\gammA$ in a monotone way.
Recall that 
\[
e_{t}(G_{a}) = \{ \gamma_{t} : \gamma \in G, \f(\gamma_{0}) = a \}.
\]

\begin{lemma}\label{L:levelset}
Assume that $L(\gamma) = f(\f(\gamma_{0}))$, then it holds
\[
e_{t}(G_{a})= \f_{t}^{-1}\left(  a - \frac{t}{2}f^{2}(a)  \right) \cap e_{t}(G).
\]
for $a \in \f(\mu_{0})$.
\end{lemma}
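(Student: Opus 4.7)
The plan is to exploit the explicit formula for the evolution of the Kantorovich potential along a geodesic. From Corollary \ref{C:1d-evo} together with \eqref{E:dephi}, one has
\[
\f_t(\gamma_t) = \f(\gamma_0) - \frac{t}{2}\, d^{2}(\gamma_0,\gamma_1) = \f(\gamma_0) - \frac{t}{2} L^{2}(\gamma)
\]
for $\gammA$-a.e.\ $\gamma$, and by the reductions in Section \ref{S:setting} (compactness of $G$, together with the Brenier-type identity) we may assume this equality holds for every $\gamma \in G$. This single identity drives both inclusions.

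For the inclusion ``$\subset$'': take $\gamma \in G_a$, so $\f(\gamma_0)=a$, and use the standing hypothesis $L(\gamma) = f(\f(\gamma_0)) = f(a)$. Substituting gives $\f_t(\gamma_t) = a - \tfrac{t}{2} f^{2}(a)$, and by definition $\gamma_t \in e_t(G)$, which is exactly the claimed membership.

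For the inclusion ``$\supset$'': let $x \in \f_t^{-1}(a - \tfrac{t}{2} f^{2}(a)) \cap e_t(G)$. By definition of $e_t(G)$ there exists $\gamma \in G$ with $\gamma_t = x$, uniquely determined on $[0,1)$ by non-branching (and on $t=1$ since $\mu_1 \ll m$). Setting $b := \f(\gamma_0)$ and applying the same formula to $\gamma$, we obtain $\f_t(\gamma_t) = b - \tfrac{t}{2} f^{2}(b)$. Equating the two expressions yields
\[
(1-t)(a-b) = t\bigl(F(b) - F(a)\bigr), \qquad F(c) := c - \tfrac{1}{2} f^{2}(c).
\]

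It remains to deduce $a = b$, and this is where the monotonicity hypothesis enters. Writing $\f_t(\gamma_t) = (1-t)\,\mathrm{id}(c) + t F(c)$ evaluated at $c = a$ and $c = b$, the first summand is strictly increasing while the second is monotone by assumption; hence the sum is strictly monotone in $c$ for every $t \in [0,1)$, forcing $a = b$. For $t=1$ the identity reduces to $F(a) = F(b)$, and injectivity of $F$ on $\f(\mu_0)$ (implicit in the hypothesis, since $F$ is monotone and the geodesics in $G$ are parameterized by their starting level set) again gives $a=b$. Thus $\gamma \in G_a$ and $x = \gamma_t \in e_t(G_a)$, completing the proof. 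The only delicate point is the handling of $t=1$, where strict monotonicity of $F$ is needed; this is a harmless strengthening consistent with the rest of Section \ref{S:particular}.
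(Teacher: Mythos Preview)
Your proof is correct and follows essentially the same route as the paper's: both use the identity $\f_{t}(\gamma_{t}) = \f(\gamma_{0}) - \tfrac{t}{2}L^{2}(\gamma)$ for the forward inclusion, and for the reverse inclusion both rewrite $\f_{t}(\gamma_{t})$ as $(1-t)\f(\gamma_{0}) + t F(\f(\gamma_{0}))$ and exploit that this is strictly increasing in $\f(\gamma_{0})$ for $t\in(0,1)$ since the identity is strictly increasing and $F$ is non-decreasing. The paper phrases the reverse inclusion contrapositively (distinct level sets $a>b$ give $\f_{t}(\gamma_{t})>\f_{t}(\bar\gamma_{t})$), while you set up the equation $(1-t)(a-b)=t(F(b)-F(a))$ directly, but the content is identical; your explicit remark on the $t=1$ case is a reasonable addition that the paper leaves implicit.
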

\begin{proof}
The first inclusion follows immediately from Corollary \ref{C:1d-evo}: if $\f(\gamma_{0})=a$ then
\[
\f_{t}(\gamma_{t}) = a - \frac{t}{2}L^{2}(\gamma) =  a - \frac{t}{2}f^{2}(a).
\]

To prove the other inclusion we observe that the evolution at time $t$ of two different level sets of $\f_{0}$ cannot be contained in the same level set of $\f_{t}$. Indeed if $a > b \in \f(\mu_{0})$ then for all $\gamma \in G_{a}$ and $\bar \gamma \in G_{b}$ 
it holds 
\[
\f_{1}(\gamma_{1}) = - \f^{c}(\gamma_{1}) = F(a) \geq  F(b) = - \f^{c}(\bar \gamma_{1})  = 
\f_{1}(\bar \gamma_{1}).
\]
Hence for all $t \in (0,1)$
\[
\f_{t}(\gamma_{t}) = (1-t) \f(\gamma_{0}) + t \f_{1}(\gamma_{1}) > (1-t) \f(\bar \gamma_{0}) + t \f_{1}(\bar \gamma_{1}) 
= \f_{t}(\bar \gamma_{t}).
\]
The claim follows.
\end{proof}

\subsection{Level sets of Kantorovich potentials }\label{Ss:Kantor}

On the set $e_{0}(G)$ we will consider the partition given by the saturated sets of $\f$, i.e. $\{ \f^{-1}(a) \}_{a \in \erre}$.
Disintegration Theorem implies that 
\[
m \llcorner_{e_{0}(G)} = \int_{\f(\mu_{0})} \tilde m_{a} q(da), \qquad \f_{\sharp}\left( m\llcorner_{e_{0}(G)} \right)= q,
\]  
with $\tilde m_{a}( \f^{-1}(a)^{c} )=0$ for $q$-a.e. $a \in \f(\mu_{0})$, where,
in order to have a shorter notation, we have denoted by $\f(\mu_{0})$ the set  $\f (\supp[\mu_{0}])$.

\begin{proposition}\label{P:quotient}
The measure $q = \f_{\sharp}\left( m\llcorner_{e_{0}(G)} \right)$ is absolute continuous w.r.t. $\mathcal{L}^{1}$. 
Moreover for $q$-a.e. $a \in \f(\mu_{0})$ it holds
\[
\tilde m_{a} \ll \mathcal{S}^{h},
\]
where $\S^{h}$ is the spherical Hausdorff measure of codimension one.
\end{proposition}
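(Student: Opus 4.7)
The plan is to combine metric Brenier's theorem (Proposition \ref{P:speed}) with the $\mathrm{BV}$ coarea formula. Since $\f$ is Lipschitz and, by Proposition \ref{P:speed}, $|D\f|_w(\gamma_0)=d(\gamma_0,\gamma_1)=L(\gamma)\geq 1/C>0$ for $\gammA$-a.e.\ $\gamma$, and since the slope $\|\nabla \f\|$ is an upper gradient dominating $|D\f|_w$, we get $\|\nabla \f\|\geq 1/C$ $m$-a.e.\ on $e_0(G)$ (modulo the $\mu_0$-null set on which Brenier does not apply). The compactness of $G$ also provides an upper bound $\mathrm{Lip}(\f)<\infty$. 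This two-sided control on $\|\nabla \f\|$ drives both claims.

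For absolute continuity of $q$, let $A\subset \erre$ with $\mathcal{L}^1(A)=0$. The BV coarea formula (Theorem 4.3 of \cite{miranda:bvcoarea}) gives
\[
|D\f|\bigl(e_0(G)\cap \f^{-1}(A)\bigr) \;=\; \int_\erre P\bigl(\{\f>t\},\, e_0(G)\cap \f^{-1}(A)\bigr)\,dt.
\]
Since $P(\{\f>t\},\cdot)$ is concentrated on the essential boundary $\partial^*\{\f>t\}\subset \f^{-1}(t)$, the integrand vanishes for $t\notin A$, so the integral is zero. Combining this with the comparison \eqref{E:comparison} between $|D\f|$ and $\|\nabla \f\|\,m$, and then with the lower bound $\|\nabla \f\|\geq 1/C$, we conclude $m(e_0(G)\cap \f^{-1}(A))=0$, i.e.\ $q(A)=0$.

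For the second claim, the same BV coarea formula can be read as a disintegration of $|D\f|\llcorner_{e_0(G)}$ along $\f$:
\[
|D\f|\llcorner_{e_0(G)} \;=\; \int_{\f(\mu_0)} P(\{\f>a\},\cdot)\llcorner_{e_0(G)}\,da,
\]
with each conditional $P(\{\f>a\},\cdot)\llcorner_{e_0(G)}$ concentrated on $\f^{-1}(a)\cap e_0(G)=e_0(G_a)$ and absolutely continuous w.r.t.\ $\S^h\llcorner_{e_0(G_a)}$ by the perimeter--Hausdorff comparison recalled in Section \ref{Ss:coarea}. On the other hand the abstract disintegration (Theorem \ref{T:disintr}) gives $m\llcorner_{e_0(G)}=\int \tilde m_a\,q(da)$. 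The two measures $m\llcorner_{e_0(G)}$ and $|D\f|\llcorner_{e_0(G)}$ are mutually absolutely continuous thanks to the two-sided bounds on $\|\nabla \f\|$ (with Radon--Nikodym derivative comparable to $\|\nabla \f\|$). By the uniqueness statement of Theorem \ref{T:disintr} applied to the common quotient map $\f$, the conditionals $\tilde m_a$ must differ from $P(\{\f>a\},\cdot)\llcorner_{e_0(G)}$ only by an $m$-measurable reweighting (accounting for the densities of $q$ w.r.t.\ $\mathcal{L}^1$ and of $\|\nabla \f\|$-adjustments on the fibers); hence $\tilde m_a\ll \S^h$ for $q$-a.e.\ $a$.

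The main technical obstacle I anticipate is the careful identification of the two disintegrations via the uniqueness theorem: matching a disintegration whose quotient is $q$ with one whose quotient is $\mathcal{L}^1\llcorner_{\f(\mu_0)}$, and transferring the density factor from the quotient side onto the conditional side. A minor delicate point is ensuring that the positive lower bound on the slope, which comes from Brenier through a $\mu_0$-a.e.\ statement, actually controls $m$-a.e.\ points of $e_0(G)$ needed for the first step.
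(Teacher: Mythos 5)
Your overall strategy (coarea formula plus a positive lower bound on the slope, then comparison of perimeter with $\S^{h}$) is the same as the paper's, but the step on which everything hinges has a genuine gap. You obtain $\|\nabla \f\|\geq 1/C$ only $\mu_{0}$-almost everywhere: Proposition \ref{P:speed} is a statement for $\gammA$-a.e.\ $\gamma$, i.e.\ for $\mu_{0}$-a.e.\ point of $e_{0}(G)$, and a $\mu_{0}$-null set need \emph{not} be $m\llcorner_{e_{0}(G)}$-null, because the density $\r_{0}$ may vanish on a subset of $e_{0}(G)=\supp\mu_{0}$ of positive $m$-measure. Since the quantity you must control is precisely $m\llcorner_{e_{0}(G)}$ (both for $q=\f_{\sharp}(m\llcorner_{e_{0}(G)})\ll\mathcal{L}^{1}$ and for the mutual absolute continuity of $m\llcorner_{e_{0}(G)}$ and $|D\f|\llcorner_{e_{0}(G)}$ in your second part), what you call a ``minor delicate point'' is in fact the heart of the proof and is left unproved. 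A second, related weakness is the assertion that the slope $\|\nabla\f\|$ of \eqref{E:gradient} ``is an upper gradient dominating $|D\f|_{w}$'': for the liminf-type slope this is not a general fact, so the inequality $\|\nabla\f\|\geq|D\f|_{w}$ $m$-a.e.\ would itself need justification.

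The paper circumvents both problems by never passing through the weak gradient or through an a.e.\ statement: for \emph{every} $x\in e_{0}(G)$, writing $y=\gamma_{1}$ for the geodesic $\gamma\in G$ with $\gamma_{0}=x$, the duality relation $\f(x)+\f^{c}(y)=d^{2}(x,y)/2$ together with $\f(z)+\f^{c}(y)\leq d^{2}(z,y)/2$ gives
\[
\f(x)-\f(z)\;\geq\;\tfrac{1}{2}\bigl(d(x,y)-d(z,y)\bigr)\bigl(d(x,y)+d(z,y)\bigr),
\]
and choosing $z=z_{n}$ on the geodesic from $x$ to $y$ at the radii $\rho_{n}$ realizing the liminf in \eqref{E:gradient} yields $\|\nabla\f\|(x)\geq d(x,y)\geq 1/C$ pointwise on $e_{0}(G)$ (after replacing $\f$ by its Lipschitz representative $\hat\f(x)=\inf_{y\in e_{1}(G)}\{d^{2}(x,y)/2-\f^{c}(y)\}$ and extending it to $X$). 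With this pointwise bound your two steps go through essentially as you wrote them: the coarea formula and \eqref{E:comparison} give $m(\f^{-1}(E)\cap e_{0}(G))=0$ for $\mathcal{L}^{1}$-null $E$, and the domination of the conditionals $\tilde m_{a}$ by the perimeter measures $P(\{\hat\f>a\},\cdot)$ (which are $\ll\S^{h}$) follows by comparing the two disintegrations, as in your last paragraph. So the fix is to replace your appeal to metric Brenier by this direct first-variation estimate valid at every point of $e_{0}(G)$.
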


\begin{proof}

{\it Step 1.} Recall that on $G$ the point wise metric Brenier's Theorem holds
true: $|D\f|_{w}(\gamma_{0}) = d(\gamma_{0},\gamma_{1})$, for all $\gamma \in G$.
Define the map
\[
e_{0}(G) \ni x \mapsto \hat \f(x) : = \inf_{y \in e_{1}(G) } \left\{  \frac{d^{2}(x,y)}{2} -\f^{c}(y) \right\}.
\]
Since $G$ is compact, $e_{0}(G)$ and $e_{1}(G)$ are bounded and 
$\hat \f$ is obtained as the infimum of Lipschitz maps with uniformly bounded Lipschitz constant.
Therefore $\hat \f$ is Lipschitz and coincide with $\f(x)$. Extend $\hat \f$ to the whole space keeping the same Lipschitz constant.

We can use the Coarea formula (see Section \ref{Ss:coarea}) in the particular case of  Lipschitz maps: 
for any $B \subset  X$ Borel 
\begin{equation}\label{E:coarea}
\int_{-\infty}^{+\infty} P(\{ \hat \f > a \}, B) da \geq c_{0} \int_{B} \| \nabla \hat \f \|(x) m(dx),  
\end{equation}
where $c_{0}$ is a strictly positive constant.

{\it Step 2.} For $(x,y) \in (e_{0},e_{1})(G)$, $\|\nabla \hat \f \| (x) \geq d(x,y)$. 
Indeed fix $(x,y) \in (e_{0},e_{1})(G)$, then $\hat \f (x) + \f^{c}(y) = d^{2}(x,y)/2$ and 
\[
\hat \f(x) - \hat \f (z) \geq \frac{1}{2} (d^{2}(x,y) - d^{2}(z,y)) =\frac{1}{2} (d(x,y) - d(z,y))(d(x,y) + d(z,y))
\]
Select a minimizing sequence $\rho_{n} \to 0$ for $\|\nabla \hat \f \| (x)$ and $z_{n}$ on the geodesic connecting $x$ to $y$ at distance 
$\rho_{n}$ from $x$. Then
\[
\frac{1}{\rho_{n}} \sup_{z \in B_{\rho_{n}}(x)} |\hat \f (z) - \hat \f(x)| \geq \frac{1}{2} \frac{1}{\rho_{n}} (d(x,y) - d(z_{n},y))(d(x,y) + d(z_{n},y)) 
=\frac{1}{2} (d(x,y) + d(z_{n},y)).
\]
Passing to the limit we have $\|\nabla \hat \f \| (x) \geq d(x,y)$. 

Let $E \subset \erre$ with $\mathcal{L}^{1}(E) = 0$, then  from \eqref{E:coarea} it follows that
\begin{equation}\label{E:tutto}
\int_{\f^{-1}(E) \cap e_{0}(G)} \| \nabla \hat \f \|(x) m(dx) =\int_{\hat \f^{-1}(E) \cap e_{0}(G)} \| \nabla \hat \f \|(x) m(dx) \leq  \frac{1}{c_{0}}
\int_{E} P(\{ \hat \f > a \}, e_{0}(G)) da = 0.
\end{equation}
But on $e_{0}(G)$ the gradient of $\hat \f$ is strictly positive, it follows that $m(\f^{-1}(E) \cap e_{0}(G))=0$ and therefore 
the first part of the claim is proved.
Moreover from \eqref{E:tutto} it follows that 
\[
\tilde m_{a}\leq P(\{\hat \f  > a \},\cdot). 
\]
Being the latter  absolutely continuous with respect to $\S^{h}$, also the second part of the statement follows. 
\end{proof}

\begin{remark}\label{R:general}
Proposition \ref{P:quotient} proves a property of disintegration at time $t=0$ where the particular shape 
of the optimal transportation or of the Kantorovich potentials do not play any role and indeed the proof 
is done without using any particular assumption. Hence the result will be used also in the general case.
\end{remark}

So Proposition \ref{P:quotient} implies the following decomposition for $t=0$: 
\[
m \llcorner_{e_{0}(G)} = \int_{\f(\mu_{0})} \tilde m_{a} q(a)  \mathcal{L}^{1}(da) = \int_{\f(\mu_{0})} \hat m_{a} \mathcal{L}^{1}(da),
\]
with clearly again $\hat m_{a} \ll \S^{h}$.

For $t \in [0,1)$ an analogous partition can be considered also on the support of $\mu_{t}$, $e_{t}(G)$.
Indeed the $d^{2}$-cyclical monotonicity of $\Gamma$ implies that the family
\[
\{ \gamma_{t} : \gamma \in G, \f(\gamma_{0}) = a \}_{a\in \f(\mu_{0})} = \{e_{t}(G_{a}) \}_{a \in \f(\mu_{0})}
\] 
is a disjoint family and a partition of $e_{t}(G)$. 
Therefore we consider the disintegration of $m\llcorner_{e_{t}(G)}$ w.r.t. the aforementioned family.
Since for every $t \in [0,1)$
\[
\mu_{0}(\f^{-1}(A)) = \mu_{t}(\{\gamma_{t} : \f(\gamma_{0}) \in A \}),
\]
the quotient measures of $\mu_{0}$ and $\mu_{t}$ are the same measure. 
We can conclude that the quotient measures of $m\llcorner_{e_{t}(G)}$ and of $m\llcorner_{e_{0}(G)}$ are equivalent and
\begin{equation}\label{E:mass2}
m\llcorner_{e_{t}(G)} = \int_{\f(\mu_{0})} \tilde m_{a,t} f_{t}(a)  \mathcal{L}^{1}(da) = \int_{\f(\mu_{0})} \hat m_{a,t} \mathcal{L}^{1}(da), 
\qquad \hat m_{a,t}( \{ \gamma_{t} : \f(\gamma_{0})=a \}^{c}) = 0.
\end{equation}
To keep notation consistent, we will denote also the conditional probabilities for $t=0$ with $\hat m_{a,0}$.

For $t=1$ only if $\mu_{1}$ is absolute continuous with respect to $m$ we can do the same disintegration.  
Indeed if this is the case, from Theorem 2.7 of \cite{gigli:maps}, 
for $m$-a.e. $x \in e_{1}(G)$ there is only 
one geodesic $\gamma$ in $G$ so that $\gamma_{1} = x$ and the family
\[
\{\gamma_{1} : \gamma \in G, \f(\gamma_{0}) = a \}_{a\in \f(\mu_{0})}
\]
is again partition of $e_{1}(G)$. Since we are assuming both $\mu_{0}$ and $\mu_{1}$ absolute continuous with respect to $m$, we have 
\[
m\llcorner_{e_{t}(G)} = \int_{\f(\mu_{0})} \hat m_{a,t} \mathcal{L}^{1}(da), \qquad \hat m_{a,t}(e_{t}(G_{a})) = \| \hat m_{a,t} \|,
\]
for all $t \in [0,1]$.

\begin{lemma}\label{L:cod1}
For every $t\in [0,1]$ and $\mathcal{L}^{1}$-a.e. $a \in \f(\mu_{0})$, with the exceptional set depending on $t$, 
it holds
\[
\hat m_{a,t} \ll \S^{h},
\]
where $\S^{h}$ is the spherical Hausdorff measure of codimension one defined in \eqref{E:spherical}. 
\end{lemma}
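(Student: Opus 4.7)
The plan is to repeat the argument of Proposition \ref{P:quotient} verbatim, but with the Kantorovich potential $\f_{t}$ in place of $\f=\f_{0}$, and then to transfer the resulting disintegration back to the parameter $a$ via the bijection coming from Lemma \ref{L:levelset}. The cases $t=0$ and (under the assumption $\mu_{1}\ll m$) $t=1$ are already covered by Proposition \ref{P:quotient} applied to $\f$ and $\f_{1}=-\f^{c}$ respectively, so the remaining case is $t\in(0,1)$.

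First I would observe that, by Lemma \ref{L:levelset}, the partition $\{e_{t}(G_{a})\}_{a\in\f(\mu_{0})}$ of $e_{t}(G)$ coincides with the restriction to $e_{t}(G)$ of the level sets of $\f_{t}$, via the correspondence $a\mapsto F_{t}(a):=a-\tfrac{t}{2}f^{2}(a)$. Under the standing assumption on the class of optimal transportations, $F_{t}$ is non-decreasing, and by the continuity assumption on $\gamma_{0}\mapsto f(\f(\gamma_{0}))$ it is continuous. Therefore, up to reparametrizing the quotient variable through $F_{t}$, the disintegration \eqref{E:mass2} of $m\llcorner_{e_{t}(G)}$ with respect to $\{e_{t}(G_{a})\}_{a}$ is the same object as the disintegration of $m\llcorner_{e_{t}(G)}$ with respect to the level sets of $\f_{t}$, and it suffices to show that the conditional measures of the latter are absolutely continuous with respect to $\S^{h}$.

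Next I would mimic the two steps of Proposition \ref{P:quotient}. Since $G$ is compact, $e_{t}(G)$ is bounded and $\f_{t}$ is Lipschitz on $e_{t}(G)$; extend $\f_{t}$ to a Lipschitz function on $X$ with the same Lipschitz constant, for instance by the same inf-convolution formula used in Step 1 of Proposition \ref{P:quotient}. The key input is Proposition \ref{P:MBT}, which yields
\[
\|\nabla \f_{t}\|(\gamma_{t}) \;\geq\; |D\f_{t}|(\gamma_{t}) \;=\; d(\gamma_{0},\gamma_{1}) \;\geq\; 1/C \;>\;0,
\qquad \forall \gamma\in G,
\]
so that $\f_{t}$ has slope uniformly bounded away from $0$ on $e_{t}(G)$; the inequality $\|\nabla\f_{t}\|\geq d(\gamma_{0},\gamma_{1})$ itself is obtained by the same one-sided computation as in Step 2 of Proposition \ref{P:quotient}, with \eqref{E:useful} in place of the $t=0$ calculation. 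Combining this lower bound with the Coarea inequality \eqref{E:coarea} applied to $\f_{t}$, together with \eqref{E:comparison}, gives, for every $E\subset\erre$ with $\mathcal{L}^{1}(E)=0$,
\[
m\bigl(\f_{t}^{-1}(E)\cap e_{t}(G)\bigr) \;=\;0,
\]
and more precisely
\[
m\llcorner_{\{\f_{t}=s\}\cap e_{t}(G)} \;\ll\; P\bigl(\{\f_{t}>s\},\cdot\bigr)\llcorner_{e_{t}(G)} \;\ll\; \S^{h}.
\]

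Finally, uniqueness of the disintegration (Theorem \ref{T:disintr}) together with the change of variable $s=F_{t}(a)$ yields $\hat m_{a,t}\ll \S^{h}$ for $\mathcal{L}^{1}$-a.e. $a\in\f(\mu_{0})$, as required. The step I expect to be the main technical nuisance is the uniform positivity of $\|\nabla\f_{t}\|$ on $e_{t}(G)$: Proposition \ref{P:MBT} gives it pointwise but one must check that the Coarea comparison in \eqref{E:coarea}-\eqref{E:comparison} can be applied to the Lipschitz extension of $\f_{t}$ rather than to $\f_{t}$ itself, which is exactly what was done at $t=0$ in Proposition \ref{P:quotient} and carries over verbatim here.
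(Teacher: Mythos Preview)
Your proposal is correct and follows essentially the same route as the paper: use Lemma \ref{L:levelset} to rewrite the partition $\{e_{t}(G_{a})\}_{a}$ as the level sets of $\f_{t}$ on $e_{t}(G)$, and then rerun the coarea argument of Proposition \ref{P:quotient} with $\f_{t}$ in place of $\f$. Your additional care about the reparametrization through $F_{t}$ and the gradient lower bound via \eqref{E:useful} spells out exactly the details that the paper leaves implicit in the phrase ``using coarea formula as in Proposition \ref{P:quotient} the claim follows''.
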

\begin{proof} 

For $t =0$ the claim has been already obtained in Proposition \ref{P:quotient}.
For $t \in (0,1]$ we observe that from Lemma \ref{L:levelset} the partition  of $e_{t}(G)$
\[
\{\gamma_{t} : \gamma \in G_{a} \}_{a \in \f(\mu_{0})},
\]
can be equivalently written as 
\[
\{ \f_{t}^{-1}(a) \}_{a \in \f_{t}(\mu_{t})}.
\]
Then using coarea formula as in Proposition \ref{P:quotient} the claim follows.
\end{proof}

Since level sets of $\f_{0}$ are moved after time $t$ to level sets of $\f_{t}$,  the monotone map $F (a) = a - f^{2}(a)/2$ is the optimal map between the quotient measures. 

\begin{lemma}\label{L:quotientgeodesic}
For each $t \in [0,1]$, consider the map $F_{t}(a) : = a -t f^{2}(a)/2$ defined on $\f(\mu_{0})$.
Then for each $t \in [0,1]$, $F_{t}$ is the optimal map for between
\[
(\f_{0})_{\sharp}\mu_{0}, \qquad (\f_{t})_{\sharp}\mu_{t}
\]
and $f$ is locally Lipschitz.
\end{lemma}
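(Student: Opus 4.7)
The plan splits into three parts: identify $F_t$ as a transport map via the Hopf--Lax evolution, establish optimality via monotonicity, and treat the local Lipschitz regularity of $f$.

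First I would apply Corollary \ref{C:1d-evo} to obtain, for $\gammA$-a.e.\ $\gamma$,
\[
\f_t(\gamma_t) = \f(\gamma_0) - \frac{t}{2}d^2(\gamma_0,\gamma_1) = \f(\gamma_0) - \frac{t}{2}f^2(\f(\gamma_0)) = F_t(\f(\gamma_0)),
\]
where the second equality uses the structural hypothesis $L(\gamma) = f(\f(\gamma_0))$. Thus $\f_t\circ e_t = F_t\circ\f\circ e_0$ holds $\gammA$-a.e., and pushing $\gammA$ forward yields $(\f_t)_\sharp\mu_t = (F_t)_\sharp(\f_0)_\sharp\mu_0$, identifying $F_t$ as a transport map between the two one-dimensional marginals on $\erre$.

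Next I would check optimality by noticing that $F_t(a) = (1-t)\,a + t\,F_1(a)$ is a convex combination of the identity and $F_1=F$, both non-decreasing (the latter by hypothesis). Since $(\f_0)_\sharp\mu_0\ll\mathcal{L}^1$ by Proposition \ref{P:quotient}, any non-decreasing measurable map pushing $(\f_0)_\sharp\mu_0$ to $(\f_t)_\sharp\mu_t$ coincides a.e.\ with the monotone rearrangement, which is the unique optimal map on $\erre$ for the quadratic cost; this yields optimality of $F_t$.

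The hardest step will be the local Lipschitz regularity of $f$. The bound $1/C < L(\gamma) < C$ forces $1/C < f < C$ on $\f(\mu_0)$, and since $f^2(a) = 2(a-F_1(a))$ with $\sqrt{\cdot}$ smooth on $[1/C^2,\infty)$, it suffices to prove $F_1$ is locally Lipschitz on the relative interior of $\f(\mu_0)$. Monotonicity of $F_1$ alone yields only the one-sided bound $f^2(a) - f^2(b) \leq 2(a-b)$ for $a\geq b$, so a matching bound needs additional input. I would exploit that $F_1$ is the one-dimensional optimal transport between the absolutely continuous marginals $(\f_0)_\sharp\mu_0$ and $(\f_1)_\sharp\mu_1$, whose densities are bounded from above via $\r_0,\r_1 \leq M$ together with the disintegration coming out of the proof of Proposition \ref{P:quotient}. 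Combining this upper bound on one marginal density with a strictly positive lower bound on the other over compact subintervals of the interior, the explicit distribution-function formula for monotone rearrangements on $\erre$ delivers the required locally Lipschitz modulus.
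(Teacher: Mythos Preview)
Your first two steps are exactly the paper's argument: the paper also obtains $F_t(\f(\gamma_0)) = \f_t(\gamma_t)$ from the Hopf--Lax identity and then invokes the monotonicity of $F_1$ (hence of every $F_t$) together with the absolute continuity of $(\f_i)_\sharp\mu_i$ from Proposition~\ref{P:quotient} to conclude optimality. Your presentation is more explicit, but the logic is identical.

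For the local Lipschitz assertion, the paper's proof is a single line and does not spell out an argument; you are right to treat this as the nontrivial point. However, your proposed route has a genuine gap. The Lipschitz bound for a monotone rearrangement on $\erre$ via the distribution-function formula requires a \emph{lower} bound on the target density $p_1$ of $(\f_1)_\sharp\mu_1$ and an \emph{upper} bound on the source density $p_0$ of $(\f_0)_\sharp\mu_0$, since $F_1'(a)=p_0(a)/p_1(F_1(a))$. You assert both bounds but justify neither: the assumption $\r_0,\r_1\leq M$ controls the densities of $\mu_0,\mu_1$ with respect to $m$, not the densities of their push-forwards under $\f_0,\f_1$ with respect to $\mathcal{L}^1$; the latter involve the mass $\|\hat m_{a}\|$ of the conditional measures from Proposition~\ref{P:quotient}, which is integrable but not a priori bounded above or away from zero on compact subintervals. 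Without this, the argument for $F_1$ Lipschitz, and hence for $f$ locally Lipschitz, does not close.

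A cleaner remark you could add is that for $t\in(0,1)$ the map $F_t=(1-t)\,\mathrm{id}+tF_1$ satisfies $F_t(a)-F_t(b)\geq(1-t)(a-b)$ for $a>b$, so $F_t^{-1}$ is globally $(1-t)^{-1}$-Lipschitz with no density information at all; this is what the paper later uses in Remark~\ref{R:assuverified}. But the two-sided local Lipschitz bound on $f$ itself, as stated in the lemma, still needs an argument you have not supplied.
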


\begin{proof}
Just note that 
\begin{equation}\label{E:geoquotient}
F_{t}(\f(\gamma_{0})) = \f(\gamma_{0}) - \frac{t}{2}L^{2}(\gamma) = \f_{t}(\gamma_{t}).
\end{equation}
Since $g_{1}$ is monotone by assumption and, thanks to Proposition \ref{P:quotient},
$(\f_{i})_{\sharp}\mu_{i}$ are absolute continuous w.r.t. to $\mathcal{L}^{1}$ for $i=0,1$, the claim follows.
\end{proof}

\subsection{Disintegration in the direction of motion}\label{Ss:directionmotionpartial}

As already motivated in the Introduction, $\hat m_{a,t}$ is not the right reference measure to improve the curvature estimate
to a ``codimension 1''-like estimate.
So we consider the evolution in time of a single level set as a whole subset of $X$, that is the set 
$e([0,1]\times G_{a})$, and we disintegrate the reference measure $m$ with respect to the family $\{e_{t}(G_{a}) \}_{t \in [0,1]}$.
In this way the quotient space of the disintegration will be the time variable and as $t$ moves the conditional probabilities will move in the same direction of the optimal transportation.

Recall $\bar \Gamma_{a}(1) : = e \left( [0,1] \times G_{a} \right)$.
Thanks to Lemma \ref{L:partition}, we can consider the disintegration of $m\llcorner_{\bar \Gamma_{a}(1)}$ w.r.t.  
the family of sets $\{e_{t}(G_{a}) \}_{t\in [0,1]}$:
\[
m\llcorner_{\bar \Gamma_{a}(1)} = \int_{[0,1]} \bar m_{a,t}q(dt), \qquad  q \in \mathcal{P}([0,1]), \quad 
q(I) = m (e(I\times G_{a})).
\] 
Observe that any $\gamma \in G_{a}$ can be taken as quotient set, therefore 
Corollary \ref{C:disintegration} implies the strong consistency of the disintegration, i.e. for $q$-a.e. $t \in [0,1]$
$\bar m_{a,t}$ is concentrated on $e_{t}(G_{a})$.

\begin{proposition}\label{P:absolutecont}
The quotient measure $q_{a}$ is absolute continuous with respect to $\mathcal{L}^{1}$.
\end{proposition}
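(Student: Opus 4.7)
The plan is to reduce absolute continuity of $q$ to absolute continuity of the pushforward of $m\llcorner_{\bar\Gamma_a(1)}$ under a single Lipschitz function on $X$, and then to repeat the coarea argument from the proof of Proposition \ref{P:quotient}. The natural function to use is the $L^1$-Kantorovich potential $\phi_a$ associated to $\Gamma_a$, with the explicit choice from the remark following Proposition \ref{P:monotone}: since in this section $L(\gamma) = f(a)$ is constant on $G_a$, that formula reads
\[
\phi_a(\gamma_s) = a - s f(a), \qquad \gamma \in G_a,\ s \in [0,1].
\]
Therefore the quotient map $\psi : \bar\Gamma_a(1) \to [0,1]$ associated to the partition $\{ e_s(G_a)\}_{s \in [0,1]}$ is the affine rescaling $\psi(x) = (a - \phi_a(x))/f(a)$, and absolute continuity of $q$ with respect to $\mathcal{L}^1$ becomes equivalent to absolute continuity of $(\phi_a)_{\sharp}(m\llcorner_{\bar\Gamma_a(1)})$ with respect to $\mathcal{L}^1$.

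After extending $\phi_a$ to a $1$-Lipschitz function on all of $X$, I would verify that $\|\nabla \phi_a\|(x) \geq 1$ for every $x \in \bar\Gamma_a(1)$; this follows by testing the slope \eqref{E:gradient} of $\phi_a$ along the unique geodesic $\gamma \in G_a$ through $x = \gamma_s$, since for small $r > 0$ the choice $y = \gamma_{s + r/f(a)}$ gives $|\phi_a(y) - \phi_a(x)| = r = d(y,x)$. Combining the coarea formula of \cite{miranda:bvcoarea} with the comparison \eqref{E:comparison} then yields
\[
\int_{-\infty}^{+\infty} P(\{\phi_a > t\}, B)\, dt \geq c_0 \int_B \|\nabla \phi_a\|(x)\, m(dx),
\]
for every Borel $B \subset X$. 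Applying this to $B := \phi_a^{-1}(E') \cap \bar\Gamma_a(1)$ for a Borel set $E' \subset \mathbb{R}$ with $\mathcal{L}^1(E') = 0$, the left-hand side vanishes because the perimeter measure $P(\{\phi_a > t\}, \cdot)$ is concentrated on $\partial^*\{\phi_a > t\} \subset \phi_a^{-1}(t)$ and $B \cap \phi_a^{-1}(t) = \emptyset$ for $t \notin E'$. The slope bound then forces $m(B) = 0$, and translating through the affine change of variable $E \mapsto \{a - tf(a) : t \in E\}$ one concludes $q(E) = 0$ whenever $\mathcal{L}^1(E) = 0$.

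The argument is essentially a transcription of the proof of Proposition \ref{P:quotient}, with $\hat\f$ replaced by $\phi_a$, so I do not foresee a serious obstacle. The only genuinely new input is the identification of the quotient map with an affine function of $\phi_a$: this is where the special assumption $L(\gamma) = f(\f(\gamma_0))$ of Section \ref{S:particular} is exploited, for it is precisely what makes $\phi_a$ depend linearly on the time parameter $s$ along each geodesic of $G_a$ with a proportionality factor $f(a)$ that does not depend on the individual $\gamma \in G_a$. In the general setting, where $L$ is not constant along $G_a$, this identification will no longer be so direct, and a finer argument relying on Assumptions \ref{A:lengthreg} and \ref{A:Phi} is to be expected.
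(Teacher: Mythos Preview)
Your argument is correct in the special setting of Section~\ref{S:particular}: the identification $\phi_a(\gamma_s)=a-sf(a)$ does make the quotient map an affine function of the $1$-Lipschitz potential $\phi_a$, and the coarea argument from Proposition~\ref{P:quotient} then goes through verbatim.

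However, the paper takes a genuinely different route. Instead of pushing forward under $\phi_a$, the paper disintegrates $m\llcorner_{\bar\Gamma_a(1)}$ along the \emph{transversal} partition $\{\gamma_{[0,1]}\}_{\gamma\in G_a}$ into one-dimensional ray measures $\eta_y$. The $\mathsf{MCP}(K,N)$ property (via Theorem~9.5 of \cite{biacava:streconv}) forces each $\eta_y$ to be of the form $g(y,\cdot)\,\mathcal L^1\llcorner_{[0,1]}$, and a Fubini swap then gives $q=\bigl(\int g(y,t)\,q_a(dy)\bigr)\,\mathcal L^1$.

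The distinction matters for two reasons. First, the paper's proof uses \emph{only} the $d$-monotonicity of $\Gamma_a$ and the curvature bound; it does not touch the hypothesis $L(\gamma)=f(\f(\gamma_0))$ at all, and the paper explicitly records this so that the same proposition can be invoked unchanged in Section~\ref{S:general}. Your approach, by contrast, leans on the constancy of $L$ along $G_a$ to get the affine identification, so your closing prediction that the general case would require Assumptions~\ref{A:lengthreg} and~\ref{A:Phi} is actually too pessimistic: no such regularity is needed for this proposition. Second, the ray disintegration with density $g(y,t)$ satisfying the $\mathsf{MCP}$ bounds \eqref{E:surfacevopoint} is not incidental: it is reused throughout Section~\ref{S:unique} (notably in Proposition~\ref{P:step2} and Lemma~\ref{L:welldefined}) and underlies Lemma~\ref{L:likeh}. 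Your coarea argument, while cleaner for the bare statement, would not produce this structure.
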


\begin{proof} 
Since $\Gamma_{a}$ is $d$-cyclically monotone, we can consider another partition of $\bar \Gamma_{a}(1)$.  
Consider the family of sets $\{\gamma_{s} : s \in [0,1] \}_{\gamma \in G_{a}}$. 
By Lemma \ref{L:partition}, we have that
\begin{itemize}
\item the following disintegration holds true:
\[
m\llcorner_{\bar \Gamma_{a}(1)} = \int \eta_{y} q_{a}(dy), 
\]
where the quotient measure $q_{a}$ is concentrated on $\{\gamma_{1/2} : \gamma \in G_{a}\}$ and $q_{a}$-a.e. 
conditional probability $\eta_{y}$ is concentrated on $\{\gamma_{s} : s\in [0,1], \gamma \in e_{1/2}^{-1}(y) \cap G_{a} \}$;
\item Since $\mathsf{CD}_{loc}(K,N)$ implies $\mathsf{MCP}(K,N)$, from Theorem 9.5 of \cite{biacava:streconv} we have that $\eta_{y} = g(y,\cdot)\mathcal{L}^{1}\llcorner_{[0,1]}$ for $q_{a}$-a.e. $y$, and for $r \leq R$
\begin{equation}\label{E:surfacevopoint}
 \Bigg( \frac{\sin \big( \frac{r}{R}d(\gamma_{0},\gamma_{R}) \sqrt{K/(N-1)} \big)}{\sin \big( d(\gamma_{0},\gamma_{R}) \sqrt{K/(N-1)} \big)} \Bigg)^{N-1}  
  \leq \frac{g(y,r)}{g(y,R)} \leq  
  \Bigg( \frac{\sin \big( \frac{r}{R}d(\gamma_{r},\gamma_{1}) \sqrt{K/(N-1)} \big)}{\sin \big( d(\gamma_{r},\gamma_{1}) \sqrt{K/(N-1)} \big)} \Bigg)^{N-1},
\end{equation}
where $\gamma = e_{1/2}^{-1}(y) \cap G_{a}$, and 
and the measure $g(y,\cdot)\mathcal{L}^{1}\llcorner_{[0,1]}$ has to be intended as 
$(\gamma)_{\sharp}(g(y,\cdot)\mathcal{L}^{1}\llcorner_{[0,1]})$, with 
$\gamma$ the unique element of $G_{a}$ so that $\gamma_{1/2} = y$.
\end{itemize}

To prove the claim it is enough to observe that the two disintegration proposed for $m\llcorner_{\bar \Gamma_{a}(1)}$ are the same.
Use Fubini's Theorem to get 
\[
\int_{[0,1]} \bar m_{a,t} q(dt) = m\llcorner_{\bar \Gamma_{a}(1)} = \int  g(y,\cdot)\mathcal{L}^{1}(dt) q_{a}(dy) = 
\int_{[0,1]} g(\cdot,t) q_{a}(dy) dt,
\]
therefore from uniqueness of disintegration, 
\[
\bar m_{a,t} = g(\cdot,t) q_{a} \left( \int g(y,t)q_{a}(dy) \right)^{-1}, \qquad
q = \left( \int g(y,t)q_{a}(dy) \right) \mathcal{L}^{1},
\]
and the claim follows.
\end{proof}

Hence if $dq/d\mathcal{L}^{1}$ denotes the density of $q$ with respect to $\mathcal{L}^{1}$, posing 
$m_{a,t} : = \left( dq/d\mathcal{L}^{1} \right) \bar m_{a,t}$, we have
\begin{equation}\label{E:L1m}
m\llcorner_{\bar \Gamma_{a}(1) } = \int_{[0,1]} m_{a,t} dt.
\end{equation}

Note that Proposition \ref{P:absolutecont}, and therefore \eqref{E:L1m}, has been obtained without using the assumption of constant speed of geodesics along the level set of $\f$. So we will use it also in the general case without any need of prove it again.

\begin{proposition}\label{P:cod2}
For $\mathcal{L}^{2}$-a.e. $(a,t) \in \f(\mu_{0})\times [0,1]$ it holds 
\[
m_{a,t} \ll \S^{h},
\]
where $\S^{h}$ is the spherical Hausdorff measure of codimension one defined in \eqref{E:spherical}. 
\end{proposition}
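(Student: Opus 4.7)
The strategy is to match two disintegrations of the measure $|D\phi_{a}|\llcorner_{\bar\Gamma_{a}(1)}$ over the common partition $\{e_{t}(G_{a})\}_{t\in[0,1]}$: the one arising from the BV coarea formula of Section~\ref{Ss:coarea}, and the one coming from Proposition~\ref{P:absolutecont}. Here $\phi_{a}$ is the $1$-Lipschitz $L^{1}$-Kantorovich potential associated to the $d$-monotone set $K_{a}$ from Section~\ref{S:dmonotone}, and the identity $\phi_{a}(\gamma_{s}) = a - sf(a)$ for $\gamma\in G_{a}$ forces the level sets of $\phi_{a}$ inside $\bar\Gamma_{a}(1)$ to be exactly the $e_{t}(G_{a})$.

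First I would check that $\|\nabla\phi_{a}\| = 1$ on $\bar\Gamma_{a}(1)$: along any $\gamma\in G_{a}$ the potential $\phi_{a}$ has Lipschitz quotient exactly $1$ since $|\phi_{a}(\gamma_{s}) - \phi_{a}(\gamma_{s'})| = |s-s'|f(a) = d(\gamma_{s},\gamma_{s'})$, while the global $1$-Lipschitz bound of $\phi_{a}$ yields the matching upper estimate. By \eqref{E:comparison} it follows that $|D\phi_{a}|\llcorner_{\bar\Gamma_{a}(1)}$ and $m\llcorner_{\bar\Gamma_{a}(1)}$ are equivalent as measures, with a Radon-Nikodym density $h$ satisfying $c_{0} \leq h \leq 1$ $m$-a.e.

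Next, the BV coarea formula applied to $\phi_{a}$ (extended by McShane to all of $X$ preserving the Lipschitz constant) gives $|D\phi_{a}| = \int_{\erre} P(\{\phi_{a} > c\},\cdot)\,dc$, with each perimeter measure $P(\{\phi_{a} > c\},\cdot)$ absolutely continuous with respect to $\S^{h}$ and concentrated on $\partial^{*}\{\phi_{a} > c\}\subset\{\phi_{a}=c\}$. Restricting to $\bar\Gamma_{a}(1)$ and changing variable via $c = a - tf(a)$ produces a disintegration of $|D\phi_{a}|\llcorner_{\bar\Gamma_{a}(1)}$ over the partition $\{e_{t}(G_{a})\}_{t}$ whose conditional measures are all absolutely continuous with respect to $\S^{h}$. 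On the other hand, combining $|D\phi_{a}|\llcorner_{\bar\Gamma_{a}(1)} = h\,m\llcorner_{\bar\Gamma_{a}(1)}$ with Proposition~\ref{P:absolutecont} gives a second disintegration of the same measure along the same partition, with conditional measures $hm_{a,t}$. Uniqueness of disintegration (Theorem~\ref{T:disintr}) then identifies the two families up to normalization for $\mathcal{L}^{1}$-a.e. $t$, yielding $hm_{a,t} \ll \S^{h}$ and hence, since $h \geq c_{0}>0$, $m_{a,t} \ll \S^{h}$ for $\mathcal{L}^{1}$-a.e. $t$ at each fixed $a\in\f(\mu_{0})$. Fubini then produces the claim for $\mathcal{L}^{2}$-a.e. $(a,t)$.

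The main obstacle will be the careful invocation of the uniqueness-of-disintegration step when the conditional measures are not probabilities, together with verifying that the perimeter measures $P(\{\phi_{a} > c\},\cdot)$, once restricted to $\bar\Gamma_{a}(1)$, are genuinely supported on the corresponding element $e_{(a-c)/f(a)}(G_{a})$ of the partition; this rests on the inclusion $\partial^{*}\{\phi_{a}>c\}\subset\{\phi_{a}=c\}$ (modulo $\S^{h}$-negligible sets) and on the fact that $\phi_{a}$ is constant on each $e_{t}(G_{a})$.
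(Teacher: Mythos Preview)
Your argument is correct and follows essentially the same strategy as the paper: exhibit a Lipschitz function on $\bar\Gamma_{a}(1)$ whose level sets are precisely the $e_{t}(G_{a})$, then invoke the BV coarea formula and the perimeter bound $P(\{\cdot>c\},\cdot)\leq c\,\S^{h}$ to conclude that the conditional measures $m_{a,t}$ are $\ll\S^{h}$. The only difference is the choice of function: the paper introduces the time map $\Lambda(\gamma_{s}):=s$ and proves it is Lipschitz via the identity $\f_{s}(\gamma_{s})=\f_{s}(\hat\gamma_{s})$ from Lemma~\ref{L:levelset}, while you use the explicit $L^{1}$-potential $\phi_{a}(\gamma_{s})=a-sf(a)$ from the Remark in Section~\ref{S:dmonotone}. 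In the constant-speed setting of Section~\ref{S:particular} these are affinely related, $\phi_{a}=a-f(a)\Lambda$, so the level-set structure is identical and the coarea step goes through verbatim. Your route is marginally shorter because $\phi_{a}$ is $1$-Lipschitz by construction, whereas the paper must establish Lipschitz regularity of $\Lambda$ separately; on the other hand the paper's $\Lambda$ is what survives in the general case (Lemma~\ref{L:cod2}), where lengths vary along $G_{a}$ and $\phi_{a}$ is no longer constant on $e_{t}(G_{a})$. The ``obstacles'' you flag are not genuine: uniqueness of disintegration for finite (non-probability) conditionals follows from the probability case after normalization, and $\partial^{*}\{\phi_{a}>c\}\subset\{\phi_{a}=c\}$ is immediate from continuity of $\phi_{a}$.
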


\begin{proof} 
Following the proof of Lemma \ref{L:cod1}, the claim will be proved 
if we write the family of sets 
\[
\{ \gamma_{t} : \gamma \in G_{a} \}_{t\in [0,1]} = \{ e_{t}(G_{a})\}_{t\in [0,1]},
\] 
as a family of level sets $\{ \Lambda^{-1}(t) \}_{t\in [0,1]}$ for some locally Lipschitz $\Lambda : \bar \Gamma_{a}(1) \to \erre$ with non zero gradient.

{\it Step 1.} Consider the evaluation map $e: [0,1] \times G_{a} \to \bar \Gamma_{a}(1)$ as $e(s,\gamma) : = \gamma_{s}$ 
and define the following function 
\[
\bar \Gamma_{a}(1) \ni x \mapsto \Lambda(x) : = P_{1} (e^{-1}(x)).
\]
Hence $\Lambda(x)$ is the unique $t$ for which there exists $\gamma \in G_{a}$ so that $\gamma_{t} = x$. From its definition, $\Lambda$ 
is clearly measurable and 
\[
\Lambda^{-1}(t) = \{\gamma_{t} : \gamma \in G_{a} \}.
\] 
Its derivative in the direction of $s \mapsto \gamma_{t+s}$ is 1 for any $t\in (0,1)$ and $\gamma \in G_{a}$.
We now show that $\Lambda$ is locally Lipschitz. Note that for $s < t$ and any $\gamma, \hat \gamma \in G_{a}$
\[
\Lambda (\gamma_{t}) - \Lambda(\hat \gamma_{s}) = t-s =\frac{1}{L(\hat \gamma)} d(\hat \gamma_{s},\hat \gamma_{t}).
\]
On the other hand from Lemma \ref{L:levelset} $\f_{s}(\gamma_{s}) = \f_{s}(\hat \gamma_{s})$ and therefore
\[
\frac{1}{(t-s)^{2}}d^{2}(\hat \gamma_{s}, \hat \gamma_{t} )=  \f_{s}(\hat \gamma_{s}) + \f^{c}_{1-t}(\hat \gamma_{t})
= \f_{s}(\hat \gamma_{s}) + \f^{c}_{1-t}( \gamma_{t}) \leq \frac{1}{(t-s)^{2}}d^{2}(\hat \gamma_{s},  \gamma_{t} ).
\]
Hence
\[
|\Lambda(\gamma_{t}) - \Lambda(\hat \gamma_{s})| \leq \frac{1}{C} d(\gamma_{t}, \hat \gamma_{s}).
\]
and therefore the claim is proved.
\end{proof}

So the results obtained in this Section are: assuming that
$L(\gamma) = f(\f(\gamma_{0}))$,
and  $a - \frac{1}{2}f^{2}(a)$ is non decreasing, we have 
\[
m\llcorner_{e_{t}(G)} = \int_{\f(\mu_{0})} \hat m_{a,t} \mathcal{L}^{1}(da), \qquad 
m\llcorner_{\bar \Gamma_{a}(1)} = \int_{[0,1]} m_{a,t} \mathcal{L}^{1}(dt)
\]
for all $t \in [0,1]$ and $a \in \f(\mu_{0})$ and $\hat m_{a,t} , m_{a,t} \ll \mathcal{S}^{h}\llcorner_{e_{t}(G_{a})}$ for 
$\mathcal{L}^{1}$-a.e. $a,t \in [0,1]$.

\section{Dimension reduction for the General transportation}\label{S:general}

In this Section we obtain the result of Section \ref{S:particular} dropping the assumption of constant length on level sets of $\f$
but assuming few regularity properties for $\gammA$.
In particular we will assume a regularity property of the length map that has been already introduce 
in Section \ref{S:setting}: for $t\in (0,1)$
\[
e_{t}(G) \ni x \mapsto L_{t}(x): =L(e_{t}^{-1}(x)) \in (0,\infty).
\]

\begin{assumption}\label{A:lengthreg}
For all $t\in (0,1)$ the map $L_{t}$ is locally Lipschitz: for $\mu_{t}$-a.e. $x \in e_{t}(G)$ there 
exists an open neighborhood $U(x)$ of $x$ and a positive constant $C$ so that 
\[
|L_{t}(z) - L_{t}(w)| \leq C d(z,w), \qquad \forall z,w \in U(x).
\]
\end{assumption}

We can now introduce the function $\Phi_{t}$ defined on $e_{t}(G)$:
\[
\Phi_{t}(x) : = \f_{t}(x) + \frac{t}{2}L^{2}_{t}(x). 
\]
As already pointed out in Section \ref{S:setting}, the relevance of $\Phi_{t}$ is explained by the following equivalent identities:
\[
\Phi_{t}(\gamma_{t})  = \f(\gamma_{0}),\qquad  \Phi_{t}^{-1}(a) = \{\gamma_{t} : \gamma \in G, \f(\gamma_{0}) = a \}.
\]
It follows from Assumption \ref{A:lengthreg} that also $\Phi_{t}$ is locally Lipschitz. Moreover almost by definition 
\[
(\Phi_{t})_{\sharp} m \llcorner_{e_{t}(G)}  \ll \mathcal{L}^{1},
\]
indeed since $\Phi_{t} \circ e_{t}  = \f \circ e_{0}$ it follows that $(\Phi_{t})_{\sharp}\mu_{t} = (\f)_{\sharp}\mu_{0}$ and therefore
\[
(\Phi_{t})_{\sharp} \r_{t} m \llcorner_{e_{t}(G)} = (\f)_{\sharp} \r_{0} m \llcorner_{e_{0}(G)}.
\]
Since $\r_{t} > 0$ on $e_{t}(G)$, also the converse is true, that is 
\[
\mathcal{L}^{1}\llcorner_{\f(\mu_{0})} \ll  (\Phi_{t})_{\sharp} m \llcorner_{e_{t}(G)}.
\]
Anyway this property is not sufficient to guarantee that its metric gradient do not vanish. 
See \cite{abc:sard} for a counter example to this property (constructed on $\erre^{2}$). 
One of the first steps we have to do is prove that the reference measures of codimension one are all absolute continuous with respect to the spherical Hausdorff measure 
$\S^{h}$, and, as in the proof of Proposition \ref{P:quotient}, we will use Coarea formula and we will apply it to the function $\Phi_{t}$. 
Since Coarea formula brings information only where the gradient is non zero we have to ask for the following property to hold.
\begin{assumption}\label{A:Phi} For all $t\in [0,1]$ for $\gammA$-a.e. $\gamma \in G$ the following holds
\[
\lim_{s \to 0} \frac{\Phi_{t}(\gamma_{t})  - \Phi_{t}(\gamma_{t+s})  }{d(\gamma_{t}, \gamma_{t+s})} \in (0,\infty).
\]
\end{assumption}

\begin{remark}\label{R:assuverified} Assumption \ref{A:lengthreg} is verified in the hypothesis of Section \ref{S:particular} that is: 
\[
L(\gamma) = f(\f(\gamma_{0})), \qquad \forall \gamma \in G, 
\]
with $f$ so that $a \mapsto a - f^{2}(a)/2$ is non-decreasing. 
Indeed as proved in Lemma \ref{L:quotientgeodesic} $f$ is locally Lipschitz. Moreover if $F_{t} : \f_{0}(\mu_{0}) \to \f_{t}(\mu_{t})$
is the locally bi-Lipschitz function of \eqref{E:geoquotient}, then
\[
e_{t}(G) \ni \gamma_{t} \mapsto  f \circ F^{-1}_{t} \circ \f_{t} (\gamma_{t})
\]
is locally Lipschitz and coincides with $L_{t}(\gamma_{t})$. 
Noticing that $\Phi_{t} = F_{t}^{-1} \circ \f_{t}$ Assumption \ref{A:Phi} is straightforward. 
\end{remark}

Before showing how Assumption \ref{A:lengthreg} and \ref{A:Phi} are used in the metric framework, we prove that if $X$ is a Riemannian manifold
with $d$ geodesic distance induced by a Riemannian tensor $g$ and $m$ is the volume measure, then 
Assumption \ref{A:lengthreg} and \ref{A:Phi} are verified.

\begin{proposition}\label{P:assumptionriemannian}
Assume $(X,d,m)$ has a Riemannian structure, that is $(X,d)$ is a Riemannian manifold with metric $g$ and $m$ is the volume measure. 
Then Assumption \ref{A:lengthreg} and \ref{A:Phi} are both verified.
\end{proposition}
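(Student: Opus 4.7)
The plan is to combine the Brenier--McCann regularity theory for optimal transportation on a Riemannian manifold with a direct second-order computation along the Hopf--Lax evolution. Fix $t\in(0,1)$. By Brenier--McCann, $\f$ is locally semiconcave on $\supp(\mu_{0})$, the optimal map is $T_{1}(x)=\exp_{x}(-\nabla\f(x))$, and $\dot\gamma_{0}=-\nabla\f(\gamma_{0})$ for $\gammA$-a.e.\ $\gamma$. Applying Theorem \ref{T:potential} in both time directions, $\f_{t}$ is simultaneously $c^{t,1}$-concave (as $-H^{t}_{1}(\f^{c})$) and $c^{0,t}$-convex (as $H^{t}_{0}(\f)$). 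Each property yields, away from the cut locus, a one-sided quadratic bound on $\f_{t}$ through Jacobi-field estimates for $d^{2}(\cdot,y)/2$; together they show that $\f_{t}$ is locally $C^{1,1}$ on $e_{t}(\supp\gammA)$. Hence $\nabla\f_{t}$ is locally Lipschitz and, by Aleksandrov's theorem, $\f_{t}$ admits a pointwise Hessian $m$-a.e.

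Assumption \ref{A:lengthreg} follows immediately. Arguing as in the first step of Proposition \ref{P:MBT}, non-branching and the Riemannian Brenier formula give $\nabla\f_{t}(\gamma_{t}) = -\dot\gamma_{t}$ at every differentiability point of $\f_{t}$, so
\[
L_{t}(\gamma_{t}) = L(\gamma) = |\dot\gamma_{t}| = |\nabla\f_{t}(\gamma_{t})|,
\]
and the local Lipschitz character of $\nabla\f_{t}$ transfers to $L_{t}$ on $e_{t}(G)$.

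For Assumption \ref{A:Phi}, extend $\Phi_{t}$ to a neighbourhood of $e_{t}(G)$ by $\Phi_{t}(x)=\f_{t}(x)+\frac{t}{2}|\nabla\f_{t}(x)|^{2}$, which agrees with the paper's $\Phi_{t}$ on $e_{t}(G)$. Since $\mu_{t}\ll m$, for $\gammA$-a.e.\ $\gamma$ the point $\gamma_{t}$ is a twice-differentiability point of $\f_{t}$. Differentiating $\Phi_{t}$ along $\gamma$ and using $\nabla\f_{t}(\gamma_{t})=-\dot\gamma_{t}$ and the symmetry of the Hessian,
\[
\left.\frac{d}{ds}\Phi_{t}(\gamma_{s})\right|_{s=t} = \langle\nabla\f_{t},\dot\gamma_{t}\rangle + t\,\mathrm{Hess}\,\f_{t}(\nabla\f_{t},\dot\gamma_{t}) = -L(\gamma)^{2} - t\,\mathrm{Hess}\,\f_{t}(\dot\gamma_{t},\dot\gamma_{t}).
\]
Since $d(\gamma_{t},\gamma_{t+s})=|s|L(\gamma)$, the candidate limit in Assumption \ref{A:Phi} equals $L(\gamma)+\frac{t}{L(\gamma)}\,\mathrm{Hess}\,\f_{t}(\dot\gamma_{t},\dot\gamma_{t})$. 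Finiteness follows from the Lipschitz bound on $\nabla\f_{t}$, and by Lemma \ref{L:density} the quantity is attained as a genuine one-sided limit along $s\in I_{t}(\gamma)$.

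The remaining, and main, obstacle is to show strict positivity of $L(\gamma)^{2}+t\,\mathrm{Hess}\,\f_{t}(\dot\gamma_{t},\dot\gamma_{t})$. The plan is to exploit the time-reversed optimal map $S_{0}(x)=\exp_{x}(t\nabla\f_{t}(x))$ pushing $\mu_{t}$ onto $\mu_{0}$. At a twice-differentiability point of $\f_{t}$ its Jacobian is given by the Monge--Amp\`ere formula as the determinant of the symmetric operator $\mathrm{Id}+t\,\mathrm{Hess}\,\f_{t}$ modulated by Jacobi-field curvature factors which depend continuously on $t$ and reduce to the identity at $t=0$. Since $\mu_{t}$ and $\mu_{0}$ are absolutely continuous with strictly positive densities on the relevant sets, the Jacobian is strictly positive $m$-a.e.; continuity in $t$ then forces the operator to remain positive definite on $[0,1)$, and evaluating on $\dot\gamma_{t}$ gives the required strict inequality. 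The delicate technical point, which is standard in McCann's theory and can be invoked from Villani's monograph, is the justification of this Monge--Amp\`ere identity at $m$-a.e.\ point and the control of the Jacobi-field corrections across possible cut-locus issues.
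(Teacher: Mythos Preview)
Your approach is correct and arrives at the same endpoint as the paper, but the route is packaged differently. For Assumption~\ref{A:lengthreg} the paper simply invokes the Monge--Mather shortening principle (Theorem~8.5 in \cite{villa:Oldnew}) to get Lipschitz regularity of $T_{t}^{-1}$ and hence of $L_{t}$; you instead derive $C^{1,1}_{loc}$ regularity of $\f_{t}$ from its two-sided Hopf--Lax description and read off $L_{t}=|\nabla\f_{t}|$. These are two faces of the same intermediate-time regularity theory. For Assumption~\ref{A:Phi} the paper writes $\Phi_{t}=\f\circ T_{t}^{-1}$, quotes the Cordero-Erausquin--McCann--Schmuckenschl\"ager formula $DT_{t}^{-1}=Y(H-t\,\mathrm{Hess}\,\f_{t}^{c})$ from \cite{corderomccann:brescamp}, and uses Gauss' Lemma to reduce the claim to positivity of $g_{\gamma_{t}}\big(\nabla\f_{t},(H-t\,\mathrm{Hess}\,\f_{t}^{c})\nabla\f_{t}\big)$; you instead differentiate $\Phi_{t}=\f_{t}+\tfrac{t}{2}|\nabla\f_{t}|^{2}$ directly along $\gamma$ and obtain $L(\gamma)^{2}+t\,\mathrm{Hess}\,\f_{t}(\dot\gamma_{t},\dot\gamma_{t})$. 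Since $\f_{t}=-\f_{t}^{c}$ and $\dot\gamma_{t}=-\nabla\f_{t}$, these are the same quadratic form (up to the curvature corrections encoded in $H$ and $Y$), so both proofs reduce to the positive definiteness of $H-t\,\mathrm{Hess}\,\f_{t}^{c}$.

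One point in your argument should be sharpened. Your justification of strict positivity via ``continuity in $t$ then forces the operator to remain positive definite'' is not quite sound as written: as $t$ varies both the base point $\gamma_{t}$ and the potential $\f_{t}$ change, and $\mathrm{Hess}\,\f_{t}$ exists only $m$-a.e., so there is no obvious continuous path of symmetric operators along which to propagate positive definiteness from $t=0$. The clean argument, which is implicit in \cite{corderomccann:brescamp} and in the paper's proof, is static: $c^{0,t}$-concavity of $\f_{t}^{c}$ gives $H-t\,\mathrm{Hess}\,\f_{t}^{c}\geq 0$ at every Aleksandrov point, and the strict positivity of the Monge--Amp\`ere Jacobian (which you correctly identify) then upgrades this to positive definiteness $\mu_{t}$-a.e. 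With this fix your proof is complete.
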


\begin{proof}
Assumption \ref{A:lengthreg} follows from the Monge-Mather shortening principle, see \cite{villa:Oldnew} Theorem 8.5.

Actually Theorem 8.5 of \cite{villa:Oldnew} proves Lipschitz regularity on compact sets of the transport map from intermediate times: 
if 
\[
T_{t} : e_{0}(G) \to e_{t}(G), \qquad (T_{t})_{\sharp} \mu_{0} = \mu_{t}
\]
then for any $t \in (0,1)$ the map $T_{t}^{-1}$ is Lipschitz and in particular $m$-almost every where differentiable:
Since $\Phi_{t} = \f \circ T_{t}^{-1}$, it follows that Assumption \ref{A:Phi} is equivalent to prove that 
\[
g_{\gamma_{0}} (\nabla\f(\gamma_{0}), DT_{t}^{-1}\nabla  \f_{t}(\gamma_{t}) ) > 0.
\]
In order to compute the previous quantity is convenient to consider the expression of $DT_{t}^{-1}$ proved in \cite{corderomccann:brescamp}, 
see Theorem 4.2: 
\[
DT_{t}^{-1} = Y (H - t Hess_{x}\f_{t}^{c}),
\]
where $Y$ is the differential of the exponential map in $(\gamma_{t}, -t \nabla \f_{t}^{c}) \in X  \times T_{\gamma_{t}}X$, $H$ is the Hessian of the squared of distance function and $\f_{t}^{c}$ has been introduced in Section \ref{Ss:slopKant} and minus its gradient composed with the exponential maps gives the optimal transport from $\mu_{t}$ to $\mu_{0}$. 
Since 
\[
Y \nabla \f_{t}(\gamma_{t}) = \nabla \f(\gamma_{0})
\]
it follows from Gauss Lemma, see \cite{gallot:riemannian} Theorem 3.70, that 
\begin{equation}\label{E:equivalent}
g_{\gamma_{0}} (\nabla\f(\gamma_{0}), DT_{t}^{-1}\nabla  \f_{t}(\gamma_{t}) ) = 
g_{\gamma_{t}} (\nabla\f(\gamma_{t}),(H - t Hess_{x}\f_{t}^{c}) \nabla  \f_{t}(\gamma_{t})  ).
\end{equation}
Since $(H - t Hess_{x}\f_{t}^{c})$ is symmetric with strictly positive determinant $\mu_{t}$-almost everywhere, the claim follows.
\end{proof}

\subsection{Level sets of Kantorovich potentials}\label{Ss:levelsetgeneral}

\begin{proposition}\label{P:cod1}
For every $t\in [0,1)$ and $\mathcal{L}^{1}$-a.e. $a \in \f(\mu_{0})$, with the exceptional set depending on $t$, 
it holds
\[
\hat m_{a,t} \ll \S^{h},
\]
where $\S^{h}$ is the spherical Hausdorff measure of codimension one defined in \eqref{E:spherical}. 
\end{proposition}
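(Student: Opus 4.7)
The plan is to adapt the proof of Proposition \ref{P:quotient}, which handled the case $t=0$ (where $\Phi_0 = \f$), to arbitrary $t \in [0,1)$, playing the role of $\f$ with the function $\Phi_t$ and leveraging Assumptions \ref{A:lengthreg} and \ref{A:Phi} in place of the metric Brenier Theorem. The starting observation is that the partition yielding $\hat m_{a,t}$ is precisely the family of level sets of $\Phi_t$, since $\Phi_t(\gamma_t) = \f(\gamma_0)$ gives
\[
e_t(G_a) \,=\, \Phi_t^{-1}(a) \cap e_t(G), \qquad a \in \f(\mu_0).
\]

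Step one is to bring $\Phi_t$ into a form suitable for Coarea. By Assumption \ref{A:lengthreg} the map $L_t$ is locally Lipschitz on $e_t(G)$, hence so is $\Phi_t = \f_t + (t/2)L_t^2$. Using inner regularity of compact sets I would, as in Section \ref{S:setting}, restrict $\gammA$ without loss of generality to a compact set on which $\Phi_t$ is globally Lipschitz, and extend $\Phi_t$ to a Lipschitz function on the whole of $X$ with the same constant. Step two is then to apply to this Lipschitz extension the Coarea-type inequality \eqref{E:comparison} combined with Theorem~4.3--4.4: for every Borel $B\subset e_t(G)$,
\[
\int_{-\infty}^{+\infty} P\bigl(\{\Phi_t > a\},\, B\bigr)\, da \,=\, |D\Phi_t|(B) \,\geq\, c_0 \int_B \|\nabla \Phi_t\|(x)\, m(dx).
\]

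Step three, the pivotal one, is to verify that $\|\nabla \Phi_t\|$ is strictly positive $m$-almost everywhere on $e_t(G)$. This is where Assumption \ref{A:Phi} enters. For $\gammA$-a.e.\ $\gamma$ and $s > 0$ small we have $\gamma_{t+s} \in \bar B_{sL(\gamma)}(\gamma_t)$, so by definition \eqref{E:gradient}
\[
\|\nabla \Phi_t\|(\gamma_t) \,\geq\, \liminf_{s\to 0} \frac{|\Phi_t(\gamma_{t+s}) - \Phi_t(\gamma_t)|}{s L(\gamma)} \,=\, \frac{1}{L(\gamma)}\lim_{s\to 0} \frac{\Phi_t(\gamma_t) - \Phi_t(\gamma_{t+s})}{d(\gamma_t,\gamma_{t+s})} \cdot L(\gamma),
\]
where the sign inside the absolute value is controlled by Lemma \ref{L:monotony} and the limit is a strictly positive real number by Assumption \ref{A:Phi}. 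Since $(e_t)_\sharp \gammA = \mu_t$ and $\mu_t$ is equivalent to $m\llcorner_{e_t(G)}$ (recall $\r_t > 0$ there), this yields $\|\nabla \Phi_t\| > 0$ $m$-a.e.\ on $e_t(G)$.

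The conclusion is then a verbatim copy of the last lines of Proposition \ref{P:quotient}. For any $E \subset \erre$ with $\mathcal{L}^1(E) = 0$, applying the above inequality with $B = \Phi_t^{-1}(E) \cap e_t(G)$ produces
\[
\int_{\Phi_t^{-1}(E) \cap e_t(G)} \|\nabla \Phi_t\|\, dm \,\leq\, \frac{1}{c_0}\int_E P\bigl(\{\Phi_t > a\},\, e_t(G)\bigr)\, da \,=\, 0,
\]
and strict positivity of $\|\nabla \Phi_t\|$ forces $m(\Phi_t^{-1}(E) \cap e_t(G)) = 0$; feeding this back into the disintegration identity \eqref{E:mass2} identifies the quotient measure as absolutely continuous with respect to $\mathcal{L}^1$ on $\f(\mu_0)$ and delivers the pointwise bound $\hat m_{a,t} \leq P(\{\Phi_t > a\},\cdot)$ for $\mathcal{L}^1$-a.e.\ $a$, from which $\hat m_{a,t}\ll \S^h$ follows since the perimeter measure is absolutely continuous with respect to the spherical Hausdorff measure of codimension one. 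The main technical point to be careful about is the extension of $\Phi_t$ to a globally Lipschitz function without destroying the non-vanishing gradient property on $e_t(G)$; everything else is mechanical once Assumption \ref{A:Phi} is in hand.
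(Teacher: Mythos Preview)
Your proposal is correct and follows essentially the same route as the paper: extend the locally Lipschitz $\Phi_t$ (via Assumption~\ref{A:lengthreg}) to a global Lipschitz function, apply the Coarea inequality to get $\|\nabla\Phi_t\|\,\hat m_{a,t}$ controlled by $\S^h$, and invoke Assumption~\ref{A:Phi} to conclude that $\|\nabla\Phi_t\|>0$ $m$-a.e.\ on $e_t(G)$. The paper is slightly terser in Step~2 (it writes the comparison directly as $\|\nabla\hat\Phi_t\|\,\hat m_{a,t}\leq c\,\S^h$ rather than passing through the perimeter bound), and it does not spell out, as you do in Step three, why Assumption~\ref{A:Phi} forces the gradient to be positive; your extra detail there is welcome, though note that the displayed chain with the trailing ``$\cdot L(\gamma)$'' is redundant and the argument tacitly uses that $\gamma_{t+s}\in e_t(G)$ for enough $s$ near $0$ (cf.\ Lemma~\ref{L:density}) so that the limit in Assumption~\ref{A:Phi} genuinely bounds $\|\nabla\Phi_t\|(\gamma_t)$ from below.
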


\begin{proof} 
{\it Step 1.} For $t =0$ the claim has been already proved in Proposition \ref{P:quotient}, see Remark \ref{R:general}.

As a consequence of Assumption \ref{A:lengthreg}, $\Phi_{t}$ is locally Lipschitz on $e_{t}(G)$. 
Since we are proving a local property, possibly taking a compact subset of $G$,
we can assume without loss of generality that $\Phi_{t}$ is Lipschitz on the whole $e_{t}(G)$. 
Denote with $\hat \Phi_{t}$ its Lipschitz extension to $X$. 
Coarea formula for Lipschitz maps applies (see Section \ref{Ss:coarea} and references therein): 
for any measurable $A \subset X$
\[
\int_{-\infty}^{+\infty} P(\{ \hat \Phi_{t} > a \}, A) \mathcal{L}^{1}(da) \geq c_{0} \int_{A} \| \nabla \hat \Phi_{t} \|(x)m(dx),
\]
where $c_{0}$ is a strictly positive constant.

{\it Step 2.} Since $\hat \Phi_{t}$ is Lipschitz, 
\[
P(\{\hat  \Phi_{t} > a \}, \cdot) \leq c \S^{h}. 
\]
where $c$ is a positive constant depending on $K$ and $N$.
So we have
\[
\int \| \nabla \hat \Phi_{t} \| \hat m_{a,t} \mathcal{L}^{1}(da) \leq \int_{-\infty}^{+\infty} P(\{ \hat \Phi_{t} > a \}, \cdot) \mathcal{L}^{1}(da),
\]
which in turn gives 
\[
\| \nabla\hat \Phi_{t} \| \hat m_{a,t} \leq c \S^{h}.
\]
From Assumption \ref{A:Phi} it follows that $\|\nabla \hat \Phi_{t} \| >0$ on $e_{t}(G)$ and the claim follows.
\end{proof}

\subsection{Disintegration in the direction of motion}\label{Ss:motiongeneral}

As in Section \ref{Ss:directionmotionpartial}, we disintegrate $m\llcorner_{\bar \Gamma_{a}(1)}$ in with respect to the 
partition $\{e_{t}(G_{a}) \}_{t \in [0,1]}$. From Proposition \ref{P:absolutecont} we have
\[
m\llcorner_{\bar \Gamma_{a}(1)} = \int_{[0,1]} m_{a,t} \mathcal{L}^{1}(dt).
\]

We now prove a regularity property for the conditional measures $m_{a,t}$.  
Recall that we are considering optimal transportation with uniformly positive and bounded lengths: 
there exists $C > 0$ so that
\[
\frac{1}{C} < L(\gamma) < C, \qquad \forall \gamma \in G.
\]

\begin{lemma}\label{L:cod2}
For $\mathcal{L}^{2}$-a.e. $(a,t) \in \f(\mu_{0})\times [0,1]$ it holds 
\[
m_{a,t} \ll \S^{h},
\]
where $\S^{h}$ is the spherical Hausdorff measure of codimension one defined in \eqref{E:spherical}. 
\end{lemma}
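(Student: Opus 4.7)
The plan is to mimic the argument of Proposition \ref{P:cod2}: exhibit the partition $\{e_{t}(G_{a})\}_{t\in[0,1]}$ of $\bar\Gamma_{a}(1)$ as the level sets of a locally Lipschitz function with non-vanishing metric gradient, and then invoke the coarea formula recalled in Section \ref{Ss:coarea} exactly as in the proofs of Proposition \ref{P:quotient} and Proposition \ref{P:cod1}. Concretely, I would define
\[
\Lambda : \bar\Gamma_{a}(1) \to [0,1], \qquad \Lambda(x) := P_{1}(e^{-1}(x)\cap([0,1]\times G_{a})),
\]
which is well defined thanks to Lemma \ref{L:partition} (the sets $e_{t}(G_{a})$ are pairwise disjoint as $t$ varies). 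Its directional derivative along any geodesic $\gamma\in G_{a}$ is $1/L(\gamma)$, hence bounded below by $1/C$ under our standing assumption on geodesic length; so once $\Lambda$ is shown to be locally Lipschitz, $\|\nabla\Lambda\|$ is strictly positive on $\bar\Gamma_{a}(1)$.

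The core step is therefore to prove local Lipschitz regularity of $\Lambda$, and this is precisely where Assumption \ref{A:lengthreg} and Assumption \ref{A:Phi} enter. In the special setting of Section \ref{S:particular}, the estimate was obtained by combining Lemma \ref{L:levelset} with $d^{2}$-cyclical monotonicity. In the general setting, the substitute is to use the function $\Phi_{t}=\f_{t}+\tfrac{t}{2}L_{t}^{2}$, which is locally Lipschitz by Assumption \ref{A:lengthreg} and satisfies $\Phi_{t}\equiv a$ on $e_{t}(G_{a})$. For $x=\gamma_{t}\in e_{t}(G_{a})$ and $y=\hat\gamma_{s}\in e_{s}(G_{a})$ sufficiently close, and assuming $\hat\gamma_{s}\in e_{t}(G)$ (which can be arranged via Lemma \ref{L:density} and a local argument after discarding a negligible set of $t$), one writes
\[
|\Phi_{t}(x)-\Phi_{t}(y)|=|\Phi_{t}(\hat\gamma_{t})-\Phi_{t}(\hat\gamma_{s})|,
\]
bounds the right-hand side from below by a positive multiple of $|t-s|$ using Assumption \ref{A:Phi} and the monotonicity of $\tau\mapsto\Phi_{t}(\hat\gamma_{\tau})$ established in Lemma \ref{L:monotony}, and bounds the left-hand side from above by $\mathrm{Lip}(\Phi_{t})\,d(x,y)$. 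This yields $|\Lambda(x)-\Lambda(y)|=|t-s|\leq C\,d(x,y)$ on a neighborhood of each point of $\bar\Gamma_{a}(1)$.

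With $\Lambda$ locally Lipschitz and $\|\nabla\Lambda\|>0$, the conclusion follows the template of Propositions \ref{P:quotient} and \ref{P:cod1}: localize to compact subsets so that $\Lambda$ extends to a Lipschitz function $\hat\Lambda$ on $X$, apply the coarea inequality
\[
\int_{-\infty}^{+\infty} P(\{\hat\Lambda>t\},\cdot)\,\mathcal{L}^{1}(dt) \geq c_{0}\int \|\nabla\hat\Lambda\|\,m,
\]
use $P(\{\hat\Lambda>t\},\cdot)\leq c\,\S^{h}$ from Section \ref{Ss:coarea} (since $\hat\Lambda$ is Lipschitz), and compare with the disintegration $m\llcorner_{\bar\Gamma_{a}(1)}=\int_{[0,1]}m_{a,t}\,\mathcal{L}^{1}(dt)$ given by Proposition \ref{P:absolutecont}. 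Dividing by $\|\nabla\hat\Lambda\|>0$ on $\bar\Gamma_{a}(1)$ yields $m_{a,t}\ll\S^{h}$ for $\mathcal{L}^{1}$-a.e.\ $t$, and since the construction can be run for each fixed $a$, Fubini gives the claim for $\mathcal{L}^{2}$-a.e.\ $(a,t)\in\f(\mu_{0})\times[0,1]$.

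The main obstacle is the Lipschitz estimate in Step 2: without the rigid structure of Section \ref{S:particular} one must patch together the comparison between two distinct geodesics $\gamma,\hat\gamma\in G_{a}$, and verifying that $\hat\gamma_{s}$ can be taken inside $e_{t}(G)$ requires care; Lemma \ref{L:density} provides enough such times $\tau$ near $t$ to close the argument.
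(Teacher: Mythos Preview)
Your overall plan---define $\Lambda$, show it is locally Lipschitz with non-vanishing gradient, then run coarea---matches the paper's. The divergence, and the source of trouble, is entirely in the Lipschitz step.

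The paper does \emph{not} invoke Assumption~\ref{A:lengthreg} or Assumption~\ref{A:Phi} here at all. It argues by compactness directly on $G_a\times G_a\times[0,1]^2$: for the continuous function $Y(\bar\gamma,\hat\gamma,s,t)=d(\bar\gamma_s,\hat\gamma_t)-\tfrac{1}{C}|t-s|$ one has $Y(\gamma,\gamma,s,t)=|t-s|\big(L(\gamma)-1/C\big)>0$ along the diagonal in the geodesic variables (using only the standing uniform bound $L>1/C$), and compactness of $G_a$ propagates this to an $\ve$-neighborhood, yielding $|t-s|\leq C\,d(\bar\gamma_s,\hat\gamma_t)$ for all $\bar\gamma,\hat\gamma$ in a uniform ball around $\gamma$ and \emph{all} $s,t\in[0,1]$. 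This is elementary and sidesteps the obstacle you flag at the end.

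Your route via $\Phi_t$ has two genuine gaps. First, Assumption~\ref{A:Phi} asserts only that a limit exists and lies in $(0,\infty)$ for $\gammA$-a.e.\ $\gamma$; it gives neither a finite-increment bound $|\Phi_t(\hat\gamma_t)-\Phi_t(\hat\gamma_s)|\geq c|t-s|$ for $s$ ranging in a fixed neighborhood of $t$, nor any uniformity of the constant in $\hat\gamma$. Lemma~\ref{L:monotony} supplies monotonicity, hence positivity, but not the quantitative lower bound you need. Second, Lemma~\ref{L:density} does not deliver the domain condition: it is an $L^1$ density-point statement at $\tau=t$ along a \emph{fixed} geodesic, whereas your comparison requires $\hat\gamma_s\in e_t(G)$ for an \emph{arbitrary} nearby $\hat\gamma\in G_a$ at an \emph{arbitrary} nearby time $s$, since the Lipschitz inequality must hold for all pairs of points in a neighborhood. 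Even if one argued by density of such pairs together with continuity of $\Lambda$, a uniform constant is still missing, and the pointwise hypothesis in Assumption~\ref{A:Phi} does not provide one.
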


\begin{proof} The idea of the proof is exactly the same as Proposition \ref{P:cod2}.

{\it Step 1.}
Define the map
\[
\bar \Gamma_{a}(1) \ni x \mapsto \Lambda(x) : = P_{1} (e^{-1}(x)),
\]
hence $\Lambda(x)$ is the unique $t$ for which there exists $\gamma \in G_{a}$ so that $\gamma_{t} = x$. From its definition, $\Lambda$ 
is clearly measurable, $\Lambda^{-1}(t) = \{\gamma_{t} : \gamma \in G_{a} \}$ and its derivative in the direction of $s \mapsto \gamma_{t+s}$ is 1 
for any $t\in (0,1)$ and $\gamma \in G_{a}$.

{\it Step 2.} We now show that $\Lambda$ is locally Lipschitz. 
Consider the map 
\[
G_{a} \times G_{a} \times [0,1]^{2} \ni (\bar \gamma,\hat \gamma,s,t) \mapsto Y(\bar \gamma, \hat \gamma,s,t) 
= d(\bar \gamma_{s}, \hat \gamma_{t}) - \frac{1}{C} |t-s|.
\]
Fix $\gamma \in G_{a}$ and note that for any $s,t \in [0,1]$:
\[
Y(\gamma,\gamma,s,t) = |t-s| \left( L(\gamma) - \frac{1}{C} \right) > 0.
\]
By continuity, there exists an open set $U$ in $G_{a} \times G_{a} \times [0,1]^{2}$ so that $Y(U) \subset (0, \infty)$ and 
\[
K : = \{(\gamma,\gamma,s,t):  s,t \in [0,1]  \} \subset U.
\]
Since $K$ is compact, there exists $\ve >0$ so that $K^{\ve} \subset U$ where $K^{\ve}$ is the $\ve$-neighborhood of $K$ 
in the metric space $G_{a}\times G_{a} \times [0,1]^{2}$. 
Consider $\bar \gamma, \hat \gamma \in G_{a}$ so that  
\[
d_{\infty} (\bar \gamma, \gamma) < \frac{\ve}{2}, \qquad  d_{\infty} (\hat \gamma, \gamma) < \frac{\ve}{2},
\]
where $d_{\infty}$ the metric on $\G(X)$. Then $(\hat \gamma, \bar \gamma, s,t) \in K^{\ve}$ for any $s,t \in [0,1]$. 
Therefore
\[
d(\hat \gamma_{s} , \bar \gamma_{t}) > \frac{1}{C}|s-t|, \qquad \forall \, s,t \in [0,1].
\]
Hence we have shown that for any $\gamma \in G_{a}$ there exists $\ve >0$ so that the map 
\[
e\left( [0,1] \times B_{\ve}(\gamma) \right) \ni x \mapsto \Lambda (x)
\]
is Lipschitz indeed for $x, y \in e\left( [0,1] \times B_{\ve}(\gamma) \right)$ with say $x = \bar \gamma_{s}$ and 
$y = \hat \gamma_{t}$ it holds
\[
|\Lambda(x) - \Lambda(y)| = |s-t| \leq C d(\bar \gamma_{s} , \hat \gamma_{t}).
\]

{\it Step 3.} Repeating the proof of Proposition \ref{P:cod1} with coarea formula we obtain that 
\[
m_{a,t}\llcorner_{e\left( [0,1] \times B_{\ve}(\gamma) \right)} \ll \S^{h}, 
\]
for $\mathcal{L}^{1}$-a.e. $t \in [0,1]$. Since $G_{a}$ is compact the claim on the whole $\bar \Gamma_{a}(1)$
follows by a covering argument.
\end{proof}

\section{Uniqueness of conditional measures}\label{S:unique}
This Section is devoted to find a relation and possibly a comparison between $m_{a,t}$ and $\hat m_{a,t}$.
Find a comparison between this two different reference measure of codimension one is fundamental.
Indeed disintegrate $\gammA$ w.r.t. $\{e_{0}^{-1} ( \f^{-1}(a) ) \}_{a \in \erre}$ that is the set of geodesic starting from a given level set of $\f$:
\[
\gammA = \int_{\f(\mu_{0})} \gammA_{a} q(a)\mathcal{L}^{1}(da), \qquad \gammA_{a}\big( (\f \circ e_{0}) ^{-1}(a)  \big) =1.
\]
Clearly this disintegration is just the lift for each $t$ of the disintegration of $\mu_{t}$ w.r.t. $\{ e_{t}(G_{a}) \}_{a \in \f(\mu_{0})}$.
Therefore the quotient measure $q(a)\mathcal{L}^{1}(da)$ is the same quotient measure of $\mu_{t}$ for every $t \in [0,1]$.
Then necessarily, 
\[
\int_{\f(\mu_{0})} \r_{t} \hat m_{a,t} \mathcal{L}^{1}(da) = \mu_{t} = (e_{t})_{\sharp} \gammA = \int_{\f(\mu_{0})} (e_{t})_{\sharp} \gammA_{a} q(a)\mathcal{L}^{1}(da),
\]
and from uniqueness of disintegration,
\[
(e_{t})_{\sharp} \gammA_{a} = \left( \int \r_{t}(z) \hat m_{a,t}(dz)\right)^{-1} \r_{t} \hat m_{a,t}.
\]
Hence if we want to express the geodesic of codimension one $(e_{t})_{\sharp} \gammA_{a}$ in terms of the reference measure 
$m_{a,t}$ moving in the same direction of the optimal transportation, we have to prove that 
$(e_{t})_{\sharp} \gammA_{a}  \ll m_{a,t}$.
To do that we will prove that $\hat m_{a,t} \ll m_{a,t}$.

\begin{figure}
\psfrag{e0}{$e_{0}(G_{a})$}
\psfrag{e1}{$e_{1/2}(G_{a})$}
\psfrag{e2}{$e_{1}(G_{a})$}
\psfrag{ma}{$m\llcorner_{\bar \Gamma_{a}}$}
\psfrag{t0}{$t=0$}
\psfrag{t1}{$t=1/2$}
\psfrag{t2}{$t=1$}
\psfrag{ma0}{$m_{a,0}$}
\psfrag{ma2}{$m_{a,1/2}$}
\psfrag{ma1}{$m_{a,1}$}
\psfrag{hm0}{$\hat m_{a,0}$}
\psfrag{hm1}{$\hat m_{a,1/2}$}
\psfrag{hm2}{$\hat m_{a,1}$}
\psfrag{m0}{$m\llcorner_{e_{0}(G)}$}
\psfrag{m1}{$m\llcorner_{e_{1/2}(G)}$}
\psfrag{m2}{$m\llcorner_{e_{1}(G)}$}
\psfrag{f0}{$\f^{-1}(a)$}
\psfrag{f1}{$\Phi_{1/2}^{-1}(a)$}
\psfrag{f2}{$\Phi_{1}^{-1}(a)$}
\centering{\resizebox{16cm}{9cm}{\includegraphics{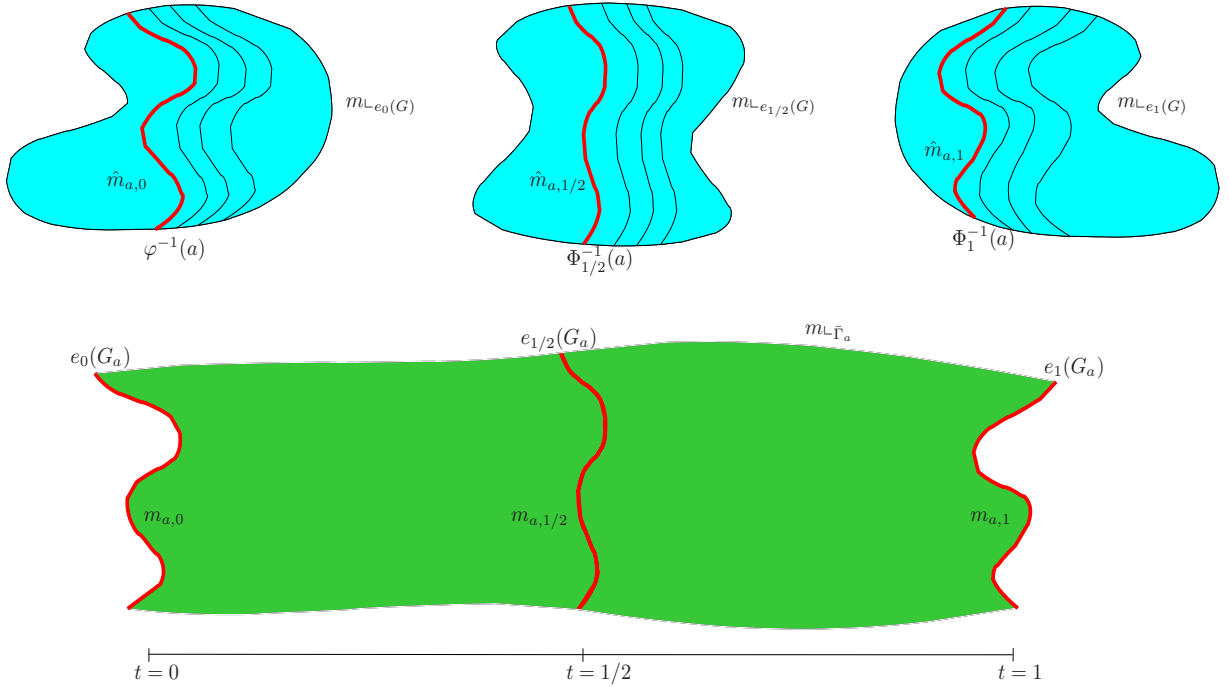}}}
\caption{Above and below the disintegration with conditional $\hat m_{a,t}$ and $m_{a,t}$, respectively.}
\label{fig:different} 
\end{figure}

\begin{remark} Here we want to stress the differences between $\hat m_{a,t}$ and $m_{a,t}$.
It is worth underlining again that both measures are concentrated on $e_{t}(G_{a})$. Also they are both obtained as conditional measures of $m$ 
or, otherwise stated, they belong to the range of two different disintegration maps of $m$:
\[
m\llcorner_{e_{t}(G)} = \int_{\f(\mu_{0})} \hat m_{a,t} \mathcal{L}^{1}(da),     \qquad m\llcorner_{\bar \Gamma_{a}(1)} = \int_{[0,1]} m_{a,t} \mathcal{L}^{1}(dt).
\]
Since in both disintegrations the quotient measure is $\mathcal{L}^{1}$, conditional 
measures  can be interpret as the ``derivative'' with respect to the parameter in the quotient space, $a$ in the first case and $t$ in the second one, of $m$.
Even if $m$ and $e_{t}(G_{a})$ are fixed, what do matters, and implies $\hat m_{a,t} \neq m_{a,t}$, is the difference between $e_{t+\ve}(G_{a})$ and $e_{t}(G_{a+\ve})$. The difference can be observed in Figure \ref{fig:different}.
\end{remark}

\begin{lemma}\label{L:Lebesgue}
For every $a \in \f(\mu_{0})$, 
\[
\lim_{s \to 0}\frac{1}{s} \int_{(t,t+s)} m_{a,\tau} \mathcal{L}^{1}(d\tau) = m_{a,t}, 
\]
for $\mathcal{L}^{1}$-a.e. $t\in [0,1]$, where the convergence is in the weak sense.
\end{lemma}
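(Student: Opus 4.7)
The plan is a weak-topology Lebesgue differentiation argument; no curvature information is needed. Throughout, fix $a \in \f(\mu_{0})$ and set $K := \bar \Gamma_{a}(1) = e([0,1] \times G_{a})$, which is compact since $G$ is compact by the standing assumptions of Section \ref{S:setting} and $e$ is continuous. By Proposition \ref{P:absolutecont},
\[
m \llcorner_{K} = \int_{[0,1]} m_{a,\tau}\, \mathcal{L}^{1}(d\tau),
\]
and each $m_{a,\tau}$ is concentrated on $e_{\tau}(G_{a}) \subset K$. Consequently, for every bounded Borel $f : K \to \erre$ the real-valued function $F_{f}(\tau) := \int f\, dm_{a,\tau}$ belongs to $L^{1}([0,1], \mathcal{L}^{1})$, with $\int_{0}^{1} F_{f}(\tau)\, d\tau = \int_{K} f\, dm < \infty$; in particular $F_{\ind_{K}}(\tau) = \|m_{a,\tau}\|$ lies in $L^{1}([0,1])$.

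Since $K$ is a compact metric space, $C(K)$ is separable; choose a countable dense sequence $\{f_{n}\}_{n \in \enne} \subset C(K)$. By the classical one-dimensional Lebesgue differentiation theorem applied to each scalar function $F_{f_{n}}$ and to $F_{\ind_{K}}$, there exists a set $A \subset [0,1]$ of full Lebesgue measure such that for every $t \in A$ and every $n \in \enne$
\[
\lim_{s \to 0} \frac{1}{s}\int_{t}^{t+s} F_{f_{n}}(\tau)\, d\tau = F_{f_{n}}(t), \qquad \lim_{s \to 0} \frac{1}{s}\int_{t}^{t+s} \|m_{a,\tau}\|\, d\tau = \|m_{a,t}\|.
\]

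Fix $t \in A$ and let $\nu_{s} := \frac{1}{s}\int_{(t,t+s)} m_{a,\tau}\, \mathcal{L}^{1}(d\tau)$ for small $s$. Each $\nu_{s}$ is supported in the compact set $K$, so the family $\{\nu_{s}\}$ is automatically tight, and $\nu_{s}(K) \to \|m_{a,t}\|$, so the total masses are uniformly bounded for $s$ small. Given an arbitrary $f \in C(K)$ and $\ve > 0$, choose $n$ with $\|f - f_{n}\|_{\infty} < \ve$; then
\[
\Big|\int f\, d\nu_{s} - \int f\, dm_{a,t}\Big| \leq \ve\big(\nu_{s}(K) + \|m_{a,t}\|\big) + \Big|\int f_{n}\, d\nu_{s} - \int f_{n}\, dm_{a,t}\Big|.
\]
The second term vanishes as $s \to 0$ by the choice of $A$, and then $\ve$ may be sent to $0$; this gives $\int f\, d\nu_{s} \to \int f\, dm_{a,t}$ for every $f \in C(K)$, which is exactly the claimed weak convergence $\nu_{s} \weak m_{a,t}$.

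The statement is essentially soft, with no genuine obstacle: once we exploit the compactness of $K$ (for separability of $C(K)$ and for tightness) the conclusion reduces to the scalar Lebesgue differentiation theorem applied simultaneously to a countable dense family of test functions.
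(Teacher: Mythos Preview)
Your proof is correct and follows essentially the same route as the paper: apply the scalar Lebesgue differentiation theorem to a countable dense family of continuous test functions, take the union of the exceptional sets, and pass to arbitrary test functions by uniform approximation. The only cosmetic difference is that you work in $C(K)$ for the compact support $K=\bar\Gamma_{a}(1)$, whereas the paper invokes separability of $C_{b}(X)$ via local compactness; your choice is slightly cleaner since all the measures involved live on $K$ anyway, and it sidesteps any worry about separability of $C_{b}(X)$ in the sup norm.
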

\begin{proof}
Since $(X,d,m)$ is locally compact, the space of real valued continuous and bounded functions $C_{b}(X)$ is separable. 
Let $\{f_{k} \}_{k \in \enne} \subset C_{b}(X)$ be a dense family. 

Fix $a\in \f(\mu_{0})$. The Lebesgue differentiation theorem implies that for every $k \in \enne$
\[
\frac{1}{s} \int_{(t,t+s)} \bigg(  \int f_{k}(z)m_{a,\tau}(dz) \bigg)  \mathcal{L}^{1}(d\tau) \to \int f_{k}(z) m_{a,t}(dz), \quad
\textrm{as } s \searrow 0,
\]
as real numbers, for all $t \in [0,1] \setminus E_{a,k}$ with $\mathcal{L}^{1}(E_{a,k})=0$. Hence $E_{a} : =\cup_{m \in \enne} E_{a,k}$ is 
$\mathcal{L}^{1}$-negligible. 
Take $f \in C_{b}(X)$ and chose $\{f_{k_{h}}\}_{h\in \enne}$ approximating $f$ in the uniform norm.
Using $f_{k_{h}}$, it is then fairly easy to show that 
\[
\lim_{s\to 0 }\frac{1}{s} \int_{(t,t+s)} \bigg(  \int f(z)m_{a,\tau}(dz) \bigg)  \mathcal{L}^{1}(d\tau) = \int f(z) m_{a,t}(dz)
\]
for all $t \in [0,1] \setminus E_{a}$. 
\end{proof}

The analogous statement of Lemma \ref{L:Lebesgue} is true for the conditional measures $\hat m_{a,t}$ of \eqref{E:mass2}: fix $t \in [0,1]$, then  
\[
\lim_{b \to 0}\frac{1}{b} \int_{(a,a+b)} \hat m_{\alpha,t} \mathcal{L}^{1}(d\alpha) = \hat m_{a,t}, 
\]
for $\mathcal{L}^{1}$-a.e. $a\in \f(\mu_{0})$, where the convergence is in the weak sense.

\subsection{Comparison between conditional measures}

The next one is the main technical statement of the Section.

\begin{proposition}\label{P:stesso}
For $\mathcal{L}^{1}$-a.e. $a \in \f(\mu_{0})$ and every sequence $\ve_{n} \to 0^{+}$ there exists a subsequence $\ve_{n_{k}}$ so that:
 \[
\lim_{\ve \to 0^{+}} \frac{1}{\ve} \cdot m \llcorner_{\Phi_{t}^{-1} ([a-\ve,a])}  =
\lim_{k \to \infty} \frac{1}{\ve_{n_{k}}} \cdot m \llcorner_{\Phi_{t}^{-1} ([a-\ve_{n_{k}},a]) \cap \bar  \Gamma_{a}(1)} 
\]
for $\mathcal{L}^{1}$-a.e. $t \in  [0,1]$, where the exceptional set depends on the subsequence $\ve_{n_{k}}$ and
the limit is in the weak topology.
\end{proposition}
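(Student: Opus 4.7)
The plan is to identify both sides of the equality with the codimension-one conditional measure $\hat m_{a,t}$ obtained from the disintegration of $m\llcorner_{e_t(G)}$ along the level sets of $\Phi_t$ (Proposition \ref{P:cod1}). For the left-hand side, combining that disintegration with Fubini gives, for every $f \in C_b(X)$,
\[
\frac{1}{\ve}\int f\, dm\llcorner_{\Phi_t^{-1}([a-\ve,a])} \;=\; \frac{1}{\ve}\int_{a-\ve}^{a} \hat m_{\alpha,t}(f)\, d\alpha.
\]
Lebesgue differentiation in $\alpha$, upgraded via the countable-dense approximation used in Lemma \ref{L:Lebesgue}, shows that $\mu^\ve := \frac{1}{\ve} m\llcorner_{\Phi_t^{-1}([a-\ve,a])}$ converges to $\hat m_{a,t}$ narrowly (in particular with $\mu^\ve(X) \to \|\hat m_{a,t}\|$, by taking $f\equiv 1$) for $\mathcal{L}^2$-a.e. $(a,t)\in \f(\mu_{0})\times[0,1]$. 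Fubini in $(a,t)$ then yields the statement in the order ``$\mathcal{L}^1$-a.e.\ $a$, and then $\mathcal{L}^1$-a.e.\ $t$''.

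For the right-hand side, set $\nu^\ve := \frac{1}{\ve} m\llcorner_{\Phi_t^{-1}([a-\ve,a])\,\cap\,\bar\Gamma_a(1)} \leq \mu^\ve$. The uniform total-variation bound coming from the LHS yields weak-$\ast$ compactness of $\{\nu^\ve\}$ on the compact set $\bar\Gamma_a(1)$; a diagonal extraction over a countable dense family $\{f_k\}\subset C_b(X)$ and a countable dense set of times gives, from any prescribed $\ve_n\to 0$, a subsequence $\ve_{n_k}$ along which $\nu^{\ve_{n_k}}\weak \nu_t$ for $\mathcal{L}^1$-a.e. $t$, with $\nu_t \leq \hat m_{a,t}$. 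Since $\nu_t \le \hat m_{a,t}$, it suffices to match total masses. Using the second disintegration $m\llcorner_{\bar\Gamma_a(1)} = \int_{0}^{1} m_{a,\tau}\,\mathcal{L}^1(d\tau)$,
\[
\nu^\ve(X) \;=\; \frac{1}{\ve}\int_0^1 m_{a,\tau}\bigl(\Phi_t^{-1}([a-\ve,a])\bigr)\, d\tau.
\]
By Lemma \ref{L:monotony}, $\tau \mapsto \Phi_t(\gamma_\tau)$ is strictly decreasing along each $\gamma \in G_a$ with value $a$ at $\tau = t$, and Assumption \ref{A:Phi} gives a strictly positive one-sided derivative there. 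Hence the integrand is effectively supported on $\tau \in [t, t+O(\ve)]$, and a Lebesgue-differentiation argument for $\tau \mapsto \|m_{a,\tau}\|$ at $\tau = t$ together with the rate estimate delivers $\lim_k \nu^{\ve_{n_k}}(X) = \|\hat m_{a,t}\|$, whence $\nu_t = \hat m_{a,t}$.

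The main obstacle is this last matching of masses. The fiber $e_\tau(G_a)$ carrying $m_{a,\tau}$ for $\tau \neq t$ is not a priori contained in $e_t(G)$, so $\Phi_t$ need not be defined on all of it; this is precisely what Lemma \ref{L:density} is designed to absorb, since it guarantees that $\{\tau : \gamma_\tau \in e_t(G)\}$ has Lebesgue density $1$ at $\tau=t$ for $\gammA$-a.e. $\gamma$, making the ``undefined'' portion asymptotically negligible. Moreover, the rate at which $\Phi_t(\gamma_\tau)$ drops below $a$ as $\tau>t$ grows depends on the individual geodesic $\gamma$, so to pass from pointwise estimates to the integrated Lebesgue limit one must handle non-uniform fluctuations; subsequence extraction in $\ve_{n_k}$ is the natural device for doing this, which explains why the statement only asserts convergence along subsequences on the right.
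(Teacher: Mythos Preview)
Your identification of the left-hand side with $\hat m_{a,t}$ via Lebesgue differentiation (using the disintegration $m\llcorner_{e_t(G)}=\int \hat m_{\alpha,t}\,d\alpha$ and the density argument of Lemma~\ref{L:Lebesgue}) is correct and coincides with what the paper does.

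The gap is in your mass-matching step for the right-hand side. You write
\[
\nu^{\ve}(X)=\frac{1}{\ve}\int_{0}^{1} m_{a,\tau}\bigl(\Phi_{t}^{-1}([a-\ve,a])\bigr)\,d\tau
\]
and then claim that the integrand is supported on $[t,t+O(\ve)]$ and that ``Lebesgue differentiation for $\tau\mapsto\|m_{a,\tau}\|$ together with the rate estimate'' yields $\nu^{\ve}(X)\to\|\hat m_{a,t}\|$. This does not follow. First, Assumption~\ref{A:Phi} gives only a \emph{pointwise} positive derivative of $\tau\mapsto\Phi_t(\gamma_\tau)$ at $\tau=t$, not a uniform one over $\gamma\in G_a$; hence the width of $\{\tau:\Phi_t(\gamma_\tau)\in[a-\ve,a]\}$ depends on $\gamma$, and $m_{a,\tau}(\Phi_t^{-1}([a-\ve,a]))$ is not close to $\|m_{a,\tau}\|$. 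Second, even granting the computation along each $\gamma$, what emerges is
\[
\nu^{\ve}(X)\longrightarrow \int_{G_a} g(\gamma,t)\,\lambda_t(\gamma_t)\,q_a(d\gamma)=\int \lambda_t\,dm_{a,t},
\]
with $\lambda_t$ the reciprocal of the derivative in Assumption~\ref{A:Phi}. Identifying this integral with $\|\hat m_{a,t}\|$ is precisely the relation $\hat m_{a,t}=\lambda_t m_{a,t}$, which is Lemma~\ref{L:welldefined} --- and that lemma is proved \emph{using} Proposition~\ref{P:stesso}. So your argument is circular at this point.

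The paper avoids this by attacking the \emph{difference} $\mu^\ve-\nu^\ve$ directly: one shows
\[
\lim_{\ve\to 0}\frac{1}{\ve}\int_{0}^{1} m\bigl(\Phi_t^{-1}([a-\ve,a])\setminus\bar\Gamma_a(1)\bigr)\,dt=0
\]
by contradiction. If it failed, there would be $a_k\uparrow a$ with $m(\bar\Gamma_{a_k}(1)\setminus\bar\Gamma_a(1))\ge\alpha>0$; but compactness of $G$ forces $G_{a_k}$ (hence $\bar\Gamma_{a_k}(1)$) to converge in Hausdorff distance to a subset of $G_a$ (resp.\ $\bar\Gamma_a(1)$), contradicting the mass lower bound via outer regularity of $m$. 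This Hausdorff-convergence argument is the missing idea in your proof: it controls the leakage outside $\bar\Gamma_a(1)$ without any prior knowledge of the relationship between $m_{a,t}$ and $\hat m_{a,t}$.
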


\begin{proof}

{\it Step 1.} 
We show that for every $a\in \f(\mu_{0})$, 
\[
\lim_{\ve \to 0} \frac{1}{\ve} \int_{(0,1)} m\big(  \Phi_{t}^{-1} ([a-\ve,a])  \setminus \bar \Gamma_{a}(1) \big)  dt = 0.
\]
Suppose by contradiction the existence of $a\in \f(\mu_{0})$ and of a sequence $\ve_{n} \to 0$ such that 
\[
\lim_{n \to \infty} \frac{1}{\ve_{n}} \int_{(0,1)} m\big(  \Phi_{t}^{-1} ([a-\ve_{n},a])  \setminus \bar \Gamma_{a}(1) \big)  dt  \geq \alpha.
\]
Then, since in Lemma \ref{L:cod1} we have proved that  $\|\nabla \hat \Phi_{t} \| \hat m_{b,t} \leq c\,\S^{h}\llcorner_{e_{t}(G_{b})}$ for $\mathcal{L}^{1}$-a.e. 
$b \in \f(\mu_{0})$, with $\|\nabla \hat \Phi_{t} \|$ positive $m$-a.e., it follows that 
\[
\lim_{n \to \infty} \frac{1}{\ve_{n}}  \int_{(0,1)} \int_{[a-\ve_{n},a]}  \S^{h} (e_{t}(G_{b}) \setminus \bar \Gamma_{a}(1))\mathcal{L}^{1}(db) \mathcal{L}^{1}(dt) 
\geq \alpha.
\]
Then by Fubini's Theorem
\[
\lim_{n \to \infty} \frac{1}{\ve_{n}} \int_{[a-\ve_{n},a]}   \int_{(0,1)}  \S^{h} (e_{t}(G_{b}) \setminus \bar \Gamma_{a}(1))\mathcal{L}^{1}(dt) \mathcal{L}^{1}(db) 
\geq \alpha.
\]
Hence there exists a sequence $a_{k}$ converging to $a$ from below such that 
\[
\int_{(0,1)} \S^{h}(e_{t}(G_{a_{k}}) \setminus \bar \Gamma_{a}(1))  \mathcal{L}^{1}(dt) \geq \alpha.
\]
for all $k \in \enne$.

{\it Step 2.} It follows from Lemma \ref{L:cod2} and Proposition \ref{P:noboundary} that, since $m\llcorner_{\bar \Gamma_{a_{k}}(1)} = \int m_{a_{k},t}dt$:
\[
\liminf_{k \to 0} m\left(\bar \Gamma_{a_{k}}(1) \setminus \bar \Gamma_{a}(1) \right)  \geq \alpha.
\]
Since as $k\to \infty$ the sequence $a_{k}$ is converging to $a$, the sequence of compact sets of geodesics $\{G_{a_{k}}\}_{k\in \enne}$ is converging in Hausdorff topology to a subset of $G_{a}$, hence the same happens for the sequence of compact sets $\{ \bar \Gamma_{a_{k}}(1) \}_{k\in \enne}$.
Then just observe that 
\[
m(\bar \Gamma_{a}(1) ) = \lim_{\delta \to 0} m (\bar \Gamma_{a}(1) ^{\delta} ) \geq m(\bar \Gamma_{a}(1) )+  
\liminf_{k \to \infty}m(\bar \Gamma_{a_{k}}(1) \setminus \bar \Gamma_{a}(1) ) \geq m(\bar \Gamma_{a}(1) ) + \alpha,
\]
where $\bar \Gamma_{a}(1) ^{\delta} = \{z \in X : d(z, \bar \Gamma_{a}(1) ) \leq \delta \}$ is a neighborhood of $\bar \Gamma_{a}(1)$ and the first inequality follows from the definition of Hausdorff convergence.
Since $\alpha>0$ we have a contradiction and therefore for each $a \in \f(\mu_{0})$
\[
\lim_{\ve \to 0} \frac{1}{\ve} \int_{(0,1)} m\big(  \Phi_{t}^{-1} ([a-\ve,a])  \setminus \bar \Gamma_{a}(1) \big)  dt = 0.
\]
So for each sequence $\ve_{n} \to 0$ there exists a subsequence $\ve_{n_{k}}$ such that 
\[
\lim_{k \to \infty} \frac{1}{\ve_{n_{k}}} m\big(  \Phi_{t}^{-1} ([a-\ve_{n_{k}},a])  \setminus \bar \Gamma_{a}(1) \big)  = 0,
\]
for $\mathcal{L}^{1}$-a.e. $t \in [0,1]$.

{\it Step 3.} Let $\{f_{h} \}_{h \in \enne} \subset C_{b}(X)$ be a dense family. Then for each $f_{k}$
\[
\lim_{k \to \infty} \left( \frac{1}{\ve_{n_{k}}} \int_{[a-\ve_{n_{k}}, a]} f_{h} \hat m_{b,t} \mathcal{L}^{1}(db) - 
\frac{1}{\ve_{n_{k}}} \int_{\Phi_{t}^{-1}([a-\ve_{n_{k}}, a]) \cap \bar \Gamma_{a}(1)} f_{h}m 
\right) = 0
\]
for all $t \in [0,1]$ minus a set of measure zero. Reasoning as Lemma \ref{L:Lebesgue}, we have the existence 
of a set $E\subset [0,1]$ with $\mathcal{L}^{1}(E) = 0$ such that for all $f \in C_{b}(X)$ it holds
\[
\lim_{k \to \infty} \left( \frac{1}{\ve_{n_{k}}} \int_{[a-\ve_{n_{k}}, a]} f \hat m_{b,t} \mathcal{L}^{1}(db) - 
\frac{1}{\ve_{n_{k}}} \int_{\Phi_{t}^{-1}([a-\ve_{n_{k}}, a]) \cap \bar \Gamma_{a}(1)} fm 
\right) = 0
\]
for all $t \in [0,1] \setminus E$. Then again from Lemma \ref{L:Lebesgue} applied to $\hat m_{a,t}$ we have the claim.
\end{proof}

The proof of the next Corollary follows from Lemma \ref{L:Lebesgue} and Proposition \ref{P:stesso}.
\begin{corollary}\label{C:step}
For $\mathcal{L}^{1}$-a.e. $a \in \f(\mu_{0})$ the following holds: for every sequence $\ve_{n} \to 0$ there exists a subsequence 
$\ve_{n_{k}} \to 0$ so that 
\[
\lim_{k \to \infty} \frac{1}{\ve_{n_{k}}} \cdot m \llcorner_{\Phi_{t}^{-1} ([a-\ve_{n_{k}},a]) \cap \bar  \Gamma_{a}(1)} = \hat m_{a,t}
\]
for $\mathcal{L}^{1}$-a.e. $t \in  [0,1]$, where the exceptional set depends on the subsequence $\ve_{n_{k}}$ and
the limit is in the weak topology.
\end{corollary}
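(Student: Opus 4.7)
The plan is to identify the common limit measure in Proposition \ref{P:stesso} as $\hat m_{a,t}$ via a Lebesgue-differentiation argument on the disintegration \eqref{E:mass2}, and then to chain this identification with the proposition itself.

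The key observation is that since each conditional measure $\hat m_{\alpha,t}$ is concentrated on $\Phi_t^{-1}(\alpha)\cap e_t(G)$, the disintegration \eqref{E:mass2} gives the identity
\[
m\llcorner_{\Phi_t^{-1}([a-\ve,a])} \;=\; \int_{[a-\ve,a]} \hat m_{\alpha,t}\,\mathcal{L}^1(d\alpha),
\]
as Borel measures on $X$, for every $t\in[0,1]$ and every $\ve>0$, because $\Phi_t^{-1}([a-\ve,a])$ is contained in $e_t(G)$ by definition of $\Phi_t$. Applying the Lebesgue-differentiation variant recorded just after Lemma \ref{L:Lebesgue} (with $b=\ve$ and base point $a-\ve$) therefore yields, for each fixed $t$,
\[
\frac{1}{\ve}\, m\llcorner_{\Phi_t^{-1}([a-\ve,a])} \;\rightharpoonup\; \hat m_{a,t}\qquad \text{as } \ve\to 0^+,
\]
for $\mathcal{L}^1$-a.e. $a\in\f(\mu_0)$.

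To coordinate the exceptional sets in the two parameters $(a,t)$, I would fix a countable dense family $\{f_h\}_{h\in\enne}\subset C_b(X)$ and apply the one-dimensional Lebesgue differentiation theorem to each real-valued map $\alpha\mapsto \int f_h\,d\hat m_{\alpha,t}$. Invoking Fubini on the resulting $\mathcal{L}^2$-negligible bad set in $(a,t)$ produces an $\mathcal{L}^1$-negligible set $N\subset\f(\mu_0)$ with the property that, for every $a\notin N$, the above differentiation holds simultaneously on all $\{f_h\}$ for $\mathcal{L}^1$-a.e. $t\in[0,1]$. A uniform approximation exactly as at the end of the proof of Lemma \ref{L:Lebesgue} then upgrades pointwise convergence on $\{f_h\}$ to weak convergence against every $f\in C_b(X)$.

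Finally, fix $a\notin N$ and an arbitrary sequence $\ve_n\to 0^+$. Proposition \ref{P:stesso} provides a subsequence $\ve_{n_k}$ along which the two measures
\[
\frac{1}{\ve_{n_k}}\, m\llcorner_{\Phi_t^{-1}([a-\ve_{n_k},a])\cap\bar\Gamma_a(1)} \quad\text{and}\quad \frac{1}{\ve_{n_k}}\, m\llcorner_{\Phi_t^{-1}([a-\ve_{n_k},a])}
\]
share the same weak limit for $\mathcal{L}^1$-a.e. $t\in[0,1]$; by the previous step this common limit is $\hat m_{a,t}$. Intersecting the two $\mathcal{L}^1$-null exceptional sets in $t$ yields the corollary. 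The only mildly technical point is the Fubini book-keeping needed to pass from ``$\mathcal{L}^1$-a.e. $a$, for each fixed $t$'' to the joint statement ``for $\mathcal{L}^1$-a.e. $a$, then $\mathcal{L}^1$-a.e. $t$''; everything else is a direct chaining of Lemma \ref{L:Lebesgue} and Proposition \ref{P:stesso}.
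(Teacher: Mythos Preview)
Your proposal is correct and follows essentially the same route as the paper, which simply says the corollary ``follows from Lemma \ref{L:Lebesgue} and Proposition \ref{P:stesso}''; you have just made explicit the identification $m\llcorner_{\Phi_t^{-1}([a-\ve,a])}=\int_{[a-\ve,a]}\hat m_{\alpha,t}\,d\alpha$ and the Fubini book-keeping needed to swap the order of the ``a.e.\ $a$'' and ``a.e.\ $t$'' quantifiers. If anything, your treatment of the exceptional sets is more careful than what the paper writes down.
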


We now prove that  $\hat m_{a,t} \ll m_{a,t}$. Let us recall the disintegration formula for $m$ as constructed in 
Proposition \ref{P:absolutecont}: for each $a \in \f(\mu_{0})$ since the geodesics in $G_{a}$ are disjoint even for different times 
it holds
\begin{equation}\label{E:motionagain}
m_{\bar \Gamma_{a}(1)} = \int_{e_{1/2}(G_{a})} g(y,\cdot)\mathcal{L}^{1}\llcorner_{[0,1]} q_{a}(dy).
\end{equation}
where $g$ satisfies \eqref{E:surfacevopoint}, $q_{a}$ is the quotient measure satisfying for $I \subset e_{1/2}(G_{a})$
\[
q_{a}(I) = m( \{ \gamma_{t} : \gamma \in G_{a}, \gamma_{1/2} \in I \})
\]
and the measure $g(y,\cdot)\mathcal{L}^{1}\llcorner_{[0,1]}$ has to be intended as 
$(\gamma)_{\sharp}(g(y,\cdot)\mathcal{L}^{1}\llcorner_{[0,1]})$, with 
$\gamma$ the unique element of $G_{a}$ so that $\gamma_{1/2} = y$.
Being the evaluation map $e_{1/2}$ a Borel isomorphism between $G_{a}$ and $e_{1/2}(G)$ 
the measure $q_{a}$ can be also interpret as a measure on $G_{a}$.

\begin{proposition}\label{P:step2}
For $\mathcal{L}^{1}$-a.e. $a \in \f(\mu_{0})$ 
\[
\hat m_{a,t} \ll m_{a,t}
\]
for $\mathcal{L}^{1}$-a.e. $t \in [0,1]$. Equivalently $(e_{1/2})_{\sharp} \gammA_{a} \ll q_{a}$.
\end{proposition}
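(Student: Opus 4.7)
The approach is to represent both $\hat m_{a,t}$ and $m_{a,t}$ as averages of one-dimensional slices along the geodesics of $G_a$ against the same quotient measure $q_a$, then compare the resulting densities. The key input is the blow-up formula of Corollary \ref{C:step}, which realises $\hat m_{a,t}$ as a limit of pieces of $m$ supported inside $\bar \Gamma_a(1)$, which is precisely where the disintegration \eqref{E:motionagain} along geodesics is available.

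I would fix $a \in \f(\mu_0)$ in the full-measure set supplied by Corollary \ref{C:step} and extract a sequence $\ve_k \downarrow 0$ so that, for $\mathcal{L}^1$-a.e. $t \in [0,1]$,
\[
\mu_k := \tfrac{1}{\ve_k}\, m \llcorner_{\Phi_t^{-1}([a-\ve_k, a]) \cap \bar \Gamma_a(1)} \;\rightharpoonup\; \hat m_{a,t}.
\]
Identifying $q_a$ with a measure on $G_a$ via the bijection $\gamma \mapsto \gamma_{1/2}$ and inserting \eqref{E:motionagain}, each $\mu_k$ rewrites as
\[
\mu_k = \int_{G_a} (\gamma)_\sharp \Big( \tfrac{1}{\ve_k}\, g(\gamma, s)\, \mathbf{1}_{E_k(\gamma)}(s)\, \mathcal{L}^1(ds) \Big)\, q_a(d\gamma),
\]
with $E_k(\gamma) := \{s \in [0,1] : \Phi_t(\gamma_s) \in [a-\ve_k, a]\}$.

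The second step is a one-dimensional analysis of the inner slice. Since $\Phi_t(\gamma_t) = \f(\gamma_0) = a$ and Lemma \ref{L:monotony} forces $s \mapsto \Phi_t(\gamma_s)$ to be strictly decreasing past $s=t$, Assumption \ref{A:Phi}, together with the uniform bound $L(\gamma) \in [1/C, C]$ used to convert the $d$-normalised derivative into the $s$-normalised one, supplies a finite positive one-sided derivative
\[
D(\gamma) := \lim_{s \downarrow 0} \frac{\Phi_t(\gamma_t) - \Phi_t(\gamma_{t+s})}{s} \in (0,\infty), \qquad \gammA\text{-a.e. } \gamma,
\]
hence $q_a$-a.e.\ $\gamma \in G_a$ after disintegrating $\gammA$. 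Combined with Lemma \ref{L:density} (guaranteeing that the implicit constraint $\gamma_s \in e_t(G)$ holds on a density-one set of $s$ near $t$) and the local continuity of $g(\gamma, \cdot)$ that follows from the two-sided estimate \eqref{E:surfacevopoint}, this yields the one-dimensional pointwise weak convergence
\[
\tfrac{1}{\ve_k}\, g(\gamma, s)\, \mathbf{1}_{E_k(\gamma)}(s)\, \mathcal{L}^1(ds) \;\rightharpoonup\; \frac{g(\gamma, t)}{D(\gamma)}\, \delta_t.
\]

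The final step is to commute this pointwise-in-$\gamma$ limit with the outer $q_a$-integral and obtain the representation
\[
\hat m_{a,t} = \int_{G_a} \frac{g(\gamma, t)}{D(\gamma)}\, \delta_{\gamma_t}\, q_a(d\gamma).
\]
Since the parallel representation $m_{a,t} = \int_{G_a} g(\gamma, t)\, \delta_{\gamma_t}\, q_a(d\gamma)$ is immediate from \eqref{E:motionagain} at the Lebesgue point $t$ (cf.\ Lemma \ref{L:Lebesgue}), absolute continuity $\hat m_{a,t} \ll m_{a,t}$ follows with explicit density $1/D(\gamma)$. The equivalent formulation $(e_{1/2})_\sharp \gammA_a \ll q_a$ then comes from the identity $(e_t)_\sharp \gammA_a \propto \r_t\, \hat m_{a,t}$ recalled at the top of Section \ref{S:unique}: dividing out the $q_a$-a.e. positive factors $\r_t(\gamma_t)$ and $g(\gamma, t)$, and pulling back through the bijection $e_{1/2}: G_a \to e_{1/2}(G_a)$, reduces the absolute continuity of measures on $e_t(G_a)$ to that of their preimages on $G_a$. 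The \textbf{main obstacle} is exactly this final commutation: $D(\gamma)$ may degenerate on $q_a$-positive subsets, so the pointwise convergence of the slice measures is not uniformly dominated. I would resolve this by a standard truncation and exhaustion, restricting $q_a$ to the level sets $\{D \geq 1/R\}$ where the quotient $\mathcal{L}^1(E_k(\gamma))/\ve_k$ is uniformly bounded and dominated convergence applies, and then letting $R \to \infty$ using that $D > 0$ $q_a$-a.e.\ by Assumption \ref{A:Phi}.
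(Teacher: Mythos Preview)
Your strategy is not the paper's. You try to identify the Radon--Nikodym density outright, by pushing the blow-up limit of Corollary~\ref{C:step} through the $q_a$-integral and reading off $1/D(\gamma)$ as the coefficient of $\delta_{\gamma_t}$. That is really the content of the paper's \emph{next} result, Lemma~\ref{L:welldefined}; the paper deliberately separates the two steps, first establishing the bare absolute continuity $\hat m_{a,t}\ll m_{a,t}$ by a soft argument, and only afterwards, once $\hat m_{a,t}=\theta_{a,t}\,q_a$ is known, computing the density.

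Your merged argument contains a genuine circularity. From Assumption~\ref{A:Phi} you get $D(\gamma)\in(0,\infty)$ for $\gammA$-a.e.\ $\gamma$, and then assert ``hence $q_a$-a.e.\ $\gamma\in G_a$ after disintegrating $\gammA$''. Disintegrating $\gammA$ over $a=\f(\gamma_0)$ yields $D>0$ only $\gammA_a$-a.e., not $q_a$-a.e.; upgrading to $q_a$-a.e.\ would require $q_a\ll (e_{1/2})_\sharp\gammA_a$, the absolute continuity \emph{opposite} to the one stated in the Proposition, which is nowhere available. Your truncation--exhaustion therefore does not close the gap: the sets $\{D\geq 1/R\}$ exhaust $G_a$ only up to $\gammA_a$-null sets, so a $q_a$-positive residual with $D=0$ (or with no limit) may remain, on which the slice masses $\mathcal{L}^1(E_k(\gamma))/\ve_k$ are uncontrolled and a singular part of $\hat m_{a,t}$ cannot be excluded. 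The paper avoids this trap by never asking for pointwise $q_a$-a.e.\ convergence of the slices: given a $q_a$-null compact $I$, it chooses open $A_i\supset I$ with $q_a(A_i)\leq 1/i$, uses \eqref{E:motionagain} to bound $\mu_k\big(e(U\times A_i)\big)\leq C\,q_a(A_i)$ from above, and then applies lower semicontinuity of weak limits on open sets of $\bar\Gamma_a(1)$ to obtain $\hat m_{a,t}\big(e(U\times A_i)\big)\leq C/i$ and hence $\hat m_{a,t}(I)=0$. Only an integrand \emph{upper} bound is needed there, which the paper reads off Assumption~\ref{A:Phi} combined with the inner-regularity reductions of Section~\ref{S:setting}, whereas your route needs the full pointwise limit $q_a$-a.e.
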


\begin{proof}
Consider $a \in \f(\mu_{0})$ and a subsequence $\ve_{n_{k}}$ so that Corollary \ref{C:step} holds. 

{\it Step 1.} Consider the evaluation map $e : [0,1] \times e_{1/2}(G_{a}) \to \bar \Gamma_{a}(1)$ defined as usual by
\[
e(s,y) =  e_{s} \circ e_{1/2}^{-1}(y).
\]
Note that it is continuous, surjective and its inverse is continuous as well. Hence $\bar \Gamma_{a}(1)$ and  
$[0,1] \times e_{1/2}(G_{a})$ are homeomorphic.

Take $I$ compact subset of $e_{1/2}(G_{a})$ with $q_{a}(I) = 0$.
Since $q_{a}$ is a regular finite measure on $e_{1/2}(G_{a})$,
by outer regularity there exists a sequence $\{A_{i}\}_{i\in \enne}$ with $A_{i } \subset e_{1/2}(G_{a})$ 
and open in the subspace topology of $e_{1/2}(G_{a})$
so that  
\[
I \subset A_{i}, \qquad  q_{a}(A_{i}) \leq \frac{1}{i}.
\]
Take now any open set $U \subset [0,1]$ neighborhood of $1/2$. Then $e(U \times A_{i})$ will be an open set 
in $\bar \Gamma_{a}(1)$ for each $i\in \enne$.

{\it Step 2.}
Then
\begin{align*}
\frac{1}{\ve_{n_{k}}} m & \llcorner_{\Phi_{t}^{-1} ([a-\ve_{n_{k}},a]) \cap \bar  \Gamma_{a}(1)}(e(U\times A_{i})) \crcr
= &~ 
\frac{1}{\ve_{n_{k}}} \int_{A_{i}} (g(y,\cdot) \mathcal{L}^{1})
\left(U \cap \{ \tau \in [t,1] : \Phi_{t}(\gamma_{\tau}) \in [a-\ve_{n_{k}},a ] \}\right)
q_{a}(dy).
\end{align*}
Let $s_{k} \in (0,1)$ be such that 
\[
s_{k} = \max \{ s : \Phi_{t}(\gamma_{t+s}) \geq a -\ve_{n_{k}} \}. 
\]
Then $s_{k} \geq \mathcal{L}^{1}\left( \{ \tau \in [t,1] : \Phi_{t}(\gamma_{\tau}) \in [a-\ve_{n_{k}},a ] \}\right)$
and 
\[
\ve_{n_{k}} = a - (a - \ve_{n_{k}}) \geq \Phi_{t}(\gamma_{t}) - \Phi_{t}(\gamma_{t+s_{k}})
\]
Therefore 
\begin{equation}\label{E:limite}
\lim_{k \to \infty} \frac{1}{\ve_{n_{k}}} \mathcal{L}^{1}\left( \{ \tau \in [0,1] : \Phi_{t}(\gamma_{\tau}) \in [a-\ve_{n_{k}},a ] \}\right) \leq 
\lim_{k \to \infty} \frac{s_{k}}{\Phi_{t}(\gamma_{t}) - \Phi_{t}(\gamma_{t+s_{k}})}
\end{equation}
and the last term by Assumption \ref{A:Phi} is bounded. Since $g$ is uniformly bounded as well, it follows that 
\[
\frac{1}{\ve_{n_{k}}} m  \llcorner_{\Phi_{t}^{-1} ([a-\ve_{n_{k}},a]) \cap \bar  \Gamma_{a}(1)}(e(U\times A_{i})) \leq C q_{a}(A_{i}), 
\]
for some positive constant $C$ not depending on $k$.

{\it Step 3.} We now observe that $\bar \Gamma_{a}(1)$ is a compact set. Hence any function $f \in C_{b}(X)$
can be extended, by Tiezte's Theorem, to a bounded and continuous function on the whole space, say $\tilde f$.
It follows that 
\[
\lim_{k \to \infty} \frac{1}{\ve_{n_{k}}} \cdot m \llcorner_{\Phi_{t}^{-1} ([a-\ve_{n_{k}},a]) \cap \bar  \Gamma_{a}(1)} = \hat m_{a,t}
\]
holds also in the weak topology of $\mathcal{P}(\bar \Gamma_{a}(1))$.
So we can use lower semicontinuity on open sets of weakly converging measures, also for open sets in the trace topology of 
$\bar \Gamma_{a}(1)$. Therefore
\[
\hat m_{a,t}(e(U\times A_{i})) \leq \liminf_{k \to \infty} \frac{1}{\ve_{n_{k}}} m  \llcorner_{\Phi_{t}^{-1} ([a-\ve_{n_{k}},a]) \cap \bar  \Gamma_{a}(1)}(e(U\times A_{i})) \leq C \frac{1}{i}.
\]
By outer regularity, $\hat m_{a,t} (I ) = 0$ and the claim follows.
\end{proof}

Direct consequence of Proposition \ref{P:step2} is that for $\mathcal{L}^{1}$-a.e. $a \in \f(\mu_{0})$ we have $\hat m_{a,t} = \theta_{a,t} q_{a}$ 
for $\mathcal{L}^{1}$-a.e. $t \in [0,1]$, that is 
\[
\hat m_{a,t}(K) = \int_{e_{1/2}(e_{t}^{-1}(K))}  \theta_{a,t}(y) q_{a}(dy), 
\]
for all $K \subset e_{t}(G_{a})$.

\subsection{A formula for the density}

We now derive an explicit expression for the density of $\hat m_{a,t}$ with respect to $m_{a,t}$.

\begin{lemma}\label{L:welldefined}
For $\mathcal{L}^{1}$-a.e. $a\in \f(\mu_{0})$ and every sequence $\ve_{n} \to 0^{+}$, there exists a subsequence $\ve_{n_{k}}$ such that  the limit 
\begin{equation}\label{E:1dim}
\lim_{k \to \infty} \frac{1}{\ve_{n_{k}}}  \mathcal{L}^{1}\Big( \big\{ \tau \in (0,1) :  \Phi_{t}(\gamma_{\tau}) \in [a- \ve_{n_{k}},a] \big\} \Big)
\end{equation}
exists for $\gammA_{a}$-a.e. $\gamma \in G_{a}$ and $\mathcal{L}^{1}$-a.e. $t\in [0,1]$. If we denote by $\lambda_{t}(\gamma_{t})$ its value, then 
\[
\hat m_{a,t} = \lambda_{t} m_{a,t}.
\]
\end{lemma}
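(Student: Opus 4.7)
The plan is to combine Corollary \ref{C:step} with the time-direction disintegration \eqref{E:motionagain} obtained in Proposition \ref{P:absolutecont}, and to identify the resulting inner time-integrals pointwise using Assumption \ref{A:Phi} together with Lemma \ref{L:density}. Fix $a\in\f(\mu_{0})$ in the full measure set for which Proposition \ref{P:step2} and Corollary \ref{C:step} both apply, and let $\ve_{n_{k}}$ be the subsequence furnished by Corollary \ref{C:step}. Proposition \ref{P:step2} already gives $\hat m_{a,t}\ll m_{a,t}$, so the Radon--Nikodym density $\lambda_{t}:= d\hat m_{a,t}/dm_{a,t}$ exists; what remains is to identify it with the pointwise limit \eqref{E:1dim}.

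For any $f\in C_{b}(X)$, inserting \eqref{E:motionagain} into Corollary \ref{C:step} gives
\begin{equation*}
\int f\,d\hat m_{a,t}=\lim_{k\to\infty}\int_{G_{a}}F_{k}(\gamma)\,q_{a}(d\gamma),\qquad
F_{k}(\gamma):=\frac{1}{\ve_{n_{k}}}\int_{I_{t}(\gamma)}f(\gamma_{\tau})\,\chi_{[a-\ve_{n_{k}},a]}(\Phi_{t}(\gamma_{\tau}))\,g(\gamma,\tau)\,d\tau,
\end{equation*}
where $q_{a}$ is identified with a measure on $G_{a}$ via $e_{1/2}$. By Lemma \ref{L:monotony} the integrand is supported on $\tau\geq t$ since $\Phi_{t}(\gamma_{t})=a$ for $\gamma\in G_{a}$, and Assumption \ref{A:Phi} provides, for $\gammA$-a.e. $\gamma$, the one-sided asymptotics
\begin{equation*}
\Phi_{t}(\gamma_{t})-\Phi_{t}(\gamma_{t+s})=D(\gamma)L(\gamma)\,s+o(s),\qquad s\to 0^{+},\quad D(\gamma)\in(0,\infty).
\end{equation*}
Combined with Lemma \ref{L:density}, which yields $\mathcal{L}^{1}(I_{t}(\gamma)^{c}\cap[t,t+s])=o(s)$ for $\mathcal{L}^{1}$-a.e. $t$ and $\gammA$-a.e. $\gamma$, I obtain for $\gammA_{a}$-a.e. $\gamma$ and a.e. $t$
\begin{equation*}
\lim_{k\to\infty}\frac{1}{\ve_{n_{k}}}\,\mathcal{L}^{1}\bigl(\{\tau\in(0,1):\Phi_{t}(\gamma_{\tau})\in[a-\ve_{n_{k}},a]\}\bigr)=\frac{1}{D(\gamma)L(\gamma)}=:\lambda_{t}(\gamma_{t}),
\end{equation*}
proving the existence of the pointwise limit in \eqref{E:1dim}.

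Continuity of $f$ together with the parameter dependence of $g(\gamma,\tau)$ deduced from \eqref{E:surfacevopoint} then produces $F_{k}(\gamma)\to f(\gamma_{t})g(\gamma,t)\lambda_{t}(\gamma_{t})$. For the exchange of limit and $q_{a}$-integral I would argue by dominated convergence: total masses are controlled since the sequence $\ve_{n_{k}}^{-1}m\llcorner_{\Phi_{t}^{-1}([a-\ve_{n_{k}},a])\cap\bar\Gamma_{a}(1)}$ converges weakly to the finite measure $\hat m_{a,t}$, so testing against $f\equiv 1$ first yields $q_{a}$-integrability of $g(\cdot,t)\lambda_{t}(e_{t}(\cdot))$ via Fatou, which in turn supplies the required uniform bound when passing to general bounded $f$. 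Comparing the limiting identity with
\begin{equation*}
\int f\,\lambda_{t}\,dm_{a,t}=\int_{G_{a}}f(\gamma_{t})g(\gamma,t)\lambda_{t}(\gamma_{t})\,q_{a}(d\gamma),
\end{equation*}
where this equality uses the representation $m_{a,t}=(e_{t})_{\sharp}(g(\cdot,t)q_{a})$ extracted from Proposition \ref{P:absolutecont}, and invoking arbitrariness of $f\in C_{b}(X)$, we conclude $\hat m_{a,t}=\lambda_{t}m_{a,t}$.

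The main obstacle is the clean asymptotic identification of the $\mathcal{L}^{1}$-measure of the slab in terms of $D(\gamma)L(\gamma)$: Assumption \ref{A:Phi} delivers only the one-sided slope of $\Phi_{t}$ along $\gamma$ at $\tau=t$, so one must simultaneously control the portion of $\{\tau:\Phi_{t}(\gamma_{\tau})\in[a-\ve_{n_{k}},a]\}$ lying in $I_{t}(\gamma)$ (where that slope is usable to invert $\Phi_{t}$) and its complement (negligible via Lemma \ref{L:density}), taking care to intersect the full-measure sets of $t$ and $\gamma$ produced by Assumption \ref{A:Phi}, Lemma \ref{L:density} and Corollary \ref{C:step} in a compatible way.
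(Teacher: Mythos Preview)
Your approach differs from the paper's in a genuine way. The paper does not invoke Assumption~\ref{A:Phi} or Lemma~\ref{L:density} in this lemma at all; instead it reads the existence of the limit \eqref{E:1dim} off as a ratio of two quantities already known to converge. Concretely, writing $\hat m_{a,t}=\theta_{a,t}q_a$ (from Proposition~\ref{P:step2}) and $m_{a,t}=g(\cdot,t)q_a$, the paper upgrades the weak convergence of Corollary~\ref{C:step} by a localization argument on the support of $q_a$, obtaining along a further subsequence and for $q_a$-a.e.\ $\gamma$ that the total mass of the inner one-dimensional slab converges to $\theta_{a,t}(\gamma)$; since the normalised inner integral converges to $g(\gamma,t)$ by continuity of $\tau\mapsto g(\gamma,\tau)$, the quotient gives $\mathcal{L}^1(\text{slab})/\ve_{n_k}\to\theta_{a,t}(\gamma)/g(\gamma,t)=:\lambda_t(\gamma_t)$, and $\hat m_{a,t}=\lambda_t m_{a,t}$ is then immediate from $\theta_{a,t}=\lambda_t\,g(\cdot,t)$. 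The explicit identification $\lambda_t^{-1}=\lim_{s\to 0}(\Phi_t(\gamma_t)-\Phi_t(\gamma_{t+s}))/s$ is postponed to Theorem~\ref{T:expression1}, where the tools you front-load here (Assumption~\ref{A:Phi}, Lemma~\ref{L:density}, and a linear extension of $\Phi_t$ across the gaps of $I_t(\gamma)$) actually enter. Your proposal effectively merges Lemma~\ref{L:welldefined} with Theorem~\ref{T:expression1}; this buys an explicit value for $\lambda_t$ in one shot, at the cost of importing those extra ingredients earlier.

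The concrete gap in your argument is the dominated-convergence step. Integrability of the pointwise limit $g(\cdot,t)\lambda_t$ via Fatou does \emph{not} supply a majorant $H\in L^1(q_a)$ with $|F_k|\le H$ uniformly in $k$; you would need such an $H$, and nothing you have written produces it (the bound coming from Assumption~\ref{A:Phi} is $\gamma$-dependent and not a priori $q_a$-integrable). The cleanest repair is to bypass the exchange of limit and integral entirely and follow the paper's logic: since you already know $\hat m_{a,t}=\theta_{a,t}q_a$, the Radon--Nikodym density you seek is $\theta_{a,t}/g(\cdot,t)$; once your direct computation shows the pointwise limit of $\ve_{n_k}^{-1}\mathcal{L}^1(\text{slab})$ exists, the pointwise limit of the inner total mass $\ve_{n_k}^{-1}\int_{\text{slab}}g(\gamma,\tau)\,d\tau$ exists as well and equals $g(\gamma,t)\lambda_t(\gamma_t)$, and the localization argument identifies it with $\theta_{a,t}(\gamma)$, giving $\hat m_{a,t}=\lambda_t m_{a,t}$ without any dominated convergence.
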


\begin{proof}
Consider $a \in \f(\mu_{0})$ and $\ve_{n_{k}}$ so that Corollary \ref{C:step} and Proposition \ref{P:step2} holds. 
Then we have
\[
\lim_{k\to \infty} \int \frac{1}{\ve_{n_{k}}} \Big(g(y,\cdot)\mathcal{L}^{1}\Big)_{\Big( \big\{ \tau \in (0,1) :  \Phi_{t}(\gamma_{\tau}) \in [a-\ve_{n_{k}},a] \big\} \Big)} 
q_{a}(dy)
=  \theta_{a,t}q_{a},
\]
again for $\mathcal{L}^{1}$-a.e. $t \in  [0,1]$ with the exceptional set depending on the subsequence and where the convergence is in the weak topology.
Using a localization argument on the support of $q_{a}$, it follows that there exists another subsequence that we will call again $\ve_{n_{k}}$ so that 
\[
\lim_{k\to \infty} \frac{1}{\ve_{n_{k}}} \Big(g(y,\cdot)\mathcal{L}^{1}\Big)_{\Big( \big\{ \tau \in (0,1) :  \Phi_{t}(\gamma_{\tau}) \in [a-\ve_{n_{k}},a] \big\} \Big)} = \theta_{a,t} \delta_{y}
\]
for $\mathcal{L}^{1}$-a.e. $t \in  [0,1]$ and $q_{a}$-a.e. $y \in e_{1/2}(G_{a})$. Then by continuity of $t\mapsto g(y,t)$ for $q_{a}$-a.e. $y$, it follows that 
\begin{align*}
\lim_{k\to \infty} &~\frac{1}{ \mathcal{L}^{1}\Big( \big\{ \tau \in (0,1) :  \Phi_{t}(\gamma_{\tau}) \in [a-\ve_{n_{k}},a] \big\} \Big)}  \crcr
 &~ \quad \cdot \Big(g(y,\cdot)\mathcal{L}^{1}\Big)_{\Big( \big\{ \tau \in (0,1) :  \Phi_{t}(\gamma_{\tau}) \in [a-\ve_{n_{k}},a] \big\} \Big)} = g(y,t) \delta_{y}
\end{align*}
for $\mathcal{L}^{1}$-a.e. $t \in  [0,1]$ and $q_{a}$-a.e. $y \in e_{1/2}(G_{a})$.
Then necessarily
\[
\lim_{k \to \infty} \frac{1}{\ve_{n_{k}}}  \mathcal{L}^{1}\Big( \big\{ \tau \in (0,1) :  \Phi_{t}(\gamma_{\tau}) \in [a- \ve_{n_{k}},a] \big\} \Big)
\]
exists $\mathcal{L}^{1}$-a.e. $t \in  [0,1]$ and $q_{a}$-a.e. $y \in e_{1/2}(G_{a})$. By uniqueness of the limit 
\[
\theta_{a,t} = g(y,t) \cdot \lim_{k \to \infty} \frac{1}{\ve_{n_{k}}}  \mathcal{L}^{1}\Big( \big\{ \tau \in (0,1) :  \Phi_{t}(\gamma_{\tau}) \in [a- \ve_{n_{k}},a] \big\} \Big).
\]
Hence if we define $\lambda_{t} (y) = \theta_{a,t}(y)/g(y,t)$ then
\[
\lambda_{t} (y) = \lim_{k \to \infty} \frac{1}{\ve_{n_{k}}}  \mathcal{L}^{1}\Big( \big\{ \tau \in (0,1) :  \Phi_{t}(\gamma_{\tau}) \in [a- \ve_{n_{k}},a] \big\} \Big)
\]
and since $m_{a,t} = g(\cdot,t) q_{a}$ and $\hat m_{a,t} = \theta_{a,t} q_{a}$ it follows that
\[
\hat m_{a,t} = \lambda_{t} m_{a,t},
\]
and therefore the claim.
\end{proof}

At the beginning of this section we observed that 
\[
e_{t\,\sharp} \gammA_{a} = \left( \int \r_{t}(z) \hat m_{a,t}(dz)\right)^{-1} \r_{t} \hat m_{a,t},
\]
so now Proposition \ref{P:step2} and Lemma \ref{L:welldefined} implies the next corollary.
\begin{corollary}\label{C:surfacevo}
The measure $(e_{t})_{\sharp}\gammA_{a}$ is absolute continuous with respect to the surface measure $m_{a,t}$.
\end{corollary}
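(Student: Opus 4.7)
The statement follows almost immediately by chaining the two absolute continuity results already established in this section. My plan is the following.

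First, I would recall the identity
\[
(e_{t})_{\sharp}\gammA_{a} = \left(\int \r_{t}(z)\,\hat m_{a,t}(dz)\right)^{-1} \r_{t}\,\hat m_{a,t},
\]
which was derived at the beginning of Section \ref{S:unique} by applying uniqueness of disintegration to the two representations
\[
\mu_{t} = \int \r_{t}\,\hat m_{a,t}\,\mathcal{L}^{1}(da) = \int (e_{t})_{\sharp}\gammA_{a}\,q(a)\,\mathcal{L}^{1}(da).
\]
In particular $(e_{t})_{\sharp}\gammA_{a}$ has a density with respect to $\hat m_{a,t}$.

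Next, I would invoke Lemma \ref{L:welldefined}, which gives $\hat m_{a,t} = \lambda_{t}\, m_{a,t}$ for $\mathcal{L}^{1}$-a.e.\ $a\in\f(\mu_{0})$ and $\mathcal{L}^{1}$-a.e.\ $t\in[0,1]$, with $\lambda_{t}$ the density obtained as the limit in \eqref{E:1dim}. Substituting this into the previous expression gives
\[
(e_{t})_{\sharp}\gammA_{a} = \left(\int \r_{t}(z)\,\hat m_{a,t}(dz)\right)^{-1} \r_{t}\,\lambda_{t}\, m_{a,t},
\]
which is manifestly absolutely continuous with respect to $m_{a,t}$. The normalizing integral is a finite positive constant depending only on $a$ (since $\gammA_{a}$ is a probability measure up to the factor $q(a)$), and $\r_{t}\lambda_{t}$ is a non-negative measurable function, so this is a legitimate density.

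There is essentially no obstacle here: all the analytic work has been done in Proposition \ref{P:step2} (to show $\hat m_{a,t}\ll m_{a,t}$) and in Lemma \ref{L:welldefined} (to identify the density $\lambda_{t}$). The corollary is just the composition of these facts with the Radon-Nikodym expression of $(e_{t})_{\sharp}\gammA_{a}$ in terms of $\hat m_{a,t}$. One only has to check that the exceptional $\mathcal{L}^{1}$-null sets in $a$ (coming from Proposition \ref{P:step2} and Lemma \ref{L:welldefined}) can be taken to be the same, which is immediate by intersecting them.
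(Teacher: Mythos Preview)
Your proposal is correct and follows exactly the paper's approach: the paper states that the corollary is an immediate consequence of the identity $(e_{t})_{\sharp}\gammA_{a} = \bigl(\int \r_{t}\,\hat m_{a,t}\bigr)^{-1}\r_{t}\,\hat m_{a,t}$ together with Proposition~\ref{P:step2} and Lemma~\ref{L:welldefined}, which is precisely the chain of absolute continuities you describe.
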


Let $\hat h_{a,t}$ be such that $(e_{t})_{\sharp}\gammA_{a} = \hat h_{a,t} m_{a,t}$.
We prefer to think of $\hat h_{a,t}$ as a function defined on $G_{a}$ rather than on $e_{t}(G_{a})$, hence define 
$h_{a,r}: G_{a} \to [0,\infty]$ by $h_{a,r}(\gamma):= \hat h_{a,r}(\gamma_{r})$.
So we have found a decomposition of $\r_{t}$:
\[
\r_{t}(\gamma_{t}) =\left( \int \r_{t}(z) \hat m_{a,t}(dz)\right) \frac{1}{\lambda_{t}(\gamma_{t})}h_{a,t}(\gamma),
\]
where $a = \f(\gamma_{0})$.
We now deduce a more convenient expression for $\lambda_{t}$. 
Recall the definition 
\[
I_{t}(\gamma) = \{ \tau \in [0,1] : \gamma_{\tau} \in e_{t}(G)\} = \{ \tau \in [0,1] : d(\gamma_{\tau}, e_{t}(G) ) = 0\},
\]

\begin{theorem}\label{T:expression1}
For $\mathcal{L}^{1}$-a.e $t \in [0,1]$
\begin{equation}\label{E:incrementalration}
\frac{1}{\lambda_{t}(\gamma_{t})} = \lim_{s\to 0} \frac{\Phi_{t}(\gamma_{t})  - \Phi_{t}(\gamma_{t+s})}{s},
\end{equation}
point wise for $\gammA$-a.e. $\gamma \in G$.
\end{theorem}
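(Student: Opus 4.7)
The strategy is to compute directly the limit characterizing $\lambda_t(\gamma_t)$ in Lemma \ref{L:welldefined}, by exploiting the monotonicity of $\Phi_t$ along the geodesic together with Assumption \ref{A:Phi} and a one-sided Lebesgue density argument.

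First I would fix $t$ and $\gamma$ in the relevant full-measure sets and set $a = \f(\gamma_0) = \Phi_t(\gamma_t)$. By Lemma \ref{L:monotony}, $\tau \mapsto \Phi_t(\gamma_\tau)$ is strictly decreasing on $I_t(\gamma)$; since $\Phi_t$ is locally Lipschitz by Assumption \ref{A:lengthreg}, it is continuous there. It follows that, up to a set of $\tau$'s where $\gamma_\tau \notin e_t(G)$,
\[
E_\ve(\gamma,t) := \{\tau \in (0,1) : \Phi_t(\gamma_\tau) \in [a-\ve,a]\} = I_t(\gamma) \cap [t, \tau_\ve],
\]
where $\tau_\ve := \sup\{\tau \in I_t(\gamma) : \Phi_t(\gamma_\tau) \geq a - \ve\}$; by density of $I_t(\gamma)$ near $t$ together with continuity, $\tau_\ve$ is an accumulation point of $I_t(\gamma)$.

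Next, because $\gamma$ is a constant-speed geodesic we have $d(\gamma_t,\gamma_{t+s}) = sL(\gamma)$ for $s>0$ small. Dividing the limit in Assumption \ref{A:Phi} by $L(\gamma) \in (0,\infty)$ yields
\[
c(\gamma,t) := \lim_{s \to 0^+} \frac{\Phi_t(\gamma_t) - \Phi_t(\gamma_{t+s})}{s} \in (0,\infty)
\]
for $\gammA$-a.e.\ $\gamma$ and every $t$ (with $s$ restricted so that $t+s \in I_t(\gamma)$, as $\Phi_t$ is only defined there). For any $\eta > 0$, for all sufficiently small $\ve > 0$ this yields the two-sided bound
\[
\frac{\ve}{c(\gamma,t) + \eta} \leq \tau_\ve - t \leq \frac{\ve}{c(\gamma,t) - \eta}.
\]

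I would then invoke Lemma \ref{L:density}: from the $L^1(G,\gammA)$-convergence stated there, a diagonal subsequence extraction combined with Fubini gives that for $\mathcal{L}^1$-a.e.\ $t \in [0,1]$ and $\gammA$-a.e.\ $\gamma$, the point $\tau = t$ is a point of Lebesgue density $1$ of $I_t(\gamma)$; one-sided density then follows because both $\mathcal{L}^1(I_t(\gamma) \cap (t-s,t))/s$ and $\mathcal{L}^1(I_t(\gamma) \cap (t,t+s))/s$ are bounded by $1$. Combining the density identity $\lim_{s \to 0^+} s^{-1}\mathcal{L}^1(I_t(\gamma) \cap [t,t+s]) = 1$ with the bounds on $\tau_\ve - t$,
\[
\frac{1}{c(\gamma,t)+\eta} \leq \liminf_{\ve\to 0^+} \frac{\mathcal{L}^1(E_\ve(\gamma,t))}{\ve} \leq \limsup_{\ve\to 0^+} \frac{\mathcal{L}^1(E_\ve(\gamma,t))}{\ve} \leq \frac{1}{c(\gamma,t)-\eta}.
\]
Letting $\eta \to 0$, the whole limit exists and equals $1/c(\gamma,t)$. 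Since the full limit exists, every subsequential limit appearing in Lemma \ref{L:welldefined} coincides with it, so $\lambda_t(\gamma_t) = 1/c(\gamma,t)$, which is the claimed identity.

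The main obstacle is the measure-theoretic bookkeeping required to pass from the $L^1(\gammA)$-convergence of Lemma \ref{L:density} to the pointwise statement ``for $\mathcal{L}^1$-a.e.\ $t$, for $\gammA$-a.e.\ $\gamma$,'' in such a way that the same full-measure set of $\gamma$'s simultaneously supports Assumption \ref{A:Phi}, the density property, and the validity of Lemma \ref{L:welldefined}. A diagonal subsequence argument applied to a countable dense family handles this. A secondary technical point is to verify that $\tau_\ve$ is well-approximated by points of $I_t(\gamma)$, which follows from continuity and strict monotonicity of $\Phi_t \circ \gamma$ together with the density of $I_t(\gamma)$ near $t$.
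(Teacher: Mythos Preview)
Your approach is essentially the same as the paper's: both identify $\lambda_t$ with the reciprocal of the derivative of $\Phi_t\circ\gamma$ at $t$ by combining Assumption~\ref{A:Phi} with Lemma~\ref{L:density}. The paper, however, packages the argument differently: it first extends $\Phi_t\circ\gamma$ by linear interpolation across the gaps of $I_t(\gamma)$ to obtain a map $\hat\Phi_t\circ\gamma$ defined on a full interval $[t-\delta,t+\delta]$, uses Lemma~\ref{L:density} in its $L^1(\gammA_a)$ form to show that the preimage measures under $\Phi_t\circ\gamma$ and $\hat\Phi_t\circ\gamma$ agree along the subsequence $\ve_{n_k}$, and only then computes the (full) limit for the extended, everywhere-defined and invertible $\hat\Phi_t\circ\gamma$. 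You instead work directly on $I_t(\gamma)$ and appeal to pointwise one-sided density.

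There is one soft spot in your write-up. The lower bound $\tau_\ve - t \geq \ve/(c+\eta)$ does not follow from Assumption~\ref{A:Phi} alone: if $I_t(\gamma)$ has a gap immediately to the right of some $t_1$, then $\tau_\ve$ may get stuck at $t_1$ for a range of $\ve$'s. You need the density of $I_t(\gamma)$ at $t$ already here, to produce points of $I_t(\gamma)$ arbitrarily close to $t+\ve/(c+\eta)$. More importantly, Lemma~\ref{L:density} only asserts $L^1(\gammA)$-convergence, not $\gammA$-a.e.\ convergence of the density quotients, and your ``diagonal subsequence on a countable dense family'' does not obviously upgrade this. The clean fix is not to claim the full limit of $\mathcal{L}^1(E_\ve)/\ve$: it suffices to work along the subsequence $\ve_{n_k}$ from Lemma~\ref{L:welldefined}, further refine it so that the density quotients converge $\gammA$-a.e.\ (which $L^1$ convergence does give along a subsequence), and run your sandwich there. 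Since $\lambda_t$ is by definition the value of that subsequential limit, and $c(\gamma,t)$ is a full limit by Assumption~\ref{A:Phi}, the identification $\lambda_t = 1/c$ follows. This is precisely what the paper's extension device is engineered to sidestep.
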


\begin{proof}
{\it Step 1.} From Lemma \ref{L:welldefined} for $\mathcal{L}^{1}$-a.e. $a \in \f(\mu_{0})$, for every $\ve_{n} \to 0^{+}$ there exists a subsequence 
$\ve_{n_{k}}$ such that 
\[
\lambda_{t}(\gamma_{t}) = \lim_{k \to \infty} \frac{1}{\ve_{n_{k}}}   \mathcal{L}^{1}\Big( \big\{ \tau \in (0,1) :  \Phi_{t}(\gamma_{\tau}) \in [a- \ve_{n_{k}},a] \big\} \Big),
\]
point wise $\gammA_{a} \otimes \mathcal{L}^{1}$-a.e. in $G_{a} \times [0,1]$. 
An equivalent expression of $\lambda_{t}(\gamma_{t})$ is: 
\[
\lambda_{t}(\gamma_{t}) = \lim_{k\to \infty} \frac{ \big((\Phi_{t} \circ \gamma)_{\sharp} \mathcal{L}^{1}\big)  ([a-\ve_{n_{k}},a]) }{\mathcal{L}^{1}  ([a-\ve_{n_{k}},a])}.
\]

Using Assumption \ref{A:Phi} $\lambda_{t}$ can be written in terms of the same limit above substituting $\Phi_{t}\circ \gamma$, that is defined only on $I_{t}(\gamma)$,
with an extension of $\Phi_{t}$ to a neighborhood of $t$. 

Since for each $\gamma \in G$ the set $I_{t}(\gamma)$ is compact,
 we can extend $\Phi_{t}$  by linearity on each geodesic of $G_{a}$. By $d$-monotonicity this will create no problem in the definition.  
More specifically: for $\delta> 0$ fixed,  for each $\tau \in [t - \delta, t+ \delta]$ and $\gamma \in G_{a}$  
there exists 
\[
\tau_{m} = \max \{s \in I_{t}(\gamma) : s \leq \tau \}, \qquad  \tau_{M} = \min\{ s \in I_{t}(\gamma) : \tau \leq s \}.
\]
Clearly $\tau_{m}$ and $\tau_{M}$ depends on $\gamma$ and if $\tau \in I_{t}(\gamma)$ they all coincide $\tau = \tau_{m} = \tau_{M}$. 
Then we define the extension map $\hat \Phi_{t}$ by linearity
\[
\hat \Phi_{t}(\gamma_{\tau}) = \Phi_{t}(\gamma_{\tau_{m}}) + (\tau - \tau_{m}) \frac{\Phi_{t}(\gamma_{\tau_{M}}) - \Phi_{t}(\gamma_{\tau_{m}})}{\tau_{M} - \tau_{m}}.
\]
Since by $d$-cyclical monotonicity $\gamma_{t} \neq \bar \gamma_{s}$ for all $t,s \in [0,1]$ if $\gamma, \bar \gamma \in G_{a}$ with $\gamma \neq \bar \gamma$,
the map $\hat \Phi_{t}$ is well defined on $e([t-\delta, t+\delta] \times G_{a})$ and is measurable. 
Moreover by Assumption \ref{A:Phi}, on each line the map
\[
[t-\delta, t + \delta] \ni \tau \mapsto \hat \Phi_{t}(\gamma_{\tau})
\]
is differentiable in $t$ with strictly negative derivative and is Lipschitz in the whole interval $[t -\delta, t+ \delta]$. 

Consider now $\tau_{k} = \max \{ \tau \in [t,t+\delta] : \hat \Phi_{t}(\gamma_{\tau}) \geq a-\ve_{n_{k}} \}$, then 
\[
\mathcal{L}^{1}\Big(  \big( \hat \Phi_{t} \circ \gamma\big)^{-1} [a-\ve_{n_{k}}, a]   \Big) \leq \tau_{k} -t.
\]
Since by construction
\[
\hat  \Phi_{t}(\gamma_{t}) - \hat \Phi_{t}(\gamma_{\tau_{k}}) \geq \frac{1}{c}(\tau_{k} - t), 
\]
for some positive constant $C$,  we have  $(\tau_{k} - t) \leq c \ve_{n_{k}}$ implying 
that 
\begin{align*}
\int_{G_{a}} &~\left| \frac{\big((\Phi_{t} \circ \gamma)_{\sharp} \mathcal{L}^{1}\big)  ([a-\ve_{n_{k}},a]) }{\ve_{n_{k}}} - 
\frac{\big((\hat \Phi_{t} \circ \gamma)_{\sharp} \mathcal{L}^{1}\big)  ([a-\ve_{n_{k}},a]) }{\ve_{n_{k}}} \right| \gammA_{a}(d\gamma) \crcr
\leq &~ \int_{G_{a}} \frac{\big((\hat \Phi_{t} \circ \gamma)_{\sharp} \mathcal{L}^{1}\big)  ([a-\ve_{n_{k}},a] \cap I_{t}(\gamma)^{c}) }{\ve_{n_{k}}} \gammA_{a}(d\gamma)\crcr
\leq &~ \int_{G_{a}} \frac{   \mathcal{L}^{1}\left( (t - c\ve_{n_{k}}, t+ c\ve_{n_{k}} )  \cap I_{t}(\gamma)^{c} \right)}{\ve_{n_{k}}} \gammA_{a}(d\gamma).
\end{align*}
By Lemma \ref{L:density} the last integral converges to $0$ as $k \to \infty$. 
We have therefore proved that for $\mathcal{L}^{1}$-a.e. $a \in \f(\mu_{0})$, for every $\ve_{n} \to 0$ there exists a subsequence 
$\ve_{n_{k}}$ such that 
\[
\lambda_{t}(\gamma_{t}) = \lim_{k \to \infty} \frac{1}{\ve_{n_{k}}}   \mathcal{L}^{1}\Big( \big\{ \tau \in (0,1) : \hat  \Phi_{t}(\gamma_{\tau}) \in [a- \ve_{n_{k}},a] \big\} \Big),
\]
for $\gammA_{a} \otimes \mathcal{L}^{1}$-a.e. in $G_{a} \times [0,1]$.

{\it Step 2.} Now we take advantage from the fact that $\hat \Phi_{t} \circ \gamma$ is defined on a connected set and invertible.
For any $\ve$ sufficiently small the following identity holds:  
\[
\frac{ \big((\hat \Phi_{t} \circ \gamma)_{\sharp} \mathcal{L}^{1}\big)  ([a-\ve,a]) }{\ve} =
\frac{s_{\ve}}{ \hat \Phi_{t}(\gamma_{t})- \hat \Phi_{t}(\gamma_{t+s_{\ve}})},
\]
where $s_{\ve}$ is the unique $s \in [t,t+\delta]$ such that
\[
\hat \Phi_{t}(\gamma_{t+s_{\ve}}) = a -\ve.
\]
It follows that 
\begin{equation}\label{E:positive}
\frac{1}{\lambda_{t}(\gamma_{t})} = \lim_{s \to 0 } \frac{\hat \Phi_{t}(\gamma_{t}) - \hat \Phi_{t}(\gamma_{t+s})   }{s},
\end{equation}
for $\gammA_{a} \otimes \mathcal{L}^{1}$-a.e. $(\gamma,t) \in G_{a}\times [0,1]$.
Restricting $s$ to $I_{t}(\gamma)$ the claim follows.
\end{proof}

\section{Global estimates and main theorems}\label{S:reduction}
So far we have proved that in a metric measure space $(X,d,m)$ verifying $\mathsf{CD}_{loc}(K,N)$ or $\mathsf{CD}^{*}(K,N)$
(actually $\mathsf{MCP}(K,N)$ would be enough),
given a geodesic $\mu_{t} = \r_{t} m$ in the $L^{2}$-Wasserstein space with some regularity, the following decomposition holds:
\[
\r_{t}(\gamma_{t}) =\left( \int \r_{t}(z) \hat m_{a,t}(dz)\right) \frac{1}{\lambda_{t}(\gamma_{t})}h_{a,t}(\gamma),
\]
where $a = \f(\gamma_{0})$ and the functions involved in the decomposition are determined by the following identities:
\begin{align*}
h_{a,t}m_{a,t} = &~ (e_{t})_{\sharp} \gammA_{a} = \left( \int \r_{t}(z) \hat m_{a,t}(dz)\right)^{-1} \r_{t} \hat m_{a,t}, \crcr
\frac{1}{\lambda_{t}(\gamma_{t})} = &~ \lim_{s \to 0 } \frac{\Phi_{t}(\gamma_{t}) - \Phi_{t}(\gamma_{t+s})   }{s}.
\end{align*}
From Assumption \ref{A:Phi}, $\lambda_{t}(\gamma_{t}) >0$ $\gammA$-a.e. and the above expression make sense.

To give a complete meaning to this decomposition we have to prove additional properties for both $h_{a,t}$ and $\lambda_{a,t}$.
In this Section we will consider this function $h_{a,t}$ and $\lambda_{t}$ in the perspective of lower curvature bounds.
In particular, thanks to the metric results proved in Section \ref{S:dmonotone},
we prove that $h_{a,t}$ verifies $\mathsf{CD}^{*}(K,N-1)$.

As already observed (see \eqref{E:motionagain}) a disintegration of $m\llcorner_{\bar \Gamma_{a}(1)}$ is given by the next expression:
\begin{equation}\label{E:horizontal}
m_{\bar \Gamma_{a}(1)} = \int_{e_{1/2}(G_{a})} \left( g(y,\cdot) \mathcal{L}^{1}\llcorner_{[0,1]} \right) q_{a}(dy),
\end{equation}
where $g(y,\cdot) \mathcal{L}^{1}\llcorner_{[0,1]}$ has to be intended as a measure on $\gamma_{[0,1]} \subset X$, the image of $\gamma$ where $\gamma = e_{1/2}^{-1}(y)$.

Since $\lambda_{t}(\gamma_{t})> 0$ also for $\gammA_{a}$-a.e $\gamma \in G_{a}$, it follows from Proposition \ref{P:step2} that 
$(e_{1/2})_{\sharp}\gammA_{a}$ can be taken to be the quotient measure in \eqref{E:horizontal},
at the price of changing the value of $g$:
\begin{equation}\label{E:quotient}
m_{\bar \Gamma_{a}(1)} = \int_{G_{a}} \left( g(\gamma_{1/2},\cdot) \mathcal{L}^{1}\llcorner_{[0,1]} \right) \gammA_{a}(d\gamma),
\end{equation} 
with the change of the value constant in $t$ and therefore the new $g$ still verifies \eqref{E:surfacevopoint}.
For ease of notation in what follows we will just denote with $g(\gamma,t)$ instead of $g(\gamma_{1/2},t)$.
The new densities $g$ enjoy the following property.

\begin{lemma}\label{L:likeh} For $\gammA_{a}$-a.e. $\gamma \in G_{a}$ 
\[
h_{a,t}(\gamma) g(\gamma,t) = 1, \qquad \mathcal{L}^{1}-a.e. \ t \in [0,1].
\]
\end{lemma}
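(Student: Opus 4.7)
The plan is to identify the conditional measure $m_{a,t}$ in two different ways and equate them. The expression \eqref{E:quotient} writes $m\llcorner_{\bar\Gamma_{a}(1)}$ as an iterated integral in which the ``outer'' variable runs over geodesics in $G_{a}$ and the ``inner'' variable is the time parameter along each geodesic. Applying Fubini to it yields, for any bounded Borel $F$ on $\bar\Gamma_{a}(1)$,
\[
\int F\,dm = \int_{0}^{1}\!\!\Bigl(\int_{G_{a}} F(\gamma_{t})\,g(\gamma,t)\,\gammA_{a}(d\gamma)\Bigr)\,dt
= \int_{0}^{1}\!\!\Bigl(\int F\,d\bigl[(e_{t})_{\sharp}(g(\cdot,t)\gammA_{a})\bigr]\Bigr)\,dt.
\]
Since this exhibits a consistent disintegration of $m\llcorner_{\bar\Gamma_{a}(1)}$ along the partition $\{e_{t}(G_{a})\}_{t\in[0,1]}$ with quotient $\mathcal{L}^{1}$, the uniqueness part of Theorem \ref{T:disintr} forces
\[
m_{a,t} = (e_{t})_{\sharp}\bigl(g(\cdot,t)\,\gammA_{a}\bigr) \qquad \text{for $\mathcal{L}^{1}$-a.e.\ } t\in[0,1].
\]

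Now I combine this with the defining identity of $h_{a,t}$, namely $(e_{t})_{\sharp}\gammA_{a} = \hat h_{a,t}\,m_{a,t}$ (Corollary \ref{C:surfacevo}), obtaining
\[
(e_{t})_{\sharp}\gammA_{a} = \hat h_{a,t}\cdot (e_{t})_{\sharp}\bigl(g(\cdot,t)\gammA_{a}\bigr).
\]
By Lemma \ref{L:partition} (and Corollary \ref{C:isomorphism}), for each fixed $t$ the evaluation map $e_{t}$ is a Borel isomorphism between $G_{a}$ and $e_{t}(G_{a})$. Pulling the previous identity back to $G_{a}$ through $e_{t}^{-1}$ and using the very definition $h_{a,t}(\gamma):=\hat h_{a,t}(\gamma_{t})$ turns it into an identity of measures on $G_{a}$,
\[
\gammA_{a} = h_{a,t}(\gamma)\,g(\gamma,t)\,\gammA_{a},
\]
which is precisely $h_{a,t}(\gamma)g(\gamma,t)=1$ for $\gammA_{a}$-a.e.\ $\gamma\in G_{a}$, holding for $\mathcal{L}^{1}$-a.e.\ $t$. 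A final application of Fubini on $G_{a}\times[0,1]$ converts this into the statement as written: for $\gammA_{a}$-a.e.\ $\gamma$, the identity holds for $\mathcal{L}^{1}$-a.e.\ $t\in[0,1]$.

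There is no substantial obstacle here; the argument is essentially a bookkeeping exercise built on the uniqueness of disintegration. The only points to watch are (i) that the constant (in $t$) normalization absorbed into $g$ when passing from the quotient measure $(e_{1/2})_{\sharp}m\llcorner_{\bar\Gamma_{a}(1)}$ of Proposition \ref{P:absolutecont} to $(e_{1/2})_{\sharp}\gammA_{a}$ via Proposition \ref{P:step2} is already incorporated in \eqref{E:quotient}, so the two disintegrations match exactly and not just up to a multiplicative constant; and (ii) that the injectivity provided by Lemma \ref{L:partition} is genuine for every $t\in[0,1]$, which is what legitimizes the identification of $\hat h_{a,t}$ (a function on $e_{t}(G_{a})$) with $h_{a,t}$ (a function on $G_{a}$) used in the last step.
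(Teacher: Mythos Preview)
Your proof is correct and follows essentially the same approach as the paper. Both arguments rest on applying Fubini to the disintegration \eqref{E:quotient} and comparing with the defining relation $(e_{t})_{\sharp}\gammA_{a}=\hat h_{a,t}m_{a,t}$; the paper tests the resulting identity against product sets $H\times I\subset G_{a}\times[0,1]$ directly, while you first extract $m_{a,t}=(e_{t})_{\sharp}(g(\cdot,t)\gammA_{a})$ via uniqueness of disintegration and then pull back through $e_{t}$, but these are two phrasings of the same computation.
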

\begin{proof} 
The function $\hat h_{a,t}$ has been introduced after Corollary \ref{C:surfacevo}.
For any measurable sets $H \subset G_{a}$, $I \subset [0,1]$ the following identities hold:
\begin{align*}
\gammA_{a}(H) \mathcal{L}^{1}(I) = &~ \int_{I} (h_{a,t} m_{a,t})(e_{t}(H)) dt 
=  \int_{\{\gamma_{t} : \gamma \in H, t \in I\}} \hat h_{a,t}(z)  m_{a,t}(dz) dt \crcr 
= &~ \int_{\{\gamma_{t} : \gamma \in H, t \in I\}} \hat h_{a,t}(z) m(dz) \crcr
= &~ \int_{H} \left( \int_{I}  h_{a,t}(\gamma) g(\gamma,t) dt  \right) \gammA_{a}(d\gamma),
\end{align*}
where passing from the second to the third line we used \eqref{E:quotient} and $\hat h _{a,t}$ was introduced after 
Corollary \ref{C:surfacevo}.
The claim follows from the arbitrariness of $H$  and $I$.
\end{proof}

\subsection{Gain of one degree of freedom}

As proved in Section \ref{S:dmonotone}, for any  $a_{1} <  b_{1} <  a_{0} < b_{0}$
and for any $\gamma \in G_{a}$ so that $(a_{0},b_{1}) \subset \phi_{a}(\gamma_{[0,1]})$
we can define $R^{\gamma}_{0}, L^{\gamma}_{0} \subset [0,1]$ and  
$R^{\gamma}_{1}, L^{\gamma}_{1} \subset [0,1]$ so that 
\[
\phi_{a}\circ \gamma \left((R^{\gamma}_{0}, R^{\gamma}_{0} + L^{\gamma}_{0} ) \right)= (a_{0},b_{0}),  \qquad
\phi_{a}\circ \gamma \left((R^{\gamma}_{1}, R^{\gamma}_{1} + L^{\gamma}_{1} ) \right)= (a_{1},b_{1}), 
\]
where $\phi_{a}$ is a Kantorovich potential associated to the $d$-monotone set 
$\{ (\gamma_{s}, \gamma_{t})  : \gamma \in G_{a},  s \leq t \}$.
The previous equations are equivalent to 
\[
\phi_{a}\circ \gamma ( R^{\gamma}_{0} ) = b_{0}, \qquad \phi_{a}\circ \gamma ( R^{\gamma}_{0} +L^{\gamma}_{0}) = a_{0}.
\]
and 
\[
\phi_{a}\circ \gamma ( R^{\gamma}_{1} ) = b_{1}, \qquad \phi_{a}\circ \gamma ( R^{\gamma}_{1} +L^{\gamma}_{0}) = a_{1}.
\]
Accordingly for all $t \in [0,1]$ we define 
\[
R^{\gamma}_{t}:= (1-t) R^{\gamma}_{0} + tR^{\gamma}_{1}, \qquad L^{\gamma}_{t}:= (1-t)L^{\gamma}_{0} + tL^{\gamma}_{1}.
\]

Let $H \subset G_{a}$ be so that for all $\gamma \in H$ both $(a_{0},b_{0}) , (a_{1},b_{1}) \subset \phi_{a}(\gamma_{(0,1)})$
with $a_{1} <  b_{1} <  a_{0} < b_{0}$. The Proposition \ref{P:w2geo} implies if we define 
\begin{equation}\label{E:geoattuale}
[0,1] \ni t \mapsto \nu_{t}  : = \frac{1}{\gammA_{a}(H)} \int_{H}   \frac{1}{L^{\gamma}_{t}} 
\mathcal{L}^{1}\llcorner_{[R^{\gamma}_{t},R^{\gamma}_{t} + L^{\gamma}_{t}]}  \gammA_{a}(d\gamma) \in \mathcal{P}([0,1] \times G_{a}),
\end{equation}
then $[0,1] \ni t \mapsto (e_{\sharp}) \nu_{t}$ is a  $W_{2}$-geodesic.

Moreover from Lemma \ref{L:likeh} we can deduce that for each $t \in [0,1]$ 
the density $p_{t}(x)$ of $(e)_{\sharp}\nu_{t}$ w.r.t. $m$ is given by
\begin{equation}\label{E:density}
p_{t}(\gamma_{\tau}) =
\begin{cases}
\displaystyle \frac{1}{\gammA_{a}(H)L^{\gamma}_{t}}  h_{a,\tau}(\gamma), & \tau \in  [R^{\gamma}_{t},R^{\gamma}_{t} + L^{\gamma}_{t}], \crcr 
0, & \textrm{otherwise}.
\end{cases}
\end{equation}
The dynamical optimal plan associated to $\nu_{t}$ can be obtained as follows: consider the following map 
\begin{eqnarray*}
\Theta :  \G(X) \times [0,1] & \to & \G(X) \\
 (\gamma,s) & \mapsto & t \mapsto \eta_{t} = \gamma_{(1-t)(R^{\gamma}_{0} +s L^{\gamma}_{0}) + t (R^{\gamma}_{1}+s L^{\gamma}_{1})} 
\end{eqnarray*}
Then if we pose  
\begin{equation}\label{E:dynamic}
\tilde \gammA_{a} : = \Theta_{\sharp} \bigg( \frac{1}{\gammA_{a}(H)} \gammA_{a}\llcorner_{H} \otimes \mathcal{L}^{1}\llcorner_{[0,1]} \bigg),
\end{equation}
it follows that $(e)_{\sharp}\nu_{t} = (e_{t})_{\sharp} \tilde \gammA_{a}$.

\begin{theorem}\label{T:surface}
For $\gammA_{a}$-a.e. $\gamma \in G_{a}$ and for any $0 \leq \tau_{0}<\tau_{1}\leq 1$ the following inequality holds true:
\begin{equation}\label{E:surface}
h_{a,\tau_{1/2}}^{-\frac{1}{N-1}}(\gamma)  
 \geq  \sigma_{K,N-1}^{(1/2)}\big( (\tau_{1}-\tau_{0})L(\gamma) \big) \left\{   h_{a,\tau_{0}}^{-\frac{1}{N-1}}(\gamma) +   h_{a,\tau_{1}}^{-\frac{1}{N-1}}(\gamma) \right\},
\end{equation}
where $\tau_{1/2}= (\tau_{0}+ \tau_{1})/2$.
\end{theorem}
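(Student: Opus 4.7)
The plan is to apply the pointwise curvature-dimension inequality to the auxiliary $W_{2}$-geodesic $(e)_{\sharp}\nu_{t}$ constructed in Proposition \ref{P:w2geo} and then exploit the free parameters $L_{0}^{\gamma},L_{1}^{\gamma}$ (inherited from the linear structure on the $d$-monotone set $K_{a}$) together with a sharp Hölder step to upgrade the dimensional exponent from $N$ to $N-1$. Fix $\gamma\in G_{a}$, $0\le \tau_{0}<\tau_{1}\le 1$, pick $L_{0},L_{1}>0$ sufficiently small and $R_{0}=\tau_{0}-\tfrac12 L_{0}$, $R_{1}=\tau_{1}-\tfrac12 L_{1}$ so that, with $s=1/2$ in the definition \eqref{E:geoattuale}, the associated midpoint curve $t\mapsto\gamma_{\tau_{t}}$ with $\tau_{t}=R_{t}^{\gamma}+\tfrac12 L_{t}^{\gamma}$ is precisely the geodesic running from $\gamma_{\tau_{0}}$ to $\gamma_{\tau_{1}}$. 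A direct check shows that $d(\gamma_{\tau_{0}},\gamma_{\tau_{1}})=(\tau_{1}-\tau_{0})L(\gamma)$ independently of $L_{0},L_{1}$.

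Take then a shrinking sequence of compact neighborhoods $H_{n}\subset G_{a}$ of $\gamma$, build $\nu_{t}^{n}$ and $\tilde\gammA_{a}^{n}$ as in \eqref{E:geoattuale}–\eqref{E:dynamic} and apply the pointwise form \eqref{E:cdpunto} of $\mathsf{CD}_{loc}(K,N)$ along this Wasserstein geodesic (valid locally, which is enough since $H_{n}$ can be localized so that $e(\{0,1\}\times H_{n})$ sits in a common $\mathsf{CD}_{loc}$ neighborhood). Using the explicit density \eqref{E:density}, dividing the common factor $\gammA_{a}(H_{n})$ and sending $n\to\infty$ via Lebesgue differentiation along $t\mapsto h_{a,t}(\gamma)$ (which is well defined $\gammA_{a}$-a.e. after Section \ref{S:unique}) we obtain for $\gammA_{a}$-a.e. $\gamma$
\begin{equation}\label{E:planN}
L_{1/2}^{1/N}\,h_{a,\tau_{1/2}}^{-1/N}(\gamma)\;\ge\;\tau_{K,N}^{(1/2)}(d)\Big[L_{0}^{1/N}h_{a,\tau_{0}}^{-1/N}(\gamma)+L_{1}^{1/N}h_{a,\tau_{1}}^{-1/N}(\gamma)\Big],
\end{equation}
with $d=(\tau_{1}-\tau_{0})L(\gamma)$ and $L_{1/2}=(L_{0}+L_{1})/2$, and where crucially $L_{0},L_{1}>0$ are arbitrary.

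To conclude, insert the factorization $\tau_{K,N}^{(1/2)}(d)=(1/2)^{1/N}\sigma_{K,N-1}^{(1/2)}(d)^{(N-1)/N}$ into \eqref{E:planN} and write $L_{0}=u^{N}$, $L_{1}=v^{N}$, $A=h_{a,\tau_{0}}^{-1/N}(\gamma)$, $B=h_{a,\tau_{1}}^{-1/N}(\gamma)$. Then \eqref{E:planN} reads
\[
\Big(\tfrac{u^{N}+v^{N}}{2}\Big)^{\!1/N}\,h_{a,\tau_{1/2}}^{-1/N}(\gamma)\;\ge\;(1/2)^{1/N}\sigma_{K,N-1}^{(1/2)}(d)^{(N-1)/N}\,(uA+vB),
\]
and Hölder's inequality with conjugate exponents $N$ and $N/(N-1)$ yields the sharp bound
\[
\sup_{u,v>0}\frac{uA+vB}{\big((u^{N}+v^{N})/2\big)^{1/N}}=2^{1/N}\big(A^{N/(N-1)}+B^{N/(N-1)}\big)^{(N-1)/N}.
\]
Optimizing over $u,v$, simplifying the factor $2^{1/N}(1/2)^{1/N}=1$, raising to the power $N/(N-1)$ and using $A^{N/(N-1)}=h_{a,\tau_{0}}^{-1/(N-1)}(\gamma)$ and similarly for $B$, one arrives exactly at \eqref{E:surface}.

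The principal difficulty will be the passage from the integrated Wasserstein estimate on $(e)_{\sharp}\nu_{t}^{n}$ to the pointwise inequality \eqref{E:planN} in the limit $H_{n}\downarrow\{\gamma\}$: it requires the pointwise $\mathsf{CD}_{loc}(K,N)$ machinery on non branching spaces applied to the somewhat unusual geodesic built from $d$-monotonicity in Section \ref{S:dmonotone}, together with the continuity/differentiation properties of $t\mapsto h_{a,t}(\gamma)$ coming from $h_{a,t}g(\gamma,t)=1$ (Lemma \ref{L:likeh}) and the regularity of $g$ in \eqref{E:surfacevopoint}. Once this limit is secured, the algebraic Hölder manipulation above is the mechanism encoding the gain of one degree of freedom and hence the reduction of dimension.
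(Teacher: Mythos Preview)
Your overall strategy coincides with the paper's: apply $\mathsf{CD}_{loc}(K,N)$ to the auxiliary $W_2$-geodesic of Proposition \ref{P:w2geo}, read off the density via \eqref{E:density}, and optimize over the free length parameters $L_0^\gamma,L_1^\gamma$ to upgrade the dimensional exponent from $N$ to $N-1$. Your H\"older step is exactly equivalent to the paper's explicit choice $L_0^\gamma \propto h_{a,R_0^\gamma}^{-1/(N-1)}$, $L_1^\gamma \propto h_{a,R_1^\gamma}^{-1/(N-1)}$.

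There is however one genuine gap: the local-to-global passage. Your parenthetical remark that ``$H_n$ can be localized so that $e(\{0,1\}\times H_n)$ sits in a common $\mathsf{CD}_{loc}$ neighborhood'' does not address the issue. Shrinking $H_n$ in $G_a$ makes the \emph{transverse} spread small, but it does nothing to bring the supports of $(e)_\sharp\nu_0$ (concentrated near $\gamma_{\tau_0}$) and $(e)_\sharp\nu_1$ (near $\gamma_{\tau_1}$) close together in $X$; if $\tau_1-\tau_0$ is not small these sit at opposite ends of a long geodesic segment and need not fit in any single $\mathsf{CD}_{loc}$ patch. The paper handles this explicitly as a preliminary step: it suffices to prove \eqref{E:surface} for $\tau_0,\tau_1$ close (so that $\mathsf{CD}_{loc}$ genuinely applies), and the global inequality then follows from the known globalization property of $\mathsf{CD}^*$ from \cite{sturm:loc} together with the iteration argument of \cite{cavasturm:MCP}. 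Without this reduction your argument as written only yields the local estimate.

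A secondary point: the limiting $H_n\downarrow\{\gamma\}$ via ``Lebesgue differentiation'' is both unnecessary and somewhat delicate (the $\gammA_a$-null exceptional set coming from \eqref{E:cdpunto} could in principle contain your fixed $\gamma$ for every $n$). The paper instead keeps a compact $H$ fixed, applies the pointwise $\mathsf{CD}_{loc}$ inequality (which already gives \eqref{E:splitting} for $\gammA_a$-a.e.\ $\gamma'\in H$ and $\mathcal{L}^1$-a.e.\ $s$), sends $s\searrow 0$ using the continuity of $r\mapsto h_{a,r}(\gamma')$ furnished by Lemma \ref{L:likeh} and the regularity of $g$ in \eqref{E:surfacevopoint}, and then invokes continuity of all terms in the parameters $(a_0,b_0,a_1,b_1)$ to produce a \emph{single} $\gammA_a$-null exceptional set outside of which the optimization over $L_0^\gamma,L_1^\gamma$ can be carried out for every remaining $\gamma$.
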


\begin{proof}
As a preliminary step, we note that in order to prove the claim is sufficient to prove \eqref{E:surface} locally, i.e. for $R_{0}$ and $R_{1}$ sufficiently close.
As proved in \cite{sturm:loc}, reduced curvature dimension condition enjoys the globalization property.

{\it Step 1.} Since $\phi_{a}$ is 1-Lipschitz and $G_{a}$ is compact, there exist real numbers $\alpha_{i}, \beta_{i}$ for $i=0,1$ so that
\[
\phi_{a}\circ e_{0} (G_{a}) \subset [\alpha_{0}, \alpha_{1}], \quad \phi_{a}\circ e_{1} (G_{a}) \subset [\beta_{0}, \beta_{1}].
\]
For any $n\in \enne$ and $\enne \ni k \leq n-1 $ we can consider the following family of curves
\[
E_{k,n} : = \left(\phi_{a}\circ e_{0}\right)^{-1}\left(\left[\alpha_{0} + \frac{k}{n}\alpha_{1}, \alpha_{0} + \frac{k+1}{n}\alpha_{1}  \right]\right), \qquad 
D_{k,n}: = \left(\phi_{a}\circ e_{1}\right)^{-1}\left(\left[\beta_{0} + \frac{k}{n}\beta_{1}, \beta_{0} + \frac{k+1}{n}\beta_{1}  \right]\right),
\]
where the maps $\phi_{a}\circ e_{i}$, for $i=0,1$, has to be considered as defined only on $G_{a}$. Then we define the family of compact sets
\[
M_{h,k,n} : = E_{h,n} \cap D_{k,n}.
\]
For any $n \in \enne$, as $h$ and $k$ vary from 0 to $n-1$ the sets $M_{h,k,n} $ cover $G_{a}$. In particular we will consider this covering for $n$
so that
\[
\frac{1}{n} \ll \min\{ L(\gamma), \gamma \in G_{a} \}, |\alpha_{1} - \alpha_{0}|, |\beta_{1} - \beta_{0}|.
\]
Under the previous condition
\[
\min \{ \phi_{a}(\gamma_{0}) : \gamma \in M_{h,k,n} \}    \gg \max \{ \phi_{a}(\gamma_{1}) : \gamma \in M_{h,k,n} \}.
\]
Then for any $a > b$ real numbers so that
\[
\min \{ \phi_{a}(\gamma_{0}) : \gamma \in M_{h,k,n} \}  > a > b >  \max \{ \phi_{a}(\gamma_{1}) : \gamma \in M_{h,k,n} \},
\]
for any $\gamma \in M_{h,k,n}$ the image $\phi_{a}(\gamma_{[0,1]})$ contains $[b,a]$. Therefore we are under the hypothesis of Proposition \ref{P:w2geo}.

{\it Step 2.}
Fix a compact set $H \subset M_{h,k,n}$ and $a,  b$ such that the curvature dimension condition $\mathsf{CD}(K,N)$ holds
true for all measures supported in 
\[
\phi_{a}^{-1}([b,a]) \cap \{\gamma_{[0,1]} : \gamma \in H \}.
\]
Chose now $a_{0},b_{0}$ and $a_{1},b_{1}$ so that $(b_{0},a_{0}), (b_{1},a_{1}) \subset [b,a]$.
In the same manner as Proposition \ref{P:w2geo} consider $R^{\gamma}_{0},R^{\gamma}_{1},L^{\gamma}_{0}$ and $L^{\gamma}_{1}$.
Finally define $\{(e)_{\sharp}\nu_{t}\}_{t \in [0,1]}$ as before in \eqref{E:geoattuale} and the associated dynamical optimal plan $\tilde \gammA_{a}$ as in \eqref{E:dynamic}.
Note that since $M_{h,k,n}$ is a covering of $G_{a}$ we can always assume $\gammA_{a}(M_{h,k,n})>0$ and therefore $\gammA_{a}(H)>0$.

Condition $\mathsf{CD}_{loc}(K,N)$ for $t=1/2$ imply that for $\tilde \gammA_{a}$-a.e. 
$\eta \in \G(X)$
\[
p^{-1/N}_{1/2}(\eta_{1/2}) \geq \tau_{K,N}^{(1/2)}(d(\eta_{0},\eta_{1})) \left\{ p_{0}^{-1/N}(\eta_{0}) + p_{1}^{-1/N}(\eta_{1}) \right\},
\]
that can be formulated also in the following way: for $\mathcal{L}^{1}$-a.e. $s\in [0,1]$ and $\gammA_{a}$-a.e. $\gamma \in H$ 
\[
p^{-1/N}_{1/2}(\gamma_{R^{\gamma}_{1/2} + s L^{\gamma}_{1/2}}) \geq 
\tau_{K,N}^{(1/2)}\big( (R^{\gamma}_{1}-R^{\gamma}_{0} + s | L^{\gamma}_{1} - L^{\gamma}_{0}| ) L(\gamma) \big) 
\left\{ p_{0}^{-1/N}(\gamma_{R^{\gamma}_{0}+sL^{\gamma}_{0}}) + p_{1}^{-1/N}(\gamma_{R^{\gamma}_{1}+s L^{\gamma}_{1}}) \right\}.
\]
Then using \eqref{E:density} and the continuity of $r \mapsto h_{r}(\gamma)$ (Lemma \ref{L:likeh}), letting $s \searrow 0$, it follows that
\begin{align}\label{E:splitting}
(L^{\gamma}_{0}+&L^{\gamma}_{1})^{1/N} h^{-1/N}_{a,R^{\gamma}_{1/2}}(\gamma) \crcr
\geq &~ \sigma_{K,N-1}^{(1/2)}\big( (R^{\gamma}_{1}-R^{\gamma}_{0})L(\gamma)\big)^{\frac{N-1}{N}} \left\{ (L^{\gamma}_{0})^{1/N} 
h_{a, R^{\gamma}_{0}}^{-1/N}(\gamma) +(L^{\gamma}_{1})^{1/N} h_{a,R^{\gamma}_{1}}^{-1/N}(\gamma) \right\},
\end{align}
for $\gammA_{a}$-a.e. $\gamma \in H$, with exceptional set depending on $a_{0},b_{0},a_{1},b_{1}$.

{\it Step 3.} Note that all the involved quantities in \eqref{E:splitting} are continuous w.r.t. 
$R^{\gamma}_{0},L^{\gamma}_{0},R^{\gamma}_{1},L^{\gamma}_{1}$, that in turn are continuous functions of $a_{0},b_{0},a_{1},b_{1}$ 
respectively.
Therefore there exists a common exceptional set $H' \subset H$ of zero $\gammA_{a}$-measure such that \eqref{E:splitting} holds true for all 
for all $a_{0} > a_{1} \in (a,b)$, and all $b_{0},b_{1}$ so that $a_{0}-b_{0},a_{1}-b_{1}$ are sufficiently small and all $\gamma  \in H \setminus H'$.
Then for fixed fixed $\gamma \in H\setminus H'$, varying $L^{\gamma}_{0},L^{\gamma}_{1}$ in \eqref{E:splitting} yields 
\[
h_{a, R^{\gamma}_{1/2}}^{-\frac{1}{N-1}}(\gamma)  
 \geq  \sigma_{K,N-1}^{(1/2)}\big( (R^{\gamma}_{1}-R^{\gamma}_{0})L(\gamma) \big) \left\{   h_{a,R^{\gamma}_{0}}^{-\frac{1}{N-1}}(\gamma) +   
 h_{a,R^{\gamma}_{1}}^{-\frac{1}{N-1}}(\gamma) \right\}.
\]
Indeed the optimal choice is 
\[
L^{\gamma}_{0}  = L \frac{h_{a,R^{\gamma}_{0}}^{-1/(N-1)}(\gamma)  }{h_{a,R^{\gamma}_{0}}^{-1/(N-1)}(\gamma) 
+ h_{a,R^{\gamma}_{1}}^{-1/(N-1)}(\gamma)}, \qquad
L^{\gamma}_{1}  = L \frac{h_{a,R^{\gamma}_{1}}^{-1/(N-1)}(\gamma)  }{h_{a,R^{\gamma}_{0}}^{-1/(N-1)}(\gamma) 
+ h_{a,R^{\gamma}_{1}}^{-1/(N-1)}(\gamma)}
\]
for sufficiently small $L>0$.

Using the same argument of \cite{cavasturm:MCP}, we prove the global \eqref{E:surface} for $\tau^{\gamma}_{0}, \tau^{\gamma}_{1}$ so that 
\[
\phi_{a}(\gamma_{\tau^{\gamma}_{0}}) \leq a, \qquad  \phi_{a}(\gamma_{\tau^{\gamma}_{1}}) \geq b,
\]
for $\gammA$-a.e. $\gamma \in M_{h,k,n}$. Since $n$ can be as big as we want, $\tau^{\gamma}_{0}$ and $\tau^{\gamma}_{0}$ can be taken $0$ and 
$1$ respectively. Therefore we obtain the claim.
\end{proof}

We have therefore proved one of the main results of this note.
\begin{theorem}\label{T:main1}
Let $(X,d,m)$ be a non-branching metric measure space verifying $\mathsf{CD}_{loc}(K,N)$ or $\mathsf{CD}^{*}(K,N)$
and let $\{ \mu_{t}\}_{t\in [0,1]} \subset \mathcal{P}_{2}(X,d,m)$ be a geodesic with $\mu_{t} = \r_{t} m$.
Assume moreover Assumption \ref{A:lengthreg} and Assumption \ref{A:Phi}. Then
\[
\r_{t}(\gamma_{t}) =C(a) \frac{1}{\lambda_{t}(\gamma_{t})}h_{a,t}(\gamma), \qquad \gammA-a.e. \ \gamma \in G,
\]
where $a = \f(\gamma_{0})$ and $C(a) = \| \gammA_{a} \|$ is a constant depending only on $a$. The map 
$[0,1] \ni t \mapsto h_{a,t}(\gamma)$ verifies $\mathsf{CD}^{*}(K,N-1)$ for $\gammA$-a.e. $\gamma \in G$ and 
\[
\frac{1}{\lambda_{t}(\gamma_{t})} = \lim_{s \to 0 } \frac{\Phi_{t}(\gamma_{t}) - \Phi_{t}(\gamma_{t+s})   }{s}.
\]
\end{theorem}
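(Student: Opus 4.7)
The plan is to assemble the results of the preceding sections: the identity for $\r_t$ follows from uniqueness of disintegration once the two different codimension-one reference measures are put in relation, the explicit form of $\lambda_t$ is already Theorem \ref{T:expression1}, and the $\mathsf{CD}^{*}(K,N-1)$ property of $h_{a,t}$ is already Theorem \ref{T:surface}. Hence what remains to be written is a bookkeeping argument.

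First I would disintegrate $\gammA$ along the initial Kantorovich potential, writing
\[
\gammA=\int_{\f(\mu_0)}\gammA_a\,\mathcal{L}^1(da),
\]
with $\gammA_a$ supported on $G_a$ and $\|\gammA_a\|=C(a)$. Pushing forward by $e_t$ gives $\mu_t=\int (e_t)_\sharp \gammA_a\,\mathcal{L}^1(da)$. On the other hand, the disintegration
\[
m\llcorner_{e_t(G)}=\int_{\f(\mu_0)}\hat m_{a,t}\,\mathcal{L}^1(da),
\]
proved in Proposition \ref{P:quotient} for $t=0$ and in Proposition \ref{P:cod1} for $t\in(0,1]$ under Assumption \ref{A:lengthreg}, combined with $\mu_t=\r_t\,m$, gives $\mu_t=\int \r_t\,\hat m_{a,t}\,\mathcal{L}^1(da)$. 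Uniqueness of disintegration then forces
\[
(e_t)_\sharp\gammA_a=\r_t\,\hat m_{a,t}\qquad\text{for $\mathcal{L}^1$-a.e.\ $a$.}
\]

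The second step is to re-express both sides with respect to the codimension-one reference measure $m_{a,t}$ that moves in the direction of the optimal transport. Lemma \ref{L:welldefined} yields $\hat m_{a,t}=\lambda_t\,m_{a,t}$, while Corollary \ref{C:surfacevo} together with the definition $h_{a,t}(\gamma):=\hat h_{a,t}(\gamma_t)$ yields $(e_t)_\sharp\gammA_a=C(a)\,h_{a,t}\,m_{a,t}$, the factor $C(a)$ accounting for the normalization fixed above. Equating the two expressions for $(e_t)_\sharp\gammA_a$ and canceling $m_{a,t}$ produces
\[
\r_t(\gamma_t)=C(a)\,\frac{1}{\lambda_t(\gamma_t)}\,h_{a,t}(\gamma),
\]
for $\gammA$-a.e.\ $\gamma$. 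The incremental-quotient formula for $1/\lambda_t$ is then exactly Theorem \ref{T:expression1}, and the $\mathsf{CD}^{*}(K,N-1)$-inequality for $t\mapsto h_{a,t}(\gamma)$ is exactly Theorem \ref{T:surface}, upgraded from local to global by the Bacher--Sturm globalization of $\mathsf{CD}^{*}_{loc}$ under the non-branching hypothesis (Section \ref{Ss:geom}).

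The actual work has already been done earlier: the truly delicate step is the identification $\hat m_{a,t}=\lambda_t m_{a,t}$, which depends on the blow-up comparison of Proposition \ref{P:stesso} and on the bound \eqref{E:limite} coming from Assumption \ref{A:Phi}, while the $(N-1)$-dimensional curvature estimate exploits the linear degree of freedom in the geodesics constructed in Section \ref{S:dmonotone} via the optimization of $L_0^{\gamma},L_1^{\gamma}$ inside \eqref{E:splitting}. By contrast, the statement of Theorem \ref{T:main1} itself is a transcription of these results, and once the two representations of $(e_t)_\sharp\gammA_a$ are available, the displayed formula is immediate.
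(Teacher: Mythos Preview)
Your proposal is correct and follows exactly the paper's approach: the paper itself gives no separate proof of Theorem \ref{T:main1} but simply states ``We have therefore proved one of the main results of this note'' after Theorem \ref{T:surface}, the decomposition formula having already been derived just after Corollary \ref{C:surfacevo} (and restated at the opening of Section \ref{S:reduction}) from the same uniqueness-of-disintegration comparison of $(e_t)_\sharp\gammA_a=\r_t\,\hat m_{a,t}$ with $(e_t)_\sharp\gammA_a=h_{a,t}\,m_{a,t}$ via $\hat m_{a,t}=\lambda_t\,m_{a,t}$. Your bookkeeping, including the normalization $C(a)=\|\gammA_a\|=\int\r_t\,\hat m_{a,t}$, matches the paper's conventions.
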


\subsection{Globalization for a class of optimal transportation}

In order to prove globalization theorem of $\mathsf{CD}_{loc}$ it now necessary to show 
concavity in time of $\lambda_{t}(\gamma_{t})$. 
We will do that in the framework of Section \ref{S:particular}:
$L(\gamma)$ depends only on $\f(\gamma_{0})$, i.e.
\[
L(\gamma) = f(\f(\gamma_{0})), \qquad  \gammA-a.e. \ \gamma \in G, 
\]
for some $f : \f(\mu_{0}) \to (0,\infty)$ such that $\f(\mu_{0}) \ni a \mapsto a - f^{2}/a$ is non increasing.

\begin{proposition}\label{P:1dim}
Assume the following: 
Then for $\gammA$-a.e. $\gamma \in G$ the following holds true
\[
\lambda_{t}(\gamma_{t}) = (1-t) \lambda_{0}(\gamma_{0})+ t \lambda_{1}(\gamma_{1}),
\]
for every $t \in [0,1]$.
\end{proposition}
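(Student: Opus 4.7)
The plan is to use the special structure of this setting to factor $\Phi_{t}$ through a one-dimensional quotient map, compute the derivative of $1/\lambda_{t}(\gamma_{t})$ explicitly via Proposition \ref{P:MBT}, and observe that the resulting formula for $\lambda_{t}(\gamma_{t})$ is affine in $t$.

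First, I would invoke Lemma \ref{L:levelset}, which in the current setting identifies
\[
\Phi_{t}^{-1}(b) \cap e_{t}(G) = \f_{t}^{-1}(F_{t}(b)) \cap e_{t}(G), \qquad F_{t}(b) := b - \frac{t}{2}f^{2}(b).
\]
Combined with Lemma \ref{L:quotientgeodesic}, which asserts that $F_{t}$ is strictly monotone and locally bi-Lipschitz as an optimal map between the quotient measures, this yields the pointwise factorization $\Phi_{t} = F_{t}^{-1} \circ \f_{t}$ on $e_{t}(G)$. Since $f$ is locally Lipschitz, Rademacher's theorem on the line gives the differentiability of $F_{t}$ at $\mathcal{L}^{1}$-a.e.\ $a \in \f(\mu_{0})$ with
\[
F_{t}'(a) = 1 - t f(a) f'(a) > 0,
\]
and by absolute continuity of the quotient measure (Proposition \ref{P:quotient}) this good set covers $\gammA$-a.e.\ initial point $\gamma_{0}$.

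Next, I would apply Proposition \ref{P:MBT}, the metric Brenier property for the evolved Kantorovich potential. Since $d(\gamma_{t},\gamma_{t+s}) = |s|L(\gamma) = |s|f(a)$, it directly delivers
\[
\lim_{s \to 0}\frac{\f_{t}(\gamma_{t}) - \f_{t}(\gamma_{t+s})}{s} = L^{2}(\gamma) = f^{2}(a).
\]
Using the chain rule with the differentiability of $F_{t}^{-1}$ at $\f_{t}(\gamma_{t}) = F_{t}(a)$, and the incremental-ratio formula for $1/\lambda_{t}(\gamma_{t})$ from Theorem \ref{T:expression1}, this gives for $\gammA$-a.e.\ $\gamma$ with $a = \f(\gamma_{0})$
\[
\frac{1}{\lambda_{t}(\gamma_{t})} = \lim_{s\to 0}\frac{\Phi_{t}(\gamma_{t}) - \Phi_{t}(\gamma_{t+s})}{s} = \frac{f^{2}(a)}{F_{t}'(a)} = \frac{f^{2}(a)}{1 - t f(a) f'(a)}.
\]

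Finally, inverting yields the explicit expression
\[
\lambda_{t}(\gamma_{t}) = \frac{1 - t f(a) f'(a)}{f^{2}(a)} = (1-t)\frac{1}{f^{2}(a)} + t \frac{1 - f(a) f'(a)}{f^{2}(a)},
\]
which is manifestly affine in $t$. Specializing to $t=0$ and $t=1$ identifies the coefficients with $\lambda_{0}(\gamma_{0})$ and $\lambda_{1}(\gamma_{1})$, respectively, giving $\lambda_{t}(\gamma_{t}) = (1-t)\lambda_{0}(\gamma_{0}) + t\lambda_{1}(\gamma_{1})$. The main obstacle is matching the Lebesgue-a.e.\ differentiability of $F_{t}$ in $a$ to a $\gammA$-a.e.\ statement in $\gamma$; this is precisely where the absolute continuity of the quotient measure $q$ along $\f \circ e_{0}$ is essential. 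A secondary concern is that Proposition \ref{P:MBT} is stated as a two-sided limit along $\gamma$ and therefore gives a bona fide derivative, not merely a one-sided bound, allowing the chain rule to produce equality (and not just inequality) in the central computation.
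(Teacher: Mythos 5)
Your argument is correct and follows essentially the same route as the paper's proof: factor $\Phi_{t}=F_{t}^{-1}\circ\f_{t}$, combine Theorem \ref{T:expression1} with the derivative $L^{2}(\gamma)$ of $\f_{t}$ along the curve, and note that $\partial_{a}F_{t}(a)=1-tf(a)f'(a)$ is affine in $t$ while $L^{2}(\gamma)=f^{2}(a)$ is constant, your handling of the a.e.\ differentiability of $f$ via Proposition \ref{P:quotient} being if anything more careful than the paper's. One small caveat: Proposition \ref{P:MBT} has $d(\gamma_{t},\gamma_{t+s})$ (always positive) in the denominator and its proof really controls the quotient from the side $s>0$, so the signed two-sided limit $\lim_{s\to 0}\bigl(\f_{t}(\gamma_{t})-\f_{t}(\gamma_{t+s})\bigr)/s=L^{2}(\gamma)$ does not follow verbatim from its statement for $s<0$; this is harmless here because the limit defining $1/\lambda_{t}$ exists by Theorem \ref{T:expression1}, so identifying it along $s\to 0^{+}$ suffices.
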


\begin{proof}
Since $\Phi_{t} = F_{t}^{-1} \circ \f_{t}$, where $F_{t}(a) = a - tf^{2}/2$ and 
\[
\lim_{s\to 0 } \frac{\f_{t}(\gamma_{t})  - \f_{t}(\gamma_{t+ s})  }{s} = L^{2}(\gamma), 
\]
for all $t \in (0,1)$, it follows that from Theorem \ref{T:expression1} that 
\begin{align*}
\lambda_{t}(\gamma_{t}) = &~  (\partial_{a}F_{t}) (F_{t}^{-1} (\f_{t}(\gamma_{t}))) \frac{1}{L^{2}(\gamma)} \crcr
= &~  (\partial_{a}F_{t}) ( \f(\gamma_{0}) ) \frac{1}{L^{2}(\gamma)}. 
\end{align*}
Since $(\partial_{a}F_{t}) g_{t}$ is linear in $t$ the claim follows.
\end{proof}

Using the results proved so far, we can now state the following.

\begin{theorem}\label{T:cd}
Let $(X,d,m)$ be a non-branching metric measure space verifying $\mathsf{CD}_{loc}(K,N)$ or $\mathsf{CD}^{*}(K,N)$
and let $\{ \mu_{t}\}_{t\in [0,1]} \subset \mathcal{P}_{2}(X,d,m)$ be a geodesic with $\mu_{t} = \r_{t} m$.
Assume moreover that 
\[
L(\gamma) = f(\f(\gamma_{0})), 
\]
for some $f : \f(\mu_{0}) \to (0,\infty)$ such that $\f(\mu_{0}) \ni a \mapsto a - f^{2}/a$ is non increasing.
Then 
\[
\r_{t}(\gamma_{t})^{-1/N} \geq \r_{0}(\gamma_{0})^{-1/N} \tau_{K,N}^{(1-t)}(d(\gamma_{0},\gamma_{1})) 
+ \r_{1}(\gamma_{1})^{-1/N} \tau_{K,N}^{(s)}(d(\gamma_{0},\gamma_{1})),
\]
for every $t \in [0,1]$ and for $\gammA$-a.e. $\gamma \in G$.
\end{theorem}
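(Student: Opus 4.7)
The plan is to combine the decomposition of $\r_{t}$ from Theorem \ref{T:main1} with the linearity of $\lambda_{t}$ proved in Proposition \ref{P:1dim} and the $\mathsf{CD}^{*}(K,N-1)$ estimate for $h_{a,t}$, and then to reconstruct the full $\tau_{K,N}$-coefficient via the identity $\tau_{K,N}^{(t)}(\theta) = t^{1/N} \sigma_{K,N-1}^{(t)}(\theta)^{(N-1)/N}$ together with one application of H\"older's inequality.

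First I would verify that the hypotheses of the theorem indeed put us in the framework of Theorem \ref{T:main1}: by Remark \ref{R:assuverified}, the assumption $L(\gamma) = f(\f(\gamma_{0}))$ with $a\mapsto a -f^{2}(a)/2$ non-increasing implies both Assumption \ref{A:lengthreg} and Assumption \ref{A:Phi}. Hence Theorem \ref{T:main1} applies and yields
\[
\r_{t}(\gamma_{t}) = C(a)\,\lambda_{t}(\gamma_{t})^{-1}\, h_{a,t}(\gamma),
\]
with $t\mapsto h_{a,t}(\gamma)$ verifying $\mathsf{CD}^{*}(K,N-1)$; Proposition \ref{P:1dim} moreover gives $\lambda_{t}(\gamma_{t}) = (1-t)\lambda_{0}(\gamma_{0}) + t\lambda_{1}(\gamma_{1})$. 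Taking $-1/N$-th powers,
\[
\r_{t}(\gamma_{t})^{-1/N} = C(a)^{-1/N}\,\big((1-t)\lambda_{0}(\gamma_{0})+t\lambda_{1}(\gamma_{1})\big)^{1/N}\, h_{a,t}(\gamma)^{-1/N}.
\]

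Next I would invoke the point-wise formulation of $\mathsf{CD}^{*}(K,N-1)$ for $h_{a,t}$, analogous to \eqref{E:cdpunto} but with $(K,N-1)$-coefficients, to obtain
\[
h_{a,t}(\gamma)^{-1/(N-1)} \geq \sigma_{K,N-1}^{(1-t)}\big(d(\gamma_{0},\gamma_{1})\big)\,h_{a,0}(\gamma)^{-1/(N-1)} + \sigma_{K,N-1}^{(t)}\big(d(\gamma_{0},\gamma_{1})\big)\,h_{a,1}(\gamma)^{-1/(N-1)},
\]
recalling $L(\gamma)=d(\gamma_{0},\gamma_{1})$. Raising this to the power $(N-1)/N$ and multiplying by the $1/N$-th power of the linear function $\lambda_{t}$, the crucial step is the H\"older inequality
\[
(a_{1}+a_{2})^{1/N}(b_{1}+b_{2})^{(N-1)/N} \geq a_{1}^{1/N}b_{1}^{(N-1)/N} + a_{2}^{1/N}b_{2}^{(N-1)/N},
\]
applied with $a_{1}=(1-t)\lambda_{0}$, $a_{2}=t\lambda_{1}$ and $b_{i}$ the corresponding $\sigma_{K,N-1}$-term times $h_{a,i}^{-1/(N-1)}$. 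Using $\tau_{K,N}^{(t)}(\theta)=t^{1/N}\sigma_{K,N-1}^{(t)}(\theta)^{(N-1)/N}$, the right-hand side produces precisely the coefficients $\tau_{K,N}^{(1-t)}$ and $\tau_{K,N}^{(t)}$, and the leftover factors $C(a)^{-1/N}\lambda_{i}^{1/N} h_{a,i}^{-1/N}$ recombine into $\r_{i}(\gamma_{i})^{-1/N}$ for $i=0,1$.

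I do not foresee any essential obstacle: the theorem is a packaging result, and the only delicate point is ensuring that the $(K,N-1)$-estimate on $h_{a,t}$ holds point-wise for $\gammA$-a.e. $\gamma$. This follows from Theorem \ref{T:surface} (which delivers the inequality for $\tau_{1/2}$ midpoints on every compact piece) together with the globalization from local to reduced curvature-dimension in non-branching spaces \cite{sturm:loc}, passing then from the integrated to the point-wise version via the non-branching assumption as in \eqref{E:cdpunto}.
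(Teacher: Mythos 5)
Your proposal is correct and follows essentially the same route as the paper: verify Assumptions \ref{A:lengthreg} and \ref{A:Phi} via Remark \ref{R:assuverified}, apply the decomposition of Theorem \ref{T:main1} with the $\mathsf{CD}^{*}(K,N-1)$ estimate on $h_{a,t}$ (Theorem \ref{T:surface}) and the linearity of $\lambda_{t}$ (Proposition \ref{P:1dim}), and conclude with the H\"older-type inequality $(a_{1}+a_{2})^{1/N}(b_{1}+b_{2})^{(N-1)/N}\geq a_{1}^{1/N}b_{1}^{(N-1)/N}+a_{2}^{1/N}b_{2}^{(N-1)/N}$ and the identity $\tau_{K,N}^{(t)}=t^{1/N}\sigma_{K,N-1}^{(t)\,(N-1)/N}$. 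This is exactly the paper's argument, including the observation that the constant $C(a)$ is time-independent and can be dropped.
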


\begin{proof}
From Remark \ref{R:assuverified},
\[
\r_{t}(\gamma_{t}) = \left( \int\r_{t}(z) \hat m_{a,t}(dz)\right) \frac{h_{a,t}(\gamma)}{\lambda_{t}(\gamma_{t})},
\]
where the integral is constant in $t$ and therefore in order to prove the claim we can assume 
\[
\r_{t}(\gamma_{t}) =  \frac{1}{\lambda_{t}(\gamma_{t})}h_{a,t}(\gamma).
\]
Then from Theorem \ref{T:surface} and Proposition \ref{P:1dim} 
\[ 
\begin{aligned}
\r^{-1/N}_{t}(\gamma_{t}) = &~ \Big( \frac{1}{\lambda_{t}(\gamma_{t})}h_{a,t}(\gamma) \Big)^{-1/N} \crcr
= &~ \Big((1-t)\lambda_{0}(\gamma_{0}) +t\lambda_{1}(\gamma_{1}) \Big)^{\frac{1}{N}} \Big( h_{a,t}^{-1/(N-1)}(\gamma) \Big)^{\frac{N-1}{N}}  \crcr
\geq &~ \Big( (1-t)\lambda_{0}(\gamma_{0})\Big)^{1/N} \Big(\sigma_{K,N-1}^{(1-t)}(d(\gamma_{0},\gamma_{1})) h^{-\frac{1}{N-1}}_{a, 0  }(\gamma)   \Big)^{\frac{N-1}{N}}  \crcr
+ &~ \Big( t \lambda_{1}(\gamma_{1})\Big)^{1/N} \Big(\sigma_{K,N-1}^{(t)}(d(\gamma_{0},\gamma_{1})) h^{-\frac{1}{N-1}}_{a, 1  }(\gamma)   \Big)^{\frac{N-1}{N}}  
\crcr
=&~ \r_{0}^{-1/N}(\gamma_{0})\tau_{K,N}^{(1-t)}(d(\gamma_{0},\gamma_{1}))  
+ \r_{1}^{-1/N}(\gamma_{1})\tau_{K,N}^{(t)}(d(\gamma_{0},\gamma_{1})). 
\end{aligned}
\]
The claim follows.
\end{proof}

\section{More on the one-dimensional component}\label{S:formal}

Assuming the metric measure space $(X,d,m)$ to be \emph{infinitesimally strictly convex}, see Subsection \ref{Ss:gradiff},
we can give an more explicit expression for $\lambda_{t}$.

Define the restriction map as follows. For any $t \in (0,1)$ let
$restr_{[t,1]} : \G(X) \to \G(X)$ be defined as follows $restr_{[t,1]}(\gamma)_{s} = \gamma_{(1-s) t + s}$.
Denote by $\gammA_{[t,1]}$ the measure $restr_{[t,1]\sharp}\gammA$.

\begin{lemma}
For all $t \in [0,1)$ the measure $\gammA_{[t,1]}$ represents $\nabla (1-t)(- \f_{t})$.
\end{lemma}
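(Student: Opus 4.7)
The plan is to verify the defining conditions of Definition \ref{D:representing} for the function $g := (1-t)(-\f_t)$ and the measure $\gammA_{[t,1]}$. There are three things to check: bounded compression of $\gammA_{[t,1]}$, finiteness of $\|\gammA_{[t,1]}\|_2$, and the liminf inequality \eqref{E:representing}. The first two are immediate from the setup. For any $r \in [0,1]$ the evaluation map satisfies $(e_r)_{\sharp}\gammA_{[t,1]} = \mu_{t+r(1-t)} = \r_{t+r(1-t)}\,m \leq M\,m$ by the standing bound $\r_\tau \leq M$ of Section \ref{S:setting}. Since $restr_{[t,1]}(\gamma)$ is a constant speed geodesic with metric derivative $(1-t)L(\gamma) \leq (1-t)C$, a direct computation from the definition of $\|\cdot\|_2$ gives
\[
\|\gammA_{[t,1]}\|_2^2 = \int (1-t)^2 L^2(\gamma) \, \gammA(d\gamma) < \infty.
\]

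For the key inequality I would analyze the two sides of \eqref{E:representing} separately. On the right, $|Dg|_w = (1-t)|D\f_t|_w$; since the weak upper gradient is bounded above $m$-a.e.\ by the local Lipschitz constant, Proposition \ref{P:MBT} yields $|D\f_t|_w(\gamma_t) \leq |D\f_t|(\gamma_t) = d(\gamma_0,\gamma_1) = L(\gamma)$ on $e_t(G)$, so that
\[
\||Dg|_w\|_{L^2(X,\mu_t)}^2 = \int |Dg|_w^2(\gamma_t)\,\gammA(d\gamma) \leq \int (1-t)^2 L^2(\gamma)\,\gammA(d\gamma) = \|\gammA_{[t,1]}\|_2^2.
\]
On the left, writing $\eta = restr_{[t,1]}(\gamma)$ and $\sigma := s(1-t)$ one computes
\[
\frac{g(\eta_s) - g(\eta_0)}{s} \;=\; (1-t)^2\,\frac{\f_t(\gamma_t) - \f_t(\gamma_{t+\sigma})}{\sigma}.
\]
Since $restr_{[t,1]}(\gamma)$ itself is a geodesic from $\gamma_t$ to $\gamma_1$, one has $d(\gamma_t,\gamma_{t+\sigma}) = \sigma L(\gamma)$, so Proposition \ref{P:MBT} identifies the pointwise limit as $s \downarrow 0$ with $(1-t)^2 L^2(\gamma)$, uniformly dominated by $(1-t)^2 C^2$. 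Dominated convergence then yields
\[
\lim_{s \downarrow 0} \int \frac{g(\eta_s) - g(\eta_0)}{s}\,\gammA_{[t,1]}(d\eta) = \int (1-t)^2 L^2(\gamma)\,\gammA(d\gamma) = \|\gammA_{[t,1]}\|_2^2 \geq \tfrac{1}{2}\||Dg|_w\|_{L^2(\mu_t)}^2 + \tfrac{1}{2}\|\gammA_{[t,1]}\|_2^2,
\]
which is exactly \eqref{E:representing}.

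The main obstacle is the pointwise identification of the incremental quotient of $\f_t$ along the correct segment of each geodesic; this is precisely the content of Proposition \ref{P:MBT}, and without it the bound $\||Dg|_w\|_{L^2(\mu_t)}^2 \leq \|\gammA_{[t,1]}\|_2^2$ would fail. Everything else is bookkeeping with the affine reparametrization $r \mapsto t + r(1-t)$: the factor $(1-t)$ that appears explicitly in $g$ and the factor $(1-t)$ hidden in the time derivative of $restr_{[t,1]}(\gamma)$ combine to produce the exact balance that makes the liminf inequality an equality with both halves of the right hand side simultaneously maximized.
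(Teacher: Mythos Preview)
Your proof is correct and follows essentially the same route as the paper: verify bounded compression and finite $2$-norm directly, then show that the incremental quotient of $g=(1-t)(-\f_t)$ along the restricted geodesics converges to $(1-t)^2L^2(\gamma)=\|\gammA_{[t,1]}\|_2^2$, which dominates the right-hand side of \eqref{E:representing}. The only notable difference is the tool you invoke: you use Proposition~\ref{P:MBT} (pointwise limit of the slope along each geodesic, valid for $t\in(0,1)$) together with dominated convergence and the inequality $|D\f_t|_w\le |D\f_t|$, whereas the paper uses Proposition~\ref{P:speed} directly, which gives both the exact identity $|D\f_t|_w(\gamma_t)=L(\gamma)$ and the $L^2$ convergence of the incremental quotient in one stroke. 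Two small remarks: your domination constant should be $(1-t)^2\,\mathrm{Lip}(\f_t)\,C$ rather than $(1-t)^2C^2$ (this is harmless, since $\f_t$ is Lipschitz on the compact set under consideration), and at $t=0$ Proposition~\ref{P:MBT} is not stated, so you should fall back on Proposition~\ref{P:speed} (or on the standing assumption \eqref{E:gradientbrenier}) for that endpoint.
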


The notion of test plans representing gradients has been introduced in Definition \ref{D:representing}.

\begin{proof}
First observe that $\f_{t} \in S^{2}(e_{t}(G),d,m)$. Indeed from Proposition \ref{P:speed}, since $\f_{t}$ is a Kantorovich potential for $(\mu_{t},\mu_{1})$,
it follows that 
\[
|D\f_{t}|_{w} (\gamma_{t}) = \frac{d(\gamma_{t},\gamma_{1})}{1-t} = d(\gamma_{0},\gamma_{1}), \qquad \textrm{for } \gammA-a.e.\gamma, 
\]
and therefore $|D\f_{t}|_{w} \in L^{2}(e_{t}(G),m)$. 
We know that $\gammA_{[t,1]}$ is the optimal dynamical transference 
plan between $\mu_{t}$ and $\mu_{1}$ and $(1-t)\f_{t}$ is the Kantorovich potential for the $d^{2}$ cost, hence Proposition \ref{P:speed} implies that 
\[
\lim_{t \downarrow 0} \int \frac{\f_{t}(\gamma_{0}) -\f_{t}(\gamma_{\tau})}{\tau} \gammA_{[t,1]}(d\gamma) 
= \int \frac{d^{2}(\gamma_{0},\gamma_{1})}{1-t} \gammA_{[t,1]}(d\gamma). 
\]
Since $\| \gammA_{[t,1]}\|_{2}^{2}= \int d^{2}(\gamma_{0},\gamma_{1}) \gammA_{[t,1]}(d\gamma)$, 
\[
(1-t) \lim_{t \downarrow 0} \int \frac{\f_{t}(\gamma_{0}) -\f_{t}(\gamma_{\tau})}{\tau} \gammA_{[t,1]}(d\gamma) = \| \gammA_{[t,1]}\|_{2}^{2}
\]
and the claim follows.
\end{proof}

Using Theorem \ref{T:changingorder} and Theorem \ref{T:expression1} we can now write $\lambda_{t}$ in a differential expression.

\begin{proposition}\label{P:lambda}
Let $(X,d,m)$ be infinitesimally strictly convex.
Then $\lambda_{t}$ verifies the following identity: for every $t \in [0,1)$
\[ 
\frac{1}{\lambda_{t}(\gamma_{t})} = D\Phi_{t} (\nabla \f_{t})(\gamma_{t}),\qquad \gammA-a.e. \gamma,
\]
where the exceptional set depends on $t$.
\end{proposition}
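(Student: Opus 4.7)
The plan is to combine the preceding Lemma (which yields $\gammA_{[t,1]}$ as a plan representing $\nabla(1-t)(-\f_t)$) with Theorem \ref{T:changingorder} applied to $f = \Phi_t$, and then identify the resulting integrated derivative with the pointwise limit of Theorem \ref{T:expression1}. Specifically, since $(X,d,m)$ is infinitesimally strictly convex, $D^+\Phi_t(\nabla g) = D^-\Phi_t(\nabla g) = D\Phi_t(\nabla g)$, and Theorem \ref{T:changingorder} gives the equality
\[
\int D\Phi_t(\nabla[(1-t)(-\f_t)])(x)\,\mu_t(dx) \;=\; \lim_{\tau\downarrow 0}\int \frac{\Phi_t(\eta_\tau) - \Phi_t(\eta_0)}{\tau}\,\gammA_{[t,1]}(d\eta).
\]
Using linearity in $f$ and $1$-homogeneity in $g$ (valid also for the negative scalar $-(1-t)$ in the infinitesimally strictly convex setting, since $D^\pm$ exchange under $g\mapsto -g$ and coincide by hypothesis), the left side equals $-(1-t)\int D\Phi_t(\nabla \f_t)(\gamma_t)\,\gammA(d\gamma)$. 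Unwinding the definition of $restr_{[t,1]}$, i.e.\ $\eta_\tau = \gamma_{t+\tau(1-t)}$, the right side equals $\lim_{\tau\downarrow 0}\int \tau^{-1}[\Phi_t(\gamma_{t+\tau(1-t)}) - \Phi_t(\gamma_t)]\,\gammA(d\gamma)$.

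Next I would evaluate this last limit using the pointwise identity of Theorem \ref{T:expression1}, namely
\[
\lim_{s\to 0,\; s\in I_t(\gamma)} \frac{\Phi_t(\gamma_t) - \Phi_t(\gamma_{t+s})}{s} \;=\; \frac{1}{\lambda_t(\gamma_t)}, \qquad \gammA\text{-a.e. }\gamma,
\]
together with the Lipschitz extension $\hat \Phi_t$ constructed in that proof: on a neighborhood of the compact set $\bar\Gamma_a(1)$ the function $\hat\Phi_t$ is uniformly Lipschitz thanks to Assumption \ref{A:lengthreg} and the uniform bound $1/C < L(\gamma) < C$, so the difference quotients $\tau^{-1}[\hat\Phi_t(\gamma_{t+\tau(1-t)}) - \hat\Phi_t(\gamma_t)]$ are uniformly bounded in $\gamma$ and $\tau$. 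The error introduced by replacing $\Phi_t$ with $\hat\Phi_t$ is controlled by $\mathcal{L}^1((t-\ve,t+\ve)\cap I_t(\gamma)^c)/(2\ve)$, which vanishes in $L^1(G,\gammA)$ by Lemma \ref{L:density}. Dominated convergence together with this error estimate yields
\[
\lim_{\tau\downarrow 0}\int \frac{\Phi_t(\gamma_{t+\tau(1-t)}) - \Phi_t(\gamma_t)}{\tau}\,\gammA(d\gamma) \;=\; -(1-t)\int \frac{1}{\lambda_t(\gamma_t)}\,\gammA(d\gamma).
\]
Combining with the previous display and dividing by $-(1-t)\neq 0$ produces the integrated identity $\int D\Phi_t(\nabla\f_t)(\gamma_t)\,\gammA(d\gamma) = \int \lambda_t(\gamma_t)^{-1}\,\gammA(d\gamma)$.

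To upgrade this to the desired pointwise $\gammA$-a.e.\ equality I would rerun the full argument with $\gammA$ replaced by the normalized restriction $\gammA\llcorner_A/\gammA(A)$ for an arbitrary Borel $A\subset G$ with $\gammA(A)>0$: restrictions of $L^2$-optimal dynamical plans remain optimal, so the proof of the preceding Lemma (which only invoked optimality and the metric Brenier theorem, Proposition \ref{P:speed}) goes through verbatim, showing that $(restr_{[t,1]})_\sharp(\gammA\llcorner_A/\gammA(A))$ still represents $\nabla(1-t)(-\f_t)$ in the sense of Definition \ref{D:representing}. Arbitrariness of $A$ then forces the two integrands to coincide $\gammA$-a.e. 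The main obstacle I expect is precisely the justification of this limit--integral swap, because the limit of Theorem \ref{T:expression1} runs only along $s\in I_t(\gamma)$ while the limit delivered by Theorem \ref{T:changingorder} runs along arbitrary $\tau\downarrow 0$; the extension $\hat\Phi_t$ together with the Lebesgue-density statement of Lemma \ref{L:density} is designed precisely to reconcile these two scales, and writing this reconciliation cleanly (together with the verification that restrictions of $\gammA_{[t,1]}$ remain representing plans) is the delicate technical step.
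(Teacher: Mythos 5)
Your proposal is correct and takes essentially the same route as the paper: both arguments combine the representation Lemma for $\gammA_{[t,1]}$, Theorem \ref{T:changingorder} under infinitesimal strict convexity, and Theorem \ref{T:expression1}, together with the same localization-by-restriction principle. The only difference is the order of operations — the paper restricts the integral identity to subsets of $restr_{[t,1]}(G)$ first, obtaining the pointwise limit $(1-t)D\Phi_{t}(\nabla\f_{t})(\gamma_{t})$ of the difference quotients and then matching it with $1/\lambda_{t}$ via Theorem \ref{T:expression1}, whereas you integrate Theorem \ref{T:expression1} first (handling the domain issue with $\hat\Phi_{t}$ and Lemma \ref{L:density}) and then localize over arbitrary $A\subset G$, which is a harmless reshuffling of the same argument.
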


\begin{proof}

Since $(X,d,m)$ is infinitesimally strictly convex, and $\gammA_{[t,1]}$ represents $\nabla(1-t)(- \f_{t})$, 
from Theorem \ref{T:changingorder} it follows that 
\begin{align*}
\lim_{\tau\downarrow 0} \int_{restr_{[t,1]}(G)} \frac{\Phi_{t}(\gamma_{0}) - \Phi_{t}(\gamma_{\tau})}{\tau} \gammA_{[t,1]}(d\gamma) 
= &~ (1-t) \int D\Phi_{t}(\nabla\f_{t})(x) \mu_{t}(dx)\crcr
= &~ (1-t) \int_{restr_{[t,1]}(G)} D\Phi_{t}(\nabla\f_{t})(\gamma_{0}) \gammA_{[t,1]}(d\gamma).
\end{align*}
Since the previous identity holds true even if we restrict to a subset of $restr_{[t,1]}(G)$, it follows that it holds point-wise:
for $\gammA_{[t,1]}$-a.e. $\gamma$
\[
\lim_{\tau\downarrow 0} \frac{\Phi_{t}(\gamma_{0}) - \Phi_{t}(\gamma_{\tau})}{\tau} = (1-t) D\Phi_{t}(\nabla\f_{t})(\gamma_{0}).
\]
So fix $\hat \gamma$ in the support of $\gammA_{[t,1]}$ such that the limit exists and  
consider $\gamma$ in the support of $\gammA$ such that $\hat \gamma_{\tau} = \gamma_{(1-\tau)t + \tau}$, 
then we have 
\[
\frac{\Phi_{t}(\hat \gamma_{0}) - \Phi_{t}(\hat \gamma_{\tau})}{\tau} = \frac{\Phi_{t}(\gamma_{t}) - \Phi_{t}(\gamma_{(1-\tau)t + \tau})}{\tau} = 
\frac{\Phi_{t}(\gamma_{t}) - \Phi_{t}(\gamma_{(1-\tau)t + \tau}) }{\tau(1-t)} (1-t),
\]
and therefore the claim follows from Theorem \ref{T:expression1}.
\end{proof}

Under the infinitesimally strictly convexity assumption, we have therefore the following decomposition: 
\[
\frac{1}{c(\f(\gamma_{0}))}  \r_{t} (\gamma_{t}) =    D\Phi_{t} (\nabla \f_{t})(\gamma_{t})    h_{a,t}(\gamma_{t}), 
\]
where $c(a) = \int \r_{t}(z) \hat m_{a,t}(dz)$ is independent of $t$, and $h$ verifies $\mathsf{CD}^{*}(K,N-1)$.

\subsection{A formal computation}
We conclude this note with a formal calculation in order to show a formal expression of 
$D\Phi_{t} (\nabla \f_{t})(\gamma_{t}) $ in a smooth framework.

So let us assume $X$ be the Euclidean space with distance 
given by the euclidean distance and $m$ any measure absolute continuous with 
respect to the Lebesgue measure of the right dimension.
Let $\mu_{t} = \r_{t} m$ be the usual geodesic in the $L^{2}$-Wasserstein space over $X$
and let $T_{t}, T_{t,1} : X \to X$ be optimal maps such that 
\[
(T_{t})_{\sharp}\mu_{0} = \mu_{t} \qquad (T_{t,1})_{\sharp}\mu_{t} = \mu_{1}.
\]
Hence 
\[
T_{t} = Id - t \nabla \f_{0}, \qquad  T_{t,1} = Id - (1-t)\nabla \f_{t},
\]
with $\f_{0}$ a Kantorovich potential associated to $\mu_{0},\mu_{1}$ and $\f_{t}$ the usual evolution at time $t$ of $\f_{0}$. 
Then the standard identity holds:
\[
\f_{t}(\gamma_{t}) = (1-t)\f_{0}(\gamma_{0}) + t \f_{1}(\gamma_{1}).
\]
Clearly $\gamma_{0} = T_{t}^{-1}(\gamma_{t})$ and $\gamma_{1} = T_{t,1}(\gamma_{t})$.
Then one can differentiate the standard identity in the direction $s \mapsto \gamma_{t+s}$.
Then we get
\[
\| \nabla \f_{t}\|^{2}(\gamma_{t}) = (1-t) \langle \nabla \f_{0}(\gamma_{0}), DT_{t}^{-1}(\gamma_{t}) \nabla\f_{t}(\gamma_{t}) \rangle + t \langle \nabla \f_{1}(\gamma_{1}), DT_{t,1}(\gamma_{t}) \nabla \f_{t}(\gamma_{t}) \rangle.
\]
Moreover one can write $\Phi_{t}$ in a more convenient way:
\[
\Phi_{t} = \f_{0} \circ T_{t}^{-1}
\]
and then compute $\lambda_{t}$ using Proposition \ref{P:lambda}
\begin{align*}
\frac{1}{\lambda_{t}(\gamma_{t})} = 
&~  \langle  (DT_{t}^{-1})^{t}(\gamma_{0}) \nabla \f_{0} (\gamma_{0}), \nabla \f_{t}(\gamma_{t}) \rangle \crcr
= &~  \langle   \nabla \f_{0} (\gamma_{0}), DT_{t}^{-1}(\gamma_{t}) \nabla \f_{t}(\gamma_{t}) \rangle 
\end{align*}
Then using what calculated before
\begin{align*}
\frac{1}{\lambda_{t}(\gamma_{t})} = &~  \frac{1}{1-t}\| \nabla \f_{t} (\gamma_{t})\|^{2}   
- \frac{t}{1-t}\langle \nabla \f_{1}(\gamma_{1}), DT_{t,1}(\gamma_{t}) \nabla \f_{t}(\gamma_{t}) \rangle \crcr
= &~ \| \nabla \f_{t} (\gamma_{t})\|^{2}  + t \langle H\f_{t}(\gamma_{t}) \nabla \f_{t}(\gamma_{t}),\nabla \f_{t}(\gamma_{t}) \rangle,  
\end{align*}
where $H\f_{t}$ is the Hessian of $\f_{t}$. 
Clearly the effect of curvature would change the expression of $DT_{t,1}$. 
Hence on a linear space 
\[
\frac{1}{\lambda_{t}(\gamma_{t})} =  \langle ( Id + t H  \f_{t}(\gamma_{t})) \nabla \f_{t}(\gamma_{t}),\nabla \f_{t}(\gamma_{t}) \rangle.
\]
As a final comment, 
by Corollary \ref{C:1d-evo}, it holds that $\f^{t} = - \f_{t}^{c}$ and since 
\[
\f_{t}^{c} (x) = H^{t}_{0}(-\f) = \inf_{y \in X} \frac{1}{2t}d^{2}(x,y) - \f(y),
\]
it follows by semi-concavity that $Id - tH \f^{c}_{t} \geq 0$, in the sense of symmetric matrices. 
Note that we have derived in a different way the same expression for $\lambda_{t}$ obtained in \eqref{E:equivalent} 
from the decomposition of the differential of optimal transport map on manifold of \cite{corderomccann:brescamp}.
Again from \cite{corderomccann:brescamp} it follows that 
\[
Id - tH \f^{c}_{t} > 0,
\]
showing again consistency with Assumption \ref{A:Phi}.

\end{document}